\documentclass[10pt]{article}

\newif\ifsiamart
\makeatletter%
\@ifclassloaded{siamart171218}%
  {\siamarttrue}%
  {\@ifclassloaded{siamart251216}%
    {\siamarttrue}%
    {\siamartfalse}%
  }%
\makeatother%

\ifsiamart\else
\usepackage{authblk}
\fi
\usepackage{silence}
\usepackage[T1]{fontenc}
\usepackage[utf8]{inputenc}
\usepackage{csquotes}
\usepackage[UKenglish]{babel}
\usepackage{paralist}
\usepackage[shortlabels]{enumitem}
\usepackage[margin=.8in]{geometry}
\usepackage{xcolor}
\usepackage{graphicx}
\usepackage{bbm}
\usepackage{bigints}
\usepackage{amsmath}
\usepackage{amssymb}
\usepackage{amsfonts}
\usepackage{stmaryrd}
\usepackage{mathrsfs}
\usepackage{mathtools}
\usepackage{epstopdf}
\usepackage{comment}
\usepackage{todonotes}
\usepackage{upgreek}

\usepackage{xspace}
\usepackage{float}
\usepackage{stackengine}

\ifsiamart\else
\usepackage{amsthm}
\usepackage{hyperref}
\hypersetup{%
    linktocpage=true,
    colorlinks=true,
    citecolor=red,
    linkcolor=blue,
    urlcolor=blue,
}
\usepackage[nameinlink,capitalise]{cleveref}
\newcommand{\email}[1]{\href{mailto:#1}{#1}}
\newenvironment{keywords}{\noindent \textbf{Keywords.}}{}
\newenvironment{AMS}{\noindent \textbf{AMS subject classification.}}{}
\fi

\ifsiamart\else
\usepackage{setspace}
\fi

\ifsiamart
\newtheorem{remark}[theorem]{Remark}
\newtheorem{example}[theorem]{Example}
\newtheorem{assumption}{Assumption}

\else
\theoremstyle{plain}

\newtheorem{theorem}{Theorem}
\newtheorem{lemma}[theorem]{Lemma}
\newtheorem{corollary}[theorem]{Corollary}
\newtheorem{proposition}[theorem]{Proposition}

\newtheorem{remark}[theorem]{Remark}

\newtheorem{example}[theorem]{Example}
\newtheorem{assumption}{Assumption}

\fi

\crefname{lemma}{Lemma}{Lemmas}
\crefname{remark}{Remark}{Remarks}
\crefname{assumption}{Assumption}{Assumptions}
\crefname{proposition}{Proposition}{Propositions}
\crefname{section}{Section}{Sections}
\crefname{subsection}{Subsection}{Subsections}
\crefname{example}{Example}{Examples}
\crefname{equation}{}{}
\Crefname{equation}{Equation}{Equations}
\newlist{lemmaenum}{enumerate}{3}
\setlist[lemmaenum]{label=(\alph*),ref=\,(\alph*)}
\crefname{lemmaenum}{Lemma}{Lemmas}
\newlist{assumpenum}{enumerate}{5}
\setlist[assumpenum]{label=(\alph*), font={\bfseries}}
\newlist{auxenum}{enumerate}{2}
\setlist[auxenum]{label=(\alph*),ref=(\alph*)}
\crefname{auxenumi}{Item}{Items}
\crefname{enumi}{}{}
\crefname{equation}{}{}
\crefname{assumpenumi}{}{}
\crefname{assumpenumii}{}{}
\Crefname{assumpenumi}{Assumption}{Assumptions}
\Crefname{assumpenumii}{Assumption}{Assumptions}
\Crefname{assumpenumii}{Assumption}{Assumptions}
\crefrangeformat{assumpenumi}{#3#1#4--#5#2#6}
\Crefname{lemmaenumi}{Part}{Parts}
\Crefname{figure}{Figure}{Figures}
\numberwithin{equation}{section}
\numberwithin{theorem}{section}

\usepackage{tikz}
\usetikzlibrary{shapes.misc}
\usetikzlibrary{positioning}

\ifsiamart\else
\usepackage[url=true,backend=biber,style=trad-abbrv,url=false,doi=false,isbn=false]{biblatex}
\DeclareFieldFormat{volume}{volume \textbf{#1}}
\DeclareFieldFormat{pages}{}
\DeclareFieldFormat[article]{volume}{\textbf{#1}}

\fi

\ifsiamart\else
\let\oldparagraph=\paragraph
\renewcommand\paragraph[1]{\oldparagraph{#1.}}
\fi

\AtBeginEnvironment{lemma}{\crefalias{theorem}{lemma}}
\AtBeginEnvironment{proposition}{\crefalias{theorem}{proposition}}
\AtBeginEnvironment{corollary}{\crefalias{theorem}{corollary}}
\AtBeginEnvironment{remark}{\crefalias{theorem}{remark}}
\AtBeginEnvironment{example}{\crefalias{theorem}{example}}

\DeclarePairedDelimiter\abs{\lvert}{\rvert}
\DeclarePairedDelimiter\norm{\lVert}{\rVert}
\DeclarePairedDelimiter\eucnorm{\lvert}{\rvert}

\DeclareMathOperator{\I}{I}

\DeclareMathOperator{\e}{e}

\DeclareMathOperator{\id}{id}

\newcommand{\N}{\mathbf{N}}
\newcommand{\R}{\mathbf R}
\newcommand{\Xin}[1]{\Xi}
\newcommand{\Z}{\mathbf{Z}}
\newcommand{\allxi}{\Xi^J}
\newcommand{\widetildeallxi}{\widetilde{\Xi}^J}

\newcommand{\bigo}{\mathcal O}

\newcommand{\cont}{\mathcal C}
\newcommand{\cov}{C^J}

\newcommand{\driftall}{F^{J}_{\sigma}}
\newcommand{\driftinf}{F^{\infty}_{\sigma}}

\newcommand{\driftlim}{F^{J}_{0}}
\newcommand{\driftone}{f_{\sigma}}
\newcommand{\driftonelim}{f_{0}}

\newcommand{\driftderiv}{\left.\partial_{\sigma} \driftall\right|_{\sigma=0}}

\newcommand{\dummy}{\mathord{\color{black!33}\bullet}}%

\newcommand{\expect}{\mathbf{E}}
\newcommand{\fastgaussianall}{\gamma_{dJ}}
\newcommand{\fastgaussianone}{\gamma_{d}}
\newcommand{\fastw}[1]{W^{#1}}

\newcommand{\fullinvariant}{\mu^J[\varepsilon,\sigma]}
\newcommand{\fullinvarianttheta}{\mu^J_\theta[\varepsilon,\sigma]}
\newcommand{\fullinvariantdiscrete}{\mu^J[\varepsilon,\sigma,h]}
\newcommand{\fullinvariantdiscretetheta}{\mu^J_\theta[\varepsilon,\sigma,h]}
\newcommand{\fullgenlimit}{\mathcal L^J[\varepsilon,\sigma=0]}
\newcommand{\fullgenparams}{\mathcal L^J[\varepsilon,\sigma]}
\newcommand{\fullgen}{\mathcal L}
\newcommand{\genfastone}{\mathcal L_{\xi}}
\newcommand{\genfast}{\mathcal L_{\xi}^J}
\newcommand{\genlimit}{\mathcal L_{\rm ovd}}
\newcommand{\genslow}{\mathcal L^J_{\theta}}

\newcommand{\grad}{\nabla}

\newcommand{\intd}{\mathrm{d}}

\newcommand{\laplacian}{\Delta}

\newcommand{\lipasym}{L}

\newcommand{\markovtransition}{P^J[\varepsilon,\sigma,h]}
\newcommand{\nat}{\N}

\newcommand{\normal}{\mathcal{N}}

\newcommand{\openball}[1]{\mathcal B_{#1}}

\newcommand{\placeholder}{\mathord{\color{black!33}\bullet}}%
\newcommand{\proba}{\mathbf P}
\newcommand{\range}[2]{\llbracket #1, #2 \rrbracket}
\newcommand{\real}{\R}

\newcommand{\sloww}{W}
\newcommand{\wslow}{W}
\newcommand{\wfast}[1]{W^{#1}}
\newcommand{\smoothcompact}{\mathcal C^{\infty}_{\rm c}}

\newcommand{\solall}{\psi^J_{\sigma}}

\newcommand{\solone}{\psi_{\sigma}}

\newcommand{\target}{\mu_{\Phi}}
\newcommand{\termi}[1]{\mathcal I^{(#1)}}
\newcommand{\torus}{\mathbf T}

\newcommand{\wn}[1]{W^{#1}}

\newcommand{\xin}[1]{\xi^{#1}}

\renewcommand{\d}{\mathrm d}
\renewcommand{\geq}{\geqslant}

\renewcommand{\leq}{\leqslant}
\renewcommand{\le}{\leqslant}

\newcommand{\sobolev}{\mathcal W}
\newcommand{\weight}[1]{\mathcal K_{#1}}
\newcommand{\weightall}[1]{\weight{#1}^J}
\renewcommand{\t}{\mathsf T}

\newcommand{\vvartheta}{\bar\vartheta}
\newcommand{\dvvartheta}[1]{\bar\vartheta_{#1}}
\newcommand{\dtheta}[1]{\uptheta_{#1}}
\newcommand{\dvartheta}[1]{\upvartheta_{#1}}
\newcommand{\dallxi}[1]{\Upxi^J_{#1}}
\newcommand{\dxin}[2]{\upxi^{#1}_{#2}}
\newcommand{\xslow}[1]{x_{#1}}
\newcommand{\xfast}[2]{x^{#1}_{#2}}

\newcommand{\sref}[1]{\cref{#1}}

\newcommand{\maintitle}{%
    Analysis of a zeroth-order multiscale sampler:
    strong convergence, 
    convergence of ergodic averages
    and weak convergence
}

\title{\maintitle}
\newif\ifdetails\detailsfalse

\author[1,2,3]{Antonin Della Noce\(^{a,}\)}
\author[1,2]{Urbain Vaes\(^{b,}\)}
\affil[ ]{\footnotesize
    \(^a\)\email{antonin.della-noce@centralesupelec.fr},
    \(^b\)\email{urbain.vaes@inria.fr}}
\affil[1]{\footnotesize MATHERIALS project-team, Inria Paris}
\affil[2]{\footnotesize CERMICS, \'Ecole nationale des ponts et chaussées, France}
\affil[3]{\footnotesize Département de Mathématiques, CentraleSupélec, France}
\date{\today}

\begin{document}
\maketitle

\begin{abstract}
    In many practical applications of Bayesian inference,
it is difficult or undesirable to compute the gradient of the negative log-posterior,
motivating the study of derivative-free samplers, also called zeroth-order samplers.
The aim of this paper is to generalize and rigorously analyze a multiscale
derivative-free sampler recently proposed in the literature~\cite{pavliotis21derivative}.
In the discrete-time setting, the multiscale sampler we study bears a strong
resemblance to other recently proposed methods, such as the so-called
zeroth-order perturbed stochastic gradient descent algorithm~\cite{JinEtal2018}
and the zeroth-order Langevin Monte Carlo algorithm~\cite{MR4411512}.
We present error estimates with respect to all the parameters of the problem,
both in the continuous-time and discrete-time settings.
The error estimates are in three different metrics:
strong convergence, convergence of ergodic averages,
and uniform-in-time weak convergence.

\end{abstract}

\ifsiamart
\begin{keywords}
    Inverse problems, Multiscale methods, Derivative-free methods.
\end{keywords}

\begin{AMS}
    % 62F15, 65C35, 65C30, 65N21.
    62F15, % Statistics, Bayesian inference
    65C35, % Numerical analysis, Stochastic particle methods
    65C30, % Numerical analysis, Numerical solutions to stochastic differential and integral equations
    65N21. % Numerical analysis, Numerical methods for inverse problems for boundary value problems involving PDEs
\end{AMS}
\fi

\newpage
\tableofcontents

\newpage
\section{Introduction}
\label{sec:introduction}

In this paper,
we mathematically analyze a zeroth-order method for generating
approximate samples from a probability measure on~\(\mathcal X\) of the form
\begin{equation}
    \label{eq:sampling_problem}
    \mu_\Phi(\intd u) =
    \frac{1}{Z}\exp\bigl(-\Phi(u)\bigr) \, \d u \, ,
\end{equation}
where \(\Phi \colon \mathcal X \to \real\) is the potential associated with the target distribution,
and \(Z\) denotes the normalization factor.
Here, the space~\(\mathcal X\) can be either~\(\real^d\) or~\(\torus^d \coloneq \real^d / \Z^d\).
Although the setting on~$\real^d$ is more relevant in practice,
the mathematical analysis is more straightforward on the torus,
and we will therefore focus on this setting in the main body of the paper.

Sampling from target distributions of the form~\eqref{eq:sampling_problem} is
fundamental in many areas of applied mathematics and statistics.
It is particularly relevant in fields such as
data assimilation~\cite{MR3363508,MR3242790},
molecular dynamics~\cite{MR2681239,MR3362507},
inverse problems~\cite{MR2102218,MR2652785,MR3839555},
and machine learning~\cite{neal2012bayesian,song2021scorebasedgenerativemodelingstochastic}.
In typical applications,
the dimension~\(d\) of the parameter space \(\mathcal X\) is large
and the normalization constant \(Z\) is intractable.
Furthermore,
the target distribution is often anisotropic or multimodal,
making it challenging to explore high probability regions of the parameter space \(\mathcal X\) efficiently.

A large number of numerical methods have been developed for this task.
These can be broadly categorized into two classes: variational methods and Markov chain Monte Carlo (MCMC) methods.
Methods of the latter type,
which date back to the seminal work of Metropolis et al.~\cite{metropolis1953equation}
and its subsequent generalization by Hastings~\cite{MR3363437},
rely on a Markov process which,
when initialized at a given point in~\(\mathcal X\),
converges in distribution to the target measure as time tends to infinity.
They have received considerable attention in the literature,
and many variants have been proposed to improve their efficiency and convergence properties.
We mention, for example,
gradient-free methods based on random walks~\cite{MR3363437,MR1428751},
as well as modifications of these methods that are robust to refinement of the discretization step~\cite{MR3135540}.
More efficient exploration of the target distribution can often be achieved
when the search follows directions informed by the gradient of~\(\Phi\).
Methods of this type include those based on Hamiltonian dynamics~\cite{neal2012mcmc},
Langevin dynamics~\cite{roberts2002langevin,Chewi26Book},
or Stein variational gradient descent~\cite{liu2017stein,MR4582478}.

In recent years,
there has been growing interest in sampling methods that are affine-invariant,
which are robust to anisotropy in the target distribution~\cite{MR2600822,MR3747563}.
As examples of such methods,
we mention the ensemble Kalman sampler (EKS)~\cite{MR4059375},
the Affine-invariant interacting Langevin dynamics (ALDI)~\cite{MR4123680},
consensus-based sampling~\cite{MR4429081},
Fokker--Planck particle systems~\cite{MR4248267},
and Metropolis-adjusted versions of some of these methods~\cite{MR4877495}.
While these methods are useful for sampling anisotropic distributions,
they are not designed to handle multimodal distributions,
which remain very challenging to sample and often require bespoke methods.
Multimodality manifests as metastability in the associated Markov process used for sampling:
the Markov process can become trapped in the basin of attraction of a local minimum of the potential for long periods of time,
before eventually escaping to explore other regions of the parameter space.
In molecular dynamics, this problem
gave rise to the development of \emph{accelerated dynamics} methods~\cite{voter1997method,voter1998parallel,so2000temperature},
which have recently received considerable attention from the mathematical community~\cite{MR2926765,lelievre2015accelerated}.

In many situations, the gradient of the potential~\(\nabla \Phi\) is not accessible or difficult to compute.
This is the case, for example, when the potential is available only through a black-box function,
or when evaluating this function requires running a physical experiment.
Relying on gradient-based methods may also be undesirable when the potential is not continuously differentiable,
or when only noisy approximations of the exact objective are available~\cite{JinEtal2018,DunbarEtal2022}.
This led to the development of zeroth-order methods,
both for optimization and sampling.
These include, for instance,
the methods inspired by the ensemble Kalman filter (EnKF)~\cite{MR3041539,MR4059375,MR3998631,2018arXiv180508034H},
as well as optimization and sampling methods based on consensus~\cite{MR3597012,MR4429081,MR4899391,MR5067178}.
% Recent gradient-free MCMC methods are also developed and compared in~\cite{MR4877495,baptista2026}.

Our work is a follow-up to~\cite{pavliotis21derivative} with a focus on theoretical analysis.
In that work, building on~\cite{2018arXiv180508034H},
the authors introduced a derivative-free multiscale method for approximating the overdamped Langevin dynamics associated with a potential \(\Phi\) arising from a Bayesian inverse problem.
Like methods based on the ensemble Kalman filter,
the method studied in~\cite{pavliotis21derivative} is derivative-free and based on an interacting particle system,
but unlike these methods, it can be refined to sample from the target distribution with arbitrary accuracy.
The main theoretical contributions of~\cite{pavliotis21derivative} were
strong error estimates between the multiscale dynamics and the overdamped Langevin dynamics associated with~\(\Phi\),
both for the continuous-time multiscale dynamics and for its time discretization.
That work also presented numerical experiments illustrating the performance of the method,
which we do not repeat here.
In the present paper, we generalize the method and extend the analysis of~\cite{pavliotis21derivative} in several directions.
Before presenting our contributions,
we briefly highlight works that are closely related to the method we study in this paper.

\paragraph{Comparison with other methods from the literature}%
The method we analyze builds on a sequence of related ideas.
We highlight here the works that are most closely related to our approach.

\begin{itemize}
    \item
        In 2018, Jin et coauthors~\cite{JinEtal2018} introduced a discrete-time zeroth-order optimization method based on finite-difference approximations of the gradient.
        The goal there was to approximate the minimizer of a population risk function in machine learning,
        by means of an algorithm that only has access to empirical risk evaluations.
        The main intuition for the method is the following Gaussian identity,
        sometimes known as Stein's lemma,
        which can be derived by integration by parts:
        \begin{equation}
            \label{eq:stein_identity}
            \expect\bigl[\nabla\Phi(u+\sigma\xi)\bigr]
            = \frac{1}{\sigma}\expect\bigl[\xi\bigl(\Phi(u+\sigma\xi)-\Phi(u)\bigr)\bigr] \, ,
            \qquad \xi\sim\normal(0, \I_d) \, ,
        \end{equation}
        where $\I_d$ denotes the $d\times d$ identity matrix and
        \(\sigma>0\) is a small parameter.
        The left-hand side is the gradient of the Gaussian-smoothed potential \(\Phi\star\normal(0,\sigma^2 \I_d)\),
        and the right-hand side is an expectation that can be approximated by Monte Carlo sampling.
        This approach leads to the \emph{Zeroth-order Perturbed Stochastic Gradient Descent} (ZPSGD) method that the authors proposed for minimizing~$\Phi$.
        For independent perturbations~\(\xin{j}_n\sim\normal(0,\I_d)\)
        and learning rate~\(\eta>0\),
        their method uses the update
        \begin{equation*}
            u_{n+1} = u_n - \frac{\eta}{J\sigma} \sum_{j=1}^J \xin{j}_n \Bigl(\Phi\bigl(u_n+\sigma\xin{j}_n\bigr)-\Phi(u_n)\Bigr) \, ,
        \end{equation*}

    \item
        Around the same time, Haber and coauthors~\cite{2018arXiv180508034H} independently introduced a related discrete-time,
        derivative-free optimization method specifically for potentials with a composite structure \(\Phi=\ell\circ G\).
        This method similarly perturbs the current iterate~$u_n$ by Gaussian vectors,
        but uses finite differences at the level of the map~$G$.
        More precisely,
        for independent perturbations~\(\xin{j}_n\sim\normal(0,\I_d)\),
        their update is
        \begin{equation*}
            u_{n+1} = u_n - \frac{\eta}{J\sigma}
            \sum_{j=1}^J
            \left\langle \nabla\ell\bigl(G(u_n)\bigr),
            G\bigl(u_n+\sigma\xin{j}_n\bigr)-G(u_n)\right\rangle
            \,\xin{j}_n \, .
        \end{equation*}
        When~$G$ is affine, the finite-difference term in this update satisfies
        \begin{equation}
            \label{eq:finite_difference_identity}
            \frac{1}{J\sigma}\sum_{j=1}^J
            \Bigl\langle \nabla\ell\bigl(G(u_n)\bigr),
            G\bigl(u_n+\sigma\xin{j}_n\bigr)-G(u_n)\Bigr\rangle \, \xin{j}_n
            = \frac{1}{J}\sum_{j=1}^J
            \bigl(\xin{j}_n \otimes\xin{j}_n \bigr)\nabla\Phi(u_n) \, ,
        \end{equation}
        which converges to \(\nabla\Phi(u_n)\) as \(J\to\infty\).
        More generally, for smooth objectives,
        equation~\eqref{eq:finite_difference_identity} holds only approximately,
        with an error that is small when the perturbation scale~\(\sigma\) is small.
        This paper directly inspired the method proposed in~\cite{pavliotis21derivative}
        and studied further in the present work.
        In~\cite{pavliotis21derivative}, this idea was adapted to sampling by adding Brownian noise,
        and to the continuous-time setting by replacing the independent perturbations with fast Ornstein--Uhlenbeck processes.

    \item
        Later, Roy and coauthors~\cite{MR4411512} proposed a discrete-time scheme for sampling probability measures of the form~\eqref{eq:sampling_problem},
        named~\emph{Zeroth order Langevin Monte Carlo} (ZO-LMC),
        also based on the identity~\eqref{eq:stein_identity}.
        Their method assumes access not directly to the potential,
        but only to an unbiased estimator of it,
        and it does not exploit any composite structure of the potential.
        When the potential estimator has zero variance,
        ZO-LMC coincides with the discrete-time method we study in the present paper in the multiscale limit~\(\varepsilon\to 0\).
        The setting of our study, however, is quite different:
        while the authors of~\cite{MR4411512} focus on the log-concave setting,
        and provide estimates on the computational budget required to achieve a given accuracy in Wasserstein distance,
        we do not assume convexity of the potential,
        and derive error estimates in different metrics classically used in the numerical analysis of stochastic differential equations.
\end{itemize}

Aside from these works,
the method we analyze bears some resemblance to other approaches proposed in the literature.
We mention, in particular, the method proposed in~\cite{akyildiz} for maximum marginal likelihood estimation,
as well as the heterogeneous multiscale method (HMM)~\cite{MR2165382}.
Furthermore, there is a strong connection between the gradient approximation~\eqref{eq:finite_difference_identity} and sampling methods derived from the ensemble Kalman filter~\cite{MR4059375,MR4123680}.
We do not explore this connection in detail in this paper,
but refer the interested reader to~\cite{2018arXiv180508034H,pavliotis21derivative} for discussions on this topic.

\paragraph{Our contributions}%
\label{par:our_contributions}
The main contributions of this paper are the following:
\begin{itemize}
    \item
        In contrast to~\cite{pavliotis21derivative},
        we do not restrict attention to potentials \(\Phi\) arising from a specific Bayesian inverse-problem structure.
        In the spirit of~\cite{2018arXiv180508034H}, we focus on the general composite representation \(\Phi=\ell\circ G\) on \(\torus^d\),
        which also includes a general potential by taking \(\ell\) to be the identity.
        Bayesian inverse problems motivate this composite formulation.

    \item
        We strengthen the strong convergence results of~\cite{pavliotis21derivative}
        by tracking explicitly the dependence of the error on the number of particles,
        and we obtain a sharper estimate for the convergence of the discrete-time scheme.
        We also relate the multiscale method to the
        overdamped Langevin dynamics associated with a regularized potential,
        which motivates its use when the potential exhibits highly oscillatory behavior,
        a setting also considered in~\cite{DunbarEtal2022}.

    \item
        We prove the existence of an invariant distribution for the multiscale dynamics~\eqref{eq:the_multiscale_dynamics}
        and its discretization~\eqref{eq:the_multiscale_dynamics_discrete},
        both introduced in~\cref{sec:Mathematical setting},
        and quantify the error between this invariant distribution and the target distribution~\(\mu_\Phi\).

    \item
        We derive an estimate quantifying the error between
        ergodic averages computed from the multiscale dynamics and
        the corresponding averages with respect to the target distribution.
        In addition, we show uniform-in-time weak convergence of the slow process to the overdamped Langevin dynamics associated with~\(\Phi\),
        in the limit where the parameters of the method are refined.
        Together, these estimates provide a rigorous justification for the use of the method to approximate the target distribution~\(\mu_\Phi\).
\end{itemize}

\paragraph{Plan of the paper}%
\ifsiamart
    The rest of the paper is organized as follows.
    In~\cref{sec:Mathematical setting}, we describe the multiscale method and its time discretization.
    In~\cref{section:main_results}, we state the main results on invariant distributions, strong convergence, ergodic averages, and uniform-in-time weak convergence.
    The continuous-time proofs are given in~\cref{sec:proofs_continuous}, while the discrete-time proofs are given in~\sref{sec:proofs_discrete}.
    The supplementary material also contains the technical auxiliary results.
    \Cref{sec:conclusion} is reserved for conclusions and perspectives.
\else
The rest of the paper is organized as follows.
In~\cref{sec:Mathematical setting}, we describe the multiscale method and its time discretization.
In~\cref{section:main_results}, we state the main results of this work,
which concern existence and uniqueness of the invariant distribution,
strong convergence,
convergence of ergodic averages,
and uniform-in-time weak convergence.
The proofs are given separately for the continuous-time and discrete-time dynamics in~\cref{sec:proofs_continuous,sec:proofs_discrete},
respectively.
\Cref{sec:conclusion} is reserved for conclusions and perspectives.
The appendices collect the auxiliary results used in these proofs.
\fi

\paragraph{Statement on the use of artificial intelligence}%
\label{sub:Use of artificial intelligence}

In accordance with the recommendations included in the Leiden Declaration on Artificial Intelligence and Mathematics~\cite{alper_2026_20302944},
we disclose the use of automated tools as transparently as possible.
AI tools were not used until the final stages of the preparation of this manuscript.
In particular,
near-final complete proofs of the main results were written without them.
However,
we used AI in the final stages of writing,
where it was useful for the following purposes:
\begin{itemize}
    \item
        A good and near-complete first draft of the section~\cref{sub:notation} with the notations used in this paper was generated by AI.

    \item
        Use of AI was useful in~\sref{sec:results_drift}.
        We provided the statements of the results we needed to an AI agent,
        and it was able to write first drafts of the corresponding routine proofs.
        The proofs written were usually much too verbose and detailed,
        so we had to edit them to make them more concise and readable.

    \item
        Use of AI was also very useful to find references to related works.
        In particular,
        this is how we found the paper~\cite{MR4411512} on zeroth-order Langevin dynamics,
        which is closely related to the method we study in this paper.

    \item
        The main results in this work were proved without the use of AI,
        but AI tools were very useful to detect and fix small typos,
        and to homogenize notation.

    \item
        We asked AI to criticize the manuscript from the viewpoint of a referee.
        Specifically, we went through four iterations (two with free models offered through OpenCode,
        and two using paid models)
        of the following process:
        ask AI to provide an assessment of the manuscript is the form of a referee report,
        then address the issues raised by AI that we agreed with.
\end{itemize}

\paragraph{Acknowledgements}%
This work was partially supported by the French National Research Agency (ANR) through the IPSO (ANR-23-CE40-0027) project.
We also thank Grigorios Pavliotis, Sebastian Reich and Andrew Stuart for helpful discussions.

\section{The numerical method}%
\label{sec:Mathematical setting}

We now specify the numerical method and the mathematical setting used in the
analysis. We first describe the target measure and the multiscale dynamics,
together with its time discretization, in~\cref{sub:description_of_the_method}.
We then summarize the results of~\cite{pavliotis21derivative} in~\cref{sub:summary_previous_results},
and explain the motivation for retaining the time-scale parameter~$\varepsilon$ in~\cref{sub:why_epsilon}.
Finally, we collect the notation used throughout the paper in~\cref{sub:notation}.

\subsection{Description of the method}
\label{sub:description_of_the_method}

For simplicity, we formulate and analyze the method on the torus~\(\torus^d \coloneq \real^d / \Z^d\).
The same construction can in principle be applied on~$\real^d$,
but its analysis would be considerably more technical,
and so we do not pursue this direction in the present work.
We seek to generate approximate samples from the target measure~$\mu_\Phi$
on~$\torus^d$:
\begin{equation*}
    \mu_\Phi(\d x) = \frac{1}{Z_\Phi} \e^{-\Phi(x)} \, \d x \, ,
    \qquad
    Z_\Phi = \int_{\torus^d} \e^{-\Phi(x)} \, \d x \, .
\end{equation*}
Here~$\Phi = \ell \circ G$,
where $\ell \colon \real^K \to \real$ is a function that is easy to differentiate,
whereas $G \colon \torus^d \to \real^K$ is not.
We assume this composite structure for the potential~$\Phi$,
also considered in~\cite{2018arXiv180508034H},
in order to cover both arbitrary scalar potentials, obtained by taking
$K=1$, $G=\Phi$, and~$\ell$ the identity on~$\real$,
as well as Bayesian inverse problems,
as illustrated in the following example.
\begin{example}
    [Bayesian inverse problem]
    \label{example:bayesian_inverse_problem}
    Consider the finite-dimensional inverse problem consisting of estimating an unknown parameter~\(\theta \in \real^d\)
    from a partial and noisy observation
    \[
        y_0 = G_0(\theta) + \eta \, ,
    \]
    where~$\eta\sim\normal(0,\Gamma_0)$ is additive Gaussian noise
    and $G_0\colon\real^d\to\real^{K_0}$ is the forward operator.
    Suppose that the prior distribution is Gaussian,~$\theta \sim \normal(m, \Sigma)$.
    Then, applying Bayes' theorem,
    the posterior distribution of~\(\theta\) given~\(y_0\) has density
    given up to normalization by
    \[
        \pi_*(\theta) \propto \e^{-\frac{1}{2} \lvert y_0 - G_0(\theta) \rvert_{\Gamma_0}^2} \, \e^{- \frac{1}{2}\lvert \theta - m \rvert_{\Sigma}^2} \, ,
    \]
    which is the product of the likelihood and the prior.
    Here, for a positive definite matrix~\(A\),
    we used the notation~\(
    \abs{u}_A^2 = u^{\t} A^{-1} u.
    \)
    The posterior density can be rewritten in the form~\(\e^{-\ell \circ G}\),
    up to a multiplicative constant,
    by defining
    \[
        G(\theta) =
        \begin{pmatrix}
            G_0(\theta) \\
            \theta
        \end{pmatrix} \,
        \qquad
        \ell(z) = \frac{1}{2} \lvert z - y \rvert_{\Gamma}^2 \,,
        \qquad
        y = \begin{pmatrix}
            y_0 \\
            m
        \end{pmatrix} \, ,
        \qquad
        \Gamma =
        \begin{pmatrix}
            \Gamma_0 & 0 \\
            0 & \Sigma
        \end{pmatrix} \, .
    \]
    This posterior illustrates the composite structure of the posterior density, with~\(G\) being the forward operator and~\(\ell\) the loss function.
    Although it is naturally posed on~\(\real^d\),
    for simplicity the results we present below assume a periodic forward model on~\(\torus^d\).
    Thus, the Gaussian-prior example motivates the formulation but is not directly covered by the theorems proved here.
\end{example}

Throughout the rest of the paper,
we make the following implicit assumption on~$\ell$.
\begin{assumption}[Standing assumption]
    The function~$\ell \colon \real^K \to \real$ is smooth.
\end{assumption}

Before describing the multiscale dynamics and its time discretization,
we introduce the necessary notation.
We denote by~$(\Omega, \mathcal F, \proba)$ a probability space on which all the random variables and processes are defined.
On this probability space,
we fix $d$-dimensional independent Brownian motions~$\wslow$ and~$\wfast{j}$, for $j \in \{1, 2, \dotsc\}$.
We also fix~i.i.d.\ samples \(\xin{1}_0, \xin{2}_0, \dotsc \) with common distribution~\(\normal(0, \I_d)\),
independent of the Brownian motions,
as well as a sample~\(\theta_0 \sim \rho_0\in \mathcal P(\torus^d)\),
independent of all the Brownian motions and of the \(\xin{j}_0\)'s.
Here~$\rho_0$ is an arbitrary initial distribution for the slow variable~$\theta$,
which plays no role in the long-time behavior of the dynamics.

Having introduced the necessary notation,
we now describe the multiscale dynamics and its time discretization.
The main purpose of this paper is to analyze the following general multiscale dynamics on \(\torus^d \times \real^{dJ}\),
which is designed to generate approximate samples from the distribution on \(\torus^d\) with density proportional to~\(\e^{-\Phi}\):
\begin{equation}
    \tag{DFOL}
    \label{eq:the_multiscale_dynamics}
    \boxed{
        \begin{aligned}
            \d \theta_t &= -\frac{1}{J\sigma}\sum_{j=1}^J
            \Bigl\langle \nabla \ell\bigl(G(\theta_t)\bigr), G\bigl(\theta_t + \sigma \xin{j}_t\bigr) - G(\theta_t) \Bigr\rangle \, \xin{j}_t \, \d t + \sqrt{2} \, \d \wslow_t \, , \\
            \d \xin{j}_t &= - \frac{1}{\varepsilon} \xin{j}_t \, \d t + \sqrt{\frac{2}{\varepsilon}} \d \wfast{j}_t \,  \qquad j=1,\dots,J \, .
        \end{aligned}
    }
\end{equation}
In this dynamics, the parameters~$J \in \nat_{>0}$, $\varepsilon>0$, and~$\sigma>0$
denote, respectively, the number of auxiliary fast processes, the fast time scale,
and the perturbation scale.
We write for brevity~$\allxi_t = (\xin{1}_t, \dots, \xin{J}_t)$,
and denote the slow drift in~\eqref{eq:the_multiscale_dynamics} by
\begin{equation}
    \driftall(\theta, \allxi) \coloneq \frac{1}{J} \sum_{j=1}^{J}
    \driftone\bigl(\theta, \xin{j}\bigr) \, ,
    \qquad
    \driftone(\theta, \xi) \coloneq
    - \frac{1}{\sigma}
    \Bigl\langle \nabla \ell\bigl(G(\theta)\bigr),
    G\bigl(\theta + \sigma \xi\bigr) - G(\theta) \Bigr\rangle \, \xi \, .
    \label{eq:slow_drift_definition}
\end{equation}
For the multiscale SDE~\eqref{eq:the_multiscale_dynamics},
the drift coefficient in the~\(\theta\) variable is not a globally Lipschitz continuous function of $(\theta,\allxi)$,
but existence of a unique strong solution still holds by classical results based on a Lyapunov condition provided that~$G$ is Lipschitz continuous;
see, for example,~\cite[Theorem~3.5]{khasminskii2012stochastic}.
For the Lyapunov function,
it suffices, for example, to take~$V(\theta, \allxi) = 1 + \frac{1}{J} \sum_{j=1}^{J} \lvert \xin{j} \rvert^2$.
We note that when the map~$G$ is affine,
the slow drift in~\eqref{eq:the_multiscale_dynamics} is independent of~$\sigma$
and equals its zero-perturbation limiting drift, denoted by~$\driftlim$:
\[
    \driftlim(\theta, \allxi) = - \frac{1}{J} \sum_{j=1}^{J}
    \bigl( \xin{j} \otimes \xin{j} \bigr) \, \nabla \Phi(\theta) \, ,
\]
which motivates the name \emph{derivative-free overdamped Langevin dynamics} for~\eqref{eq:the_multiscale_dynamics}.
We also wish to understand the convergence of its time discretization with step~$h > 0$,
initialized at~$\dtheta{0} = \theta_0$ and~$\dxin{j}{0} = \xin{j}_0$ for $j \in \range{1}{J}$.
For the discrete scheme, we use independent standard Gaussian variables
$(\xslow{n})_{n\in\N}$ and $(\xfast{j}{n})_{j\in\range{1}{J},\,n\in\N}$,
independent of one another.
\begin{equation}
    \tag{EM-DFOL}
    \label{eq:the_multiscale_dynamics_discrete}
    \boxed{
        \begin{aligned}
            \dtheta{n+1} &= \dtheta{n} -\frac{h}{J\sigma}\sum_{j=1}^J \Bigl\langle \nabla \ell\bigl(G(\dtheta{n})\bigr), G\bigl(\dtheta{n} + \sigma \dxin{j}{n}\bigr) - G(\dtheta{n}) \Bigr\rangle \, \dxin{j}{n} + \sqrt{2h} \, \xslow{n}  \, , \\
            \dxin{j}{n+1} &= \e^{- \frac{h}{\varepsilon}} \dxin{j}{n}
            + \sqrt{1 - \e^{- \frac{2h}{\varepsilon}}}
            \xfast{j}{n} \,  \qquad j=1,\dots,J \, ,
        \end{aligned}
    }
\end{equation}
We write~$\dallxi{n} = (\dxin{1}{n},\dotsc,\dxin{J}{n})$.
The slow equation is updated by Euler--Maruyama,
whereas the fast update is the exact transition of the Ornstein--Uhlenbeck process.
Such an Euler-slow/exact-fast scheme, already proposed in~\cite{pavliotis21derivative},
has also been considered in the paper~\cite{MR4376295},
where its asymptotic-preserving properties are studied.
See also~\cite{brehier:hal-05287175},
where an asymptotic-preserving scheme is proposed for multiscale dynamics whose fast component is Brownian motion on the torus.
In the following example,
we make the dynamics explicit for the two particular choices of~$\ell$.

\begin{example}
    We make the dynamics~\eqref{eq:the_multiscale_dynamics}
    explicit for the two choices of~$\ell$ that are most relevant here.
    \begin{itemize}
    \item
        For a general potential, take~$\ell$ to be the identity.
        Then the $\theta$-dynamics in~\eqref{eq:the_multiscale_dynamics} reduces to
        \begin{equation}
            \label{eq:usual_dfol}
            \d \theta_t = -\frac{1}{J\sigma}\sum_{j=1}^J \Bigl(\Phi\bigl(\theta_t + \sigma \xin{j}_t\bigr) - \Phi(\theta_t)\Bigr) \xin{j}_t \, \d t + \sqrt{2} \, \d \wslow_t \, ,
        \end{equation}
        This dynamical system was proposed in~\cite{pavliotis21derivative} and
        can be viewed as a continuous-time analogue of the
        \emph{zeroth-order perturbed gradient descent} method~\cite{JinEtal2018},
        generalized to the sampling setting.

    \item
        For the quadratic loss~$\ell(z) = \frac{1}{2}\lvert z-y \rvert_{\Gamma}^2$,
        which arises in Bayesian inverse problems with Gaussian noise,
        the~$\theta$-dynamics in~\eqref{eq:the_multiscale_dynamics} becomes
        \begin{equation}
            \label{eq:enkf_dfol}
            \d \theta_t = -\frac{1}{J\sigma}\sum_{j=1}^J \Bigl\langle G(\theta_t) - y, G(\theta_t + \sigma \xin{j}_t) - G(\theta_t) \Bigr\rangle_\Gamma \, \xin{j}_t \, \d t + \sqrt{2} \, \d \wslow_t \, .
        \end{equation}
        This dynamics,
        which was the focus of~\cite{pavliotis21derivative},
        can be viewed as a continuous-time analogue of the method proposed in~\cite{2018arXiv180508034H},
        generalized to the sampling setting.
        The drift coefficient in this case has the same structure as
        that of the ensemble Kalman inversion and sampling dynamics~\cite{MR3041539,MR4059375}.
    \end{itemize}
\end{example}

All theorem statements below concern the general composite DFOL dynamics~\eqref{eq:the_multiscale_dynamics}
and its discretization~\eqref{eq:the_multiscale_dynamics_discrete}.
In order to present the main results of this paper,
it will also be convenient to introduce the solution~$(\vartheta_t)_{t \geq 0}$ of the overdamped Langevin dynamics associated with the potential~$\Phi$,
driven by the same Brownian motion~$\wslow$ as in the slow dynamics of~\eqref{eq:the_multiscale_dynamics},
with initial condition~$\vartheta_0 = \theta_0$:
\begin{equation}
    \label{eq:overdamped_langevin_dynamics}
    \boxed{
    \d \vartheta_t = -\nabla \Phi(\vartheta_t) \, \d t + \sqrt{2} \, \d \wslow_t \, .
    }
    \tag{OL}
\end{equation}
Its Euler--Maruyama discretization with step~$h>0$,
initialized at \(\dvartheta{0}=\theta_0\) and
using the same Gaussian variables as~\eqref{eq:the_multiscale_dynamics_discrete},
is given by
\begin{equation}
    \label{eq:em_ol}
    \boxed{
    \dvartheta{n+1} = \dvartheta{n} - h \nabla\Phi(\dvartheta{n}) + \sqrt{2h} \, \xslow{n} \, .
    }
    \tag{EM-OL}
\end{equation}
We are now in a position to describe our contributions more precisely:
our goal is to obtain error estimates
in metrics that are specified in~\cref{section:main_results},
quantifying the difference between the slow dynamics in~\eqref{eq:the_multiscale_dynamics} and the overdamped Langevin dynamics~\eqref{eq:overdamped_langevin_dynamics},
as well as the difference between the time discretizations~\eqref{eq:the_multiscale_dynamics_discrete} and~\eqref{eq:em_ol}.

\subsection{Summary of previous results}
\label{sub:summary_previous_results}

We summarize here the results of~\cite{pavliotis21derivative} most relevant to the present work.
The method studied in that reference corresponds to the particular case where~$\ell(z) = \frac12\abs{z-y}_{\Gamma}^2$ in the setting above.
For that case,
the authors proved,
under appropriate smoothness assumptions on~$G$,
a strong error estimate between the multiscale dynamics~\eqref{eq:the_multiscale_dynamics} and the overdamped Langevin dynamics,
for a constant~$C_{p,T,J} > 0$ depending on~$p$, $T$ and~$J$:
\begin{equation*}
    \expect\left[\sup_{0 \leq t \leq T}
    \lvert \theta_t-\vartheta_t\rvert^p\right]
    \leq C_{p,T,J}\left(\varepsilon^{\frac p2}+\sigma^p\right) \, ,
    \qquad p\geq 1 \, .
\end{equation*}
For the Euler-slow/exact-fast discretization~\eqref{eq:the_multiscale_dynamics_discrete},
the corresponding estimate was established only for~$p=2$ and in a weaker metric:
\begin{equation*}
    \sup_{0 \leq n \leq T/h}
    \expect\Bigl[\lvert \dtheta{n}-\dvartheta{n}\rvert^2\Bigr]
    \leq C_{T,J}\left(h+\sigma^2
    +\varepsilon\log\left(1+\frac{1}{\sqrt{\varepsilon}}\right)\right) \, .
\end{equation*}
We improve these results in several directions, as detailed in~\cref{section:main_results}.

\subsection{Why we study the \texorpdfstring{method with $\varepsilon > 0$}{multiscale method}}
\label{sub:why_epsilon}
In practice, only discrete-time algorithms can be implemented on a computer,
and for the discrete-time algorithms we consider,
it is unclear whether choosing~$\varepsilon > 0$ yields any practical advantage
over sampling the auxiliary random variables~$(\xin{j}_n)_{j\in\range{1}{J}}$ independently at each iteration~$n$.
We therefore explain why we analyze the multiscale dynamics with~$\varepsilon>0$.
There are two main motivations for introducing the parameter~$\varepsilon$:
\begin{itemize}
\item
    The main reason is that, for fixed~$\varepsilon>0$,
    the discrete-time multiscale dynamics admits a clear continuous-time limit
    -- a multiscale stochastic differential equation (SDE) --
    as the time step tends to zero.
    This is very useful for the mathematical analysis.
    Indeed, as we shall see in the next sections,
    the discrete-time analysis closely follows the steps of the continuous-time analysis,
    with additional technicalities related to the discretization.
    The analysis of the continuous-time dynamics is also much simpler, both for technical reasons (notably because Itô's formula is available in continuous time)
    and because it allows us to employ techniques and results from the wider averaging and homogenization literature,
    which is mostly focused on continuous-time dynamics.

\item
    The second reason is that the multiscale dynamics with~$\varepsilon>0$ can be viewed as a variant
    of the \emph{heterogeneous multiscale method} (HMM) for multiscale SDEs~\cite{MR2165382},
    the analysis of which is of independent interest.
    This connection allows us to compare our results to the existing literature on HMM,
    and gain confidence in their optimality in terms of scaling with respect to the parameters.
    Conversely, some of the techniques we develop may be of interest for the analysis of HMM and related methods.
    For instance, to our knowledge,
    there does not exist in the HMM literature any result on the bias and mean squared error of ergodic estimators,
    of the type we prove in~\cref{sub:invariant_distribution_continuous_proof}.
\end{itemize}

We emphasize that, in the analysis presented in this paper,
we never make use of the estimates obtained for the continuous-time dynamics in the analysis of the discrete-time dynamics.
Such an approach would be feasible but not straightforward,
as it would require to control the error between the discrete-time dynamics and its continuous-time limit uniformly in the parameters~$\varepsilon,\sigma$,
which presents a similar level of difficulty as the direct analysis of the discrete-time dynamics.
Rather, the role of the estimates obtained for the continuous-time dynamics is to provide intuition and guidance for the analysis of the discrete-time dynamics.
Furthermore, for readers familiar with the averaging and homogenization literature,
the analysis of the continuous-time method may be easier to follow,
whilst still exhibiting the main ideas of the analysis.

\subsection{Notation}
\label{sub:notation}

All random variables considered in this article are defined on the probability space \((\Omega,\mathcal F,\proba)\).
For any metric space \(\mathcal X\), we denote by~\(\mathcal B(\mathcal X)\) its Borel \(\sigma\)-algebra,
and by \(\mathcal P(\mathcal X)\) the set of probability measures on the measurable space \((\mathcal X,\mathcal B(\mathcal X))\).
For any Borel measure~\(\mu\) on \(\mathcal X\) and any \(\mu\)-integrable function \(f\colon\mathcal X\to\real\),
we write
\[
    \mu(f)\coloneq\int_{\mathcal X}f(x)\,\mu(\d x) \, .
\]

\paragraph{Vectors and matrices}%
For two column vectors \(u, v \in \real^d\),
we sometimes write \(u \otimes v = u v^{\t}\).
For two matrices~\(A,B\in \real^{d\times d}\),
the matrix operation \(A:B = {\rm trace}{\left(A^{\textsf{T}} B\right)}\) denotes the Frobenius inner product.
For a scalar field~\(\phi\colon \real^d\to \real\),
its gradient is denoted by \(\nabla\phi(\theta)\).
More generally, for a vector field~\(v\colon \real^d\to \real^m\),
\(\nabla v(\theta)\) denotes its Jacobian matrix, with entry on row~\(i\) and column~\(j\) given by~\(\partial v_i / \partial\theta_j\).
If \(\phi\) admits derivatives up to order \(p\),
the notation~\(\nabla^p\phi(\theta)\) refers to the tensor of order \(p\) containing all partial derivatives of order \(p\).
The Laplacian of a vector field~$v$ is computed component-wise.
For a multilinear map~\(A \colon \real^{m_1} \times \dots \times \real^{m_k} \to \real^n\),
we denote by~\(|A|\) its operator norm for the Euclidean norm.

For a \(3\)-rd order tensor \(T\in \real^{d\times d\times d}\) and a matrix \(A\in \real^{d\times d}\),
we use \(T:A \in \real^d\) for the contraction of the tensor \(T\) with the matrix \(A\) along the last two indices of \(T\).
More generally, we use \(\cdot\) to denote the full contraction of tensors of the same order.

\paragraph{Functional spaces}%
For an integer~$k \geq 0$ and metric spaces~$\mathcal X$ and~$\mathcal Y$,
we denote by~$\cont^k(\mathcal X, \mathcal Y)$ the space of $k$-times
continuously differentiable functions from~$\mathcal X$ to~$\mathcal Y$.
When the codomain is~$\real$, it is omitted:
$\cont^k(\mathcal X) \coloneq \cont^k(\mathcal X, \real)$.
For~$k = 0$ we simply write~$\cont(\mathcal X)$.
The associated norm is
\[
    \lVert f \rVert_{\cont^k}
    \coloneq \max_{|\alpha| \leq k} \lVert \partial^\alpha f \rVert_\infty \, .
\]
For a time interval~$I$, the notation~$u\in\cont^{1,m}(I\times\torus^d)$ means that~$\partial_tu$ and all spatial derivatives~$\partial_\theta^\alpha u$ with~$|\alpha|\leq m$ are continuous on~$I\times\torus^d$.
This notation does not require mixed time--space derivatives.
For~$k \geq 0$ and~$\alpha \in (0,1)$,
the notation $\cont^{k+\alpha}(\mathcal X)$ refers to the H\"older space of functions in~$\cont^k(\mathcal X)$
whose $k$-th order derivatives are $\alpha$-H\"older continuous.
We equip this space with the norm
\[
    \lVert f \rVert_{\cont^{k+\alpha}}
    \coloneq \lVert f \rVert_{\cont^k}
    + \max_{|\beta| = k}
    \sup_{x \neq y} \frac{|\partial^{\beta} f(x) - \partial^{\beta} f(y)|}{\operatorname{dist}_{\mathcal X}(x,y)^\alpha} \, ,
\]
where~$\operatorname{dist}_{\mathcal X}$ is the Euclidean distance or the periodic distance on~$\torus^d$, as appropriate.
For~$k \geq 0$,
$\sobolev^{k,\infty}(\mathcal X)$ denotes the Sobolev space of functions whose
weak derivatives up to order~$k$ are essentially bounded,
with norm
\[
    \lVert f \rVert_{\sobolev^{k,\infty}}
    \coloneq \sum_{|\alpha| \leq k} \lVert \partial^\alpha f \rVert_{L^\infty} \, .
\]
We use the shorthand~$\lVert f \rVert_{k,\infty} \coloneq \lVert f \rVert_{\sobolev^{k,\infty}}$.

For a measure~$\pi$,
we define the weighted Sobolev space~$H^i(\pi)$ as
the subspace of \(L^2(\pi)\) of functions whose derivatives up to order \(i\) are in \(L^2(\pi)\).
The associated norm is given by
\[
    \norm{u}_i^2 = \sum_{|\alpha| \leq i} \norm*{\partial^{\alpha} u}^2 \, ,
\]
where we use standard multi-index notation and~$\norm{\dummy}$ denotes the norm
of~\(L^2(\pi)\).
We also define the mean-zero spaces~\(H^{i}_0(\pi) = H^i(\pi) \cap L^2_0(\pi)\),
where \(L^2_0(\pi) = \{u \in L^2(\pi): \int u \, \d \pi = 0\}\),
and we recall that a probability measure~\(\pi\) is said to satisfy the Poincaré inequality with constant \(R\) if
\begin{equation}
    \label{eq:poincare}
    \forall f \in H^1_0(\pi), \qquad
    \norm*{f}^2 \leq \frac{1}{R} \norm{\grad f}^2 \, .
\end{equation}
For a function \(V \colon \real^d \to \real\),
the notation \(L^{\infty}_V\) refers to the Banach space of functions such that
\(\lVert \varphi \rVert_{L^{\infty}_V} = \left\lVert \frac{\varphi}{V} \right\rVert_{\infty} < +\infty\).
Finally, for~$m \geq 0$,
we define the weight function~$\weight{m} \colon \real^d \to \real$
by~$\weight{m}(\xi) \coloneq 1 + |\xi|^m$.
Thus~$L^\infty_{\weight{m}}$ denotes the Banach space of measurable
functions~$\varphi \colon \real^d \to \real$ with finite weighted norm
\begin{equation}
    \label{eq:def_weighted_L_infty_space}
    \lVert \varphi \rVert_{L^\infty_{\weight{m}}}
    \coloneq \sup_{\xi \in \real^d}
    \frac{|\varphi(\xi)|}{\weight{m}(\xi)} < +\infty \, .
\end{equation}
We also denote by~$L^\infty_{\weight{m},0} \subset L^\infty_{\weight{m}}$ the closed
subspace of functions with mean zero under~$\gamma_d$,
where~$\gamma_d$ denotes the standard Gaussian distribution on \(\real^d\)
with mean zero and covariance matrix~\(\I_d\).
For~$k\in\N$ and~$m\geq0$, we define the weighted Sobolev space~$\sobolev^{k,\infty}_{\weight{m}}$
as the space of functions~$\varphi\colon\real^d\to\real$ such that
\begin{equation}
    \label{eq:weighted_sobolev_norm}
    \norm{\varphi}_{\sobolev^{k,\infty}_{\weight{m}}}
    \coloneq\sum_{|\alpha|\leq k}\norm{D^\alpha\varphi}_{L^\infty_{\weight{m}}}<+\infty \, .
\end{equation}

\paragraph{Torus}%
We identify the torus~$\torus^d=\real^d/\Z^d$ with the unit cube \([0,1)^d\) with periodic boundary conditions.
For any norm~$\abs{\dummy}$ on~$\real^d$,
the associated periodic distance is
\begin{equation*}
    \abs*{\dummy}_\torus = \inf_{k\in \Z^d}\abs*{\dummy - k} \, .
\end{equation*}

\paragraph{Asymptotic notation}%
We write~$a \lesssim b$ to mean~$a \leq C b$ for some constant~$C > 0$
independent of the parameters~$(\sigma, \varepsilon, h, J)$.
Subscripts on~$\lesssim$ indicate additional parameters on which the
constant may depend;
for instance,~$a \lesssim_{p,d} b$ means~$a \leq C(p,d)\,b$.
The corresponding Landau big-O notation is denoted~$\bigo(\placeholder)$.

\section{Main results}
\label{section:main_results}

In this section,
we present and comment on the main results of this work.
We first state the existence and uniqueness of an invariant distribution in~\cref{sub:existence_uniqueness_invariant}.
In each of the three subsequent subsections, we present results in both continuous and discrete time:
strong convergence in~\cref{sub:strong_convergence_of_the_numerical_scheme},
estimates on the invariant distribution and ergodic averages in~\cref{sub:invariant_distribution},
and uniform-in-time weak convergence in~\cref{sec:weak_error}.
We only give the statements here;
the proofs in continuous and discrete time are provided in~\cref{sec:proofs_continuous} and~\sref{sec:proofs_discrete}, respectively.

The regularity assumptions used in the principal results are summarized in~\cref{table:regularity_assumptions}.
\begin{table}[H]
\centering
\begin{tabular}{p{0.66\linewidth}p{0.13\linewidth}p{0.13\linewidth}}
Result & $G$ & Observable \\ \hline
Strong convergence, continuous time, fixed $\sigma$ (\cref{theorem:robustness}) & $\cont^4$ & -- \\
Strong convergence, discrete time, fixed $\sigma$ (\cref{theorem:strong_convergence_fixed_sigma_discrete}) & $\cont^5$ & -- \\
Invariant distribution and ergodic averages, continuous time (\cref{theorem:invariant_continuous}) & $\cont^{3+\beta}$ & $\sobolev^{2,\infty}$ \\
Invariant distribution and ergodic averages, discrete time (\cref{theorem:bound_invariant_discrete}) & $\cont^{3+\alpha+\beta}$ & $\cont^{2+\alpha}$ \\
Uniform-in-time weak convergence, continuous time (\cref{thm:uniform_continuous_weak_convergence}) & $\cont^4$ & $\cont^3$ \\
Uniform-in-time weak convergence, discrete time (\cref{thm:uniform_discrete_weak_convergence}) & $\cont^6$ & $\cont^5$
\end{tabular}
\caption{Regularity assumptions on the forward model~$G$ and, where relevant, on the observable~$f$, required by the principal results of the paper.}
\label{table:regularity_assumptions}
\end{table}

\subsection{Existence and uniqueness of an invariant distribution for the multiscale dynamics}
\label{sub:existence_uniqueness_invariant}

There are several classical approaches to prove the existence and uniqueness of an invariant measure.
In our setting, since the state space is compact in the $\theta$ variable,
since the noise is nondegenerate,
and since~$\allxi$ is an Ornstein--Uhlenbeck process in~$\real^{dJ}$,
the result follows from off-the-shelf results.

\begin{lemma}
    [Existence and uniqueness of an invariant measure]
    \label{lemma:existence_uniqueness_invariant}
    For any~$(\varepsilon, \sigma) \in \real_{>0} \times \real_{>0}$,
    there exists a unique invariant probability distribution \(\fullinvariant \in \mathcal P(\torus^d \times \real^{dJ})\)
    for the multiscale dynamics~\eqref{eq:the_multiscale_dynamics}.
    Furthermore, for any~$(\varepsilon, \sigma, h) \in \real_{>0} \times \real_{>0} \times \real_{>0}$,
    the discrete-time dynamics~\eqref{eq:the_multiscale_dynamics_discrete} also admits a unique invariant probability distribution \(\fullinvariantdiscrete \in \mathcal P(\torus^d \times \real^{dJ})\).
\end{lemma}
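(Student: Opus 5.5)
The natural route is Harris' theorem: a Foster--Lyapunov drift condition together with a minorization condition on a sublevel set. This yields existence and uniqueness of the invariant measure in both the continuous-time and discrete-time settings. The Lyapunov function is the one already used for well-posedness,
\[
    V(\theta, \allxi) = 1 + \frac{1}{J} \sum_{j=1}^{J} \lvert \xin{j} \rvert^2 ,
\]
which depends only on the fast variables because~$\torus^d$ is compact.

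\textbf{Continuous time.} The fast processes~$\xin{j}$ are decoupled Ornstein--Uhlenbeck processes, independent of~$\theta$. Writing~$\fullgen$ for the generator of~\eqref{eq:the_multiscale_dynamics}, a direct computation gives
\[
    \fullgen V = \frac{1}{J\varepsilon} \sum_{j=1}^{J} \bigl( 2d - 2\lvert \xin{j} \rvert^2 \bigr) = -\frac{2}{\varepsilon} V + \frac{2(d+1)}{\varepsilon} .
\]
This is a geometric drift condition, so the sublevel sets~$\{V \leq R\} = \torus^d \times \{\text{ball}\}$ are compact.

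\textbf{Minorization.} The diffusion matrix is the identity up to scaling in all~$d + dJ$ coordinates, and the coefficients are smooth when~$G \in \cont^1$ with~$\ell$ smooth (a $\cont^\infty$ assumption would make this immediate). The operator~$\partial_t - \fullgen$ is therefore uniformly elliptic on compacts. Hence the transition kernel~$P_t$ has a continuous, strictly positive density; positivity follows from the Stroock--Varadhan support theorem, or from Girsanov with respect to the driftless process on bounded sets. This gives a Doeblin minorization on~$\{V\leq R\}$ for fixed~$t>0$. Applying Harris' theorem to~$P_t$ (e.g.\ in the Hairer--Mattingly form) gives a unique invariant measure for the skeleton chain. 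The usual argument, using the semigroup property and uniqueness, then shows it is invariant for the whole semigroup. Alternatively, existence alone follows from Krylov--Bogoliubov using tightness from the bound~$\sup_t \expect V \lesssim 1$, and uniqueness from strong Feller plus irreducibility via Doob--Khasminskii.

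\textbf{Discrete time.} Let~$c = \e^{-h/\varepsilon}<1$. For the chain~\eqref{eq:the_multiscale_dynamics_discrete},
\[
    \expect\bigl[V(\dtheta{n+1},\dallxi{n+1}) \mid \mathcal F_n\bigr] = 1 + c^2 (V-1) + d(1-c^2) \leq c^2 V + C .
\]
This is again a geometric drift condition. For minorization, one step from~$(\theta,\allxi)$ with~$V\leq R$ has the following law: $\dallxi{1}$ is Gaussian with mean~$c\allxi$ and covariance~$(1-c^2)\I$, and independently~$\dtheta{1}$ is the projection onto the torus of a Gaussian with mean~$\theta + h\driftall(\theta,\allxi)$ and covariance~$2h\I_d$. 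The drift~$\driftall$ is bounded on the compact set~$\{V\leq R\}$ since~$G$ is continuous. The joint density is therefore bounded below, uniformly over starting points in~$\{V\leq R\}$, by a positive multiple of a fixed density on, say, $\torus^d\times B_1$. The periodized Gaussian on~$\torus^d$ is in fact bounded below uniformly in the mean, which helps here. Harris' theorem then gives existence and uniqueness of~$\fullinvariantdiscrete$.

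\textbf{Main obstacle.} The only delicate point is the continuous-time minorization, i.e.\ a uniform positive lower bound on the transition density over a compact set, under the limited regularity of~$G$. I would handle it by comparison with the explicit process: use Girsanov on the~$\theta$ component only. The~$\xin{j}$ are exact Ornstein--Uhlenbeck processes, and on the event that~$\allxi$ stays bounded over~$[0,t]$, which has positive probability uniformly in starting points in a ball, the~$\theta$-drift is bounded. Conditionally on this event, the~$\theta$ law is comparable to that of Brownian motion on~$\torus^d$, whose density is bounded below. This avoids any hypoellipticity machinery and requires only~$G$ continuous (with~$G$ Lipschitz for well-posedness).
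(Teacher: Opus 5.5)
Your discrete-time argument is essentially the paper's. You use the same Lyapunov function: your identity $\expect[V \mid \mathcal F_n] = c^2 V + (1-c^2)(d+1)$ is exactly the paper's $P_hV = a^2V + (1-a^2)(d+1)$. You also get the minorization on a compact sublevel set from a positive lower bound on the one-step transition density, as the paper does. You make that lower bound explicit through the product structure of the kernel and the fact that the periodized Gaussian on $\torus^d$ is bounded below uniformly in its mean. The paper compresses this into ``continuous and strictly positive''.

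In continuous time your main route differs from the paper's. The paper does not use Harris there. It obtains existence from Krylov--Bogoliubov (Khasminskii's Theorem~3.7), with the same $V$ and $\fullgen V = -\frac{2}{\varepsilon}V + \frac{2(d+1)}{\varepsilon}$. It obtains uniqueness from Khasminskii's Theorem~4.4, which needs only uniformly nondegenerate noise and a finite mean return time to a bounded domain. Your fallback, Krylov--Bogoliubov plus strong Feller and irreducibility, is essentially this argument. That route never needs a lower bound on the transition density, which is exactly the step you identify as the obstacle.

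Your Harris route applies the theorem to the skeleton chain $P_t$ and then passes to the semigroup by the commutation argument, which is correct. It buys more than the paper's route, namely geometric ergodicity in a $V$-weighted total variation norm, as the paper itself remarks after its proof. The price is that minorization.

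One caution about your Girsanov sketch for the minorization. On the event that $\allxi$ stays bounded, Girsanov gives equivalence with Brownian motion on $\torus^d$. It does not by itself give a pointwise lower bound on the density, because the Radon--Nikodym derivative contains a stochastic integral and is not bounded below. You need an extra step. One option is to condition on the endpoint and apply Jensen's inequality under the Brownian bridge; the conditional mean of the stochastic integral is then controlled by the integrable bridge drift. Another is to settle for a total-variation overlap bound on the sublevel set, which is all the coupling proof of Harris' theorem uses. Alternatively, cite the standard fact that a diffusion with nondegenerate constant noise and locally Lipschitz drift has a jointly continuous, strictly positive transition density. The compactness argument is then identical to the paper's discrete-time proof.
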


\ifsiamart
The proof of the discrete-time assertion is given in~\sref{sub:proof_existence_uniqueness_invariant_discrete}.
\fi

\begin{remark}
    Since the fast dynamics is autonomous and admits~$\fastgaussianall$ as its unique invariant distribution,
    the fast marginal of both invariant distributions in the lemma is~$\fastgaussianall$.
\end{remark}

\subsection{Strong convergence}
\label{sub:strong_convergence_of_the_numerical_scheme}

In this section, we present strong convergence results that improve upon those in~\cite[Theorem 2.1]{pavliotis21derivative} in several ways:
we precisely quantify the dependence of the error on the number of auxiliary processes~\(J\),
we extend the error estimate for the discrete-time method from~$L^2$ to~$L^p$,
and we improve its dependence on~$\varepsilon$.
We present two types of strong convergence results.
\begin{itemize}
    \item
        The first type, in~\cref{theorem:robustness,theorem:strong_convergence_fixed_sigma_discrete},
        compares the multiscale dynamics to a smoothed dynamics with drift~\(F_\sigma^\infty\) defined below in~\eqref{eq:asymptotic_drift}, for a fixed~\(\sigma\).

    \item
        The second type,
        in~\cref{corollary:strong_convergence_continuous,corollary:strong_convergence_discrete},
        builds on the first type and adds an error estimate in~\(\sigma\) (due to the difference between \(F_\sigma^\infty\) and the asymptotic drift \(-\nabla\Phi\))
        to compare the multiscale dynamics with the overdamped Langevin dynamics~\eqref{eq:overdamped_langevin_dynamics}.
\end{itemize}
In both cases, results are first stated in continuous time, and then in discrete time.
We use the slow-drift notation introduced in~\eqref{eq:slow_drift_definition}.
We begin with the continuous-time result for a fixed~\(\sigma\).
\begin{theorem}
  [Strong convergence for a fixed \(\sigma\)]
  \label{theorem:robustness}
  Let \(p\geq 2\), $0<\varepsilon\leq1$, and assume that \(G \in \cont^4\left(\torus^d,\real^K\right)\).
  Consider the strong solution \((\theta_t, \allxi_t)_{t \geq 0}\) to~\eqref{eq:the_multiscale_dynamics} with initial condition~\((\theta_0, \allxi_0)\).
  Consider also the strong solution \((\vvartheta_t)_{t \geq 0}\) to the following stochastic differential equation with initial condition~\(\vvartheta_0 = \theta_0\):
  \begin{equation*}
      \vvartheta_t = \theta_0
      +\int_{0}^{t} F^\infty_\sigma(\vvartheta_s) \, \d s
      + \sqrt{2} \, \wslow_t \, ,
  \end{equation*}
  where $(\wslow_t)_{t \geq 0}$ is the same Brownian motion as in~\eqref{eq:the_multiscale_dynamics} and the drift \(F^\infty_\sigma\) is given by
  \begin{equation}
      \label{eq:asymptotic_drift}
      \driftinf(\theta)
      \coloneq \int_{\real^d} \driftone(\theta, \xi) \, \d \gamma_{d}(\xi) \, .
  \end{equation}
  Then there exists a constant~\(C_{T,p,d} > 0\), depending on fixed derivative bounds on~$G$ and~$\ell$ but independent of~\((\rho_0,\sigma,\varepsilon,J)\), such that
  \[
      \expect\left[ \sup_{t \in [0, T]} \left|\theta_t-\vvartheta_t\right|^p \right]
      \leq C_{T,p,d}\frac{\varepsilon^{\frac{p}{2}}}{J^{\frac{p}{2}}} \, .
  \]
\end{theorem}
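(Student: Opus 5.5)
The plan is to use the Poisson-equation (corrector) method for the fast Ornstein--Uhlenbeck generator, applied componentwise and exploiting the independence of the $J$ fast processes to gain the factor $J^{-1/2}$.

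\textbf{Error decomposition.} Set $e_t = \theta_t - \vvartheta_t$. Since both processes are driven by the same Brownian motion $\wslow$,
\[
e_t = \int_0^t \bigl(\driftinf(\theta_s) - \driftinf(\vvartheta_s)\bigr)\,\d s + \int_0^t \bigl(\driftall(\theta_s,\allxi_s) - \driftinf(\theta_s)\bigr)\,\d s .
\]
A Taylor expansion $G(\theta+\sigma\xi)-G(\theta) = \sigma\int_0^1 \nabla G(\theta+s\sigma\xi)\xi\,\d s$ shows that $\driftone(\theta,\xi)$ is bounded by $C(1+|\xi|^2)$, with $\theta$-derivatives up to order three bounded by $C(1+|\xi|^2)$, uniformly in $\sigma\in(0,1]$. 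This uses $G\in\cont^4$ and only finitely many derivatives of $\ell$ on the compact set $G(\torus^d)$. Consequently $\driftinf$ is Lipschitz uniformly in $\sigma$, and the first integral is controlled by $L\int_0^t|e_s|\,\d s$. Gronwall's lemma then reduces the theorem to bounding
\[
\expect\Bigl[\sup_{t\le T}\Bigl|\int_0^t \bigl(\driftall(\theta_s,\allxi_s)-\driftinf(\theta_s)\bigr)\,\d s\Bigr|^p\Bigr]\lesssim \frac{\varepsilon^{p/2}}{J^{p/2}} .
\]

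\textbf{Corrector.} For each $j$, solve the Poisson equation $-\genfastone\,\psi(\theta,\xi) = \driftone(\theta,\xi)-\driftinf(\theta)$ in $\xi$, where $\genfastone = -\xi\cdot\nabla_\xi + \Delta_\xi$ is the OU generator and $\theta$ is a parameter. The right-hand side has zero mean under $\gamma_d$ and polynomial growth, so the Hermite/semigroup representation $\psi = \int_0^\infty e^{t\genfastone}(\driftone-\driftinf)\,\d t$ gives a solution in $L^\infty_{\weight{2}}$. The same representation shows that $\nabla_\theta\psi$, $\nabla^2_\theta\psi$ and $\nabla_\xi\psi$ also have polynomial growth, since $\theta$-derivatives commute with $\genfastone$. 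Itô's formula for $\psi(\theta_t,\xin{j}_t)$ under the full generator $\varepsilon^{-1}\genfast + \driftall\cdot\nabla_\theta + \Delta_\theta$ then gives
\[
\int_0^t \bigl(\driftone(\theta_s,\xin{j}_s)-\driftinf(\theta_s)\bigr)\,\d s
= \varepsilon\Bigl[\psi(\theta_0,\xin{j}_0)-\psi(\theta_t,\xin{j}_t) + \int_0^t(\driftall\cdot\nabla_\theta+\Delta_\theta)\psi\,\d s + \sqrt2\int_0^t \nabla_\theta\psi\,\d\wslow_s\Bigr] + \sqrt{2\varepsilon}\int_0^t \nabla_\xi\psi(\theta_s,\xin{j}_s)\,\d\wfast{j}_s .
\]

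\textbf{Averaging over $j$.} Averaging over $j$ and multiplying by $1/J$, the dominant term is the martingale
\[
M_t=\frac{\sqrt{2\varepsilon}}{J}\sum_j\int_0^t\nabla_\xi\psi\,\d\wfast{j}_s .
\]
The $\wfast{j}$ are independent, so its quadratic variation is $\frac{2\varepsilon}{J^2}\sum_j\int_0^t|\nabla_\xi\psi|^2\,\d s$. By the Burkholder--Davis--Gundy inequality and moment bounds on the stationary-Gaussian $\xin{j}$, this gives $\expect\sup_{t\le T}|M_t|^p\lesssim(\varepsilon/J)^{p/2}$.

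The remaining terms carry a factor $\varepsilon$ but are averages of non-centred quantities, so a naive bound gives only $\varepsilon^p$. This is acceptable only if $\varepsilon^p\le(\varepsilon/J)^{p/2}$, i.e.\ $\varepsilon\le1/J$. I therefore need $J^{-1/2}$ from these terms as well, and this is the main obstacle.

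\textbf{Main obstacle and how to handle it.} The term $\frac{\varepsilon}{J}\sum_j\psi(\theta_t,\xin{j}_t)$ is an average of functions that have zero $\gamma_d$-mean in $\xi$ for each fixed $\theta$. Its supremum over $[0,T]$ is not obviously $\mathcal O(J^{-1/2})$, because $\theta_t$ depends on all the $\xin{j}$.

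I would first handle it through a Lipschitz-in-$\theta$ chaining argument on the torus. On a finite grid, the centred average is $\mathcal O(J^{-1/2})$ in $L^p$ at fixed times, since the $\xin{j}_t$ are i.i.d.\ Gaussian at each time. For the supremum in time I would pay a factor $\varepsilon^{-\delta}$ via OU maximal inequalities.

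If that is too lossy, I would instead iterate the corrector argument on the term $\frac{\varepsilon}{J}\sum_j\driftall\cdot\nabla_\theta\psi$. This term contains cross terms $\driftone(\theta,\xin{i})\cdot\nabla_\theta\psi(\theta,\xin{j})$. For $i\neq j$ these are centred in each variable and produce new martingales with factor $J^{-1}$. The diagonal terms contribute only $\varepsilon/J$.

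Finally, $\varepsilon\le1$ lets me absorb all lower-order contributions, and Gronwall's lemma yields $C_{T,p,d}\,\varepsilon^{p/2}J^{-p/2}$.
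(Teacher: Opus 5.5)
Your outline is the paper's. You use a per-particle corrector $\psi_\sigma$ for $-\mathcal L_\xi$, It\^o's formula, and BDG on the fast martingale for the $(\varepsilon/J)^{p/2}$ term. You decouple $\theta_s$ from $\Xi^J_s$ by taking a supremum over $\theta$ at a fixed time, where the $\xi^j_s$ are i.i.d.\ Gaussian. The technical routes differ in two places.

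\emph{Supremum over $\theta$.} The paper (Proposition~\ref{proposition:expect_supremum_torus}) combines $W^{1,q}(\mathbf T^d)\hookrightarrow L^\infty$, $q>d$, with Marcinkiewicz--Zygmund applied to both $f$ and $\nabla_\theta f$, each of which is centred. For your chaining to give the exact rate $J^{-p/2}$, it must likewise use that the increments $\frac1J\sum_j\bigl(\psi_\sigma(\theta,\xi^j)-\psi_\sigma(\theta',\xi^j)\bigr)$ are centred averages of $L^q$-size $\lesssim J^{-1/2}\lvert\theta-\theta'\rvert$. A single grid with the crude Lipschitz constant $\frac1J\sum_j\sup_\theta\lvert\nabla_\theta\psi_\sigma(\theta,\xi^j)\rvert=\mathcal O(1)$ needs mesh $J^{-1/2}$, and the union bound then loses a power of $J$.

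\emph{Supremum in time.} The paper avoids continuous-time maximal inequalities altogether. It applies It\^o only up to $n_t\varepsilon$, with $n_t=\lfloor t/\varepsilon\rfloor$, so $\varepsilon\psi^J_\sigma$ is evaluated only at the $\lesssim T/\varepsilon$ times $m\varepsilon$. The bound $\mathbf E\max_m\lvert X_m\rvert^p\le\bigl(\sum_m\mathbf E\lvert X_m\rvert^{pq}\bigr)^{1/q}$ then costs only $\varepsilon^{-1/q}$. The leftover integral over $[n_t\varepsilon,t]$, of length at most $\varepsilon$, contributes $\varepsilon^{p-1}J^{-p/2}$ by H\"older. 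Your OU-maximal-inequality route also works: apply Proposition~\ref{corollary:stationary_empirical_mean} at fixed $\theta$ to $\psi_\sigma(\theta,\cdot)$ and $\nabla_\theta\psi_\sigma(\theta,\cdot)$, then use the Sobolev embedding in $\theta$ and Fubini. It needs this extra uniformity, whereas the splitting reduces everything to fixed-time bounds.

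The step you leave unclear, the $\mathcal O(\varepsilon)$ drift term, is not actually an obstacle. As you observe, $\nabla_\theta\psi_\sigma$ and $\Delta_\theta\psi_\sigma$ are centred in $\xi$ for every $\theta$. Write $\frac1J\sum_j\mathcal L^J_\theta\psi_\sigma(\theta,\xi^j)=F^J_\sigma\cdot\frac1J\sum_j\nabla_\theta\psi_\sigma(\theta,\xi^j)+\frac1J\sum_j\Delta_\theta\psi_\sigma(\theta,\xi^j)$ and apply Cauchy--Schwarz with the uniform moments of $F^J_\sigma$. Because the term sits inside a time integral, the fixed-time supremum over $\theta$ already gives $\varepsilon^pJ^{-p/2}$. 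The $\sqrt2\,\varepsilon\int\nabla_\theta\psi_\sigma\cdot\mathrm dW$ term is handled identically after BDG. This is where $G\in\mathcal C^4$ is needed, since the functional LLN for $\Delta_\theta\psi_\sigma$ involves $\nabla^3_\theta\psi_\sigma$.

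Your fallback iteration is unnecessary and misstated. For $i\neq j$, the products $f_\sigma(\theta,\xi^i)\cdot\nabla_\theta\psi_\sigma(\theta,\xi^j)$ are not centred in $\xi^i$, because $f_\sigma$ has mean $F^\infty_\sigma(\theta)\neq0$. Removing that mean leaves $F^\infty_\sigma\cdot\frac1J\sum_j\nabla_\theta\psi_\sigma$. The new correctors' boundary terms also need the same uniform centred-average estimate, so iteration does not bypass it.

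Finally, the restriction $\sigma\le1$ is not needed. The $\theta$-derivative bounds on $f_\sigma$ follow from its integral form, where $\sigma$ enters only through the evaluation point $\theta+t\sigma\xi$. The bound $\sigma^{-1}\lvert G(\theta+\sigma\xi)-G(\theta)\rvert\le\lVert\nabla G\rVert_\infty\lvert\xi\rvert$ controls $\nabla_\xi f_\sigma$. The constants are therefore uniform in all $\sigma>0$, as the statement requires.
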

\begin{proof}
    See~\cref{subsub:strong_convergence_continuous}.
\end{proof}
\begin{remark}
  This error estimate is quite satisfying,
  as it recovers the usual~\(\varepsilon^{\frac{p}{2}}\) scaling of averaging estimates for fixed~\(J\),
  see~\cite{MR2382139},
  and the usual Monte Carlo scaling as~\(J^{-\frac{p}{2}}\) for fixed~\(\varepsilon\).
\end{remark}

As a main consequence of~\cref{theorem:robustness},
we deduce the following strong convergence result,
which quantifies the distance between the multiscale dynamics and the overdamped Langevin dynamics~\eqref{eq:overdamped_langevin_dynamics}.
\begin{corollary}
  [Strong convergence in continuous time]
  \label{corollary:strong_convergence_continuous}
  Let~$(\vartheta_t)_{t\geq 0}$ be the solution to the overdamped Langevin dynamics~\eqref{eq:overdamped_langevin_dynamics},
  with the same initial condition~$\vartheta_0 = \theta_0$ as for the slow process in~\eqref{eq:the_multiscale_dynamics}.
  Then, under the assumptions of~\cref{theorem:robustness},
  for all \(T\geq 0\) and for all \(p\geq 1\), there exists a constant \(C_{T,p,d} > 0\) independent of~\((\sigma, \varepsilon,J)\) such that
  \begin{equation}
      \expect\left[\sup_{0\leq t\leq
      T}\left|\theta_t-\vartheta_t\right|^p\right]\leq
      C_{T,p,d}\left(\sigma^{2p} +\frac{\varepsilon^{\frac{p}{2}}}{J^{\frac{p}{2}}}\right) \, .
  \end{equation}
\end{corollary}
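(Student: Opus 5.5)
We split the error through the intermediate smoothed process $\vvartheta$ of \cref{theorem:robustness}:
\[
    \abs{\theta_t-\vartheta_t} \leq \abs{\theta_t-\vvartheta_t} + \abs{\vvartheta_t-\vartheta_t} \, .
\]
Since $(a+b)^p\leq 2^{p-1}(a^p+b^p)$, it suffices to bound $\expect[\sup_{t\le T}\abs{\theta_t-\vvartheta_t}^p]$ and $\expect[\sup_{t\le T}\abs{\vvartheta_t-\vartheta_t}^p]$ separately. For $p\geq 2$ the first term is handled directly by \cref{theorem:robustness}, which gives $C\,\varepsilon^{p/2}J^{-p/2}$. For $1\leq p<2$ we use Jensen (or Hölder): $\expect[X^p]\leq \expect[X^2]^{p/2}$, which again gives $\varepsilon^{p/2}J^{-p/2}$.

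For the second term we compare two SDEs driven by the same Brownian motion $\wslow$ and started at the same point $\theta_0$. The noise cancels, so we obtain the pathwise identity
\[
    \vvartheta_t-\vartheta_t=\int_0^t\bigl(\driftinf(\vvartheta_s)+\nabla\Phi(\vartheta_s)\bigr)\,\d s \, .
\]
We write $\driftinf(\vvartheta_s)+\nabla\Phi(\vartheta_s) = \bigl(\driftinf+\nabla\Phi\bigr)(\vvartheta_s) - \bigl(\nabla\Phi(\vvartheta_s)-\nabla\Phi(\vartheta_s)\bigr)$. Since $\nabla\Phi$ is Lipschitz on $\torus^d$ (because $G\in\cont^4$ and $\ell$ is smooth on the bounded range of $G$), Grönwall's lemma gives the deterministic bound
\[
    \sup_{t\le T}\abs{\vvartheta_t-\vartheta_t}\leq T\e^{LT}\norm{\driftinf+\nabla\Phi}_\infty \, .
\]
It therefore suffices to prove the bias estimate $\norm{\driftinf+\nabla\Phi}_\infty\lesssim\sigma^2$.

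This bias estimate is the main step. We Taylor expand $G(\theta+\sigma\xi)-G(\theta)$ to third order with an integral remainder:
\[
    G(\theta+\sigma\xi)-G(\theta) = \sigma\nabla G(\theta)\xi+\tfrac{\sigma^2}{2}\nabla^2G(\theta)[\xi,\xi]+\tfrac{\sigma^3}{6}\nabla^3G(\theta)[\xi,\xi,\xi]+\bigo\bigl(\sigma^4\abs{\xi}^4\bigr) \, ,
\]
where the remainder constant depends on $\norm{G}_{\cont^4}$. Substituting this into $\driftone(\theta,\xi)$ from~\eqref{eq:slow_drift_definition}, we integrate each term against $\gamma_d$:
\begin{itemize}
    \item The first-order term gives $-\expect[\xi\otimes\xi]\,\nabla G^{\t}\nabla\ell(G) = -\nabla\Phi(\theta)$.
    \item The second-order term is cubic in $\xi$, so it vanishes by Gaussian symmetry.
    \item The third-order term contributes $\bigo(\sigma^2)$, because Gaussian fourth moments are finite.
    \item The remainder contributes $\bigo(\sigma^3\expect\abs{\xi}^5)=\bigo(\sigma^3)$, since $\nabla\ell\circ G$ is bounded.
\end{itemize}
All these bounds are uniform in $\theta$ because $\torus^d$ is compact. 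Hence $\norm{\driftinf+\nabla\Phi}_\infty\lesssim\sigma^2$, and raising the Grönwall bound to the $p$-th power gives the $\sigma^{2p}$ term. Combining this with the first term yields the stated estimate, with a constant depending only on $T$, $p$, $d$ and the derivative bounds on $G$ and $\ell$.
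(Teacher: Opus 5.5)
Your proposal is correct and follows the paper's proof: the triangle inequality through $\vvartheta$, \cref{theorem:robustness} for the first piece (your Jensen reduction to cover $1\le p<2$ is a detail the paper leaves implicit), and a Gr\"onwall argument for the deterministic difference $\vvartheta_t-\vartheta_t$, combined with the bias bound $\norm{\driftinf+\nabla\Phi}_\infty\lesssim\sigma^2$, which the paper takes from \cref{lemma:bound_sigma}. The one difference is that you expand $G$ to fourth order, which leaves an $\bigo(\sigma^3)$ remainder that is absorbed into $\sigma^2$ only for $\sigma\le 1$ (for larger $\sigma$, use the uniform bound on $\driftinf+\nabla\Phi$); stopping at second order with an integral remainder, as \cref{lemma:bound_sigma} does, gives $\frac{\sigma^2}{6}\norm{(\nabla\ell)\circ G}_\infty\norm{\nabla^3 G}_\infty\expect\abs{\xi}^4$ directly for all $\sigma>0$ and needs only $G\in\cont^3$.
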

\begin{proof}
    See~\cref{subsub:strong_convergence_small_sigma}.
\end{proof}

\begin{remark}
    We note that the regularity required on~$G$ in the strong convergence result in~\cref{corollary:strong_convergence_continuous} is higher than
    the regularity required in~\cite[Theorem 2.1]{pavliotis21derivative}.
    This regularity requirement arises because, in the proof of~\cref{theorem:robustness},
    the asymptotic drift~$F_\sigma^\infty$ does not enjoy a product structure;
    this function cannot be expressed as the product of a function of~$\theta$ and a function of~$\xi$.
    Specifically, the assumption allows us to control the right-hand side of~\eqref{eq:fourth_term_second_equation} uniformly in~$\sigma$ in the proof.
\end{remark}

To further illustrate the independent interest of~\cref{theorem:robustness},
we consider the setting where $\ell(y) = y$ and the function~$G \colon \torus^d \to \real$ is given by
\begin{equation}
    \label{eq:multiscale_forward_model}
    G(x) = \Phi_0(x) + \Phi_1\Bigl(\frac{x}{\delta}\Bigr) \, ,
\end{equation}
for a smooth function~$\Phi_0 \colon \torus^d \to \real$,
a smooth $1$-periodic function~$\Phi_1 \colon \real^d \to \real$ such that \(\int_{[0,1]^d} \Phi_1(x) \, \d x = 0\),
and for a small parameter \(\delta>0\) satisfying~$\delta^{-1}\in\nat_{>0}$.
It is well known; see, for example,~\cite{MR4569061},
that the overdamped Langevin dynamics~\eqref{eq:overdamped_langevin_dynamics} associated with the potential~$\Phi = \Phi_0 + \Phi_1(\placeholder/\delta)$
converges weakly in~$\cont\bigl([0, T],\torus^d\bigr)$,
in the limit $\delta \to 0$, to the following dynamics with multiplicative noise:
\[
    \d \vvartheta^0_t = - \mathcal  M \nabla \Phi_0(\vvartheta^0_t) \, \d t + \sqrt{2 \mathcal M} \, \d \wslow_t \, ,
\]
with an effective diffusion tensor~$\mathcal M$ that satisfies $\mathcal M \leq \I_d$ in the sense of symmetric matrices;
see~\cite[Proposition~2.5]{MR4569061} for a proof of this bound in the more general setting of an~$N$-scale potential.
Since~$\mathcal M \leq \I_d$, the presence of high-frequency oscillations in~$G$ leads to slower convergence of the dynamics towards the target distribution.
In contrast,
as we now prove,
the solution to the multiscale dynamics~\eqref{eq:the_multiscale_dynamics} behaves as~$\delta \to 0$ as if~$\Phi_1 \equiv 0$;
the high-frequency oscillations on top of~$\Phi_0$ are averaged out and ignored in the limit.
This property is often a desirable property in practice,
as rapid oscillations in the potential are viewed as noise.
A similar observation was made in~\cite{DunbarEtal2022} for the ensemble Kalman sampler,
an interacting particle system for sampling.
We prove this result as another simple consequence of~\cref{theorem:robustness}.
This result provides interesting information on the limiting dynamics obtained when~$\varepsilon/J \to 0$,
in the limit when~$\delta \to 0$.
\begin{corollary}
    \label{corollary:robustness_homogenization}
    Let~$(\theta_t, \allxi_t)_{t \geq 0}$ be the solution to the multiscale dynamics~\eqref{eq:the_multiscale_dynamics} with~$\ell \colon \real \to \real$ the identity function
    and function~$G \colon \torus^d \to \real$ given by~\eqref{eq:multiscale_forward_model}.
    For fixed~$\sigma>0$, set~$\Phi_{0,\sigma} \coloneq \Phi_0\star\normal(0,\sigma^2\I_d)$,
    and let~$(v^\sigma_t)_{t \geq 0}$ be the solution of
    \[
        v^\sigma_t
        = \theta_0 - \int_0^t \nabla\Phi_{0,\sigma}(v^\sigma_s) \, \d s
        + \sqrt{2} \, \wslow_t \, ,
    \]
    where~$(\wslow_t)_{t\geq 0}$ is the Brownian motion in~\eqref{eq:the_multiscale_dynamics},
    which drives the slow component of~\eqref{eq:the_multiscale_dynamics}.
    Let~$(\vartheta^0_t)_{t \geq 0}$ be the solution to the overdamped Langevin dynamics associated with the potential~$\Phi_0$,
    with the same initial condition~$\vartheta^0_0 = \theta_0$ as for the slow process in~\eqref{eq:the_multiscale_dynamics} and the same driving Brownian motion:
    \[
        \vartheta^0_t = \theta_0 - \int_{0}^{t} \nabla \Phi_0(\vartheta^0_s) \, \d s + \sqrt{2} \, \wslow_t \, .
    \]
    Then, for any~$T > 0$ and $p \geq 2$,
    it holds that
    \begin{equation}
        \lim_{\delta\to0}\lim_{\varepsilon/J\to0}
        \left(\expect\biggl[\sup_{0\leq t\leq T}
        \left|\theta_t-v^\sigma_t\right|^p\biggr]\right)^{\frac{1}{p}}=0 \, ,
        \label{eq:robustness_homogenization}
    \end{equation}
    where~$\delta^{-1}\in\nat_{>0}$ and~$0<\varepsilon\leq1$ along the limit.
    Furthermore, there exists a constant~$C_{T,p}>0$, independent of~$\sigma$, such that
    \[
        \left(\expect\biggl[\sup_{0\leq t\leq T}
        \left|v^\sigma_t-\vartheta^0_t\right|^p\biggr]\right)^{\frac{1}{p}}
        \leq C_{T,p}\sigma^2 \, .
    \]
\end{corollary}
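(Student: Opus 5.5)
The plan is to identify the averaged drift~$\driftinf$ explicitly in this setting, then chain \cref{theorem:robustness} with two pathwise Gronwall arguments.

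\textbf{Step 1: explicit form of the averaged drift.} When $\ell$ is the identity, $\driftone(\theta,\xi) = -\sigma^{-1}\bigl(G(\theta+\sigma\xi)-G(\theta)\bigr)\xi$. Since $\int \xi \, \d\gamma_d(\xi) = 0$, Stein's identity~\eqref{eq:stein_identity} gives
\[
    \driftinf(\theta) = -\nabla\bigl(G\star\normal(0,\sigma^2\I_d)\bigr)(\theta) = -\nabla\Phi_{0,\sigma}(\theta) - \nabla R_{\sigma,\delta}(\theta) \, ,
\]
where $R_{\sigma,\delta} \coloneq \Phi_1(\placeholder/\delta)\star\normal(0,\sigma^2\I_d)$.

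\textbf{Step 2: the oscillatory remainder vanishes.} Write the Fourier series $\Phi_1(y)=\sum_{k\neq0} c_k \e^{2\pi \mathrm i k\cdot y}$; there is no $k=0$ term because $\Phi_1$ has mean zero. Gaussian convolution multiplies the mode $k/\delta$ by $\e^{-2\pi^2\sigma^2|k|^2/\delta^2}$. Therefore
\[
    \norm{\nabla R_{\sigma,\delta}}_\infty \leq \sum_{k\neq 0} \frac{2\pi|k|}{\delta}\,|c_k|\, \e^{-2\pi^2\sigma^2|k|^2/\delta^2} \xrightarrow[\delta\to0]{} 0 \, ,
\]
since $(c_k)$ is summable for smooth $\Phi_1$ and $x\mapsto x\e^{-cx^2}$ tends to zero as $x\to\infty$. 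This convergence is exponentially fast in $\delta^{-2}$ for fixed $\sigma>0$.

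\textbf{Step 3: the inner limit $\varepsilon/J\to0$.} For fixed $\delta$ with $\delta^{-1}\in\nat_{>0}$, the function $G$ is smooth on $\torus^d$, so \cref{theorem:robustness} applies. It gives $\expect\bigl[\sup_{t\le T}|\theta_t-\vvartheta_t|^p\bigr]\leq C_{T,p,d,\delta}\,\varepsilon^{p/2}J^{-p/2}$. The constant depends on the $\cont^4$ norm of $G$ and hence blows up as $\delta\to0$. This is harmless, because the limit $\varepsilon/J\to0$ is taken first at fixed $\delta$, so the inner limit of $\theta-\vvartheta$ is zero.

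\textbf{Step 4: comparing $\vvartheta$ with $v^\sigma$.} Both processes are driven by the same $\wslow$ and start at $\theta_0$, so the noise cancels in the difference. Working with lifts to $\real^d$ (the drifts are periodic), we get
\[
    |\vvartheta_t-v^\sigma_t|\leq \int_0^t \norm{\nabla^2\Phi_{0,\sigma}}_\infty\,|\vvartheta_s-v^\sigma_s|\,\d s + t\,\norm{\nabla R_{\sigma,\delta}}_\infty \, .
\]
Since $\norm{\nabla^2\Phi_{0,\sigma}}_\infty\le\norm{\nabla^2\Phi_0}_\infty$, Gronwall yields the deterministic bound $\sup_{t\le T}|\vvartheta_t-v^\sigma_t|\le T\e^{LT}\norm{\nabla R_{\sigma,\delta}}_\infty$. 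This does not depend on $\varepsilon$ or $J$, and it tends to zero as $\delta\to0$ by Step 2. Combining Steps 3 and 4 with Minkowski's inequality proves~\eqref{eq:robustness_homogenization}.

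\textbf{Step 5: the second estimate.} The same pathwise Gronwall argument applies to $v^\sigma-\vartheta^0$, with forcing term $\norm{\nabla\Phi_{0,\sigma}-\nabla\Phi_0}_\infty$. Taylor-expand $\nabla\Phi_0(\theta+\sigma\xi)$ to second order with remainder. The first-order term vanishes because Gaussian odd moments vanish, so
\[
    \norm{\nabla\Phi_{0,\sigma}-\nabla\Phi_0}_\infty\le \frac{\sigma^2}{2}\,\norm{\nabla^3\Phi_0}_\infty\,\expect|\xi|^2 = \frac{d\,\sigma^2}{2}\,\norm{\nabla^3\Phi_0}_\infty \, .
\]
This gives $\sup_{t\le T}|v^\sigma_t-\vartheta^0_t|\le C_T\sigma^2$ almost surely, hence the stated $L^p$ bound. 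The constant depends on $\Phi_0$ and $d$ but not on $\sigma$.

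\textbf{Main obstacle.} The only delicate point is the order of the limits. The constant in \cref{theorem:robustness} degenerates as $\delta\to0$, so the argument relies on taking $\varepsilon/J\to0$ first at fixed $\delta$. The uniform-in-$\delta$ ingredients are then only the deterministic Steps 2 and 4.
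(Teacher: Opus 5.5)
Your proposal is correct and follows the paper's proof. It uses the same pivot $\vvartheta^\sigma$ with drift $\driftinf$, and it applies \cref{theorem:robustness} for the inner limit at fixed $\delta$, where the $\delta$-dependence of the constant is harmless because of the order of limits. It then uses deterministic Gr\"onwall bounds for $\vvartheta^\sigma - v^\sigma$ and $v^\sigma - \vartheta^0$, the latter with the same Taylor estimate $\frac{d\sigma^2}{2}\norm{\nabla^3\Phi_0}_\infty$.

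The only difference is how you bound the smoothed oscillatory term. The paper writes $\driftinf + \nabla\Phi_{0,\sigma} = -\delta^{-1}(\nabla\Phi_1 * \rho_r)(\theta/\delta)$ with $r=\sigma/\delta$, and invokes exponential convergence of Brownian motion on $\torus^d$ to equilibrium, $\norm{\rho_r^{\torus}-1}_\infty \lesssim \e^{-\lambda r^2}$. Your Fourier-multiplier computation is the frequency-space version of the same fact; it is self-contained and gives an explicit rate of order $\delta^{-1}\e^{-2\pi^2\sigma^2/\delta^2}$.
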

\begin{proof}
    See~\cref{subsub:homogenization_proof}.
\end{proof}

We now turn to the discrete-time analogue of \cref{theorem:robustness,corollary:strong_convergence_continuous}.
As is often the case in numerical analysis, the discrete-time result requires a slightly higher regularity assumption on the forward model~$G$.
The specific place where this assumption is used is mentioned explicitly in the proof.

\begin{theorem}
    [Strong convergence for fixed~\(\sigma\) in discrete time]
    \label{theorem:strong_convergence_fixed_sigma_discrete}
    Assume that \(G\in \cont^5\left(\torus^d, \real^K\right)\).
    Let \(J\in \nat_{>0}\) and~\(\sigma, \varepsilon, h \in (0, 1]\).
    Consider the process \(\left(\dtheta{n},\dallxi{n}\right)_{n\in \N}\)
    defined by~\eqref{eq:the_multiscale_dynamics_discrete},
    and the process \((\dvvartheta{n})_{n\in \N}\) defined by
    \begin{equation}
        \label{eq:discrete_asymptotic_dynamics}
        \dvvartheta{n+1} = \dvvartheta{n} + h \driftinf(\dvvartheta{n}) + \sqrt{2h}\, \xslow{n} \, ,
    \end{equation}
    with the same initial state~$\dtheta{0}=\dvvartheta{0}$ and the same Gaussian variables~$(\xslow{n})_{n\in\N}$.
    Then for all~$T >0$ and $p \geq 2$, there exists a constant $C_{T,p,d} > 0$, 
    depending on fixed derivative bounds on~$G$ and~$\ell$ but independent of~$(\rho_0,\sigma,\varepsilon,h,J)$, such that
    \begin{equation}
        \label{eq:strong_convergence_fixed_sigma}
        \max_{0 \leq n \leq T/h} \expect \bigl[ \lvert \dtheta{n} - \dvvartheta{n} \rvert^p \bigr]
        \leq C_{T,p,d} \, \frac{\varepsilon^{\frac{p}{2}} + h^{\frac{p}{2}}}{J^{\frac{p}{2}}} \, .
    \end{equation}
\end{theorem}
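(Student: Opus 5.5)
We will mimic the continuous-time argument behind \cref{theorem:robustness}: a discrete Poisson-equation (corrector) method. We work in the whole space $\real^d$, with $\dtheta{n}$ and $\dvvartheta{n}$ lifted from the torus and periodic coefficients, since both are driven by the same Gaussians $\xslow{n}$. We set $e_n = \dtheta{n} - \dvvartheta{n}$. Subtracting the two recursions gives
\[
e_{n+1} = e_n + h\bigl(\driftinf(\dtheta{n}) - \driftinf(\dvvartheta{n})\bigr) + h\, g(\dtheta{n},\dallxi{n}),
\qquad g(\theta,\allxi) \coloneq \driftall(\theta,\allxi) - \driftinf(\theta).
\]
The map $\driftinf$ is Lipschitz uniformly in $\sigma$, since $\driftone(\theta,\xi)$ has a Taylor expansion in $\sigma\xi$ with polynomially growing coefficients in $\xi$. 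Hence the first increment is bounded by $Lh\abs{e_n}$. The whole difficulty is the fluctuation sum $M_n \coloneq h\sum_{k<n} g(\dtheta{k},\dallxi{k})$. We aim to show $\expect\abs{M_n}^p \lesssim J^{-p/2}(\varepsilon+h)^{p/2}$ for $n\le T/h$, and then conclude by a discrete Grönwall argument on $\abs{e_n}^p$. Since the drift difference is bounded by $L\abs{e_k}$, we have $\abs{e_n} \le Lh\sum_{k<n}\abs{e_k} + \max_{m\le n}\abs{M_m}$. For this we also need the bound on the maximum, which Burkholder–Davis–Gundy will give.

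To control $M_n$, we solve a discrete Poisson equation for the exact OU transition kernel $Q$ with rate $\rho = \e^{-h/\varepsilon}$. Because $g$ is an average over $j$ of the centered functions $\driftone(\theta,\xin{j}) - \driftinf(\theta)$, we take
\[
\psi(\theta,\xi) = \sum_{m\ge0} Q^m\bigl(\driftone(\theta,\cdot)-\driftinf(\theta)\bigr)(\xi),
\qquad \Psi = \frac1J\sum_j \psi(\theta,\xin{j}),
\]
so that $(I-Q)\Psi = g$. We will use the spectral gap of $Q$ on $L^\infty_{\weight{m}}$ (or in $L^2(\gamma_d)$), namely $\norm{Q^m f}\le C\rho^m\norm{f}$ for centered $f$. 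This gives $\abs{\psi}+\abs{\nabla_\theta\psi}+\abs{\nabla^2_\theta\psi}\lesssim (1-\rho)^{-1}\weight{m}(\xi) \lesssim \frac{\varepsilon+h}{h}\weight{m}(\xi)$, uniformly in $\sigma\le1$. The $\theta$-derivatives of $\driftone$ up to order two must be bounded by polynomial weights in $\xi$ uniformly in $\sigma$. Expanding $G(\theta+\sigma\xi)-G(\theta)$ to get this is, we expect, where $G\in\cont^5$ enters, in particular to handle second differences in $\theta$ uniformly in $\sigma$.

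Next, we write
\[
h\,g(\dtheta{k},\dallxi{k}) = h\bigl[\Psi(\dtheta{k},\dallxi{k}) - \Psi(\dtheta{k},\dallxi{k+1})\bigr] + h\bigl[\Psi(\dtheta{k},\dallxi{k+1}) - Q\Psi(\dtheta{k},\dallxi{k})\bigr].
\]
The second bracket is a martingale increment. Its conditional variance inherits the $1/J$ factor from the independence of the $j$-summands. By BDG, this part contributes $\bigl(\sum_k h^2 J^{-1}(\frac{\varepsilon+h}{h})^2 (1-\rho^2)\bigr)^{p/2}$. Using $1-\rho^2\lesssim h/\varepsilon \wedge 1$, this is $\lesssim (J^{-1}(\varepsilon+h))^{p/2}$ over $T/h$ steps; this requires checking the $(1-\rho^2)$ gain carefully, which we expect to work because the one-step OU increment of $\psi$ has variance of order $1-\rho^2$. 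The first bracket telescopes after an index shift into $h\sum_k[\Psi(\dtheta{k+1},\dallxi{k+1})-\Psi(\dtheta{k},\dallxi{k+1})]$ plus boundary terms. Boundary terms are $O(h\cdot\frac{\varepsilon+h}{h}J^{-1/2})=O((\varepsilon+h)J^{-1/2})$ in $L^p$, using the centered average to get $J^{-1/2}$.

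The main obstacle is the remaining $\theta$-increment term. We Taylor expand $\Psi$ in $\theta$ to second order, using $\dtheta{k+1}-\dtheta{k} = h\driftall + \sqrt{2h}\,\xslow{k}$. The $\sqrt{2h}\,\xslow{k}\cdot\nabla_\theta\Psi$ piece is again a martingale, with variance $\lesssim h^2(\frac{\varepsilon+h}{h})^2 h\,J^{-1}$ per step, which sums fine. The drift piece $h^2\,\driftall\cdot\nabla_\theta\Psi$ and the Hessian piece are of size $h(\varepsilon+h)$ per step but need a $J^{-1/2}$ factor. For the Hessian piece, $\nabla^2_\theta\Psi$ is a centered empirical average, so its $L^p$ norm is $O(J^{-1/2})$ after using moments of $\dallxi{k}\sim\fastgaussianall$. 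For the drift piece, $\driftall\cdot\nabla_\theta\Psi$ contains diagonal terms $j=j'$ that are not centered. Here we split $\driftall=\driftinf+g$: the $\driftinf$ part is centered and gives $J^{-1/2}$, while the $g\cdot\nabla_\theta\Psi$ part is a product of two $O(J^{-1/2})$ averages and hence $O(J^{-1})$. If a residual non-small diagonal term appears, we would apply a second corrector to it, as in the continuous proof. Summing over $T/h$ steps yields $O((\varepsilon+h)J^{-1/2})\le O((\varepsilon+h)^{1/2}J^{-1/2})$, which completes the bound on $M_n$ and hence, via Grönwall, the estimate~\eqref{eq:strong_convergence_fixed_sigma}.
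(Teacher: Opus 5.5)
There is a genuine gap in how you obtain the factor $J^{-1/2}$. Your architecture (discrete Poisson corrector, split into a fast martingale, a slow Taylor increment and boundary terms, then Gr\"onwall) matches the paper's. However, you repeatedly treat averages $\frac1J\sum_{j=1}^J\phi(\dtheta{k},\dxin{j}{k})$ or $\frac1J\sum_{j=1}^J\phi(\dtheta{k},\dxin{j}{k+1})$, with $\phi(\theta,\cdot)$ centered under $\gamma_d$ for each \emph{fixed} $\theta$, as averages of independent centered variables. You do this in four places:
\begin{itemize}
\item the boundary terms (``using the centered average'');
\item the factor $J^{-1}$ you claim in the variance of $\sqrt{2h}\,\xslow{k}\cdot\nabla_\theta\Psi(\dtheta{k},\dallxi{k+1})$;
\item the Hessian piece (``centered empirical average, so its $L^p$ norm is $O(J^{-1/2})$'');
\item the $\driftinf$ part of the drift piece.
\end{itemize}
This is not justified. $\dtheta{k}$ is a function of $\dtheta{0}$, $\xslow{0},\dots,\xslow{k-1}$ and $\dallxi{0},\dots,\dallxi{k-1}$, so it is correlated with every $\dxin{j}{k}$ and $\dxin{j}{k+1}$. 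At $\theta=\dtheta{k}$ the summands are therefore neither independent nor centered, and Marcinkiewicz--Zygmund does not apply. Only your fast martingale increment $\Psi(\dtheta{k},\dallxi{k+1})-Q\Psi(\dtheta{k},\dallxi{k})$ is safe: conditionally on $\mathcal F_k$ the point $\dtheta{k}$ is frozen and the summands are conditionally independent and centered. The same conditioning does not close for the other terms, because $\dtheta{k}$ depends directly on $\dallxi{k-1}$.

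The paper flags exactly this as the main difficulty, already in continuous time, and resolves it with the functional law of large numbers \cref{proposition:expect_supremum_torus}. One bounds $\expect|\frac1J\sum_j\phi(\dtheta{k},\dxin{j}{k+1})|^p$ by $\expect\sup_{\theta\in\torus^d}|\frac1J\sum_j\phi(\theta,\dxin{j}{k+1})|^p\lesssim J^{-p/2}$. This follows from the Sobolev embedding $W^{1,q}(\torus^d)\hookrightarrow L^\infty(\torus^d)$ for $q>d$ together with Marcinkiewicz--Zygmund pointwise in $\theta$, at the cost of one extra $\theta$-derivative of $\phi$. Some decoupling device of this kind is indispensable. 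Your ``second corrector'' would not supply it, since the problem is the randomness of the evaluation point, not the centering of a fixed function.

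You also never treat the remainder of your second-order Taylor expansion in $\theta$. Per step it is of size $h\,\sup_\theta|\nabla^3_\theta\Psi|\,|\dtheta{k+1}-\dtheta{k}|^3$. With only deterministic weighted bounds $\sup_\theta|\nabla^3_\theta\psi|\lesssim\frac{\varepsilon+h}{h}\weight{m}$, this sums over $T/h$ steps to $O\bigl((\varepsilon+h)h^{1/2}\bigr)$ with \emph{no} factor $J^{-1/2}$, which does not fit into $(\varepsilon+h)^{1/2}J^{-1/2}$ for large $J$. The paper gets the $J^{-1/2}$ by applying the functional LLN to $\nabla^3_\theta\Psi$, which needs $\nabla^4_\theta\driftone$ bounded uniformly in $\sigma$. \emph{That} is where $G\in\cont^5$ enters; $\theta$-derivatives of $\driftone$ up to order two need only $G\in\cont^3$.

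Two smaller points:
\begin{itemize}
\item Your Gr\"onwall step puts $\max_{m\le n}|M_m|$ inside the expectation. BDG does not control the non-martingale boundary term $h\Psi(\dtheta{m},\dallxi{m})$, and a union bound over $T/h$ indices loses a factor $h^{-1/q}$ that is not uniform as $h\to0$. Since the statement has the maximum outside the expectation, it suffices to run discrete Gr\"onwall on $\lVert\dtheta{n}-\dvvartheta{n}\rVert_{L^p(\Omega)}$. The paper instead proves the stronger maximal bound by evaluating boundary terms only at block endpoints spaced by $\lceil\varepsilon/h\rceil$ steps.
\item The diagonal-term discussion for $h^2\,\driftall\cdot\nabla_\theta\Psi$ is unnecessary. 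Once $\expect\sup_\theta|\nabla_\theta\Psi(\theta,\dallxi{k+1})|^{2p}\lesssim\bigl(\frac{\varepsilon+h}{h}\bigr)^{2p}J^{-p}$ is available, Cauchy--Schwarz with the uniform moments of $\driftall$ gives the $J^{-1/2}$ without any cancellation in the product.
\end{itemize}
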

\begin{proof}
    See~\sref{subsub:strong_convergence_discrete}.
\end{proof}

Just as for~\cref{corollary:strong_convergence_continuous},
we can deduce the following corollary from~\cref{theorem:strong_convergence_fixed_sigma_discrete} in the same way.
We omit the proof, as it follows exactly the same lines as the one of~\cref{corollary:strong_convergence_continuous}.
\begin{corollary}
  [Strong convergence in discrete time]
  \label{corollary:strong_convergence_discrete}
  Let \(G\in \cont^5\left(\torus^d, \real^K\right)\),
  \(J\in \nat_{>0}\), and \((\sigma, \varepsilon, h) \in (0, 1]\).
  Consider the process \(\left(\dtheta{n},\dallxi{n}\right)_{n\in \N}\)
  defined by~\eqref{eq:the_multiscale_dynamics_discrete}
  and the process \((\dvartheta{n})_{n\in \N}\) defined by~\eqref{eq:em_ol},
  with the same initial condition~$\dvartheta{0} = \dtheta{0}$ and the same Gaussian variables~$(\xslow{n})_{n\in\N}$.
  Then for all \(T\geq 0\) and \(p\geq 1\), there exists a constant
  \(C_{T,p,d} > 0\) independent of \((\rho_0,\sigma,\varepsilon,h,J)\) such that
  \begin{equation}
      \expect\left[\max_{0\leq n\leq
      \frac{T}{h}}\left|\dtheta{n}-\dvartheta{n}\right|^p\right]\leq
      C_{T,p,d}\left(\sigma^{2p} +\frac{\varepsilon^{\frac{p}{2}}+h^{\frac{p}{2}}}{J^{\frac{p}{2}}}\right) \, .
  \end{equation}
\end{corollary}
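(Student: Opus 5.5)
The plan is to split the error with the triangle inequality through the intermediate scheme~\eqref{eq:discrete_asymptotic_dynamics}:
\[
    \abs{\dtheta{n}-\dvartheta{n}} \leq \abs{\dtheta{n}-\dvvartheta{n}} + \abs{\dvvartheta{n}-\dvartheta{n}} \, .
\]
We then raise to the power~$p$ and use $(a+b)^p \leq 2^{p-1}(a^p+b^p)$. It suffices to treat $p\geq 2$, since the case $p\in[1,2)$ follows from Jensen's (or Hölder's) inequality.

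\textbf{First term.} \Cref{theorem:strong_convergence_fixed_sigma_discrete} only bounds $\max_n \expect[\cdot]$, whereas the statement has the maximum inside the expectation. Two options are available.
\begin{itemize}
    \item The crude route is $\expect[\max_n X_n]\leq \sum_n \expect[X_n]$. This loses a factor $T/h$ and is too lossy.
    \item The route I would take is to revisit the proof of \cref{theorem:strong_convergence_fixed_sigma_discrete}. The error $e_n=\dtheta{n}-\dvvartheta{n}$ satisfies
    \[
        e_{n+1}=e_n+h\bigl(\driftinf(\dtheta{n})-\driftinf(\dvvartheta{n})\bigr)+h\bigl(\driftall(\dtheta{n},\dallxi{n})-\driftinf(\dtheta{n})\bigr) \, .
    \]
    Summing gives $e_n = h\sum_{k<n}(\text{Lipschitz term}) + M_n$, where $M_n \coloneq h\sum_{k<n}(\driftall-\driftinf)(\dtheta{k},\dallxi{k})$ is the fluctuation sum. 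The proof of the theorem presumably controls $M_n$ through a discrete Poisson equation in the fast variable, which produces a martingale part plus boundary and remainder terms. The Burkholder--Davis--Gundy (or Doob) inequality then controls the supremum of the martingale part, and the remainder terms are bounded pathwise by sums that are already controlled inside the maximum.
\end{itemize}
A discrete Grönwall argument applied to $\max_{k\leq n}\abs{e_k}$ then gives
\[
    \expect\Bigl[\max_{n\leq T/h}\abs{e_n}^p\Bigr]\lesssim \frac{\varepsilon^{p/2}+h^{p/2}}{J^{p/2}} \, .
\]
Here I use that $\driftinf$ is Lipschitz uniformly in $\sigma$, which follows from $G\in\cont^2$, $\ell$ smooth, and the Gaussian moments.

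\textbf{Second term.} This term is deterministic given the noise. Both schemes are driven by the same Gaussian variables, so the noise cancels in the difference:
\[
    \dvvartheta{n+1}-\dvartheta{n+1}=\dvvartheta{n}-\dvartheta{n}+h\bigl(\driftinf(\dvvartheta{n})-\driftinf(\dvartheta{n})\bigr)+h\bigl(\driftinf+\nabla\Phi\bigr)(\dvartheta{n}) \, .
\]
The key estimate is $\norm{\driftinf+\nabla\Phi}_\infty\lesssim\sigma^2$. To prove it, Taylor expand
\[
    G(\theta+\sigma\xi)-G(\theta)=\sigma\nabla G\,\xi+\tfrac{\sigma^2}{2}\nabla^2G[\xi,\xi]+\tfrac{\sigma^3}{6}\nabla^3G[\xi,\xi,\xi]+\bigo(\sigma^4\abs{\xi}^4) \, ,
\]
which is valid since $G\in\cont^4$. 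Then:
\begin{itemize}
    \item the first-order term, multiplied by $\xi$ and integrated against $\gamma_d$, gives exactly $-\nabla G^{\t}\nabla\ell = -\nabla\Phi$, which cancels;
    \item the $\sigma$-order term is odd in $\xi$ (cubic after multiplication by $\xi$), so its Gaussian expectation vanishes;
    \item the remaining terms are $\bigo(\sigma^2)$, with constants depending on $\norm{G}_{\cont^4}$ and on Gaussian moments.
\end{itemize}
Combined with the Lipschitz bound and discrete Grönwall, this gives $\max_n\abs{\dvvartheta{n}-\dvartheta{n}}\leq C_T\sigma^2$ almost surely. The $p$-th power then contributes $\sigma^{2p}$.

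\textbf{Main obstacle.} The main obstacle is upgrading \cref{theorem:strong_convergence_fixed_sigma_discrete} to the version with the maximum inside the expectation. If its proof does not already yield this, I would redo the martingale decomposition of the fluctuation sum $M_n$. I would apply BDG to the martingale part, which has increments of variance $\lesssim (\varepsilon+h)h/J$, and bound the boundary terms from the discrete Poisson solution by maximal moments of the Gaussian fast variables. Those maximal moments grow only logarithmically in $T/h$, and this growth would have to be absorbed into the $\varepsilon^{1/2}$ prefactor of the boundary terms.
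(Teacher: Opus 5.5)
Your proposal is correct and follows the paper's route. You use the triangle inequality through the scheme~\eqref{eq:discrete_asymptotic_dynamics}, \cref{theorem:strong_convergence_fixed_sigma_discrete} for the first term, and the $\bigo(\sigma^2)$ bound on $\driftinf+\nabla\Phi$ from \cref{lemma:bound_sigma} combined with a pathwise discrete Gr\"onwall argument for the second. Your concern about where the maximum sits is resolved exactly as you propose: the proof of \cref{theorem:strong_convergence_fixed_sigma_discrete} already bounds $E_n=\expect\bigl[\max_{\ell\leq n}\abs{\dtheta{\ell}-\dvvartheta{\ell}}^p\bigr]$, with the maximum inside the expectation. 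Your fallback is therefore unnecessary, and as phrased it would not work, because a $\log(T/h)$ factor cannot be absorbed when $\varepsilon$ is fixed and $h\to0$. The paper avoids this by evaluating the boundary term only on the coarse grid of mesh $\nu=\lceil\varepsilon/h\rceil$, which costs only a factor $\min\{1/h,1/\varepsilon\}^{1/q}$.
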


\subsection{Estimate on the invariant distribution and convergence of time averages}
\label{sub:invariant_distribution}

Having proved the existence of an invariant distribution in~\cref{sub:existence_uniqueness_invariant}, we address here the
issue of bounding the distance,
in an appropriate metric,
between this distribution and the target distribution.
To this end, we use the approach pioneered
in~\cite{MattinglyStuartTretyakov2010},
where the effects of numerical discretization on the invariant distribution,
the bias, and the mean-square error of ergodic averages are investigated.
The technique introduced in that paper is remarkably versatile,
and can be employed to study errors introduced by other types of approximation.

We note that a bound on the distance between the invariant distribution and the target distribution can be deduced from the uniform-in-time weak error estimates proved in~\cref{sec:weak_error_continuous_proof}.
However, the former can be proved under milder assumptions than the latter,
and using simpler mathematical techniques.
Indeed, while uniform-in-time weak error estimates require to study the solution of an appropriate parabolic PDE,
an estimate on the invariant distribution can be established solely by studying an elliptic Poisson equation.
The proofs of the results in this subsection are given in~\cref{sub:invariant_distribution_continuous_proof} and~\sref{sub:bound_invariant_discrete}.

We first present the continuous-time estimates.
Before stating the result, we introduce some notation.
For an observable~\(f \colon \torus^d \to \real\), we define the ergodic estimator
\begin{equation}
    \label{eq:ergodic_estimator}
    \widehat f_T \coloneq \frac{1}{T} \int_{0}^{T} f(\theta_t) \, \d t \, ,
\end{equation}
which provides an approximation of~\(\mu_{\Phi}(f)\).
Let also \(\fullinvariant\) denote the unique invariant distribution of the multiscale dynamics~\eqref{eq:the_multiscale_dynamics},
and let \(\fullinvarianttheta \in \mathcal P(\torus^d)\) be its marginal with respect to~\(\theta\).

\begin{theorem}
    \label{theorem:invariant_continuous}
    Consider the multiscale dynamics~\eqref{eq:the_multiscale_dynamics}
    with initial condition~$(\theta_0, \allxi_0) \sim \pi_0 \in \Pi(m_0, \fastgaussianall)$,
    for any probability measure $m_0 \in \mathcal P(\torus^d)$ and where~$\Pi(m_0, \fastgaussianall)$ denotes the set of couplings between~$m_0$ and~$\fastgaussianall$.
    Assume the forward model satisfies~\(G\in \cont^{3+\beta}(\torus^d,\R^K)\),
    for some \(\beta \in \{0, 1\}\),
    and let~$(\varepsilon, \sigma, J) \in (0, 1] \times (0, 1] \times \nat_{>0}$.
    We regard~$\beta$ as a parameter indicating whether~$G$ enjoys the additional regularity of being in~$\cont^4$.
    Then the following statements hold:
    \begin{itemize}
        \item
            There exists a constant~\(C > 0\) not depending on \(\varepsilon,\sigma,J\) such that for all~\(f\in \sobolev^{2,\infty}(\torus^d,\R)\), we have
            \begin{equation}
                \label{eq:bound_distributions}
            \abs[\Big]{\fullinvarianttheta(f) - \mu_\Phi(f)}
            \leq C\left(\sigma^2 + (1-\beta)\frac{\sigma}{\sqrt{J}}\right)  \lVert f \rVert_{\infty}
            + \frac{C\varepsilon}{\sqrt{J}} \,  \lVert f \rVert_{2,\infty} \, .
        \end{equation}

    \item
        There exists a (different) constant~\(C > 0\) not depending on \(\varepsilon,\sigma,J\) such that for all~\(f\in \sobolev^{2,\infty}(\torus^d,\R)\),
        the bias of the ergodic estimator~\eqref{eq:ergodic_estimator} satisfies for all~$T > 0$
        \begin{equation}
            \label{eq:bound_bias_continuous}
            \left\lvert \expect \Bigl[ \widehat f_T - \target(f) \Bigr] \right\rvert
            \leq
            C\left(\frac{1}{T} + \sigma^2 + (1-\beta)\frac{\sigma}{\sqrt{J}}\right)  \lVert f \rVert_{\infty}
            + \frac{C\varepsilon}{\sqrt{J}} \,  \lVert f \rVert_{2,\infty} \, .
        \end{equation}

    \item
        There exists a (different) constant~\(C > 0\) not depending on \(\varepsilon,\sigma,J\) such that for all~\(f\in \sobolev^{2,\infty}(\torus^d,\R)\),
        the mean square error of the ergodic estimator~\eqref{eq:ergodic_estimator} satisfies for all~$T \geq 1$
        \begin{align}
            \label{eq:bound_mse_continuous}
            \expect  \left\lvert \widehat f_T - \target(f)  \right\rvert^2
            &\leq
            \frac{C}{T} \norm{f}_{\infty}^2
            +
            C\left(\sigma^2 + (1-\beta)\frac{\sigma}{\sqrt{J}}\right)^2  \lVert f \rVert_{\infty}^2
            + \frac{C\varepsilon^2}{J} \,  \lVert f \rVert_{2,\infty}^2
            \, .
        \end{align}
\end{itemize}
    The constant~$C$ in these statements depends on fixed derivative bounds on~$G$ and~$\ell$; 
    this dependence is made explicit in the proof.
\end{theorem}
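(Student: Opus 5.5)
The plan follows the Poisson-equation approach of~\cite{MattinglyStuartTretyakov2010}, applied at the level of the full generator $\fullgenparams = \genslow + \frac{1}{\varepsilon}\genfast$. Here $\genslow = \driftall\cdot\nabla_\theta + \Delta_\theta$ acts on the slow variable and $\genfast$ is the Ornstein--Uhlenbeck generator in~$\allxi$.

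The first step is to solve the elliptic Poisson equation for the target overdamped Langevin generator $\genlimit = -\nabla\Phi\cdot\nabla + \Delta$ on~$\torus^d$:
\[
-\genlimit \phi = f - \target(f).
\]
Since $\torus^d$ is compact and $\Phi = \ell\circ G \in \cont^{3+\beta}$, Schauder and elliptic regularity give $\phi \in \cont^{4+\beta}$ roughly. There is one important caveat: we only want to pay $\lVert f\rVert_\infty$ in the $\sigma$-terms. For this I would use the bound $\lVert\phi\rVert_{\cont^{1+\alpha}} \lesssim \lVert f\rVert_\infty$, and let only the higher derivatives of~$\phi$ cost $\lVert f\rVert_{2,\infty}$.

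Next, integrate $\fullgenparams\phi$ against the invariant measure $\fullinvariant$, whose fast marginal is $\fastgaussianall$. This gives $\fullinvariant(\fullgenparams\phi)=0$, hence
\[
\fullinvarianttheta(f)-\target(f) = \fullinvariant\bigl((\driftall + \nabla\Phi)\cdot\nabla\phi\bigr).
\]
Split the drift error as
\[
\driftall + \nabla\Phi = (\driftall - \driftinf) + (\driftinf + \nabla\Phi).
\]
The second piece is an $\bigo(\sigma^2)$ deterministic smoothing error. It holds by a Taylor expansion in~$\sigma$, since odd Gaussian moments vanish; this needs $G\in\cont^3$ and gives the $\sigma^2\lVert f\rVert_\infty$ contribution through $\lVert\nabla\phi\rVert_\infty$.

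For the fluctuating first piece, $g(\theta,\allxi) \coloneq (\driftall-\driftinf)\cdot\nabla\phi$ has zero mean in $\allxi$ under $\fastgaussianall$ for each~$\theta$. I would introduce the corrector $\psi$ solving $-\genfast\psi = g$. Because $\driftall$ is an average of $J$ i.i.d.\ terms, $\psi = \frac1J\sum_j \psi_j(\theta,\xi^j)$, with each $\psi_j$ polynomially growing; the OU resolvent is bounded on $L^\infty_{\weight{m},0}$. Applying the invariance identity to $\varepsilon\psi$ gives
\[
\fullinvariant(g) = \varepsilon\,\fullinvariant(\genslow\psi).
\]
Expanding $\genslow\psi = \driftall\cdot\nabla_\theta\psi + \Delta_\theta\psi$, the cross term $\frac{1}{J^2}\sum_{i,j}f_\sigma(\theta,\xi^i)\cdot\nabla_\theta\psi_j$ must be controlled. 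I would bound it by Cauchy--Schwarz in $L^2(\fullinvariant)$ combined with the independence structure, using $\fullinvariant(\lvert\frac1J\sum_j\psi_j\rvert^2)$-type estimates with Gaussian fast marginal. The key point is that the marginal of $\allxi$ is exactly Gaussian, so moments of mean-zero averages scale like $J^{-1}$, giving a $\sqrt J$ gain. This yields the term $\frac{\varepsilon}{\sqrt J}\lVert f\rVert_{2,\infty}$, since two $\theta$-derivatives of $\psi$ involve up to three derivatives of~$\phi$.

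The $\beta$ dichotomy arises because, when $G$ is only $\cont^3$, $\driftall-\driftinf$ differs from its $\sigma\to0$ limit $\driftlim-\driftlim^\infty$ by $\bigo(\sigma)$ with fluctuations of size $J^{-1/2}$. I would estimate that remainder directly by Cauchy--Schwarz, giving $\frac{\sigma}{\sqrt J}\lVert f\rVert_\infty$, and apply the corrector argument only to the $\sigma$-independent product-structure part. When $\beta=1$, the full $\sigma$-dependent $g$ is regular enough for the corrector argument uniformly in~$\sigma$.

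For the bias and MSE, apply Itô's formula to $\phi(\theta_t)+\varepsilon\psi(\theta_t,\allxi_t)$ along the trajectory. The resulting expression is
\[
\widehat f_T-\target(f) = \frac{\phi(\theta_0)-\phi(\theta_T)}{T} + \text{boundary terms in }\varepsilon\psi + \frac{1}{T}\times\text{martingales} + \frac1T\int_0^T \bigl(\text{same remainders as above}\bigr)\,\d t.
\]
Since $\allxi_t\sim\fastgaussianall$ for all $t$ by stationarity of the fast marginal from the coupling initial law, the same moment bounds apply pointwise in time. The martingale contributes $C/T\cdot\lVert\nabla\phi\rVert_\infty^2\lesssim\lVert f\rVert_\infty^2/T$ to the MSE, and each remainder is squared.

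I expect the main obstacle to be the $1/\sqrt J$ gain in the cross term $\driftall\cdot\nabla_\theta\psi$ under the non-product invariant measure. The $\theta$-marginal is correlated with $\allxi$, so independence cannot be used directly. My first attempt is to rely only on the Gaussian law of $\allxi$ together with sup-in-$\theta$ bounds, i.e.\ $\lvert\psi\rvert\leq\frac1J\sup_\theta\lvert\sum_j\psi_j\rvert$, with a chaining or Sobolev-in-$\theta$ embedding on the compact torus to control the supremum.
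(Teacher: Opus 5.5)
Your outline is sound, but for the fluctuating part of the drift you take a genuinely different route from the paper. Write $\phi$ for the solution of the target Poisson equation, as you do. The paper never centers at $\driftinf$. It expands $\driftall=\driftlim+\sigma\driftderiv+\sigma^2\mathcal R^J_\sigma$ (\cref{lemma:drift_lim_small_sigma}), so every fluctuating term is a centered polynomial in the $\xin{j}$ times a function of $\theta$ alone. These terms are $(\xi\otimes\xi-\I_d):(\nabla\Phi\otimes\nabla\phi)$ and $\bigl((\xi\otimes\xi\otimes\xi):\Psi\bigr)\cdot\nabla\phi$, where $\Psi$ is built from $(\nabla\ell)\circ G$ and $\nabla^2G$. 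The averaging lemma (\cref{lemma:averaging}) then pulls out $\lVert A\rVert_{2,\infty}$ and uses only the independence of the $\xin{j}_t\sim\fastgaussianone$. So the correlation between $\theta_t$ and $\allxi_t$, which you identify as the main obstacle, never has to be confronted, and no supremum over $\theta$ is taken.

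In the paper, the $\beta$-dichotomy is just whether $\Psi\in\cont^2$. This upgrades the crude bound $\sigma/\sqrt J$ on the cubic term to $\sigma\varepsilon/\sqrt J\leq\varepsilon/\sqrt J$, while $\sigma^2\mathcal R^J_\sigma$ is bounded by fourth Gaussian moments. The bound on $\fullinvariant$ is deduced from the bias bound with stationary initial data and $T\to\infty$, rather than from $\fullinvariant(\fullgenparams\phi)=0$ directly; the two are equivalent.

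Your non-tensorizing corrector centered at $\driftinf$ is essentially the alternative sketched in \cref{remark:averaged_dynamics_as_pivot}. By keeping the target Poisson equation, you avoid the uniform-in-$\sigma$ slow Poisson estimate mentioned there. You pay instead with the functional law of large numbers (\cref{proposition:expect_supremum_torus}) and with uniform-in-$\sigma$ control of $\nabla_\theta^3\driftone$, which is where $G\in\cont^4$ enters in your version. In exchange, your argument makes the smoothing by $\driftinf$ explicit and treats all the $\sigma$-dependence at once, as in the strong-convergence proof.

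Two details need fixing for your route to close.

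\textbf{Regularity of $\phi$.} From $f\in\sobolev^{2,\infty}$ you only get $\phi\in\cont^{3+\lambda}$ (and $W^{4,q}$ for $q<\infty$), not $\cont^{4+\beta}$. The Sobolev-embedding bound for $\sup_\theta\lvert\Delta_\theta\psi\rvert$ involves $\nabla_\theta\Delta_\theta\psi$, and hence $\nabla^4\phi$. To avoid this, first write $\psi(\theta,\allxi)=\sum_k\partial_k\phi(\theta)\,\frac1J\sum_j\Theta_k(\theta,\xin{j})$, with $-\genfastone\Theta_k(\theta,\cdot)=(\driftone-\driftinf)_k(\theta,\cdot)$. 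Then take the supremum only over the averages of the $\Theta_k$, which depend on $G$, $\ell$, $\sigma$ but not on $f$. With this factorization only $\lVert\phi\rVert_{\cont^3}\lesssim\lVert f\rVert_{2,\infty}$ is needed.

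\textbf{The $\beta=0$ remainder.} Cauchy--Schwarz in $L^2(\fullinvariant)$ alone does not give the factor $J^{-1/2}$ for your $\bigo(\sigma)$ remainder, for the same correlation reason. You have two options:
\begin{itemize}
    \item apply the supremum bound to it, which works with $G\in\cont^3$ since its $\theta$-gradient is $\bigo\bigl(\sigma(1+\lvert\xi\rvert^3)\bigr)$;
    \item more simply, split off $\sigma\driftderiv$, which has product structure, and bound what is left by $\sigma^2$, as the paper does.
\end{itemize}
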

\begin{proof}
    See~\cref{sub:invariant_distribution_continuous_proof}.
\end{proof}
\begin{remark}
    Before presenting the proof of the theorem,
    a few remarks are in order:
    \begin{itemize}
        \item
            The first bias term on the right-hand side of~\eqref{eq:bound_bias_continuous}
            accounts for the mismatch between the initial distribution~$\pi_0$ and the invariant distribution~$\fullinvariant$ of the multiscale dynamics.

        \item
            For~$\beta=1$, the parameters~$\varepsilon$ and~$J$ enter these estimates only through~$\varepsilon/\sqrt{J}$.
            This indicates the time and ensemble averaging effects are, in some sense, cumulative.

        \item
            The limits as~$T \to \infty$ of the upper bounds in~\eqref{eq:bound_bias_continuous} and~\eqref{eq:bound_mse_continuous} are consistent with the bound~\eqref{eq:bound_distributions}.

        \item
            In the bound~\eqref{eq:bound_mse_continuous} on the mean square error of the ergodic estimator,
            the first term on the right-hand side, which scales as~$T^{-1}$, arises from the variance of the estimator,
            while the other terms arise from the bias of the estimator.
    \end{itemize}
\end{remark}

Similarly, in discrete time we define
\begin{equation}
    \label{eq:ergodic_estimator_discrete}
    \widehat f_N \coloneq \frac{1}{N} \sum_{n=0}^{N-1} f(\dtheta{n}) \, .
\end{equation}
Before stating the discrete-time counterpart of~\cref{theorem:invariant_continuous},
we introduce the notation~$\fullinvariantdiscrete$ 
for the unique invariant distribution of the discretized multiscale dynamics~\eqref{eq:the_multiscale_dynamics_discrete},
and the notation~$\fullinvariantdiscretetheta$ its marginal with respect to~$\theta$.
The following theorem then provides the discrete-time estimates.

\begin{theorem}
    \label{theorem:bound_invariant_discrete}
    Consider the discretized multiscale dynamics~\eqref{eq:the_multiscale_dynamics_discrete}
    with initial condition~$(\dtheta{0}, \dallxi{0}) \sim \pi_0$
    for a probability measure $\pi_0 \in \Pi(m_0, \fastgaussianall)$,
    with some $m_0 \in \mathcal P(\torus^d)$.
    Let~$J\in\nat_{>0}$ and~$\varepsilon,\sigma,h\in(0,1]$.
    Assume that the forward model satisfies~\(G\in \cont^{3+\alpha+\beta}(\torus^d,\R^K)\),
    for some~\(\alpha \in (0, 1)\) and~\(\beta \in \{0,1\}\).
    Then the following statements hold:
    \begin{itemize}
        \item
            There exists a constant~\(C > 0\) not depending on \(\varepsilon,\sigma,J,h\) such that for all~\(f\in \cont^{2+\alpha}(\torus^d,\R)\),
            we have
            \begin{equation}
                \label{eq:bound_distributions_discrete}
                \abs*{\fullinvariantdiscretetheta(f)-\mu_\Phi(f)} \leq C\left( h + \sigma^2 + (1-\beta)\frac{\sigma}{\sqrt{J}} +
                \frac{\varepsilon}{\sqrt{J}} \right)\norm{f}_{\cont^{2+\alpha}} \, .
            \end{equation}

        \item
            There exists a (different) constant~\(C > 0\) not depending on \(\varepsilon,\sigma,J,h\) such that for all~\(f\in \cont^{2+\alpha}(\torus^d)\),
            the bias of the ergodic estimator~\eqref{eq:ergodic_estimator_discrete} satisfies for all~$N \in \nat_{>0}$
            \begin{equation}
                \label{eq:bound_bias_discrete}
                \left\lvert \expect \Bigl[ \widehat f_N - \target(f) \Bigr] \right\rvert
                \leq C \left( \frac{1}{Nh} + h + \sigma^2 + (1-\beta)\frac{\sigma}{\sqrt{J}} +
                \frac{\varepsilon}{\sqrt{J}} \right) \norm{f}_{\cont^{2+\alpha}} \, .
            \end{equation}
        \item
            There exists a (different) constant~\(C > 0\) not depending on \(\varepsilon,\sigma,J,h\) such that for all~\(f\in \cont^{2+\alpha}(\torus^d)\),
            the mean square error of the ergodic estimator~\eqref{eq:ergodic_estimator_discrete} satisfies for all~$N \in \nat_{>0}$
            \begin{align}
                \label{eq:bound_mse_discrete}
                \expect \Bigl[ \Bigl( \widehat f_N - \target(f) \Bigr)^2 \Bigr]
                \leq \frac{C}{Nh} \norm{f}_{\infty}^2 + C\left( \frac{1}{Nh} + h + \sigma^2 + (1-\beta)\frac{\sigma}{\sqrt{J}} + \frac{\varepsilon}{\sqrt{J}} \right)^2
                \norm{f}_{\cont^{2+\alpha}}^2 \, .
            \end{align}
    \end{itemize}
\end{theorem}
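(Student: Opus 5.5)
We follow the Poisson-equation approach of~\cite{MattinglyStuartTretyakov2010}, adapted to the discrete chain~\eqref{eq:the_multiscale_dynamics_discrete}. Given~$f\in\cont^{2+\alpha}(\torus^d)$, let~$\phi$ solve the elliptic Poisson equation~$-\genlimit\phi = f-\mu_\Phi(f)$ on~$\torus^d$, with~$\genlimit = -\nabla\Phi\cdot\nabla + \laplacian$ and~$\mu_\Phi(\phi)=0$. Since~$G\in\cont^{3+\alpha+\beta}$ gives~$\nabla\Phi\in\cont^{2+\alpha+\beta}$, Schauder theory yields~$\phi\in\cont^{4+\alpha}$ with~$\norm{\phi}_{\cont^{4+\alpha}}\lesssim\norm{f}_{\cont^{2+\alpha}}$. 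The H\"older index is what buys the two extra derivatives that a pure~$\sobolev^{2,\infty}$ estimate cannot provide, and four derivatives of~$\phi$ are needed for the second-order Taylor expansion of the Euler step.

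\textbf{Step 1: one-step expansion.} Conditioning on~$(\dtheta{n},\dallxi{n})$, we Taylor expand~$\phi(\dtheta{n+1})$. This gives
\[
\expect[\phi(\dtheta{n+1})-\phi(\dtheta{n})\mid\mathcal F_n] = h\,\genlimit\phi(\dtheta{n}) + h\bigl(\driftall(\dtheta{n},\dallxi{n})+\nabla\Phi(\dtheta{n})\bigr)\cdot\nabla\phi(\dtheta{n}) + R_n \, ,
\]
with~$\expect\abs{R_n}\lesssim h^2\norm{\phi}_{\cont^4}$, using Gaussian moments of~$\dallxi{n}$, which are stationary because the fast marginal is~$\fastgaussianall$. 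Summing over~$n$ and dividing by~$Nh$ gives
\[
\widehat f_N-\mu_\Phi(f) = -\frac{\phi(\dtheta{N})-\phi(\dtheta{0})}{Nh} + \frac{M_N}{Nh} + \frac1N\sum_{n}\bigl(\driftall+\nabla\Phi\bigr)\cdot\nabla\phi\,(\dtheta{n},\dallxi{n}) + \bigo(h) \, ,
\]
where~$M_N$ is a martingale. The boundary term gives the~$1/(Nh)$ contribution. For the invariant-measure bound~\eqref{eq:bound_distributions_discrete}, we instead start from~$\fullinvariantdiscrete$, so that the boundary and martingale terms have zero mean.

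\textbf{Step 2: the drift error.} We split~$\driftall+\nabla\Phi = (\driftinf+\nabla\Phi) + (\driftall-\driftinf)$. The first part is deterministic, and a Taylor expansion of~$G(\theta+\sigma\xi)$ with odd Gaussian moments vanishing shows it is~$\bigo(\sigma^2)$; this gives the~$\sigma^2$ term. The second part has~$\gamma_{dJ}$-mean zero in~$\allxi$. To exploit fluctuation averaging, we solve a second, discrete Poisson equation in the fast variable for the Ornstein--Uhlenbeck transition~$P_h$:
\[
(\id - P_h)\psi_\sigma(\theta,\cdot) = h(\driftall-\driftinf)(\theta,\cdot)\cdot\nabla\phi(\theta) \, .
\]
Its solution has size~$\bigo(\varepsilon)$ times a function that is a sum of~$J$ centred terms divided by~$J$. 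It therefore carries the factor~$1/\sqrt{J}$ in~$L^2(\gamma_{dJ})$, with weighted~$L^\infty_{\weight{m}}$ bounds on its~$\theta$-derivatives. Replaying the telescoping argument with~$\psi_\sigma(\dtheta{n},\dallxi{n})$ converts this term into a boundary term, a martingale, and a cross term
\[
\expect\bigl[\psi_\sigma(\dtheta{n+1},\dallxi{n+1})-\psi_\sigma(\dtheta{n},\dallxi{n+1})\bigr] \, ,
\]
which comes from the motion of~$\theta$. Expanding this cross term to first order gives~$h\,\driftall\cdot\nabla_\theta\psi_\sigma$ plus a diffusion part. Its average is controlled by~$\varepsilon/\sqrt{J}$ times a factor that decomposes as follows. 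When~$\beta=1$, a further cancellation, which requires one extra derivative of~$G$, kills the~$\bigo(1)$ correlation. When~$\beta=0$, we only obtain the cruder factor~$\sigma/\sqrt{J}$, coming from the~$\sigma$-dependent part of~$\driftone$ paired with the~$\xi$-fluctuation. This reproduces the~$(1-\beta)\sigma/\sqrt J+\varepsilon/\sqrt J$ terms.

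\textbf{Step 3: MSE and the main obstacle.} For~\eqref{eq:bound_mse_discrete}, we square the decomposition. The martingale increments have conditional variance~$\lesssim h\norm{\nabla\phi}_\infty^2$, giving~$C\norm{f}_\infty^2/(Nh)$ after~$\norm{\nabla\phi}_\infty\lesssim\norm{f}_\infty$; this last bound holds by elliptic regularity on the torus, via the spectral gap and gradient bounds of the resolvent. The remaining terms are deterministic-type biases and are squared. The main obstacle is Step 2: controlling the discrete fast Poisson solution uniformly in~$h/\varepsilon$. We must show~$\norm{(\id-P_h)^{-1}}\lesssim \varepsilon/h$ on mean-zero weighted spaces~$L^\infty_{\weight{m},0}$. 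We will try to do this via the explicit contraction~$\e^{-h/\varepsilon}$ on Hermite-type components, and~$\sum_k\e^{-kh/\varepsilon}\lesssim 1+\varepsilon/h$. The harder part is extracting the~$1/\sqrt{J}$ gain with regularity in~$\theta$ when the Euler step couples~$\theta$ to all~$J$ fast variables. We plan to handle this by computing second moments of the empirical average~$\frac1J\sum_j$ of centred single-particle Poisson solutions, rather than using sup-norm bounds.
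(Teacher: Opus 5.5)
Your Steps~1 and~3 follow the paper's skeleton: the Schauder solution $\phi\in\cont^{4+\alpha}$ of $-\genlimit\phi=f-\mu_\Phi(f)$, the one-step Taylor expansion, the boundary and martingale terms, and the stationary start for~\eqref{eq:bound_distributions_discrete}. However, Step~2, which carries all the $\varepsilon,\sigma,J$ dependence, has a genuine gap.

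By pivoting on $\driftinf$ you are left with the centred but non-separable function $(\driftall-\driftinf)(\theta,\allxi)\cdot\nabla\phi(\theta)$. Your corrector $\psi_\sigma(\theta,\allxi)=\frac1J\sum_j\psi_1(\theta,\xin{j})$ must then be evaluated at $(\dtheta{n},\dallxi{n+1})$, and $\dtheta{n}$ depends on the past of every fast particle. For terms such as $\nabla_\theta\psi_\sigma(\dtheta{n},\dallxi{n+1})\cdot\driftall(\dtheta{n},\dallxi{n})$ or $\laplacian_\theta\psi_\sigma(\dtheta{n},\dallxi{n+1})$, the $1/\sqrt J$ gain does not follow from second moments of the empirical average, as you plan. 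The summands $\psi_1(\dtheta{n},\dxin{j}{n+1})$ share the random argument $\dtheta{n}$, so they are not independent across~$j$. Hence the off-diagonal terms of $\expect\lvert\frac1J\sum_j\cdots\rvert^2$ do not vanish, and bounding them would need a decoupling argument, uniform in~$n$, which you do not give. The natural tool is a supremum over~$\theta$ (\cref{proposition:expect_supremum_torus}), which you explicitly want to avoid. It also costs extra $\theta$-derivatives of the corrector, uniformly in~$\sigma$. This is exactly the route the paper sets aside in \cref{remark:averaged_dynamics_as_pivot}.

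Your account of the $(1-\beta)\sigma/\sqrt J$ term also has no counterpart in your decomposition. After the split $(\driftinf+\nabla\Phi)+(\driftall-\driftinf)$ there is no separate $\bigo(\sigma)$ term for a ``cruder factor'' to attach to. Read literally, ``$\varepsilon/\sqrt J$ times a factor $\sigma/\sqrt J$'' gives $\varepsilon\sigma/J$, not $\sigma/\sqrt J$.

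The missing idea is to expand around the $\sigma=0$ drift rather than around $\driftinf$. By \cref{lemma:drift_lim_small_sigma}, $\driftall=\driftlim+\sigma\driftderiv+\sigma^2\mathcal R^J_\sigma$, where $\driftlim+\nabla\Phi=-\frac1J\sum_j(\xin{j}\otimes\xin{j}-\I_d)\nabla\Phi$ and $\driftderiv=\frac1J\sum_j(\xin{j}\otimes\xin{j}\otimes\xin{j}):\Psi(\theta)$. Every centred term then has product form $\frac1J\sum_j\chi(\xin{j})\cdot A(\theta)$, with $\chi$ a Gaussian-centred polynomial. The discrete Poisson equation is solved for $\chi$ alone, explicitly on Hermite components, with $\norm{\varphi}_{L^2(\fastgaussianone)}\lesssim(h+\varepsilon)\norm{\chi}_{L^2(\fastgaussianone)}$. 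The $\theta$-dependence is controlled by $\norm{A}_{\cont^{2+\alpha}}$, and $\expect\lvert\frac1J\sum_j\varphi(\dxin{j}{n+1})\rvert\leq\norm{\varphi}_{L^2(\fastgaussianone)}/\sqrt J$ holds exactly by independence, with no supremum over~$\theta$ (\cref{lemma:averaging_discrete}).

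This is also where $\beta$ enters. $\Psi$ contains $\nabla^2G$, so $A=\Psi\otimes\nabla\phi\in\cont^{2+\alpha}$ requires $G\in\cont^{4+\alpha}$, which gives $\sigma(h+\varepsilon)/\sqrt J$. Otherwise only the coarse bound $\sigma\norm{A}_\infty/\sqrt J$ is available. The remainder yields $\sigma^2$.

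Two smaller points. In Step~3 the averaging terms are random time averages, not ``deterministic-type biases''. Their mean square needs the mean-square averaging estimate, including the fast martingale increments, which contribute a term of order $(h+\varepsilon)/(JNh)$. Also, the fast corrector has size $h+\varepsilon$, not $\varepsilon$, and $\norm{(\id-P_h)^{-1}}\lesssim 1+\varepsilon/h$ rather than $\varepsilon/h$.
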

\begin{proof}
    See~\sref{sub:bound_invariant_discrete}.
\end{proof}

Note that this recovers the same convergence rates in~$h$ and the total time~$Nh$ as in the classical results from~\cite{MattinglyStuartTretyakov2010},
showing that our result is consistent with the classical numerical analysis literature.

\iffalse
\begin{remark}
    [\textcolor{blue}{Parameter choices and computational work}]
    \label{remark:parameter_choice}
    {\color{blue}
    For a fixed model, dimension, and observable~$f$, the bound~\eqref{eq:bound_mse_discrete} gives sufficient parameter choices for a root-mean-square error of order~$\eta\in(0,1)$.
    When~$\beta=1$, take
    \[
        h=\eta \, ,\qquad \sigma=\sqrt\eta \, ,\qquad
        \varepsilon=\min\{1,\eta\sqrt J\} \, ,\qquad
        N=\lceil\eta^{-3}\rceil \, .
    \]
    Then~$Nh\geq\eta^{-2}$ and~$\varepsilon/\sqrt J\leq\eta$, so
    \[
        \left(\expect\bigl|\widehat f_N-\target(f)\bigr|^2\right)^{\frac12}
        \lesssim\eta \, .
    \]
    When~$\beta=0$, the same conclusion follows by taking~$\sigma=\min\{\sqrt\eta,\eta\sqrt J\}$ instead.
    Each step of~\eqref{eq:the_multiscale_dynamics_discrete} requires at most~$J+1$ evaluations of~$G$ when the baseline evaluation is shared,
    giving~$\bigo\bigl((J+1)\eta^{-3}\bigr)$ forward-model evaluations.
    Since~$\varepsilon$ can be reduced without a stability restriction from the exact fast update,
    these bounds alone do not recommend increasing~$J$ as accuracy is refined: fixed~$J$ already gives~$\bigo(\eta^{-3})$ evaluations.
    Larger ensembles may still be useful when~$\varepsilon$ is prescribed or evaluations can be parallelized.
    These are sufficient choices, not optimality claims; the constants depend on the model and dimension,
    and do not provide a dimension-explicit complexity or mixing comparison.}
\end{remark}
\fi

\begin{remark}
    \label{remark:assumptions_discrete_invariant}
    In contrast with the continuous-time setting,
    here we require that~$f \in \cont^{2+\alpha}(\torus^d)$
    instead of merely~$f \in \sobolev^{2,\infty}(\torus^d)$
    to obtain the desired bounds.
    This is necessary to ensure that the discretization error scales as~$\bigo(h)$ in the bounds~\eqref{eq:bound_distributions_discrete} and~\eqref{eq:bound_bias_discrete}.
    If $f \in \sobolev^{2,\infty}(\torus^d)$,
    then it is possible to prove similar bounds,
    but with a discretization error scaling instead as~$\bigo(h^{1-\gamma})$ for any~$\gamma \in (0,1)$.
    Of course, the constant implicit in the big-O notation would diverge as~$\gamma \to 0$.
\end{remark}

\subsection{Uniform-in-time weak convergence}
\label{sec:weak_error}

We finally provide uniform-in-time weak convergence results for the multiscale dynamics~\eqref{eq:the_multiscale_dynamics} and its discretization~\eqref{eq:the_multiscale_dynamics_discrete}.
While proving the results in~\cref{sub:invariant_distribution} required a detailed study of an appropriate Poisson equation,
the results in this subsection require a more involved analysis of the backward Kolmogorov equation associated with the overdamped Langevin dynamics~\eqref{eq:overdamped_langevin_dynamics}.
The details are given together with the proof in~\cref{sec:weak_error_continuous_proof}.
We begin with the continuous-time result.
\begin{theorem}
  \label{thm:uniform_continuous_weak_convergence}
  Let \(G\in \cont^4(\torus^d,\real^K)\), \(\varepsilon,\sigma\in(0,1]\), \(J\in \nat_{>0}\).
  Let $(\theta_t, \allxi_t)_{t\geq 0}$ denote the solution to the multiscale dynamics~\eqref{eq:the_multiscale_dynamics} with initial condition as described in~\cref{sec:Mathematical setting},
  and let $(\vartheta_t)_{t\geq 0}$ denote the solution to the overdamped Langevin dynamics~\eqref{eq:overdamped_langevin_dynamics} with matching initial condition~$\vartheta_0 \sim \rho_0$.
  Then, there exists a constant~\(C>0\) independent of \(\varepsilon,\sigma,J\) and of~$\rho_0$ such that
  for all \(f\in \cont^3(\torus^d)\),
  it holds that
  \begin{equation*}
      \forall t \geq 0,
      \qquad
      \Bigl\lvert \expect \bigl[ f(\theta_t) \bigr] - \expect \bigl[f(\vartheta_t)\bigr] \Bigr\rvert
      \leq C \left( \sigma^2 + \frac{\varepsilon}{\sqrt{J}} \right) \norm*{f}_{3,\infty} \, .
  \end{equation*}
  \noindent Here, the constant \(C>0\) may depend on the function~\(G\).
\end{theorem}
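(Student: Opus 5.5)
The plan is to follow the classical backward Kolmogorov (Talay--Tubaro) approach, combined with a Poisson equation in the fast variables to capture the averaging error. Let $u(t,\theta) = \expect[f(\vartheta_t)\mid\vartheta_0=\theta]$ solve $\partial_t u = \genlimit u$, where $\genlimit = -\nabla\Phi\cdot\nabla + \laplacian$ on $\torus^d$. Since the torus is compact and the noise is nondegenerate, \eqref{eq:overdamped_langevin_dynamics} is exponentially ergodic. For $\Phi\in\cont^4$ (which follows from $G\in\cont^4$ and $\ell$ smooth) and $f\in\cont^3$, I will first establish
\[
\norm{\nabla^k u(t,\dummy)}_\infty \lesssim \e^{-\lambda t}\norm{f}_{3,\infty}, \qquad k=1,2,3,
\]
uniformly in $t\geq0$. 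One route is to apply a Bakry--Émery-type commutator or Bernstein argument to the derivatives of $u$ on a short time interval, and then use the spectral gap of $\genlimit$ on $L^2(\mu_\Phi)$ together with parabolic smoothing to get decay for $t\geq1$. This uniform-in-time control is what turns a finite-time estimate into a uniform one.

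Next I fix $t$ and apply Itô's formula to $s\mapsto u(t-s,\theta_s)$ on $[0,t]$. This gives
\[
\expect f(\theta_t)-\expect f(\vartheta_t) = \expect\int_0^t \bigl(\driftall(\theta_s,\allxi_s)+\nabla\Phi(\theta_s)\bigr)\cdot\nabla u(t-s,\theta_s)\,\d s .
\]
I split the integrand through $\driftinf$. The part $(\driftinf+\nabla\Phi)\cdot\nabla u$ is bounded by $\sigma^2\norm{\nabla u}_\infty$, using the $\bigo(\sigma^2)$ bound on $\driftinf+\nabla\Phi$ already used for \cref{corollary:strong_convergence_continuous}; that bound comes from Taylor expansion and the vanishing of odd Gaussian moments. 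Integrating $\e^{-\lambda(t-s)}$ over $s$ then gives $C\sigma^2\norm{f}_{3,\infty}$ uniformly in $t$.

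The remaining part $(\driftall-\driftinf)\cdot\nabla u$ has zero mean in $\allxi$ under $\fastgaussianall$. I solve the fast Poisson equation $-\genfast\psi = \driftall-\driftinf$, where $\genfast$ is the generator of the Ornstein--Uhlenbeck process without the factor $1/\varepsilon$. Because $\driftall$ is an average over $j$, the solution is $\psi = \frac1J\sum_j \psi_\sigma(\theta,\xin{j})$, with each $\psi_\sigma$ of polynomial growth in $\xi$ and $\cont^3$ in $\theta$, uniformly in $\sigma$; this uses $G\in\cont^4$. I then apply Itô's formula to $\varepsilon\,\psi(\theta_s,\allxi_s)\cdot\nabla u(t-s,\theta_s)$. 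The $1/\varepsilon$ fast generator reproduces the centred integrand, leaving a boundary term and a term from the slow generator and the time derivative acting on $\psi\cdot\nabla u$, both carrying a factor $\varepsilon$.

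The main obstacle is getting the factor $1/\sqrt J$ rather than $1$. A naive bound only gives $\varepsilon$, because terms like $\driftall\cdot\nabla_\theta(\psi\cdot\nabla u)$ contain products of two centred averages, and these have nonzero mean. I plan to write each such product as its $\fastgaussianall$-mean plus a fluctuation. The fluctuation has magnitude $\bigo(J^{-1/2})$ in $L^2(\fastgaussianall)$, since it is a normalized sum of independent centred terms, and I bound it by Cauchy--Schwarz. This requires the fast marginal to remain $\fastgaussianall$, which holds because $\allxi_0\sim\fastgaussianall$ and the Ornstein--Uhlenbeck process is autonomous. The boundary terms are bounded the same way: $\varepsilon\,\expect|\psi|$ is $\bigo(\varepsilon/\sqrt J)$. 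The mean part is the delicate piece. I will show that its diagonal contribution is $\bigo(1/J)$, and handle what remains, if necessary, by a second corrector (a second-order Poisson equation) so that every leftover is $\varepsilon$ times a centred average. Throughout, the derivatives up to order three of $u(t-s,\dummy)$ must still decay like $\e^{-\lambda(t-s)}$, so that each $s$-integral is bounded uniformly in $t$. Collecting the terms gives $C(\sigma^2+\varepsilon/\sqrt J)\norm{f}_{3,\infty}$.
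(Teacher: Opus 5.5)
Your route differs from the paper's. You pivot through $\driftinf$ and solve a fast Poisson equation with the non-separable right-hand side $\driftall-\driftinf$. The paper instead writes $\driftall=\driftlim+\sigma\,\partial_\sigma\driftall|_{\sigma=0}+\sigma^2\mathcal R^J_\sigma$. Every term to be averaged then has the form $\frac1J\sum_j\chi(\xin{j}_s)\cdot A(s,\theta_s)$, with $\chi$ a centred polynomial ($\xi\otimes\xi-\I_d$ or $\xi\otimes\xi\otimes\xi$), so the corrector depends on $\xi$ alone.

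Your route can be made to work, but as written it has a gap at exactly the step that produces $1/\sqrt J$. You control the fluctuations through their $L^2(\fastgaussianall)$ norm at fixed $\theta$, together with stationarity of the fast marginal. This does not suffice. The quantities to bound are $\expect\lvert g(\theta_s,\allxi_s)\rvert$, where $g(\theta,\dummy)$ is a centred empirical average for each fixed $\theta$. But $\theta_s$ is \emph{not} independent of $\allxi_s$, because the slow drift is driven by the fast variables. So the joint law is not $\mathrm{Law}(\theta_s)\otimes\fastgaussianall$, and the fixed-$\theta$ variance computation cannot be evaluated at the random point $\theta_s$.

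What is missing is a law of large numbers uniform in $\theta$: $\expect\sup_{\theta\in\torus^d}\lvert g(\theta,\allxi_s)\rvert^p\lesssim J^{-p/2}$. This follows from a Sobolev embedding in $\theta$ combined with Marcinkiewicz--Zygmund, as in \cref{proposition:expect_supremum_torus}. It costs one extra $\theta$-derivative of each corrector. For instance, the $\Delta_\theta\psi$ term requires $\nabla^3_\theta\psi_\sigma$ bounded in a weighted space uniformly in $\sigma$, which is what $G\in\cont^4$ provides (via \cref{remark:higher_order_derivatives_drift} and \cref{corollary:poisson_ou}). The boundary term at $s=t$ needs the same tool; the one at $s=0$ vanishes, since $\theta_0$ is independent of $\allxi_0$.

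Conversely, the obstacle you single out is not one. With the uniform bound, Cauchy--Schwarz gives directly
$\expect\bigl[\lvert\driftall(\theta_s,\allxi_s)\rvert\,\lvert\nabla_\theta\psi(\theta_s,\allxi_s)\rvert\bigr]\le\norm{\driftall(\theta_s,\allxi_s)}_{L^2(\Omega)}\bigl(\expect\sup_{\theta}\lvert\nabla_\theta\psi(\theta,\allxi_s)\rvert^2\bigr)^{1/2}\lesssim J^{-1/2}$.
So the ``naive'' bound already yields $\varepsilon/\sqrt J$. Moreover, at fixed $\theta$, the $\fastgaussianall$-mean of a product of two centred empirical averages $\frac1J\sum_j a(\theta,\xin{j})$ and $\frac1J\sum_k b(\theta,\xin{k})$ is exactly its diagonal part $J^{-1}\int a\cdot b\,\d\fastgaussianone$. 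No second corrector is needed.

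The paper's choice of pivot sidesteps the coupling issue altogether. Since its correctors depend only on $\xi$, the identity $\norm{\frac1J\sum_j\varphi(\xin{j}_s)}_{L^2(\Omega)}=\norm{\varphi}_{L^2(\fastgaussianone)}/\sqrt J$ holds regardless of how $\theta_s$ is coupled to $\allxi_s$. The $\theta$-dependence sits in a deterministic factor bounded in sup norm (\cref{lemma:averaging_weak}). The price is the extra term $\sigma\,\partial_\sigma\driftall|_{\sigma=0}$, which the same lemma averages into $\sigma\varepsilon/\sqrt J$. Your route instead gets a clean $\sigma^2$ from $\norm{\driftinf+\nabla\Phi}_\infty$. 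However, it is essentially the weak-error analogue of the proof of \cref{theorem:robustness}, carrying the extra technicality the authors point to in \cref{remark:averaged_dynamics_as_pivot}.
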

\begin{proof}
    See~\cref{sec:weak_error_continuous_proof}.
\end{proof}

\begin{remark}
    [Connection with the results of~\cref{sub:invariant_distribution}]
    Apart from the more restrictive regularity assumption,
    \cref{thm:uniform_continuous_weak_convergence} implies the first two items of~\cref{theorem:invariant_continuous} obtained in~\cref{sub:invariant_distribution}.
    Indeed, assume that for some function~$f\colon \torus^d \to \real$,
    there exists a constant~$C$ independent of the parameters such that,
    for all~$x \in \torus^d$,
    \[
        \forall t \geq 0, \qquad
        \bigl\lvert \expect_x\bigl[f(\theta_t) \bigr] - \expect_x\bigl[f(\vartheta_t)\bigr] \bigr\rvert
        \leq C\left(\sigma^2 + \frac{\varepsilon}{\sqrt{J}} \right) \, .
    \]
    Taking the limit~$t \to \infty$ in the above expression and using the ergodicity of the processes \((\theta_t)_{t\geq 0}\) and \((\vartheta_t)_{t\geq 0}\)
    immediately yields the following bound on the difference between the invariant distribution and the target distribution:
    \[
        \bigl\lvert \fullinvarianttheta(f) - \target(f) \bigr\rvert
        \leq C\left(\sigma^2 + \frac{\varepsilon}{\sqrt{J}} \right) \, ,
    \]
    for the same constant~$C$.
    Furthermore,
    an upper bound on the bias of ergodic averages can be obtained by decomposing the bias as follows,
    \[
        \expect {\left[ \frac{1}{T} \int_0^T f(\theta_t) \, \d t - \target(f) \right]}
        =  \frac{1}{T} \int_0^T \expect \Bigl[ f(\theta_t) - f(\vartheta_t) \Bigr] \, \d t  + \expect{\left[ \frac{1}{T} \int_0^T f(\vartheta_t) \, \d t - \target(f) \right]} \, ,
    \]
    and by using the classical approach described in~\cite[Section 4.1]{MattinglyStuartTretyakov2010} to prove that the second term scales as~$\bigo(1/T)$.
\end{remark}

The discrete-time version is as follows.

\begin{theorem}
  \label{thm:uniform_discrete_weak_convergence}
  Assume that \(G \in \cont^6(\torus^d, \real^K)\), let \(J\in \nat_{>0}\), and let~$\varepsilon,\sigma,h\in(0,1]$.
  Let $(\dtheta{n}, \dallxi{n})_{n\geq 0}$ denote the solution to the discretized multiscale dynamics~\eqref{eq:the_multiscale_dynamics_discrete}
  supplemented with an initial condition~$(\dtheta{0}, \dallxi{0}) \sim \pi_0$
  for a probability measure $\pi_0 \in \Pi(m_0, \fastgaussianall)$.
  Let $(\vartheta_t)_{t\geq 0}$ also denote the solution to the overdamped Langevin dynamics~\eqref{eq:overdamped_langevin_dynamics} with initial condition~$\vartheta_0 \sim m_0$.
  Then, there exists a constant \(C>0\) independent of~\(\varepsilon,\sigma,h,J\) and of~$m_0$ such that
  for all~\(f\in \cont^5(\torus^d)\),
  it holds that
  \begin{equation}
      \label{eq:uniform_discrete_weak_convergence}
      \forall N \geq 0,
      \qquad
      \Bigl\lvert \expect \bigl[ f(\dtheta{N}) \bigr] - \expect \bigl[f(\vartheta_{Nh})\bigr] \Bigr\rvert
      \leq
      C \left( h + \sigma^2 + \frac{\varepsilon}{\sqrt{J}} \right)  \norm*{f}_{5,\infty} \, .
  \end{equation}
\end{theorem}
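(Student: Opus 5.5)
We follow the classical Talay--Tubaro telescoping argument, with the backward Kolmogorov equation of~\eqref{eq:overdamped_langevin_dynamics} as the central object. Let $u(t,\theta) = \expect_\theta[f(\vartheta_t)]$ solve $\partial_t u = \genlimit u$, with $\genlimit = -\nabla\Phi\cdot\nabla + \laplacian$. Since $\mu_\Phi$ satisfies a Poincaré inequality on the compact torus, and $\Phi\in\cont^6$ when $G\in\cont^6$, we first establish exponentially decaying derivative bounds
\[
\sup_\theta |\nabla^k u(t,\theta)| \lesssim \e^{-\lambda t}\norm{f}_{k,\infty}, \qquad 1\le k\le 5 .
\]
These follow from $L^2(\mu_\Phi)$ decay combined with parabolic Schauder or Sobolev smoothing. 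Presumably this is the same estimate used in the continuous-time proof of \cref{thm:uniform_continuous_weak_convergence}. We then write
\[
\expect f(\dtheta{N}) - \expect f(\vartheta_{Nh}) = \sum_{n=0}^{N-1}\expect\bigl[u(t_{N-n-1},\dtheta{n+1}) - u(t_{N-n},\dtheta{n})\bigr], \qquad t_k = kh .
\]
The proof then reduces to showing that the $n$-th local term is bounded by $C\e^{-\lambda (N-n)h}\,h\,(h+\sigma^2+\varepsilon/\sqrt J)$. Summing the geometric series removes the factor $h$ and gives a bound uniform in $N$.

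To treat a local term, we Taylor expand $u(t_{k-1},\cdot)$ around $\dtheta{n}$ to fourth order in the increment $h\driftall(\dtheta{n},\dallxi{n})+\sqrt{2h}\,\xslow{n}$, and expand in time as well. The Euler contributions give the usual $\bigo(h^2)$ local error; this uses fifth derivatives of $u$ together with moment bounds on $\dallxi{n}$, which hold uniformly because the fast marginal is exactly $\fastgaussianall$ at every step, the initial law lying in $\Pi(m_0,\fastgaussianall)$. What remains at order $h$ is
\[
h\,\expect\bigl[(\driftall(\dtheta{n},\dallxi{n})+\nabla\Phi(\dtheta{n}))\cdot\nabla u(t_{k},\dtheta{n})\bigr].
\]
We split this into two parts.
\begin{itemize}
\item The \emph{smoothing} part, $(\driftinf+\nabla\Phi)\cdot\nabla u$, which is $\bigo(\sigma^2)$ by a Taylor expansion of $G$ in $\sigma$. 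The odd terms vanish under the Gaussian, which is where the $\cont^4$-type control of the drift enters.
\item The \emph{fluctuation} part, $(\driftall-\driftinf)\cdot\nabla u$, which is mean zero in $\allxi$ for fixed $\theta$ but is evaluated along correlated iterates.
\end{itemize}

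The fluctuation part is the main obstacle. We handle it with a discrete Poisson equation in the fast variable. Let $\phi(\theta,\allxi)$ solve $(\id-P_h)\phi = (h/\varepsilon)(\driftall-\driftinf)$, where $P_h$ is the exact OU transition $\allxi\mapsto \e^{-h/\varepsilon}\allxi+\sqrt{1-\e^{-2h/\varepsilon}}\,\cdot$. Since $\driftall-\driftinf$ is a sum of $J$ independent mean-zero terms in $L^\infty_{\weight{m}}$, and $(\id-P_h)^{-1}$ acts with norm $\lesssim \varepsilon/h$ on these spaces (spectral gap $1-\e^{-h/\varepsilon}\gtrsim \min(1,h/\varepsilon)$), we obtain a bound on $\phi$ of order $\varepsilon/h\cdot h/\varepsilon=\bigo(1)$ in the $\theta$-derivatives we need, times the factor $h$ absorbed into the telescoping.

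We then rewrite the fluctuation term as
\[
\tfrac{\varepsilon}{h}\,h\,\expect[\phi\cdot\nabla u - P_h\phi\cdot\nabla u]
\]
and perform summation by parts over $n$. This produces three kinds of terms:
\begin{itemize}
\item increments of $\dtheta{n}$ and of $t$, which are of size $h$ in $\theta$, so that $\varepsilon/h\cdot h\cdot h$ summed against $\e^{-\lambda t}$ gives $\varepsilon$;
\item boundary terms, which are $\bigo(\varepsilon)$ and decay thanks to $\nabla u$;
\item a cross term between $\driftall-\driftinf$ and $\nabla_\theta\phi$.
\end{itemize}
The $1/\sqrt J$ gain must come from the fact that $\phi$ is an average of $J$ i.i.d.\ mean-zero functions. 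In the $\theta$-increment terms, the leading product has expectation equal to $1/J$ times a diagonal covariance term, giving $\varepsilon/J$. The remaining terms carry $\phi$ linearly, and bounding them by Cauchy--Schwarz with $\expect|\phi|^2\lesssim 1/J$ yields $\varepsilon/\sqrt J$. The delicate point is checking that the $\sqrt{2h}\,\xslow{n}$ part of the increment does not lose a factor $h^{-1/2}$. It contributes to the summation by parts only at second order through $\laplacian_\theta$, so it is fine. This step needs up to three $\theta$-derivatives of $\phi$ and hence of $G$, and five of $u$, consistent with the assumptions $G\in\cont^6$ and $f\in\cont^5$. Collecting the three contributions $h$, $\sigma^2$ and $\varepsilon/\sqrt J$ proves~\eqref{eq:uniform_discrete_weak_convergence}.
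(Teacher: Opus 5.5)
Your route differs from the paper's. You pivot through the smoothed drift $\driftinf$. The bias $(\driftinf+\nabla\Phi)\cdot\nabla u$ is $\bigo(\sigma^2)$ by \cref{lemma:bound_sigma}, and the whole fluctuation $\driftall-\driftinf$ is removed by a discrete Poisson corrector $\phi(\theta,\allxi)$ that depends jointly on both variables. The paper never uses $\driftinf$ in this proof. It writes $\driftall+\nabla\Phi=(\driftlim+\nabla\Phi)+\sigma\driftderiv+\sigma^2\mathcal R^J_\sigma$ (\cref{lemma:drift_lim_small_sigma}). Both averaged terms then have the separable form $\frac1J\sum_j B(\xi^j)\cdot g(t,\theta)$, with $B$ a centered polynomial ($\xi\otimes\xi-\I_d$ or $\xi\otimes\xi\otimes\xi$). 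Hence \cref{lemma:averaging_discrete_weak} applies with a corrector $\mathscr B(\xi)$ that does not depend on $\theta$, and the $\sigma\driftderiv$ part is handled the same way using $\sigma\leq1$. Your decomposition is cleaner in $\sigma$, but it is exactly the alternative that \cref{remark:averaged_dynamics_as_pivot} calls more technical, and the technical point is the one you skip.

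\textbf{The gap is the $1/\sqrt J$ gain.} In the paper, $\mathscr B^J(\dallxi{n})$ depends only on the fast variables, whose law is exactly $\fastgaussianall$ with independent components at every step. So $\expect|\mathscr B^J(\dallxi{n})|^2=\|\mathscr B\|^2_{L^2(\fastgaussianone)}/J$ however $\dtheta{n}$ depends on $\dallxi{n}$, and $g(t,\dtheta{n})$ is simply bounded in sup norm.

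Your corrector is centered only for each fixed $\theta$, yet you evaluate it at $\theta=\dtheta{n}$. This is a function of $\dallxi{0},\dots,\dallxi{n-1}$ and, at $n=0$, an arbitrary coupling with $\dallxi{0}$ under $\pi_0\in\Pi(m_0,\fastgaussianall)$; one may even take $\dtheta{0}$ to maximize $|\phi(\cdot,\dallxi{0})|$. The $J$ summands of $\phi(\dtheta{n},\dallxi{n})$ are then neither independent nor centered. So neither ``$\expect|\phi|^2\lesssim1/J$ because $\phi$ is an average of $J$ i.i.d.\ mean-zero functions'' nor ``the cross terms vanish, leaving a diagonal covariance of order $1/J$'' follows.

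What you need is a uniform-in-$\theta$ law of large numbers, $\expect\sup_{\theta}|\nabla_\theta^k\phi(\theta,\dallxi{n})|^q\lesssim J^{-q/2}$, for every $\theta$-derivative that enters your expansion; this is \cref{proposition:expect_supremum_torus}. Combine it with uniform-in-$\sigma$ weighted bounds on $\nabla^k_\theta(\driftone-\driftinf)$ (\cref{remark:higher_order_derivatives_drift}) and the resolvent bound of \cref{lemma:discrete_poisson_eq}. It costs one extra $\theta$-derivative of the corrector, as in the proof of \cref{theorem:strong_convergence_fixed_sigma_discrete}. With this inserted after each Cauchy--Schwarz step, your boundary, time-increment and $\theta$-increment terms are all bounded by $C(h+\varepsilon)\norm{f}_{5,\infty}/\sqrt J$, and $G\in\cont^6$ leaves enough room.

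Two smaller corrections:
\begin{enumerate}
\item The reduction to a per-step bound $C\e^{-\lambda(N-n)h}h(h+\sigma^2+\varepsilon/\sqrt J)$ is false for the fluctuation part. One step contributes $h\,\expect[(\driftall-\driftinf)\cdot\nabla u]$, which is only $\bigo(h/\sqrt J)$. The factor $\varepsilon$ appears only after summation by parts, so drop that claim.
\item You have $\|(\id-P_h)^{-1}\|\lesssim(1-\e^{-h/\varepsilon})^{-1}\lesssim1+\varepsilon/h$, not $\varepsilon/h$. Thus $\phi=\bigo(1+h/\varepsilon)$ and the effective corrector $\varepsilon\phi$ is $\bigo(h+\varepsilon)$. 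You therefore get $(h+\varepsilon)/\sqrt J$ rather than $\varepsilon/\sqrt J$, which is harmless because $(h+\varepsilon)/\sqrt J\leq h+\varepsilon/\sqrt J$.
\end{enumerate}
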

\begin{proof}
    See~\sref{ssub:weak_discrete}.
\end{proof}

We note that the regularity assumption on~$G$ in~\cref{thm:uniform_discrete_weak_convergence} is stronger than that in~\cref{thm:uniform_continuous_weak_convergence},
and that the former theorem requires~$f \in \cont^5(\torus^d)$ instead of~$f \in \cont^3(\torus^d)$.
The extra regularity is useful in the proof to control remainders arising from Taylor expansions.
The assumptions of~\cref{thm:uniform_discrete_weak_convergence} are also stronger than those of~\cref{theorem:bound_invariant_discrete},
which highlights the value of using the dedicated approach inspired by~\cite{MattinglyStuartTretyakov2010}
when studying the invariant distribution and ergodic averages.

\newpage
\section{Proofs of the main results (continuous-time)}
\label{sec:proofs_continuous}

In this section, we present the proofs of the main results for the continuous-time multiscale dynamics~\eqref{eq:the_multiscale_dynamics}.
We first comment on the proof strategy,
then prove existence and uniqueness of an invariant measure for the dynamics in~\cref{sub:proof_existence_uniqueness_invariant}.
Next, in \cref{sub:strong_convergence_continuous_proof},
we provide the proof of the strong convergence results;
\Cref{sub:invariant_distribution_continuous_proof} then provides the proof of convergence for ergodic averages,
and \cref{sec:weak_error_continuous_proof} provides the proof of weak convergence.

\subsection*{Comments on the proof strategy}%
\label{sub:proof_strategy}

We use two different proof strategies in the following sections,
both of which rely on a triangle inequality.
\begin{itemize}
    \item 
        In the proof of the strong convergence statements stated in~\cref{sub:strong_convergence_of_the_numerical_scheme},
        we use as a pivot the averaged dynamics obtained by letting~$\varepsilon \to 0$ or~$J \to \infty$ in the multiscale dynamics.
        The advantage of this approach is that it exhibits the smoothing effect of the multiscale dynamics:
        when~$\varepsilon/J \to 0$, the process~\((\theta_t)_{t \geq 0}\) converges to the solution of the Langevin dynamics 
        \begin{equation}
            \label{eq:averaged_dynamics}
            \d \vvartheta_t = \driftinf(\vvartheta_t) \, \d t + \sqrt{2} \, \d \wslow_t \, ,
            \qquad \driftinf(\theta) 
            \coloneq
            \expect_{\xi \sim \gamma_d} \Bigl[ \driftone(\theta, \xi) \Bigr] \, .
        \end{equation}
        In particular, when~$\ell = \id$ and~$G = \Phi$,
        we have~$\driftinf(\theta) = - \nabla \Phi_{\sigma}(\theta)$,
        where~$\Phi_{\sigma} = \Phi \star \normal(0, \sigma^2 \I_d)$ is a smooth approximation of~$\Phi$.
        Understanding the relation between the multiscale dynamics and the averaged dynamics~\eqref{eq:averaged_dynamics} is of independent interest,
        but presents a higher level of difficulty.
        Indeed, the drift function~$\driftone$ does not enjoy the separable structure of~$\driftonelim(\theta,\xi) \coloneq - (\xi \otimes \xi) \nabla \Phi(\theta)$,
        and so neither does the solution to the Poisson equation~\eqref{eq:poisson_strong_conv} that we are led to consider in the proof of strong convergence.
    \item
        In contrast, in the proofs of the results in~\cref{sub:invariant_distribution} and~\cref{sec:weak_error},
        we do not introduce any averaged dynamics~\eqref{eq:averaged_dynamics} as a pivot.
        As a consequence, the statements only relate the multiscale dynamics~\eqref{eq:the_multiscale_dynamics} to the target Langevin dynamics~\eqref{eq:overdamped_langevin_dynamics},
        and the various contributions to the error are not analyzed separately.
        To this end,
        the drift term in the multiscale dynamics~\eqref{eq:the_multiscale_dynamics} is systematically decomposed as
        \[
            \driftall(\theta, \allxi) = \driftlim(\theta, \allxi) + \bigl( \driftall(\theta, \allxi) - \driftlim(\theta, \allxi) \bigr) \, ,
        \]
        with the second part being viewed as an error term.
        Here~$\driftlim(\theta, \allxi) = \frac{1}{J} \sum_{j=1}^{J} \driftonelim(\theta, \xin{j})$.
        The advantage of this approach is that it allows to bypass the technical difficulties mentioned above,
        since the drift function~$\driftlim$ enjoys a separable structure:
        it can be written as the product of a function of~\(\theta\) and a function of~\(\allxi\).
\end{itemize}
It would also be possible to use the averaged dynamics~\eqref{eq:averaged_dynamics} as a pivot
in the proofs of the results in~\cref{sub:invariant_distribution} and~\cref{sec:weak_error}.
However, since doing so would render the analysis considerably more technical,
we do not pursue this approach in the present work.
We comment on some of the changes that would be required to do so in~\cref{remark:averaged_dynamics_as_pivot}.

\subsection{Existence and uniqueness for the continuous-time dynamics}
\label{sub:proof_existence_uniqueness_invariant}

\begin{proof}[Proof of~\cref{lemma:existence_uniqueness_invariant}]
    Existence of an invariant measure for the continuous-time dynamics~\eqref{eq:the_multiscale_dynamics} follows, for example,
    from the Krylov--Bogoliubov method;
    for instance, we can apply~\cite[Theorem 3.7]{khasminskii2012stochastic} with the Lyapunov function
    \[
        V(\theta, \allxi) = 1 + \frac{1}{J} \sum_{j=1}^{J} \lvert \xin{j} \rvert^2 \, .
    \]
    Indeed, the generator~$\mathcal L$ of the continuous-time dynamics satisfies
    \[
        \mathcal L V = -\frac{2}{\varepsilon}V + \frac{2(d+1)}{\varepsilon} \, ,
    \]
    and the sublevel sets of~$V$ are compact because the slow variable belongs to the torus.
    Uniqueness requires additional arguments,
    for example uniformly nondegenerate noise and a finite mean recurrence time to some bounded open domain~\cite[Theorem 4.4]{khasminskii2012stochastic},
    both of which are satisfied in our setting.
\end{proof}

\begin{remark}
    For the continuous-time dynamics,
    it would also have been possible to rely on Harris' theorem;
    see~\cite{MR2857021} and also~\cite[Section~2.4]{MR3509213} for a formulation of the method in the continuous-time setting.
    The advantage of this approach is that it provides a quantitative estimate on the rate of convergence to the invariant distribution in a weighted total variation distance.
    Applying this result requires to check two conditions:
    a Lyapunov condition and a minorization condition.
    The Lyapunov condition is straightforward to check for the multiscale dynamics~\eqref{eq:the_multiscale_dynamics} using the same Lyapunov function as above.
    A non-quantitative version of the minorization condition can be proved using a controllability argument~\cite{MR2248986,MR1931266}.
    A quantitative minorization condition can also be obtained in our setting using Girsanov's theorem~\cite{MR3986068},
    but the convergence rate obtained in this manner exhibits a bad dependence on the parameter~\(\sigma\),
    as Novikov's condition can only be verified for a small time interval whose length depends on~\(\sigma\).
    Therefore, we do not pursue this approach here.
\end{remark}

\subsection{Strong convergence}%
\label{sub:strong_convergence_continuous_proof}

Throughout this section,
we write~\(a \lesssim b\) to mean~\(a \leq C b\) for some constant~\(C > 0\) independent of the relevant parameters~\((\sigma, \varepsilon, h, J)\).
Subscripts on~\(\lesssim\) indicate additional parameters on which the constant may depend,
but we do not always make these explicit.

\subsubsection{Strong convergence for fixed \texorpdfstring{$\sigma$}{sigma}}%
\label{subsub:strong_convergence_continuous}

Throughout this section, we denote by $(\wslow_t)_{t\geq 0}$,
and $(\wfast{j}_t)_{t \geq 0}$, for $j \in \{1, 2, \dotsc\}$, 
mutually independent $d$-dimensional Brownian motions.
We also fix i.i.d.\ samples \(\xin{1}_0, \xin{2}_0, \dotsc\) with common distribution~\(\normal(0, \I_d)\),
independent of the Brownian motions,
and \(\theta_0 \sim \rho_0\in \mathcal P(\torus^d)\),
independent of all the Brownian motions and of the \(\xin{j}_0\)'s,
where~$\rho_0$ is an arbitrary initial distribution on the torus.
It will also be convenient to decompose the generator of the multiscale dynamics as
\begin{equation}
    \label{eq:generator_decomposition}
    \fullgen = \frac{1}{\varepsilon} \genfast + \genslow \, \qquad
    \text{where} \qquad
    \genfast = \sum_{j=1}^{J} \left( - \xin{j} \cdot \nabla_{\xin{j}} + \Delta_{\xin{j}} \right) \quad \text{and} \quad
    \genslow = \driftall \cdot \nabla_{\theta} + \Delta_{\theta} \, .
\end{equation}
\begin{proof}[Proof of~\cref{theorem:robustness}]
  We follow the classical approach for proving averaging results for multiscale dynamics;
  see for example~\cite{MR2382139}.
  The main difference is that we track the dependence on the number~\(J\) of fast processes.
  For~\(t \in \real_{\geq 0}\), let \(n_t = \lfloor t / \varepsilon \rfloor\).
  We first write
  \begin{align}
      \notag
      \theta_t-\vvartheta_t
      &= \int_{0}^{t} \Bigl(\driftinf(\theta_s)-\driftinf(\vvartheta_s)\Bigr) \, \d s
      + \int_{0}^{n_t \varepsilon} \Bigl(\driftall(\theta_s, \allxi_s)-\driftinf(\theta_s)\Bigr) \, \d s \\
      &\qquad + \int_{n_t \varepsilon}^{t} \Bigl(\driftall(\theta_s, \allxi_s)-\driftinf(\theta_s)\Bigr) \, \d s
      \label{eq:init_integral}
  \end{align}
  At this point, it is not clear why we separate the second and third integrals.
  The benefit of this approach will be justified later in the proof, see~\eqref{eq:max_bound} and the discussion following.
  By definition of \(\driftinf\),
  for any fixed~\(\theta \in \torus^d\) 
  the function \(\xi \mapsto \driftone(\theta, \xi)-\driftinf(\theta)\) 
  has average 0 with respect to the measure~\(\normal(0, \I_d)\).
  Therefore, by~\sref{corollary:poisson_ou}, for all~\(\theta \in \torus^d\)
  the following Poisson equation admits a unique solution~$\solone(\theta,\placeholder) \in L^2_0(\gamma_d;\real^d)$,
  where~$\genfastone$ is the generator of the Ornstein--Uhlenbeck process in~$\real^d$, given by
  \[
      \genfastone = -\xi\cdot\nabla_\xi + \Delta_\xi \, ,
  \]
  and we recall that~\(\gamma_d\) denotes the Gaussian distribution~\(\normal(0, \I_d)\):
  \begin{equation}
      \label{eq:poisson_strong_conv}
      - \genfastone \solone(\theta, \xi) =
      \driftone(\theta, \xi) - \driftinf(\theta) \, ,
  \end{equation}
  Define \(\solall \colon \torus^d \times \real^{dJ} \to \real^d\) by
  \(
  \solall(\theta, \allxi) = \frac{1}{J}\sum_{j=1}^J \solone\bigl(\theta,\xin{j}\bigr)
  \)
  and note that \(- \genfast \solall(\theta, \allxi) = \driftall(\theta, \allxi)-\driftinf(\theta)\).
  Thus, by It\^o's formula,
  it holds for all \(u \geq 0\) that
  \begin{align*}
      \solall\bigl(\theta_u, \allxi_u\bigr) - \solall\bigl(\theta_0, \allxi_0\bigr)
      &= - \frac{1}{\varepsilon} \int_{0}^{u} \Bigl( \driftall(\theta_s, \allxi_s)-\driftinf(\theta_s) \Bigr) \, \d s
      + \frac{1}{J}\sum_{j=1}^{J}\int_{0}^{u} \genslow \solone\bigl(\theta_s, \xin{j}_s\bigr) \, \d s \\
      & \qquad
      + \frac{\sqrt{2}}{J} \sum_{j=1}^{J} \int_{0}^{u} \nabla_{\theta} \solone\bigl(\theta_s, \xin{j}_s\bigr) \cdot \d \wslow_s
      + \frac{\sqrt{2}}{\sqrt{\varepsilon} J} \sum_{j=1}^{J} \int_{0}^{u} \nabla_{\xi} \solone\bigl(\theta_s, \xin{j}_s\bigr) \cdot \d \wfast{j}_s \, ,
  \end{align*}
  Combining this equation evaluated at~\(u = n_t \varepsilon\) together with~\eqref{eq:init_integral},
  we deduce that
  \begin{align}
      \notag
      \theta_t-\vvartheta_t
      &=
      \int_{0}^{t} \Bigl(\driftinf(\theta_s)-\driftinf(\vvartheta_s)\Bigr) \, \d s
      - \frac{1}{J} \sum_{j=1}^{J} \int_{n_t \varepsilon}^{t} \genfastone \solone \bigl(\theta_s, \xin{j}_s\bigr) \, \d s
      + \varepsilon \Bigl( \solall\bigl(\theta_0, \allxi_0\bigr) - \solall\bigl(\theta_{n_t \varepsilon}, \allxi_{n_t \varepsilon}\bigr)  \Bigr)
   \\ &\qquad
   \notag
   + \frac{\varepsilon}{J}\sum_{j=1}^{J}\int_{0}^{n_t \varepsilon} \genslow \solone\bigl(\theta_s, \xin{j}_s\bigr) \, \d s
   + \frac{\sqrt{2}\varepsilon}{J} \sum_{j=1}^{J} \int_{0}^{n_t \varepsilon} \nabla_{\theta} \solone\bigl(\theta_s, \xin{j}_s\bigr) \cdot \d \wslow_s
   \\ &\qquad
   + \frac{\sqrt{2\varepsilon}}{J} \sum_{j=1}^{J} \int_{0}^{n_t \varepsilon} \nabla_{\xi} \solone\bigl(\theta_s, \xin{j}_s\bigr) \cdot \d \wfast{j}_s
   \eqcolon \sum_{i=1}^{6} \termi{i}_t \, .
   \label{eq:six_terms_robustness}
  \end{align}
  We now bound each of the terms on the right-hand side separately.

  \paragraph{Bound on the first term in~\eqref{eq:six_terms_robustness}}%
  By~\sref{lemma:lipschitz_asymptotic_drift},
  the averaged drift~\(\driftinf\) is Lipschitz continuous with Lipschitz constant~\(\lipasym\) independent of~\(\sigma\),
  so it holds immediately that
  \[
      \expect \biggl[ \sup_{s \in [0, t]} \bigl\lvert \termi{1}_s \bigr\rvert^p \biggr]
      \leq \lipasym^p \int_{0}^{t} \expect {\left[ \sup_{u \in [0, s]} \lvert \theta_u - \vvartheta_u \rvert^p \right]} \, \d s \, .
  \]

  \paragraph{Bound on the second term in~\eqref{eq:six_terms_robustness}}%
  For any \(s \in [0, t]\), the integration interval in~\(\termi{2}_s\) satisfies
  \([n_s \varepsilon, s] \subseteq [0, t]\),
  and \(s - n_s \varepsilon \leq \varepsilon\).
  Therefore, by Hölder's inequality,
  \begin{equation}
      \label{eq:second_term_first_equation}
      \sup_{s \in [0, t]} \bigl\lvert \termi{2}_s \bigr\rvert^p
      \leq
      \varepsilon^{p- 1} \int_{0}^{t}
      \biggl\lvert \frac{1}{J} \sum_{j=1}^{J} \genfastone \solone \bigl(\theta_s, \xin{j}_s\bigr) \biggr\rvert^{p} \, \d s \, .
  \end{equation}
  Note that, for a given \(\theta \in \torus^d\),
  the function \(\xi \mapsto \genfastone \solone(\theta, \xi)\) has average 0 with respect to the Gaussian measure~\(\normal(0, \I_d)\).
  Consequently, for a fixed \(\theta \in \torus^d\),
  the sum
  \[
      \frac{1}{J} \sum_{j=1}^{J} \genfastone \solone \bigl(\theta, \xin{j}_s\bigr)
  \]
  can be shown to converge to 0 in \(L^p(\Omega)\) by the Marcinkiewicz--Zygmund inequality.
  However, in equation~\eqref{eq:second_term_first_equation},
  the summands in the expectation on the right-hand side are identically distributed but not independent,
  which precludes the use of a direct Marcinkiewicz--Zygmund inequality.
  To circumvent this issue,
  introducing for convenience the function \(H^J(\theta, \allxi) = \frac{1}{J} \sum_{j=1}^{J} \genfastone \solone \bigl(\theta, \xin{j} \bigr)\),
  we note that
  \[
      \forall s \in [0, t],
      \qquad
      \expect \Bigl\lvert H^J(\theta_s, \allxi_s) \Bigr\rvert^{p}
      \leq
      \expect \sup_{\theta \in \torus^d} \Bigl\lvert H^J(\theta, \allxi_s) \Bigr\rvert^{p} \, .
  \]
  In order to recover the Monte Carlo convergence rate on the right-hand side,
  we need a functional version of the law of large numbers in \(L^p(\Omega)\).
  This is the content of~\sref{proposition:expect_supremum_torus},
  the assumptions of which are satisfied in view of~\sref{lemma:upper_bound_drift_and_gradient,lemma:lipschitz_asymptotic_drift}.
  Applying this result gives, for \(q \coloneq \max\{p, d+1\}\), that
  \[
      \forall s \in [0, t], \qquad
       \expect \sup_{\theta \in \torus^d} \Bigl\lvert H^J\bigl(\theta, \allxi_{s}\bigr) \Bigr\rvert^{p}
       \lesssim_{p,d}
       J^{-\frac{p}{2}}
       \biggl( \int_{\torus^d} \expect \Bigl\lvert \genfastone \solone \bigl(\theta, \xin{1}_{s} \bigr) \Bigr\rvert^{q}
           + \expect \Bigl\lvert  \nabla_{\theta} \genfastone \solone \bigl(\theta, \xin{1}_{s} \bigr) \Bigr\rvert^{q} \, \d \theta
       \biggr)^{\frac{p}{q}} \, .
  \]
  Since the expectation on the right-hand side is bounded independently of the parameters~\((J, \sigma, \varepsilon)\),
  taking the expectation in~\eqref{eq:second_term_first_equation} and integrating in~\(s\) gives
  \begin{equation*}
      \expect {\left[ \sup_{s \in [0, t]} \bigl\lvert \termi{2}_s \bigr\rvert^p \right]}
      \lesssim_{p,d,T}
      \varepsilon^{p-1} J^{-\frac{p}{2}} \, .
  \end{equation*}
  Since \(p \geq 2\) and \(\varepsilon \leq 1\),
  one has \(p - 1 \geq p/2\), so this bound will be sufficient for the final estimate.

  \paragraph{Bound on the third term in~\eqref{eq:six_terms_robustness}}%
  It is clear that
  \begin{equation}
      \label{eq:max_bound}
      \expect {\left[ \sup_{s \in [0, t]} \bigl\lvert \termi{3}_s \bigr\rvert^p \right]}
      \leq 2^p \varepsilon^{p} \expect {\left[ \sup_{s \in [0, t]} \Bigl\lvert \solall\bigl(\theta_{n_s \varepsilon}, \allxi_{n_s \varepsilon}\bigr) \Bigr\rvert^p \right]}
      = 2^p \varepsilon^{p} \expect {\left[ \max_{m \in \left\{0, \dotsc, n_t \right\}} \Bigl\lvert \solall\bigl(\theta_{m \varepsilon}, \allxi_{m \varepsilon}\bigr) \Bigr\rvert^p \right]} \, .
  \end{equation}
  The benefit of separating the second and third integrals on the right-hand side of~\eqref{eq:init_integral} now appears clearly:
  it enables to replace a supremum over a continuous parameter set \(s \in [0, t]\) by a discrete one.
  This is useful because it enables us to write, for any \(q \geq 1\),
  \begin{align}
      \expect \biggl[ \max_{m \in \left\{0, \dotsc, n_t \right\}} \Bigl\lvert \solall\bigl(\theta_{m \varepsilon}, \allxi_{m \varepsilon}\bigr) \Bigr\rvert^p \biggr]
      \notag
      &\leq \expect \biggl[ \max_{m \in \left\{0, \dotsc, n_t \right\}} \Bigl\lvert \solall\bigl(\theta_{m \varepsilon}, \allxi_{m \varepsilon}\bigr) \Bigr\rvert^{pq} \biggr]^{\frac{1}{q}} \\
      &\leq \left( \sum_{m = 0}^{n_t} \expect \Bigl\lvert \solall\bigl(\theta_{m \varepsilon}, \allxi_{m \varepsilon}\bigr) \Bigr\rvert^{pq} \right)^{\frac{1}{q}} \, .
      \label{eq:trick_max_sum}
  \end{align}
  This inequality follows from Jensen's inequality.
  To bound the right-hand side,
  we first note that
  \[
      \forall m \in \N,
      \qquad
      \expect \Bigl\lvert \solall\bigl(\theta_{m \varepsilon}, \allxi_{m \varepsilon}\bigr) \Bigr\rvert^{pq}
      \leq
       \expect \sup_{\theta \in \torus^d}\Bigl\lvert \solall\bigl(\theta, \allxi_{m \varepsilon}\bigr) \Bigr\rvert^{pq} \, .
  \]
  In order to recover the Monte Carlo convergence rate on the right-hand side,
  we use again~\sref{proposition:expect_supremum_torus},
  which gives, assuming that \(pq \geq d+1\) (this is not restrictive as we always can take \(q\) sufficiently large),
  that
  \[
       \expect \sup_{\theta \in \torus^d}\Bigl\lvert \solall\bigl(\theta, \allxi_{m \varepsilon}\bigr) \Bigr\rvert^{pq}
       \lesssim_{p,q,d}
       J^{-\frac{pq}{2}} \biggl( \int_{\torus^d} \expect \Bigl\lvert \solone \bigl(\theta, \xin{1}_{m \varepsilon} \bigr) \Bigr\rvert^{pq}
           + \expect \Bigl\lvert  \nabla_{\theta} \solone \bigl(\theta, \xin{1}_{m \varepsilon} \bigr) \Bigr\rvert^{pq}
        \, \d \theta \biggr) \, .
  \]
  Substituting this inequality in~\eqref{eq:trick_max_sum},
  and using the stationarity of the fast processes,
  we finally deduce that, for all \(t \in [0, T]\),
  \begin{align*}
      \expect \biggl[ \sup_{s \in [0, t]} \bigl\lvert \termi{3}_s \bigr\rvert^p \biggr]
      &\lesssim_{p,q,d}
      2^p \varepsilon^{p} (n_t+1)^{\frac{1}{q}} J^{-\frac{p}{2}}
      \biggl( \int_{\torus^d} \expect \Bigl\lvert \solone \bigl(\theta, \xin{1}_{0} \bigr) \Bigr\rvert^{pq}
           + \expect \Bigl\lvert  \nabla_{\theta} \solone \bigl(\theta, \xin{1}_{0} \bigr) \Bigr\rvert^{pq}
       \, \d \theta \biggr)^{\frac{1}{q}}
       \lesssim_{p,q,d,T} \varepsilon^{p - \frac{1}{q}} J^{-\frac{p}{2}} \, .
  \end{align*}
  Here we used, for $t\leq T$ and $0<\varepsilon\leq1$,
  it holds that
  \(
      \varepsilon^p(n_t+1)^{\frac{1}{q}}
      \leq (T+1)^{\frac{1}{q}}\varepsilon^{p-\frac{1}{q}} \, .
  \)
  Since \(q\) was arbitrary,
  we can ensure that \(p - \frac{1}{q} \geq \frac{p}{2}\) by taking~\(q\) sufficiently large.

  \paragraph{Bound on the fourth term in~\eqref{eq:six_terms_robustness}}%
  \label{par:Bounding the fourth term}
  This term may be bounded similarly to the second term.
  Using the expression of the generator~\(\genslow\)
  and the Cauchy--Schwarz inequality,
  we first note that
  \begin{align}
      \label{eq:fourth_term_first_equation}
      \expect {\left[ \sup_{s \in [0, t]} \bigl\lvert \termi{4}_s \bigr\rvert^p \right]}
      &\leq t^{p-1} \varepsilon^p \int_{0}^{n_t \varepsilon} \expect {\left\lvert \frac{1}{J} \sum_{j=1}^{J}\genslow \solone\bigl(\theta_s, \xin{j}_s\bigr) \right\rvert}^p \, \d s \, .
  \end{align}  
  By definition of the generator~\(\genslow\),
  we have
  \begin{align*}
      \expect {\left\lvert \frac{1}{J} \sum_{j=1}^{J}\genslow \solone\bigl(\theta_s, \xin{j}_s\bigr) \right\rvert}^p
      &=
      \expect {\left\lvert \driftall(\theta_s, \allxi_s) \cdot \left( \frac{1}{J} \sum_{j=1}^{J} \nabla_{\theta} \solone\bigl(\theta_s, \xin{j}_s\bigr) \right) 
      + \Delta_{\theta} \left( \frac{1}{J} \sum_{j=1}^{J} \solone\bigl(\theta_s, \xin{j}_s\bigr) \right) \right\rvert}^p \, \\
      &\leq 
      \left( \expect {\left\lvert \driftall(\theta_s, \allxi_s) \right\rvert}^{2p} \right)^{\frac{1}{2}}
      \left( \expect \biggl\lvert \frac{1}{J} \sum_{j=1}^{J} \nabla_{\theta} \solone\bigl(\theta_s, \xin{j}_s\bigr) \biggr\rvert^{2p} \right)^{\frac{1}{2}}
      + \expect {\left\lvert \frac{1}{J} \sum_{j=1}^{J} \Delta_{\theta} \solone\bigl(\theta_s, \xin{j}_s\bigr) \right\rvert}^p \, .
  \end{align*}
  By~\sref{lemma:upper_bound_drift_and_gradient},
  the first expectation in the last line is bounded from above by a constant independent of the parameters~\((\varepsilon, J, \sigma)\).
  In order to bound the other expectations,
  we will use~\sref{proposition:expect_supremum_torus} with the functions~$\nabla_{\theta} \solone$ and~$\Delta_{\theta} \solone$.
  To this end,
  we should first check the assumptions of the proposition,
  but since this is a bit technical, we postpone the details to the next paragraph of the proof.
  For now, taking for granted that the assumptions of~\sref{proposition:expect_supremum_torus} are satisfied,
  we use this result and stationarity of the fast processes to obtain,
  with $q = \max\{2p, d+1\}$,
  \begin{equation}
      \label{eq:fourth_term_second_equation}
      \expect {\left\lvert \frac{1}{J} \sum_{j=1}^{J}\genslow \solone\bigl(\theta_s, \xin{j}_s\bigr) \right\rvert}^p
       \lesssim_{p,d}
       J^{-\frac{p}{2}} \biggl( \int_{\torus^d} \expect \Bigl\lvert \nabla_{\theta} \solone \bigl(\theta, \xin{1}_{0} \bigr) \Bigr\rvert^{q}
           + \expect \Bigl\lvert  \nabla_{\theta}^2 \solone \bigl(\theta, \xin{1}_{0} \bigr) \Bigr\rvert^{q}
           + \expect \Bigl\lvert  \nabla_{\theta}^3 \solone \bigl(\theta, \xin{1}_{0} \bigr) \Bigr\rvert^{q}
       \, \d \theta \biggr)^{\frac{p}{q}} \, .
  \end{equation}
  All the derivatives of~$\solone$ appearing on the right-hand side are bounded independently of the parameters~\((\varepsilon, J, \sigma)\) 
  by~\sref{lemma:upper_bound_drift_and_gradient,remark:higher_order_derivatives_drift},
  together with~\eqref{eq:theta_deriv_solone} and the boundedness of the solution operator~$\genfastone^{-1}$ on~$L^\infty_{\weight{m},0}$.
  It then follows from~\eqref{eq:fourth_term_first_equation} that, for all \(t \in [0, T]\),
  \[
      \expect {\left[ \sup_{s \in [0, t]} \bigl\lvert \termi{4}_s \bigr\rvert^p \right]}
      \lesssim_{p,d,T} \frac{\varepsilon^{p}}{J^{\frac{p}{2}}} \, .
  \]

  \paragraph{Check of the assumptions of~\sref{proposition:expect_supremum_torus} for the functions~$\nabla_{\theta} \solone$ and~$\Delta_{\theta} \solone$}%
  Here we focus on the function~$\Delta_{\theta} \solone$,
  noting that the same arguments apply to the function~$\nabla_{\theta} \solone$.
  Three assumptions need to be verified:
  \begin{itemize}
      \item it holds for all~$\theta \in \torus^d$ that \(\expect \bigl[ \Delta_{\theta} \solone(\theta, \xin{1}_{0}) \bigr] = 0\);
      \item the function \(\theta \mapsto \Delta_{\theta} \solone(\theta, \xi)\) is continuously differentiable for all~\(\xi \in \real^d\);
      \item it holds that $\expect \bigl[ \sup_{\theta \in \torus^d} \bigl\lvert \nabla_{\theta} \Delta_{\theta} \solone(\theta, \xin{1}_{0}) \bigr\rvert \bigr] < \infty$.
  \end{itemize}
  All three items follow from the regularity of~$\theta \mapsto \solone(\theta, \placeholder)$ as a map from~$\torus^d$ into the weighted space~$L^\infty_{\weight{m},0}$.
  By assumption~$G \in \cont^4(\torus^d, \real^K)$ and $\ell$ is smooth,
  so the map $\theta \mapsto \driftone(\theta, \placeholder) - \driftinf(\theta)$ is $\cont^4$
  as a map from~$\torus^d$ into the weighted space of functions $L^\infty_{\weight{m},0}$
  defined in~\eqref{eq:def_weighted_L_infty_space}, 
  for a suitable weight function~$\weight{m}(x) = 1 + |x|^m$. 
  This follows from~\sref{lemma:upper_bound_drift_and_gradient,remark:higher_order_derivatives_drift}.
  Furthermore, by~\sref{corollary:poisson_ou}, the solution operator~$\genfastone^{-1}$ is a bounded linear map on~$L^\infty_{\weight{m},0}$.
  Therefore $\theta \mapsto \solone(\theta, \placeholder) = -\genfastone^{-1}\bigl(\driftone(\theta,\placeholder)-\driftinf(\theta)\bigr)$
  is also $\cont^4$ as a map $\torus^d \to L^\infty_{\weight{m},0}$,
  with~$\theta$-derivatives given by
  \begin{equation}
      \label{eq:theta_deriv_solone}
      \partial_{\theta}^k \solone(\theta, \placeholder)
      = -\genfastone^{-1} \Bigl( \partial_{\theta}^k \bigl(\driftone(\theta, \placeholder) - \driftinf(\theta)\bigr) \Bigr)
      \in L^\infty_{\weight{m}, 0} \, .
  \end{equation}
  The first assumption above then follows immediately from the definition of~$L^\infty_{\weight{m},0}$.
  The second assumption also follows since~$\solone$ is $\cont^4$ as a map from~$\torus^d$ into~$L^\infty_{\weight{m},0}$:
  indeed pointwise evaluation at some~$\xi \in \real^d$ is a continuous linear functional on the subspace~$L^\infty_{\weight{m},0} \cap \cont(\real^d) \subset L^\infty_{\weight{m},0}$,
  for which pointwise evaluation is well-defined,
  and the function~$\Delta_{\theta} \solone(\theta, \placeholder)$ is continuous for all~$\theta \in \torus^d$ by~\sref{corollary:poisson_ou}.
  And finally, the third assumption follows from bounds on the $\theta$-derivatives of~$\driftone$ in~\sref{lemma:upper_bound_drift_and_gradient,remark:higher_order_derivatives_drift},
  together with the boundedness of the solution operator~$\genfastone^{-1}$ on~$L^\infty_{\weight{m},0}$.

  \paragraph{Bound on the fifth term in~\eqref{eq:six_terms_robustness}}%
  By the Burkholder--Davis--Gundy (BDG) and Jensen inequalities,
  we have
  \begin{align*}
      \expect {\left[ \sup_{t \in [0, T]} \bigl\lvert \termi{5}_t \bigr\rvert^p \right]}
      &= \expect \sup_{t \in [0, T]}
      \left\lvert \frac{\sqrt{2}\varepsilon}{J} \sum_{j=1}^{J} \int_{0}^{n_t \varepsilon} \nabla_{\theta} \solone\bigl(\theta_s, \xin{j}_s\bigr) \cdot \d \wslow_s \right\rvert^p \\
      &\leq
      \expect \sup_{t \in [0, T]}
      \left\lvert \frac{\sqrt{2}\varepsilon}{J} \sum_{j=1}^{J} \int_{0}^{t} \nabla_{\theta} \solone\bigl(\theta_s, \xin{j}_s\bigr) \cdot \d \wslow_s \right\rvert^p \\
      % &\leq
      % C_{\rm BDG}(p)
      % \delta^{2p}
      % \expect \left( \int_{0}^{T}  \biggl\lvert \frac{1}{J}\sum_{j=1}^{J} \nabla_{\theta} \solone\bigl(\theta_s, \xin{j}_s\bigr) \biggr\rvert^2 \, \d s \right)^{\frac{p}{2}} \\
        &\leq
        C_{\rm BDG}(p)
        \varepsilon^{p}
        T^{\frac{p}{2}-1}
        \int_{0}^{T}  \expect \Bigl\lvert \frac{1}{J} \sum_{j=1}^{J}\nabla_{\theta} \solone\bigl(\theta_s, \xin{j}_s\bigr) \Bigr\rvert^p \, \d s
        \lesssim_{p,d,T} \frac{\varepsilon^{p}}{J^{\frac{p}{2}}} \, ,
  \end{align*}
  where we used again~\sref{proposition:expect_supremum_torus} in the last inequality.

  \paragraph{Bound on the sixth term in~\eqref{eq:six_terms_robustness}}%
  For application of the BDG inequality as stated in~\cite[Theorem~7.3]{MR2380366},
  it is useful to rewrite this term as
  \[
      \frac{\sqrt{2\varepsilon}}{J} \int_{0}^{n_t \varepsilon}
      \begin{pmatrix}
          \nabla_{\xi} \solone \Bigl(\theta_s, \xin{1}_s \Bigr)  \\
          \vdots \\
          \nabla_{\xi} \solone \Bigl(\theta_s, \xin{J}_s \Bigr)
      \end{pmatrix}^\t
      \begin{pmatrix}
          \d \wfast{1}_s \, \\
          \vdots \\
          \d \wfast{J}_s \,
      \end{pmatrix} \, .
  \]
  Thus, using the BDG inequality, we deduce that
  \begin{align*}
      \expect {\left[ \sup_{s \in [0, t]} \bigl\lvert \termi{6}_s \bigr\rvert^p \right]}
      &\leq C_{\rm BDG}(p) \frac{\varepsilon^{\frac{p}{2}}}{J^p} \expect {\left( \int_{0}^{n_t \varepsilon} \sum_{j=1}^{J} \Bigl\lvert \nabla_{\xi} \solone \Bigl(\theta_s, \xin{j}_s \Bigr) \Bigr\rvert^2 \, \d s \right)}^{\frac{p}{2}} \\
      &\leq C_{\rm BDG}(p) \frac{\varepsilon^{\frac{p}{2}} T^{\frac{p}{2}-1}}{J^{\frac{p}{2} + 1}}   \int_{0}^{n_t \varepsilon} \sum_{j=1}^{J} \expect\Bigl\lvert \nabla_{\xi} \solone \Bigl(\theta_s, \xin{j}_s \Bigr) \Bigr\rvert^p \, \d s \\
      &\leq C_{\rm BDG}(p) \frac{\varepsilon^{\frac{p}{2}} T^{\frac{p}{2}}}{J^{\frac{p}{2}}} \expect \sup_{\theta \in \torus^d}  \Bigl\lvert \nabla_{\xi} \solone \Bigl(\theta, \xin{1}_0 \Bigr) \Bigr\rvert^p \, .
  \end{align*}
  By~\sref{corollary:poisson_ou} and the expression of~$\driftone$,
  the expectation on the right-hand side is bounded independently of~$\sigma$.

  \paragraph{Conclusion of the proof}%
  Combining all the bounds,
  we obtain for all \(t \in [0, T]\)
  \[
      \expect\left[
          \sup_{s \in [0, t]} \left|\theta_s-\vvartheta_s\right|^p
      \right]
      \lesssim_{p,d,T}
      \int_{0}^{t} \expect {\left[ \sup_{r \in [0, s]} \lvert \theta_r - \vvartheta_r \rvert^p \right]} \, \d s
      + \frac{\varepsilon^{\frac{p}{2}}}{J^{\frac{p}{2}}} \, .
  \]
  The conclusion then follows from Gr\"onwall's lemma.
\end{proof}

\subsubsection{Strong convergence with \texorpdfstring{$\sigma \to 0$}{sigma small}}%
\label{subsub:strong_convergence_small_sigma}

\begin{proof}[Proof of~\cref{corollary:strong_convergence_continuous}]
    The proof relies on rewriting \(\theta_t - \vartheta_t = (\theta_t - \vvartheta_t) + (\vvartheta_t - \vartheta_t)\),
    where \((\vvartheta_t)_{t\geq 0}\) is the solution to
    \[
      \d \vvartheta_t = F^\infty_\sigma(\vvartheta_t) \, \d t + \sqrt{2} \, \d \wslow_t \, ,
    \]
    with~$\vvartheta_0 = \theta_0$, and using a triangle inequality.
    A bound on the~\(L^p(\Omega)\) norm of the difference \(\theta_t - \vvartheta_t\) is given by~\cref{theorem:robustness}.
    For the other term,
    subtracting~\eqref{eq:overdamped_langevin_dynamics} from this equation and integrating in time,
    we deduce that
    \begin{align*}
        \vvartheta_t - \vartheta_t
        &= \int_{0}^{t} \Bigl( F^\infty_\sigma(\vvartheta_s) + \nabla \Phi(\vartheta_s) \Bigr) \, \d s \, \\
        &= \int_{0}^{t} \Bigl( \nabla \Phi(\vartheta_s) - \nabla \Phi(\vvartheta_s) \Bigr) \, \d s
        +\int_{0}^{t} \Bigl( \nabla \Phi(\vvartheta_s) + F^\infty_\sigma(\vvartheta_s) \Bigr) \, \d s  \,.
    \end{align*}
    In particular, by Lipschitz continuity of~\(\nabla \Phi\),
    we deduce that
    \begin{align*}
        \sup_{t \in [0, T]} \lvert \vvartheta_t - \vartheta_t \rvert^p
        &\leq (2T)^{p-1} L^p \int_{0}^{T} \lvert \vartheta_s - \vvartheta_s \rvert^p  \, \d s
        + 2^{p-1} T^{p} \bigl\lVert \nabla \Phi + F^\infty_\sigma \bigr\rVert_{L^{\infty}}^p  \,.
    \end{align*}
    Thus, the key is to understand the scaling with respect to~\(\sigma\) of the norm~\(\bigl\lVert \nabla \Phi + F^\infty_\sigma \bigr\rVert_{L^{\infty}}\),
    which is given in~\sref{lemma:bound_sigma}.
    The conclusion then follows from Gr\"onwall's inequality.
\end{proof}

\subsubsection{Homogenization limit in a multiscale potential}
\label{subsub:homogenization_proof}

\begin{proof}[Proof of~\cref{corollary:robustness_homogenization}]
    Let~$(\vvartheta^\sigma_t)_{t\geq 0}$ denote the solution to the smoothed dynamics
    \[
        \d\vvartheta^\sigma_t = F^\infty_\sigma(\vvartheta^\sigma_t)\,\d t + \sqrt{2}\,\d\wslow_t \, ,
    \]
    with the same Brownian motion~$(\wslow_t)_{t \geq 0}$ as in the multiscale dynamics and with~$\vvartheta^\sigma_0 = \theta_0$.
    By~\cref{theorem:robustness}, for every fixed~$\delta$ and~$\sigma$,
    \begin{equation}
        \label{eq:bound_theta_delta_vvartheta_sigma}
        \lim_{\varepsilon/J\to0}
        \left(\expect{\left[\sup_{t\in[0,T]}\bigl|\theta_t-\vvartheta^\sigma_t\bigr|^p\right]}\right)^{\frac{1}{p}}=0 \, .
    \end{equation}
    The difference~$\Delta_t=\vvartheta^\sigma_t-v^\sigma_t$ satisfies
    \[
        \d\Delta_t = \bigl(\driftinf\bigl(\vvartheta^\sigma_t\bigr)
        + \nabla\Phi_{0,\sigma}(\vvartheta^\sigma_t)\bigr) \,\d t
        + \bigl(\nabla\Phi_{0,\sigma}(v^\sigma_t)
        - \nabla\Phi_{0,\sigma}(\vvartheta^\sigma_t)\bigr) \,\d t \, .
    \]
    Since~$\nabla\Phi_{0,\sigma}$ is Lipschitz continuous,
    it follows that
    \[
        |\Delta_t|
        \leq \int_0^t \left|\driftinf\bigl(\vvartheta^\sigma_s\bigr)
        + \nabla\Phi_{0,\sigma}(\vvartheta^\sigma_s)\right|\,\d s
        + L\int_0^t |\Delta_s|\,\d s \, .
    \]
    By Gr\"onwall's inequality,
    \begin{equation}
        \label{eq:bound_delta}
        |\Delta_t|
        \leq \int_0^t \left|\driftinf\bigl(\vvartheta^\sigma_s\bigr)
        + \nabla\Phi_{0,\sigma}(\vvartheta^\sigma_s)\right|\e^{L(t-s)}\,\d s
        \leq \left\lVert \driftinf + \nabla\Phi_{0,\sigma} \right\rVert_{\infty}
        \int_0^t \e^{L(t-s)}\,\d s \, .
    \end{equation}
    For the specific model~\eqref{eq:multiscale_forward_model} with~$\ell(y)=y$,
    the drift~$\driftinf$ defined in~\eqref{eq:asymptotic_drift}
    can be rewritten as
    \begin{align}
        \notag
        \driftinf(\theta)
        &= - \int_{\real^d} \Biggl(\nabla\Phi_0(\theta+\sigma z) + \frac{1}{\delta} \nabla\Phi_1\Bigl(\frac{\theta}{\delta} + \frac{\sigma}{\delta} z\Bigr)\Biggr) \, \d \gamma_d(z) \, .
    \end{align}
    By the definition of~$\Phi_{0,\sigma}$, the preceding identity gives
    \begin{equation}
        \label{eq:bound_drift}
        \driftinf(\theta) + \nabla\Phi_{0,\sigma}(\theta)
        = - \frac{1}{\delta} \bigl(\nabla \Phi_1 * \rho_r\bigr)\left(\frac{\theta}{\delta}\right) \, .
    \end{equation}
    where we set $r \coloneq \sigma/\delta$,
    and $\rho_r(x)$ is the density of~$\normal(0, r^2 \I_d)$.
    In order to bound the right-hand side,
    we introduce the pushforward distribution of~$\rho_r$ on~$\torus^d$,
    given by
    \[
        \rho_r^{\torus}(x) = \sum_{k \in \Z^d} \rho_r(x + k) \, .
    \]
    To bound~$\lVert \rho_r^{\torus} - 1 \rVert_{L^\infty(\torus^d)}$,
    it is useful to recall that~$\rho_r^{\torus}$ is the law at time~$r^2$
    of a Brownian particle on the torus~$\torus^d$.
    Therefore, by classical results on convergence to equilibrium~\cite{MR2857021},
    there exist~$C, \lambda > 0$ such that
    \[
        \forall r \geq 1, \qquad
        \bigl\lVert \rho^{\torus}_r - 1 \bigr\rVert_{\infty}
        \lesssim C \e^{-\lambda r^2} \, .
    \]
    This range suffices since~$r=\frac{\sigma}{\delta}\to\infty$ at fixed~$\sigma>0$.
    Hence,
    it follows that
    \begin{align*}
        \nabla \Phi_1 * \rho_r(x)
        &= \int_{\real^d} \nabla \Phi_1(y) \rho_r(x-y) \, \d y= \int_{\torus^d} \nabla \Phi_1(y) \rho^{\torus}_r(x-y) \, \d y \\
        &= \int_{\torus^d} \nabla \Phi_1(y) \, \d y + \int_{\torus^d} \nabla \Phi_1(y) \Bigl(\rho^{\torus}_r(x-y) - 1\Bigr) \, \d y 
        = 0 + \bigo \Bigl( \e^{- \lambda r^2} \Bigr) \, .
    \end{align*}
    Going back to~\eqref{eq:bound_drift}, 
    we deduce that
    \[
        \left\lVert \driftinf + \nabla\Phi_{0,\sigma} \right\rVert_{\infty}
        \lesssim \frac{\e^{-\lambda r^2}}{\delta} \, .
    \]
    Combining this estimate with~\eqref{eq:bound_delta} and~\eqref{eq:bound_theta_delta_vvartheta_sigma}
    proves the first limit in~\eqref{eq:robustness_homogenization}.
    Finally, subtracting the equations for~$v^\sigma$ and~$\vartheta^0$ gives

    \[
        \d\bigl(v^\sigma_t-\vartheta^0_t\bigr)
        = \Bigl(\nabla\Phi_0(\vartheta^0_t)-\nabla\Phi_0(v^\sigma_t)
        + \nabla\Phi_0(v^\sigma_t)-\nabla\Phi_{0,\sigma}(v^\sigma_t)\Bigr)\,\d t \, .
    \]
    The first difference is bounded by the Lipschitz constant of~$\nabla\Phi_0$.
    For the second one, the centeredness of~$\gamma_d$ and a second-order Taylor
    expansion give
    \[
        \left\lVert\nabla\Phi_0-\nabla\Phi_{0,\sigma}\right\rVert_\infty
        \leq \frac{\sigma^2}{2}\left\lVert\nabla^3\Phi_0\right\rVert_\infty
        \int_{\real^d}|z|^2\,\d\gamma_d(z)
        = \frac{d\sigma^2}{2}\left\lVert\nabla^3\Phi_0\right\rVert_\infty \, .
    \]
    Gr\"onwall's inequality and the common initial condition then give
    \[
        \left(\expect\left[\sup_{t\in[0,T]} \left|v^\sigma_t-\vartheta^0_t\right|^p\right]\right)^{\frac{1}{p}}
        \lesssim_{T,p} \sigma^2 \, ,
    \]
    which proves the final assertion.
\end{proof}

\subsection{Invariant distribution and ergodic averages}%
\label{sub:invariant_distribution_continuous_proof}

We recall that,
since \(\mu_\Phi(\d x) \propto \e^{- \Phi(x)} \d x\) admits a density with respect
to the Lebesgue measure that is uniformly bounded from below by a positive
constant, it satisfies the Poincaré inequality with some constant \(R[\mu_\Phi] > 0\):
\begin{equation}
    \label{eq:poincare2}
    \forall \varphi \in H^1(\torus^d;\mu_\Phi) \cap L^2_0(\torus^d;\mu_\Phi), \qquad
    \int_{\torus^d}\abs{\varphi(x)}^2\mu_\Phi(\d x) \leq \frac{1}{R[\mu_\Phi]}\int_{\torus^d}
    \abs*{\nabla \varphi(x)}^2\mu_\Phi(\d x) \, ,
\end{equation}
where we recall that $H^1(\torus^d;\mu_\Phi)$ denotes the weighted Sobolev space
\[
    H^1(\torus^d;\mu_\Phi) =
    \Bigl\{
        \varphi \in L^2(\mu_\Phi)\mid \nabla \varphi\in L^2(\torus^d;\mu_\Phi)
    \Bigr\} \, ,
\]

We begin by stating the well-posedness of the Poisson equation associated with the
infinitesimal generator of the Langevin dynamics,
which plays a key role in the rest of this section.
For completeness, we give a proof in~\sref{sec:proof_poisson} that reduces the periodic regularity statements to standard elliptic results on bounded domains.
\begin{lemma}
    \label{lem:poisson}
    Consider the Poisson equation
    \begin{equation}
        \label{eq:poisson}
        \forall x\in \torus^d, \qquad
        \nabla \Phi(x)\cdot \nabla \psi(x) - \Delta \psi(x) = f(x) \, .
    \end{equation}
    Then for any~$f \in L^2_0(\mu_\Phi)$,
    there exists a unique weak solution~$\psi \in H^1(\mu_\Phi) \cap L^2_0(\mu_\Phi)$ to this equation.
    Furthermore, for any $k\in \mathbb{N}$, the following statements hold:
    \begin{itemize}
        \item
            If \(\Phi\in \sobolev^{k+1,\infty}(\torus^d)\) and \(f\in \sobolev^{k,\infty}(\torus^d)\),
            then for all $\lambda \in [0, 1)$ it holds that $\psi \in \cont^{k+1+\lambda}(\torus^d)$
            and there exists~\(C>0\) not depending on \(f\) such that
            \begin{equation}
                \label{eq:bound_poisson}
                \norm*{\psi}_{\cont^{k+1+\lambda}} \leq C \norm*{f}_{\sobolev^{k,\infty}} \, .
            \end{equation}

        \item
            If \(\Phi\in \cont^{k+1+\alpha}(\torus^d)\) and \(f\in \cont^{k+\alpha}(\torus^d)\) for some~$\alpha \in (0, 1)$,
            then~\(\psi\in \cont^{k+2+\alpha}(\torus^d)\) and there exists \(C>0\) not depending on \(f\) such that
            \begin{equation}
                \label{eq:bound_poisson_holder}
                \norm*{\psi}_{\cont^{k+2+\alpha}} \leq C \norm*{f}_{\cont^{k+\alpha}} \, .
            \end{equation}
    \end{itemize}
\end{lemma}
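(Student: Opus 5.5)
We will prove \cref{lem:poisson} in two stages: first existence and uniqueness of a weak solution by Lax--Milgram in a weighted space, then regularity by reduction to classical elliptic estimates.

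\textbf{Weak solution.} The key observation is that the operator \(\mathcal A\psi = \nabla\Phi\cdot\nabla\psi - \Delta\psi\) is symmetric with respect to \(\mu_\Phi\). Indeed, integration by parts on the torus (no boundary terms) gives
\[
    \int_{\torus^d} (\mathcal A\psi)\,\varphi \,\d\mu_\Phi = \int_{\torus^d}\nabla\psi\cdot\nabla\varphi\,\d\mu_\Phi \, .
\]
We therefore pose the problem on \(\mathcal H \coloneq H^1(\mu_\Phi)\cap L^2_0(\mu_\Phi)\), with the bilinear form \(a(\psi,\varphi)=\int\nabla\psi\cdot\nabla\varphi\,\d\mu_\Phi\). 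This form is continuous, and the Poincaré inequality~\eqref{eq:poincare2} makes it coercive on \(\mathcal H\). For \(f\in L^2_0(\mu_\Phi)\), the functional \(\varphi\mapsto\int f\varphi\,\d\mu_\Phi\) is bounded on \(\mathcal H\), again by Poincaré. Lax--Milgram then yields a unique \(\psi\in\mathcal H\) such that \(a(\psi,\varphi)=\int f\varphi\,\d\mu_\Phi\) for all \(\varphi\in\mathcal H\).

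This identity extends to all \(\varphi\in H^1(\mu_\Phi)\): both sides vanish on constants, since \(f\) has mean zero. Hence \(\psi\) is a genuine weak solution of~\eqref{eq:poisson}. Since \(\e^{-\Phi}\) is bounded above and below on the compact torus, \(H^1(\mu_\Phi)\) coincides with the unweighted space \(H^1(\torus^d)\), with equivalent norms. We also get the a priori estimate \(\norm{\psi}_{H^1}\lesssim\norm{f}_{L^2}\).

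\textbf{Regularity.} We lift \(\psi\) to a \(\Z^d\)-periodic function on \(\real^d\) and work on the bounded domains \(B_2\supset B_1\), where \(B_r\) is the cube of side \(r\). By periodicity, estimates on \(B_1\) are global estimates on \(\torus^d\). There, \(\psi\) is a weak solution of \(-\Delta\psi = f-\nabla\Phi\cdot\nabla\psi\eqqcolon g\). We then bootstrap in the two cases as follows.

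\emph{Sobolev case.} If \(\Phi\in\sobolev^{k+1,\infty}\), then \(\nabla\Phi\in\sobolev^{k,\infty}\). Starting from \(\psi\in H^1\), we have \(g\in L^2\), so interior \(L^p\) (Calderón--Zygmund) estimates give \(\psi\in W^{2,2}_{\rm loc}\). Iterating, with Sobolev embeddings raising the integrability exponent at each step, gives \(\psi\in W^{2,p}\) for every \(p<\infty\), hence \(\nabla\psi\in L^\infty\) and \(g\in L^\infty\). One more application gives \(\psi\in W^{2,p}\) for all \(p\), and Morrey's embedding then yields \(\psi\in\cont^{1+\lambda}\) for every \(\lambda<1\). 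This is the case \(k=0\). For higher \(k\), we differentiate the equation; difference quotients justify this. A derivative \(\partial^\beta\psi\) with \(|\beta|\le k\) solves an equation of the same type, whose right-hand side involves \(\partial^\beta f\) and products \(\partial^{\gamma}\nabla\Phi\,\partial^{\beta-\gamma}\nabla\psi\) with \(|\gamma|\le k\), all bounded by induction. Induction on \(k\) then yields \(\psi\in\cont^{k+1+\lambda}\).

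\emph{Hölder case.} If \(\Phi\in\cont^{k+1+\alpha}\) and \(f\in\cont^{k+\alpha}\), the \(k=0\) Sobolev step first gives \(\psi\in\cont^{1+\alpha}\), so \(g\in\cont^{\alpha}\). Interior Schauder estimates for the Laplacian then give \(\psi\in\cont^{2+\alpha}\). Iterating \(k\) times, now with \(g\in\cont^{k+\alpha}\), gives \(\psi\in\cont^{k+2+\alpha}\).

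\textbf{Main obstacle: constants independent of \(f\).} The estimates~\eqref{eq:bound_poisson}--\eqref{eq:bound_poisson_holder} require the constants to be independent of \(f\). Each elliptic estimate has the form
\[
    \norm{\psi}_{X(B_1)}\le C\bigl(\norm{g}_{Y(B_2)}+\norm{\psi}_{L^2(B_2)}\bigr) \, ,
\]
so we must close the chain of bootstrap steps with the bound \(\norm{\psi}_{L^2}\lesssim\norm{f}_{L^2}\le\norm{f}_\infty\) from the coercivity step. Interpolation inequalities absorb the lower-order terms \(\nabla\Phi\cdot\nabla\psi\), and this closes the argument, with constants depending only on \(d\), \(k\), \(\alpha\), \(\lambda\) and the corresponding norm of \(\Phi\).
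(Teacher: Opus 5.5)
Your proof is correct, but it obtains the $f$-independent constants by a genuinely different mechanism from the paper's. The paper first proves regularity only qualitatively. It gets the weak solution by Riesz, identifies the cutoff $\chi\psi$ with the $W^{2,p}$ solution of a Dirichlet problem on a ball and bootstraps, and uses Schauder in the H\"older case. It then obtains \eqref{eq:bound_poisson}--\eqref{eq:bound_poisson_holder} abstractly: $-\genlimit$ is a bounded bijection $W^{2,p}\cap L^p_0\to L^p_0$ (resp.\ $\cont^{2+\alpha}\cap L^2_0\to\cont^{\alpha}\cap L^2_0$), so its inverse is bounded by the open mapping theorem, and Morrey's embedding with $p$ large finishes.

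You instead move the whole drift term to the right-hand side and use only interior Calder\'on--Zygmund and Schauder estimates for the flat Laplacian on periodic cubes. You then chain these quantitative estimates through the finitely many bootstrap steps, closing with the energy bound $\norm{\psi}_{L^2}\lesssim\norm{f}_{L^2}$. The paper's route needs no bookkeeping, only existence and uniqueness in the target space, but its constant is non-explicit. Yours is constructive: $C$ depends on $d,k,\alpha,\lambda$, the relevant norm of $\Phi$, and the Poincar\'e constant of $\mu_\Phi$ (bounded below in terms of $\operatorname{osc}\Phi$).

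Your route also handles the higher-order step more naturally. Since $\partial^\beta\psi$ need not have $\mu_\Phi$-mean zero, the paper must subtract $\mu_\Phi(\partial_i\psi)$ before reusing the $k=0$ bound. Your interior estimate simply carries $\norm{\partial^\beta\psi}_{L^2}$ from the previous step.

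Two small remarks. First, the interpolation you invoke is unnecessary: at each step, $\nabla\Phi\cdot\nabla\psi$ is already controlled by the previous step's bound via Sobolev embedding. Second, passing from the weighted weak formulation to the distributional equation $-\Delta\psi=f-\nabla\Phi\cdot\nabla\psi$ requires testing with $\e^{\Phi}\varphi$, and this is where $\Phi\in\sobolev^{1,\infty}$ enters.
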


\begin{remark}
    [Regularity for bounded right-hand sides]
    Theorem~4.1 in~\cite{MattinglyStuartTretyakov2010} states, in the elliptic case,
    that if~$f\in L^\infty(\torus^d)$ with~$\mu_\Phi(f)=0$,
    then the mean-zero solution~$\psi$ of~\eqref{eq:poisson} belongs to~$W^{2,\infty}(\torus^d)$.
    We believe that this conclusion may require an additional regularity assumption on the right-hand side,
    as suggested by the following example for the Laplacian on~$\torus^2$,
    adapted from the construction in the Mathematics Stack Exchange post~\cite{stack}.
    Choose a smooth cutoff~$\chi$ supported in a disk of radius less than~$\frac{1}{2}$
    centered at the origin in~$(-\frac{1}{2},\frac{1}{2})^2$, and equal to~1 near the origin,
    and set
    \[
        u(x,y)=\chi(x,y)\,xy\log r \, , \qquad r=\sqrt{x^2+y^2} \, ,
    \]
    with~$u(0,0)=0$, extending~$u$ by zero outside the disk.
    Near the origin, for~$r>0$,
    \[
        \Delta u=\frac{4xy}{r^2} \, , \qquad
        \partial_x\partial_y u=\log r+\bigo(1) \, .
    \]
    Since~$u\in C^1(\torus^2)$ and~$\nabla u=\bigo(r|\log r|)$ near the origin,
    the distributional Laplacian has no additional point mass there.
    Thus~$\Delta u\in L^\infty(\torus^2)$ but~$u\notin W^{2,\infty}(\torus^2)$.
    Subtracting its mean from~$u$ gives a mean-zero weak solution of~\eqref{eq:poisson}
    with~$\Phi=0$ and bounded mean-zero right-hand side~$f=-\Delta u$.
    For this reason, we use~\eqref{eq:bound_poisson} for bounded data
    and~\eqref{eq:bound_poisson_holder} when the right-hand side is H\"older continuous.
\end{remark}

The first item in~\cref{lem:poisson} will be useful to obtain estimates in continuous time,
while the second item will be useful to obtain estimates in discrete time.
Having established the well-posedness of the Poisson equation~\eqref{eq:poisson},
we move to the proof of~\cref{theorem:invariant_continuous},
which provides an estimate on the distance between the $\theta$-marginal of the invariant distribution of the multiscale dynamics~\eqref{eq:the_multiscale_dynamics}
and the target distribution~$\mu_{\Phi}$,
together with estimates for the bias and mean square error of time averages of observables along trajectories of the dynamics.
We recall that the generator of the multiscale dynamics~\eqref{eq:the_multiscale_dynamics} can be decomposed as in~\eqref{eq:generator_decomposition},
\[
    \mathcal L^J[\varepsilon,\sigma] \varphi
    =
    \genslow[\sigma]\varphi
    + \frac{1}{\varepsilon} \mathcal L_\xi^J\varphi \, ,
\]
where we emphasize the dependence on the parameters~\(\varepsilon\) and~\(\sigma\) in square brackets.
We also recall that $\driftall$ denotes the drift term for the $\theta$-equation in the multiscale dynamics~\eqref{eq:the_multiscale_dynamics},
and that $\driftlim$ is the pointwise limit of~$\driftall$ as~$\sigma \to 0$.
Finally, we use the symbol $\lesssim$ to denote inequalities up to a multiplicative constant independent of the relevant parameters,
which here are~\(\varepsilon,\sigma,J\) and the test function~\(f\),
and we use $\bigo(\cdot)$ to denote the corresponding Landau big-O notation.
Before providing the proof of~\cref{theorem:invariant_continuous},
we present the following auxiliary result,
which is used twice in the proof of~\cref{theorem:invariant_continuous}.
We note that~\cref{lemma:averaging} can be easily generalized to the setting where~$\chi$ is not a polynomial,
but this is not necessary for our purposes,
so we state it only in the polynomial case for simplicity.
\begin{lemma}
    [Averaging lemma]
    \label{lemma:averaging}
    Assume that~$\chi \colon \real^d \to \real^n$ is a vector-valued function such that
    each of its components is a polynomial of degree at most~$r$, for some~$r\in\N$, with mean zero under~$\fastgaussianone$,
    and assume also that $A \colon \torus^d \to \real^n$ belongs to~$\cont(\torus^d, \real^n)$.
    Then, under the same assumptions as in~\cref{theorem:invariant_continuous},
    there exists a constant~$C > 0$, depending only on~$d,n,r$ and fixed derivative bounds on~$G$ and~$\ell$ but independent of~$A,\chi,\varepsilon,\sigma,J,T$, such that
    the following estimates hold for~$(\varepsilon, \sigma, J, T) \in (0, 1] \times (0, 1] \times \nat_{>0} \times (0, +\infty)$.
    \begin{itemize}
        \item
            It holds that
            \begin{equation}
                \label{eq:averaging_lemma_first_estimate}
                \expect
                \left\lvert
                    \frac{1}{T}
                     \int_{0}^{T}  \frac{1}{J} \sum_{j=1}^{J} \chi{\left(\xin{j}_t\right)}  \cdot A(\theta_t)  \, \d t
                \right\rvert^2
                \leq
                \frac{C}{J}
                \norm{A}_{\infty}^2 \norm{\chi}_{L^2(\fastgaussianone)}^2 \, .
            \end{equation}
        \item
            If additionally~$A \in \cont^2(\torus^d, \real^n)$, then
            \begin{subequations}
            \begin{equation}
                \label{eq:averaging_lemma_second_estimate}
                \left\lvert
                    \frac{1}{T}
                    \expect {\left[ \int_{0}^{T}  \frac{1}{J} \sum_{j=1}^{J} \chi{\left(\xin{j}_t\right)}  \cdot A(\theta_t)  \, \d t \right]}
                \right\rvert
                \leq
                \frac{C\varepsilon}{\sqrt{J}} \, \norm{\chi}_{L^2(\fastgaussianone)}
                \left( \kappa \frac{\norm{A}_{\infty}}{T} + \norm{A}_{2,\infty} \right) \, ,
            \end{equation}
            where~$\kappa = 0$ if~$\pi_0 = \fullinvariant$ and $\kappa = 1$ otherwise,
            and
            \begin{align}
                \notag
                \expect
                \left\lvert
                    \frac{1}{T}
                    \int_{0}^{T}  \frac{1}{J} \sum_{j=1}^{J} \chi{\left(\xin{j}_t\right)}  \cdot A(\theta_t)  \, \d t
                \right\rvert^2
                &\leq
                \frac{C\varepsilon^2}{T^2 J} \norm{\chi}_{L^2(\fastgaussianone)}^2 \norm{A}_{\infty}^2
                + \frac{C\varepsilon^2}{J} \norm{\chi}_{L^2(\fastgaussianone)}^2 \norm{A}_{2,\infty}^2 \\
                &\qquad
                + \frac{C\varepsilon}{T J} \norm{\chi}_{L^2(\fastgaussianone)}^2 \norm{A}_{\infty}^2
                + \frac{C\varepsilon^2}{T J} \norm{\chi}_{L^2(\fastgaussianone)}^2 \norm{A}_{1,\infty}^2 \, .
                \label{eq:averaging_lemma_second_estimate_mean_square}
            \end{align}
            \end{subequations}
    \end{itemize}
\end{lemma}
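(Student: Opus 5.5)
The plan is to exploit the fact that $\chi$ is a mean-zero polynomial under $\fastgaussianone$. This lets us solve the fast Poisson equation explicitly and in closed form. Since the Ornstein--Uhlenbeck generator $\genfastone$ maps polynomials of degree at most $r$ to themselves and is invertible on the mean-zero ones (Hermite decomposition: a Hermite polynomial of degree $k$ is an eigenfunction with eigenvalue $-k$), there is a polynomial $\eta\colon\real^d\to\real^n$ of degree at most $r$ with $-\genfastone\eta=\chi$ and $\norm{\eta}_{L^2(\fastgaussianone)}\leq\norm{\chi}_{L^2(\fastgaussianone)}$. Finite dimensionality gives equivalence of all $L^q(\fastgaussianone)$ norms of $\eta$, $\nabla\eta$, $\nabla^2\eta$ with $\norm{\chi}_{L^2}$, with constants depending on $d,n,r$.

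\textbf{First estimate.} Set $X_t=\frac1J\sum_j\chi(\xin{j}_t)\cdot A(\theta_t)$ and expand $\expect|\frac1T\int_0^T X_t\,\d t|^2=\frac{2}{T^2}\int_0^T\int_s^T\expect[X_sX_t]\,\d t\,\d s$. Conditioning on $\mathcal F_s$ is awkward because $\theta$ depends on all $\xi$'s. I would instead use Cauchy--Schwarz in $t$ together with independence of the fast processes. Concretely, bound $|X_t|\leq\norm{A}_\infty|\frac1J\sum_j\chi(\xin{j}_t)|$, so
\[
\expect\Bigl|\tfrac1T\int_0^T X_t\,\d t\Bigr|^2\leq \norm{A}_\infty^2\,\tfrac1T\int_0^T\expect\Bigl|\tfrac1J\sum_j\chi(\xin{j}_t)\Bigr|^2\d t=\frac{\norm{A}_\infty^2\norm{\chi}^2_{L^2}}{J}.
\]
This uses that the $\xin{j}_t$ are i.i.d.\ $\fastgaussianone$ at stationarity and are independent of $\theta$'s influence, since the fast processes are autonomous. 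This is exactly~\eqref{eq:averaging_lemma_first_estimate}.

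\textbf{Second estimate.} Apply Itô's formula to $\varepsilon\,\Psi(\theta,\allxi)\coloneq\varepsilon\frac1J\sum_j\eta(\xin{j})\cdot A(\theta)$. Since $\fullgen=\frac1\varepsilon\genfast+\genslow$, we get
\[
\int_0^T X_t\,\d t=\varepsilon\bigl(\Psi_0-\Psi_T\bigr)+\varepsilon\int_0^T\genslow\Psi\,\d t+M_T^{\theta}+M_T^{\xi},
\]
where $M^\theta$ has integrand $\sqrt2\,\varepsilon\,\frac1J\sum_j\nabla A^{\t}\eta(\xin{j})$ and $M^\xi$ has integrand $\sqrt{2\varepsilon}\,\frac1J\,\nabla\eta(\xin{j})^{\t}A$.
\begin{itemize}
\item Martingales vanish in expectation.
\item The boundary term gives $\frac{\varepsilon}{T}\cdot\frac{C}{\sqrt J}\norm{A}_\infty\norm{\chi}_{L^2}$ by the same i.i.d.\ argument applied to $\eta$. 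When $\pi_0=\fullinvariant$, stationarity gives $\expect\Psi_0=\expect\Psi_T$, which is why $\kappa=0$ in that case.
\item The term $\genslow\Psi=\frac1J\sum_j\eta(\xin{j})\cdot\bigl(\driftall\cdot\nabla A+\Delta A\bigr)$ contains the Laplacian part, controlled by $\norm{A}_{2,\infty}\norm{\eta}/\sqrt J$ via the first estimate. The drift part is the main obstacle: $\driftall$ is itself an average over $j$, producing $J^{-2}\sum_{j,k}\eta(\xin{j})f_\sigma(\theta,\xin{k})$.
\end{itemize}
For the drift part, bound by Cauchy--Schwarz:
\[
\expect\Bigl|\tfrac1J\sum_j\eta(\xin{j})\Bigr|\,|\driftall|\,\norm{\nabla A}_\infty\leq \norm{\nabla A}_\infty\Bigl(\expect\bigl|\tfrac1J\textstyle\sum_j\eta(\xin{j})\bigr|^2\Bigr)^{1/2}\Bigl(\expect|\driftall|^2\Bigr)^{1/2}.
\]
The first factor is $\lesssim J^{-1/2}\norm{\chi}$. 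The second factor is bounded uniformly, by \sref{lemma:upper_bound_drift_and_gradient} and stationary Gaussian moments, since $|\driftone(\theta,\xi)|\lesssim|\xi|^2$ uniformly in $\sigma$ (by the mean value theorem on $G$). This yields~\eqref{eq:averaging_lemma_second_estimate}.

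\textbf{Mean-square estimate.} Use the same decomposition and square each term.
\begin{itemize}
\item Boundary: $\varepsilon^2T^{-2}J^{-1}\norm{A}_\infty^2\norm{\chi}^2$.
\item Drift: $\varepsilon^2$ times a time average squared. By Jensen in $t$ and the $L^4$ analogue of the above (Hölder with $L^4$ moments of the polynomial averages, which are still $O(J^{-1/2})$ by Marcinkiewicz--Zygmund), this is $\varepsilon^2J^{-1}\norm{A}_{2,\infty}^2\norm{\chi}^2$.
\item Martingales via Itô isometry: $\expect|M^\xi_T|^2/T^2=\frac{2\varepsilon}{T^2J^2}\sum_j\int_0^T\expect|\nabla\eta(\xin{j}_t)^{\t}A|^2\,\d t\lesssim\frac{\varepsilon}{TJ}\norm{A}_\infty^2\norm{\chi}^2$. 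Similarly $\expect|M^\theta_T|^2/T^2\lesssim\frac{\varepsilon^2}{TJ}\norm{A}_{1,\infty}^2\norm{\chi}^2$.
\end{itemize}
Summing these four contributions gives~\eqref{eq:averaging_lemma_second_estimate_mean_square}. A non-stationary initial law is harmless here because the $\xi$-marginal is always $\fastgaussianall$ by the choice $\pi_0\in\Pi(m_0,\fastgaussianall)$.
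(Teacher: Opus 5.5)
Your proposal is correct and follows essentially the same route as the paper. It uses the same Jensen-plus-independence argument for the first bound, the same It\^o expansion of $\frac{\varepsilon}{J}\sum_j \eta(\xin{j})\cdot A(\theta)$ with $-\genfastone\eta=\chi$, Cauchy--Schwarz together with the $L^4$ Marcinkiewicz--Zygmund bound for the drift term, and It\^o's isometry for the martingales. The only minor difference is that you bound $\norm{\nabla\eta}_{L^2(\fastgaussianone)}$ by norm equivalence on polynomials of degree at most $r$, whereas the paper uses the energy identity $\norm{\nabla\varphi}_{L^2(\fastgaussianone)}^2=\langle\chi,\varphi\rangle_{L^2(\fastgaussianone)}\leq\norm{\chi}_{L^2(\fastgaussianone)}^2$, which gives constant one.
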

\begin{proof}[Proof of~\cref{lemma:averaging}]
    Since the tensor-valued case can be reduced to the scalar case by working componentwise,
    in this proof we focus only on the scalar case~$n=1$.

    \paragraph{Proof of~\eqref{eq:averaging_lemma_first_estimate}}
    The random variables~$(\xin{j}_t)_{j=1}^{J}$ are i.i.d.\ with distribution~$\fastgaussianone$ for all~$t \geq 0$
    by stationarity of the fast process,
    so in particular~$\expect \bigl[ \chi\bigl(\xin{j}_t\bigr) \bigr] = \fastgaussianone(\chi) = 0$.
    By Jensen's inequality applied to the time integral, then Fubini's theorem,
    the bound~$\lvert A(\theta_t) \rvert \leq \norm{A}_{\infty}$,
    and independence together with the zero-mean property,
    \[
        \expect \left\lvert \frac{1}{T} \int_{0}^{T} \frac{1}{J} \sum_{j=1}^{J} \chi{\left(\xin{j}_t\right)} A(\theta_t) \, \d t \right\rvert^2
        \leq \frac{\norm{A}_{\infty}^2}{T} \int_{0}^{T}
        \expect \left[ \left\lvert \frac{1}{J} \sum_{j=1}^{J} \chi{\left(\xin{j}_t\right)} \right\rvert^2 \right] \d t
        = \frac{\norm{A}_{\infty}^2 \norm{\chi}_{L^2(\fastgaussianone)}^2}{J} \, .
    \]

    \paragraph{Proof of~\eqref{eq:averaging_lemma_second_estimate} and~\eqref{eq:averaging_lemma_second_estimate_mean_square}}
    We use a technique from the averaging literature for multiscale SDEs~\cite[Chapter~10]{MR2382139}.
    Since~$\chi \in L^2_0(\fastgaussianone)$ and the operator~$-\genfastone$
    is positive definite on~$L^2_0(\fastgaussianone)$ with spectral gap equal to~$1$,
    there exists a unique solution~$\varphi \in L^2_0(\fastgaussianone)$ to the Poisson equation
    \begin{equation}
        \label{eq:poisson_averaging}
        - \genfastone \varphi = \chi \, , \qquad \int_{\real^d} \varphi{\left(\xi\right)} \, \fastgaussianone(\d \xi) = 0 \, ,
    \end{equation}
    and this solution satisfies~$\norm{\varphi}_{L^2(\fastgaussianone)} \leq \norm{\chi}_{L^2(\fastgaussianone)}$.
    Furthermore, since $\chi$ is a polynomial by assumption and the eigenfunctions of~$-\genfastone$ are Hermite polynomials,
    the solution~$\varphi$ is also a polynomial, of the same degree.
    Consider the function~$h \colon \torus^d \times \real^{dJ} \to \real$ defined by
    \[
        h{\left(\theta, \allxi\right)} \coloneq \frac{1}{J} \sum_{j=1}^{J} \varphi{\left(\xin{j}\right)} \cdot A(\theta) \, .
    \]
    Since the function~$h$ is in~$\cont^2(\torus^d \times \real^{dJ})$,
    we can apply It\^o's formula to~$h{\left(\theta_t, \allxi_t\right)}$ and use~\eqref{eq:poisson_averaging} to obtain
    \begin{align*}
        h{\left(\theta_T, \allxi_T\right)} - h{\left(\theta_0, \allxi_0\right)}
        &= - \frac{1}{\varepsilon} \int_{0}^{T} \left( \frac{1}{J} \sum_{j=1}^{J} \chi{\left(\xin{j}_t\right)} \right) A(\theta_t) \, \d t \\
        &\qquad + \int_{0}^{T} \frac{1}{J} \sum_{j=1}^{J} \varphi{\left(\xin{j}_t\right)}
          \Bigl( \driftall(\theta_t, \allxi_t) \cdot \grad A(\theta_t) + \Delta A(\theta_t) \Bigr) \, \d t \\
        &\qquad + \frac{1}{J} \sqrt{\frac{2}{\varepsilon}} \sum_{j=1}^{J} \int_{0}^{T} A(\theta_t) \, \grad_\xi \varphi{\left(\xin{j}_t\right)} \cdot \d \fastw{j}_t
          + \frac{\sqrt{2}}{J} \sum_{j=1}^{J} \int_{0}^{T} \varphi{\left(\xin{j}_t\right)} \, \grad A(\theta_t) \cdot \d \sloww_t \, .
    \end{align*}
    Multiplying by~$\varepsilon$, dividing by~$T$, and rearranging,
    we obtain
    \begin{align}
        \label{eq:averaging_lemma_ito_pathwise}
        \frac{1}{T} \int_{0}^{T} \left( \frac{1}{J} \sum_{j=1}^{J} \chi{\left(\xin{j}_t\right)} \right) A(\theta_t) \, \d t
        &= \frac{\varepsilon}{T}   \Bigl( h{\left(\theta_0, \allxi_0\right)} - h{\left(\theta_T, \allxi_T\right)} \Bigr)  \\
        \notag
        &\qquad + \frac{\varepsilon}{T}   \int_{0}^{T} \frac{1}{J} \sum_{j=1}^{J} \varphi{\left(\xin{j}_t\right)} \bigl( \driftall \cdot \grad A + \Delta A \bigr)(\theta_t, \allxi_t) \, \d t \\
        &\qquad + \frac{\sqrt{2\varepsilon}}{TJ} \sum_{j=1}^{J} \int_{0}^{T} A(\theta_t) \, \grad \varphi{\left(\xin{j}_t\right)} \cdot \d \fastw{j}_t
        + \frac{\sqrt{2}\varepsilon}{T} \int_{0}^{T} \frac{1}{J} \sum_{j=1}^{J} \varphi{\left(\xin{j}_t\right)} \, \grad A(\theta_t) \cdot \d \sloww_t \, .
        \notag
    \end{align}
    Taking expectations in this equation,
    we have
    \begin{align*}
        \frac{1}{T} \expect {\left[ \int_{0}^{T} \left( \frac{1}{J} \sum_{j=1}^{J} \chi{\left(\xin{j}_t\right)} \right) A(\theta_t) \, \d t \right]}
        &= \frac{\varepsilon}{T} \expect \bigl[ h{\left(\theta_0, \allxi_0\right)} - h{\left(\theta_T, \allxi_T\right)} \bigr] \\
        &\quad + \frac{\varepsilon}{T} \expect {\left[ \int_{0}^{T} \frac{1}{J} \sum_{j=1}^{J} \varphi{\left(\xin{j}_t\right)}
        \bigl( \driftall \cdot \grad A + \Delta A \bigr)(\theta_t, \allxi_t) \, \d t \right]} \, .
    \end{align*}
    The first expectation on the right-hand side is zero for stationary initial conditions ($\kappa = 0$).
    Otherwise, since the particles~$\xin{j}$ are i.i.d.\ with distribution~$\fastgaussianone$
    and $\varphi \in L^2_0(\fastgaussianone)$,
    \[
        \expect {\left[ \left\lvert \frac{1}{J} \sum_{j=1}^{J} \varphi{\left(\xin{j}\right)} \right\rvert \right]}
        \leq \expect {\left[ \left\lvert \frac{1}{J} \sum_{j=1}^{J} \varphi{\left(\xin{j}\right)} \right\rvert^2 \right]}^{\!\frac{1}{2}}
        = \frac{\norm{\varphi}_{L^2(\fastgaussianone)}}{\sqrt{J}} \, ,
    \]
    where the last equality uses independence and the zero-mean property of~$\varphi$.
    Therefore,
    it holds in general that
    \[
        \left\lvert \frac{\varepsilon}{T} \expect \bigl[ h{\left(\theta_0, \allxi_0\right)} - h{\left(\theta_T, \allxi_T\right)} \bigr] \right\rvert
        \lesssim \frac{\varepsilon}{T} \cdot \frac{\norm{\varphi}_{L^2(\fastgaussianone)}}{\sqrt{J}} \cdot \norm{A}_{\infty} \, .
    \]
    For the integral term,
    we recall the following bounds,
    which follow, respectively, from centering and independence of the summands,
    and from stationarity of the fast variables together with the bound on~$\driftall$ in~\sref{lemma:upper_bound_drift_and_gradient}:
    \begin{equation}
        \label{eq:averaging_auxiliary_l2_bounds}
        \left\lVert \frac{1}{J}\sum_{j=1}^{J}\varphi(\xin{j}_t)\right\rVert_{L^2(\Omega)}
        = \frac{\norm{\varphi}_{L^2(\fastgaussianone)}}{\sqrt J} \, ,
        \qquad
        \sup_{t\geq0}\left\lVert\driftall(\theta_t,\allxi_t)\right\rVert_{L^2(\Omega)} \lesssim1 \, .
    \end{equation}
    Using these bounds together with the Cauchy--Schwarz inequality,
    we obtain
    \[
        \left\lvert \frac{\varepsilon}{T} \expect {\left[ \int_{0}^{T} \frac{1}{J} \sum_{j=1}^{J} \varphi{\left(\xin{j}_t\right)} \cdot
        \Bigl( \driftall \cdot \grad A + \Delta A \Bigr) \bigl(\theta_t, \allxi_t\bigr) \, \d t \right]} \right\rvert
        \lesssim \frac{\varepsilon}{\sqrt{J}} \, \norm{\varphi}_{L^2(\fastgaussianone)} \, \norm{A}_{2,\infty} \, .
    \]
    Combining both bounds and using~$\norm{\varphi}_{L^2(\fastgaussianone)} \leq \norm{\chi}_{L^2(\fastgaussianone)}$
    yields~\eqref{eq:averaging_lemma_second_estimate}.

    For~\eqref{eq:averaging_lemma_second_estimate_mean_square} we return to the pathwise It\^o identity~\eqref{eq:averaging_lemma_ito_pathwise}.
    Taking the square of both sides, using the elementary inequality~$(a + b + c + d)^2 \leq 4 (a^2 + b^2 + c^2 + d^2)$,
    using Jensen's inequality to move the square inside the time integrals,
    then taking expectations,
    using It\^o's isometry for the stochastic integrals,
    we obtain
    \begin{align*}
        \expect \left\lvert \frac{1}{T} \int_{0}^{T} \left( \frac{1}{J} \sum_{j=1}^{J} \chi{\left(\xin{j}_t\right)} \right) A(\theta_t) \, \d t \right\rvert^2
        &\lesssim
        \frac{\varepsilon^2}{T^2} \expect {\left[ \left\lvert h{\left(\theta_0, \allxi_0\right)} \right\rvert^2 + \left\lvert h{\left(\theta_T, \allxi_T\right)} \right\rvert^2 \right]} \\
        &\qquad + \frac{\varepsilon^2}{T} \int_{0}^{T} \expect {\left[ \left\lvert \frac{1}{J} \sum_{j=1}^{J} \varphi{\left(\xin{j}_t\right)} \cdot
        \Bigl( \driftall \cdot \grad A + \Delta A \Bigr) \bigl(\theta_t, \allxi_t\bigr) \right\rvert^2 \right]} \d t \\
        &\qquad + \frac{\varepsilon}{T^2 J^2} \sum_{j=1}^{J}
        \int_{0}^{T} \expect {\left[ \Bigl\lvert \nabla \varphi{\left(\xin{j}_t\right)} \Bigr\rvert^2 \cdot \Bigl\lvert A(\theta_t) \Bigr\rvert^2 \right]} \, \d t
        \\
        &\qquad + \frac{\varepsilon^2}{T^2} \int_{0}^{T} \expect {\left[ \left\lvert \frac{1}{J} \sum_{j=1}^{J} \varphi{\left(\xin{j}_t\right)} \nabla A(\theta_t) \right\rvert^2 \right]} \, \d t \, .
    \end{align*}
    All contributions except the part involving~$\driftall$ in the second line are controlled by the $L^1$--$L^\infty$ H\"older inequality
    together with stationarity and~\eqref{eq:averaging_auxiliary_l2_bounds}.
    For the remaining term,
    since~$\varphi$ is a polynomial of degree at most~$r$,
    equivalence of the~$L^2(\gamma_d)$ and~$L^4(\gamma_d)$ norms on this finite-dimensional polynomial space
    and the Marcinkiewicz--Zygmund inequality (see~\sref{lemma:mz_inequality} with~$p=4$) give
    \[
        \left\lVert \frac{1}{J}\sum_{j=1}^{J}\varphi(\xin{j}_t)\right\rVert_{L^4(\Omega)}
        \lesssim \frac{\norm{\varphi}_{L^4(\fastgaussianone)}}{\sqrt J}
        \lesssim_r \frac{\norm{\varphi}_{L^2(\fastgaussianone)}}{\sqrt J} \, .
    \]
    This estimate and the uniform $L^4(\Omega)$ bound on~$\driftall$,
    which follows from~\sref{lemma:upper_bound_drift_and_gradient} and stationarity of the fast variables,
    allow us by Cauchy--Schwarz to control the remaining contribution, leading to
    \begin{align*}
        \expect \left\lvert \frac{1}{T} \int_{0}^{T} \left( \frac{1}{J} \sum_{j=1}^{J} \chi{\left(\xin{j}_t\right)} \right) A(\theta_t) \, \d t \right\rvert^2
        &\lesssim
        \frac{\varepsilon^2}{T^2 J} \norm{\varphi}_{L^2(\fastgaussianone)}^2 \norm{A}_{\infty}^2
        + \frac{\varepsilon^2}{J} \norm{\varphi}_{L^2(\fastgaussianone)}^2 \norm{A}_{2,\infty}^2 \\
        &\qquad
        + \frac{\varepsilon}{TJ} \norm{\nabla \varphi}_{L^2(\fastgaussianone)}^2 \norm{A}_{\infty}^2
        + \frac{\varepsilon^2}{TJ} \norm{\varphi}_{L^2(\fastgaussianone)}^2 \norm{\grad A}_{\infty}^2 \, .
    \end{align*}
    The conclusion then follows because,
    since~$\varphi$ is a weak solution to the Poisson equation~\eqref{eq:poisson_averaging}
    and~$\fastgaussianone$ satisfies a Poincar\'e inequality with constant~$1$,
    we have
    \[
        \norm{\grad \varphi}_{L^2(\fastgaussianone)}^2
        = \bigl\langle {-\genfastone \varphi}, \varphi \bigr\rangle_{L^2(\fastgaussianone)}
        = \bigl\langle \chi, \varphi \bigr\rangle_{L^2(\fastgaussianone)}
        \leq \norm{\chi}_{L^2(\fastgaussianone)} \norm{\varphi}_{L^2(\fastgaussianone)}
        \leq \norm{\chi}_{L^2(\fastgaussianone)}^2 \, .
    \]
    which concludes the proof.
\end{proof}

We are now in a position to prove~\cref{theorem:invariant_continuous},
the central result of this section.
\begin{proof}
    [Proof of~\cref{theorem:invariant_continuous}]
    We decompose the proof into four steps for clarity.

    \paragraph{Step 0. Decomposition of the error}
    Let $f \in \sobolev^{2,\infty}(\torus^d)$ and let~$(\theta_t, \allxi_t)_{t \geq 0}$ denote a strong solution of the multiscale dynamics.
    By \cref{lem:poisson}, there exists a unique solution \(\psi \in \cont^{3+\lambda}(\torus^d,\real)\),
    for any~$\lambda \in [0, 1)$,
    to the Poisson equation
    \begin{equation}
        \label{eq:poisson_proof}
      - \genlimit \psi = \grad \Phi\cdot \grad \psi -\Delta \psi = f - \mu_\Phi(f) \, , \qquad \text{with }\mu_\Phi(\psi) = 0 \, .
    \end{equation}
    It holds by It\^o's formula that
    \begin{align}
        \notag
        \psi(\theta_T) - \psi(\theta_0)
        &= \int_{0}^{T} \genlimit \psi(\theta_t) \,  \d t
        + \int_{0}^{T} \Bigl(\genslow[\sigma=0]\psi{\left(\theta_t, \allxi_t\right)} - \genlimit \psi(\theta_t) \Bigr) \, \d t
        \\
        &\qquad
        + \int_{0}^{T} \Bigl(\genslow[\sigma]\psi{\left(\theta_t, \allxi_t\right)} - \genslow[\sigma=0]\psi{\left(\theta_t, \allxi_t\right)} \Bigr) \, \d t
        + \sqrt{2} \int_0^T \grad_{\theta} \psi(\theta_t) \cdot \d \sloww_t \, ,
        \label{eq:ito_bound_invariant}
    \end{align}
    where $\genslow[\sigma=0]$ denotes the operator
    \(
        \genslow[\sigma=0] \psi{\left(\theta,\allxi\right)} = - \cov (\allxi) : \bigl(\nabla \Phi(\theta) \otimes \nabla \psi(\theta)\bigr) + \Delta \psi(\theta) \, .
    \)
    Since the forward model satisfies ~$G \in \cont^{3+\beta}(\torus^d, \real^K)$,
    it holds by the second estimate in~\sref{lemma:drift_lim_small_sigma} that
    \begin{equation}
        \label{eq:taylor_drift}
        \driftall{\left(\theta, \allxi\right)}
        = \driftlim{\left(\theta, \allxi\right)}
        + \partial_{\sigma} \driftall{\left(\theta, \allxi\right)} \bigl\lvert_{\sigma = 0} \, \sigma
            + \sigma^2 \mathcal R^J_{\sigma}{\left(\theta,\allxi\right)} \, , \,
    \end{equation}
    where
    the explicit expression of~$\partial_{\sigma} \driftall{\left(\theta, \allxi\right)} \bigl\lvert_{\sigma = 0}$ is given by~\sref{eq:drift_deriv_1}
    and the remainder term~$\mathcal R^J_{\sigma}{\left(\theta,\allxi\right)}$ satisfies
    \begin{equation}
        \label{eq:bound_remainder_drift}
        \bigl\lvert \mathcal R^J_{\sigma}{\left(\theta, \allxi\right)} \bigr\rvert
        \lesssim \frac{1}{J} \sum_{j=1}^{J} \bigl\lvert \xin{j} \bigr\rvert^4  \, ,
    \end{equation}
    with a constant implicit in the~$\lesssim$ notation that depends on third derivatives of~$G$.
    Substituting the expressions of~$\genslow[\sigma]$ and~$\genslow[\sigma=0]$ in~\eqref{eq:ito_bound_invariant},
    and using the decomposition of~$\driftall$ in~\eqref{eq:taylor_drift},
    we obtain
    \begin{align*}
        \notag
        \psi(\theta_T) - \psi(\theta_0)
        &= \int_{0}^{T} \genlimit \psi(\theta_t) \,  \d t
        + \int_{0}^{T} \Bigl(\driftlim{\left(\theta_t, \allxi_t\right)} + \nabla \Phi(\theta_t) \Bigr) \cdot \grad \psi(\theta_t) \, \d t
        \\
        &\qquad
        + \sigma \int_{0}^{T} \partial_{\sigma} \driftall{\left(\theta_t, \allxi_t\right)} \bigl\lvert_{\sigma = 0} \, \cdot \, \grad \psi(\theta_t) \, \d t
        + \sigma^2 \int_{0}^{T} \mathcal R^J_{\sigma}{\left(\theta_t, \allxi_t\right)} \cdot \grad \psi(\theta_t) \, \d t
        + \sqrt{2} \int_0^T \grad_{\theta} \psi(\theta_t) \cdot \d \sloww_t \, .
    \end{align*}
    Since~$\psi$ is a solution to the Poisson equation~\eqref{eq:poisson_proof},
    we obtain by rearranging the terms in this previous equation
    and using that~$\driftlim{\left(\theta, \allxi\right)} = - \cov(\allxi) \nabla \Phi(\theta)$ that
    \begin{align}
        \notag
        \frac{1}{T} \int_{0}^{T} f(\theta_t)  \,  \d t - \mu_\Phi(f)
        &=
        \frac{1}{T} \Bigl( \psi(\theta_0) - \psi(\theta_T) \Bigr)
        - \frac{1}{T} \int_{0}^{T} \Bigl( \cov\bigl(\allxi_t\bigr) - \I_d   \Bigr) : \bigl( \nabla \Phi(\theta_t) \otimes \grad \psi(\theta_t) \bigr) \, \d t
            \\
        \notag
        &\qquad
        + \frac{\sigma}{T} \int_{0}^{T} \partial_{\sigma} \driftall{\left(\theta_t, \allxi_t\right)} \bigl\lvert_{\sigma = 0} \, \cdot \, \grad \psi(\theta_t) \, \d t
        + \frac{\sigma^2}{T} \int_{0}^{T} \mathcal R^J_{\sigma}{\left(\theta_t, \allxi_t\right)} \cdot \grad \psi(\theta_t) \, \d t
        \\
        &\qquad
        +  \frac{\sqrt{2}}{T} \int_0^T \grad_{\theta} \psi(\theta_t) \cdot \d \sloww_t \, .
        \label{eq:decomposition_bound_invariant}
    \end{align}
    The three results in the statement of the theorem can be obtained from this equation.

    \paragraph{Step 1. Proof of~\eqref{eq:bound_distributions}}
    The bound~\eqref{eq:bound_distributions} on the invariant distribution
    follows from the estimate~\eqref{eq:bound_bias_continuous} on the bias of the ergodic estimator,
    by considering the particular case of stationary initial condition~$(\theta_0, \allxi_0) \sim \fullinvariant$,
    in which case the left-hand side of~\eqref{eq:bound_bias_continuous} reduces to~$\abs{\fullinvarianttheta(f) - \mu_\Phi(f)}$,
    and then letting~$T \to \infty$.
    Thus, it suffices to establish the bound~\eqref{eq:bound_bias_continuous}.

    \paragraph{Step 2. Proof of~\eqref{eq:bound_bias_continuous}}
    For~$0<T<1$, the bias is bounded by~$2\norm f_\infty\leq2T^{-1}\norm f_\infty$, so we may assume~$T\geq1$.
    Taking expectations in~\eqref{eq:decomposition_bound_invariant}, we obtain
    \begin{align}
         \notag
         \expect {\left[\frac{1}{T} \int_{0}^{T} f(\theta_t)  \,  \d t - \mu_\Phi(f)\right]}
         &=
         \frac{1}{T} \expect \Bigl[ \psi(\theta_0) - \psi(\theta_T) \Bigr]
            - \frac{1}{T} \expect {\left[ \int_{0}^{T} \Bigl( \cov\bigl(\allxi_t\bigr) - \I_d   \Bigr) : \bigl( \nabla \Phi(\theta_t) \otimes \grad \psi(\theta_t) \bigr) \, \d t \right]}
                \\
         \notag
            &\qquad
            + \frac{\sigma}{T} \expect {\left[ \int_{0}^{T} \partial_{\sigma} \driftall{\left(\theta_t, \allxi_t\right)} \bigl\lvert_{\sigma = 0} \, \cdot \, \grad \psi(\theta_t) \, \d t  \right]}
            \\
         \notag
            &\qquad
            + \frac{\sigma^2}{T} \expect {\left[ \int_{0}^{T} \mathcal R^J_{\sigma}{\left(\theta_t, \allxi_t\right)} \cdot \grad \psi(\theta_t) \, \d t  \right]}
            \\
        &\eqcolon \frac{1}{T} \expect \Bigl[ \psi(\theta_0) - \psi(\theta_T) \Bigr]
        + \mathcal A_1(T) + \mathcal A_2(T) + \mathcal A_3(T) \, .
        \label{eq:main_equation_bound_bias}
    \end{align}
    The first term is bounded by~$\norm{f}_{\infty} T^{-1}$,
    up to a multiplicative constant,
    since~$\norm{\psi}_{\infty} \lesssim \norm{f}_{\infty}$ by~\cref{lem:poisson}.
    We next bound each of the three terms~$\mathcal A_1(T)$, $\mathcal A_2(T)$, and~$\mathcal A_3(T)$.
    To bound~$\mathcal A_1(T)$,
    we use the estimate~\eqref{eq:averaging_lemma_second_estimate} in~\cref{lemma:averaging} with functions~$\chi(\xi) = \xi \otimes \xi - \I_d$ and~$A(\theta) = \nabla \Phi(\theta) \otimes \grad \psi(\theta)$.
    The assumptions of this statement are satisfied since~$G \in \cont^{3}(\torus^d, \real^K)$ and~$f \in \sobolev^{2,\infty}(\torus^d)$,
    so~$\psi \in \cont^{3}(\torus^d)$ by~\cref{lem:poisson}.
    Using this estimate together with the bound on~$\norm{\psi}_{3,\infty}$ in~\eqref{eq:bound_poisson},
    we obtain
    \begin{equation}
        \label{eq:bound_invariant_bound_second_term}
        \left\lvert \mathcal A_1(T) \right\rvert
        \lesssim \frac{\varepsilon}{\sqrt{J}} \left(1 + \frac{1}{T} \right) \norm{\Phi}_{3,\infty} \norm{\psi}_{3,\infty}
        \lesssim \frac{\varepsilon}{\sqrt{J}} \left(1 + \frac{1}{T} \right) \norm{G}_{3,\infty} \norm{f}_{2, \infty} \, .
    \end{equation}
    We note that, in this case,
    the unique solution~$\varphi$ to the Poisson equation~\eqref{eq:poisson_averaging},
    with matrix-valued right-hand side given by the function~$\xi \mapsto \bigl(\xi \otimes \xi - \I_d\bigr)$,
    is explicit and given by $\varphi(\xi) = \frac{1}{2} \bigl(\xi \otimes \xi - \I_d\bigr)$.

    For the term~$\mathcal A_2(T)$, we notice from~\sref{eq:drift_deriv_1} that,
    for each fixed~$\theta$,
    the function~$\xi \mapsto \partial_{\sigma} \driftone{\left(\theta, \xi\right)} \bigl\vert_{\sigma = 0}$
    is a centered cubic polynomial in~$\xi$ with~$\theta$-dependent coefficients;
    there exists a function~$\Psi \colon \torus^d \to \real^{d \times d}$ such that
    \begin{equation}
        \label{eq:tensor_product_drift_derivative}
        \partial_{\sigma} \driftone{\left(\theta, \xi\right)} \bigl\vert_{\sigma = 0} = \bigl(\xi \otimes \xi \otimes \xi\bigr) : \Psi(\theta) \, ,
    \end{equation}
    where $:$ denotes contraction over the last two tensor indices.
    By~\sref{eq:drift_deriv_1},
    the coefficients in~$\Psi(\theta)$ depend on~$G$ and its derivatives up to order~2.
    The rapid relaxation of the fast variables suggests heuristically that
    the dependence between~$\theta_t$ and~$\allxi_t$ becomes negligible as~$\varepsilon/\sqrt{J} \to 0$ for positive~$t$,
    and hence that~$\mathcal A_2(T)$ should vanish by an averaging argument.
    Indeed, by the first statement in~\cref{lemma:averaging}
    together with Jensen's inequality and the bound~\eqref{eq:bound_poisson} on~$\psi$,
    we have immediately that
    \begin{subequations}
    \begin{equation}
        \label{eq:bound_invariant_second_term_bound_1}
        \bigl\lvert \mathcal A_2(T) \bigr\rvert
        \lesssim
        \frac{\sigma}{\sqrt{J}} \, \norm{\nabla \psi}_{\infty} \,
        \lesssim \frac{\sigma}{\sqrt{J}} \, \norm{f}_{\infty} \, .
    \end{equation}
    For the assumption of the second bullet point in~\cref{lemma:averaging} to be satisfied,
    we use that~$\Psi \in \cont^2(\torus^d, \real^{d \times d})$,
    which follows from~$G \in \cont^{4}(\torus^d, \real^K)$.
    It is also necessary that~$\nabla \psi \in \cont^2(\torus^d, \real^d)$,
    but this follows from~\cref{lem:poisson} since~$f \in \sobolev^{2,\infty}(\torus^d)$.
    Thus, for~$G \in \cont^{4}(\torus^d, \real^K)$,
    we have
    \begin{equation}
        \label{eq:bound_invariant_second_term_bound_2}
        \bigl\lvert \mathcal A_2(T) \bigr\rvert
        \lesssim
        \frac{\sigma \varepsilon}{\sqrt{J}} \,  \, \left( \frac{\norm{\nabla \psi}_{\infty}}{T} + \norm{\nabla \psi}_{2, \infty} \right) \,
        \lesssim \frac{\sigma \varepsilon}{\sqrt{J}} \,  \, \left( \frac{\norm{f}_{\infty}}{T} + \norm{f}_{2, \infty} \right) \,
        \lesssim \frac{\sigma \varepsilon}{\sqrt{J}} \left(\frac{1}{T} + 1\right)\, \norm{f}_{2, \infty} \, .
    \end{equation}
    \end{subequations}

    Finally, for the third term~$\mathcal A_3(T)$,
    we use the bound on the remainder term~$\mathcal R^J_{\sigma}$ in~\eqref{eq:bound_remainder_drift},
    together with the bound on~$\nabla \psi$ in~\eqref{eq:bound_poisson},
    to obtain
    \[
        \bigl\lvert \mathcal A_3(T) \bigr\rvert
        \lesssim
        \sigma^2 \expect {\left[ \frac{1}{T} \int_{0}^{T} \frac{1}{J} \sum_{j=1}^{J} \bigl\lvert \xin{j}_t \bigr\rvert^4  \, \d t \right]} \norm{\grad \psi}_{\infty}
        \lesssim
        \sigma^2  \norm{f}_{\infty}  \, .
    \]
    Combining this equation together with the bounds~\eqref{eq:bound_invariant_bound_second_term},~\eqref{eq:bound_invariant_second_term_bound_1}, and~\eqref{eq:bound_invariant_second_term_bound_2} in~\eqref{eq:main_equation_bound_bias},
    we finally deduce the desired bound~\eqref{eq:bound_bias_continuous}.

    \paragraph{Step 3. Bound on the mean square error of the ergodic estimator}%
    As mentioned, a bound on the mean square error of the estimator can also be deduced from~\eqref{eq:decomposition_bound_invariant}.
    Indeed, squaring both sides of this equation, taking expectations, using the It\^o isometry,
    and using~\eqref{eq:averaging_lemma_second_estimate_mean_square} in~\cref{lemma:averaging},
    we obtain for all~$T \geq 1$
    \begin{align*}
        \notag
        \expect \biggl\lvert \frac{1}{T} \int_{0}^{T} f(\theta_t) \, \d t - \mu_{\Phi}(f) \biggr\rvert^2
        &\lesssim
        \frac{1}{T^2}  \norm{\psi}_{\infty}^2
        +
        \left(
            \frac{\varepsilon^2}{T^2 J} \norm{\nabla \psi}_{\infty}^2
            + \frac{\varepsilon^2}{J} \norm{\nabla \psi}_{2,\infty}^2
            + \frac{\varepsilon}{TJ} \norm{\nabla \psi}_{\infty}^2
            + \frac{\varepsilon^2}{TJ} \norm{\nabla \psi}_{1,\infty}^2
        \right)
        \\
        &\qquad
        +
        \sigma^2
        \left(
            \frac{1 - \beta}{J} \norm{\nabla \psi}_{\infty}^2
            + \beta
            \left(
                \frac{\varepsilon^2}{T^2 J} \norm{\nabla \psi}_{\infty}^2
                + \frac{\varepsilon^2}{J} \norm{\nabla \psi}_{2,\infty}^2
                + \frac{\varepsilon}{TJ} \norm{\nabla \psi}_{\infty}^2
                + \frac{\varepsilon^2}{TJ} \norm{\nabla \psi}_{1,\infty}^2
            \right)
        \right) \\
        &\qquad
        + \sigma^4 \norm{\nabla \psi}_{\infty}^2
        + \frac{1}{T} \norm{\nabla \psi}_{\infty}^2 \, \\
        &\lesssim
        \frac{1}{T^2} \norm{f}_{\infty}^2
        + (1-\beta) \frac{\sigma^2}{J} \norm{f}_{\infty}^2
        + \sigma^4 \norm{f}_{\infty}^2
        + \frac{1}{T} \norm{f}_{\infty}^2
        + \frac{\varepsilon^2}{J} \norm{f}_{2,\infty}^2
        + \frac{\varepsilon}{TJ} \norm{f}_{\infty}^2 \, ,
    \end{align*}
    which, upon rearranging and using $\frac{\varepsilon}{T} \leq \frac{\varepsilon^2}{2} + \frac{1}{2T^2}$,
    is the desired inequality~\eqref{eq:bound_mse_continuous}.
\end{proof}

\begin{remark}
    \label{remark:averaged_dynamics_as_pivot}
    As discussed in the comments on the proof strategy above,
    it would also have been possible to use,
    in the proof of~\cref{theorem:invariant_continuous},
    the averaged dynamics~\eqref{eq:averaged_dynamics} as a pivot,
    just as we did for the proof of strong convergence.
    To obtain a bound on the distance between the $\theta$-marginal of the invariant distribution of the multiscale dynamics, on the one hand,
    and the invariant measure $\nu_\sigma$ of the averaged dynamics~\eqref{eq:averaged_dynamics} for fixed~$\sigma > 0$, on the other hand,
    we would need to consider the Poisson equation
    \begin{equation}
        \label{eq:slow_poisson_alternative}
        \driftinf \cdot \nabla \psi + \Delta \psi = f - \nu_{\sigma}(f) \, ,
    \end{equation}
    instead of~\eqref{eq:poisson}, as well as the Poisson equation
    \begin{equation}
        \label{eq:fast_poisson_alternative}
        - \mathcal L_\xi^J \phi = \driftall(\theta, \allxi) - \driftinf(\theta) \, ,
    \end{equation}
    instead of~\eqref{eq:poisson_averaging}.
    Here, $\nu_\sigma$ denotes the invariant measure of the averaged dynamics.
    In the special case $\ell=\id$ and~$G=\Phi$, 
    the averaged drift is $-\nabla\Phi_\sigma$, where $\Phi_\sigma = \Phi \star \normal(0,\sigma^2\I_d)$;
    hence $\nu_\sigma$ has density proportional to~$\e^{-\Phi_\sigma}$.
    While the proof would be conceptually similar to that presented above,
    it would be much more tedious.
    Indeed, a uniform-in-\(\sigma\) regularity estimate for the solution to~\eqref{eq:slow_poisson_alternative} would be required.
    Furthermore,
    more involved techniques based on~\sref{proposition:expect_supremum_torus} would be required to obtain the correct scaling in~$J$ for terms arising
    from the solution to~\eqref{eq:fast_poisson_alternative}, which,
    unlike the solution to~\eqref{eq:poisson_averaging},
    does not tensorize as a product of a function of~\(\theta\) and a function of~\(\allxi\).
    We do not pursue this alternative approach here.
\end{remark}

\subsection{Weak convergence}%
\label{sec:weak_error_continuous_proof}

As emphasized in~\cref{sub:invariant_distribution},
establishing the accuracy of the invariant distribution and of ergodic averages requires studying the elliptic Poisson equation~\eqref{eq:poisson}.
In this section, we establish uniform-in-time weak convergence for the continuous-time multiscale dynamics~\eqref{eq:the_multiscale_dynamics};
the proof for the discretized dynamics is given in~\sref{ssub:weak_discrete}.
This requires to understand the regularization properties of the parabolic equation corresponding to~\eqref{eq:poisson},
known as the backward Kolmogorov equation:
\begin{equation}
    \label{eq:backward_kolmogorov}
    \begin{aligned}
        \partial_t v &= \genlimit v \quad && \text{in $(0, \infty) \times \torus^d$} \, , \\
        v &= f \quad && \text{in $\{0\} \times \torus^d$} \, .
    \end{aligned}
\end{equation}
We recall that~$\genlimit = -\nabla \Phi \cdot \nabla + \Delta$ is the generator of the overdamped Langevin dynamics
\begin{equation}
    \label{eq:overdamped_langevin_weak}
    \d \vartheta_t = -\nabla \Phi(\vartheta_t) \, \d t + \sqrt{2} \, \d \wslow_t \, .
\end{equation}
For our purposes,
we need two key properties of the function~$v$,
aside from its existence and uniqueness as a classical solution:
its connection to the overdamped Langevin dynamics~\eqref{eq:overdamped_langevin_weak} through the celebrated Feynman--Kac formula,
and a pointwise control of its spatial derivatives with sufficiently fast decay in time.
This is the content of the following classical result,
whose proof is provided in \sref{appendix:proof_auxiliary_parabolic}.

\begin{theorem}
    \label{theorem:auxiliary_parabolic}
    Let~$M\geq2$ be an integer, and assume that~$\Phi\in\cont^{\max\{4,M+1\}}(\torus^d)$ and~$f\in\cont^M(\torus^d)$.
    Then there exists a unique classical solution~$v\in\cont^{1,M}([0,\infty)\times\torus^d)$
    to the parabolic equation~\eqref{eq:backward_kolmogorov}.
    This solution is given by the Feynman--Kac formula:
    \begin{equation}
        \label{eq:feynman_kac}
        \forall \theta\in \torus^d,\quad v(t,\theta)
        = \expect_\theta\bigl[ f(\vartheta_t)\bigr] \, ,
    \end{equation}
    where \((\vartheta_t)_{t\geq 0}\) is the solution to the overdamped Langevin dynamics~\eqref{eq:overdamped_langevin_weak}.

    Furthermore,
    there exists \(\lambda_{\Phi} > 0\) such that,
    for every integer~$0\leq m\leq M$, there exists a constant~\(C_m > 0\) satisfying
    \begin{equation}
        \label{eq:decay_derivatives_semigroup_statement}
        \forall s \geq 0, \qquad
        \lVert v(s, \placeholder) - \target(f) \rVert_{m, \infty}
        \leq C_m \, \e^{-\lambda_{\Phi} s} \lVert f \rVert_{m, \infty} \, .
    \end{equation}
\end{theorem}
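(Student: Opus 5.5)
We will establish the three assertions of \cref{theorem:auxiliary_parabolic} in turn: existence and regularity of a classical solution, the Feynman--Kac representation, and the exponential decay of derivatives.

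\textbf{Existence, uniqueness and the Feynman--Kac formula.} The drift $-\nabla\Phi$ has bounded derivatives up to order $\max\{3,M\}$, and the diffusion is constant. Classical results on differentiability of stochastic flows (Kunita) then show that $\theta\mapsto\vartheta_t^\theta$ is $\cont^M$ in mean, with derivative processes satisfying linear variational equations that have bounded coefficients. Consequently $v(t,\theta)\coloneq\expect_\theta[f(\vartheta_t)]$ belongs to $\cont^M$ in $\theta$, with $\norm{v(t,\placeholder)}_{m,\infty}\leq C_m\e^{c_m t}\norm{f}_{m,\infty}$. Since $M\geq2$, the Markov property and It\^o's formula show that $v$ is $\cont^1$ in time and solves~\eqref{eq:backward_kolmogorov}. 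Alternatively, one can invoke classical Schauder theory for uniformly parabolic equations on the compact manifold $\torus^d$. Uniqueness among $\cont^{1,2}$ solutions follows by applying It\^o's formula to $s\mapsto w(t-s,\vartheta_s)$, where $w$ is the difference of two solutions, or from the parabolic maximum principle.

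\textbf{Decay for $m=0$.} Since $\mu_\Phi$ satisfies the Poincar\'e inequality~\eqref{eq:poincare2}, the semigroup contracts in $L^2_0(\mu_\Phi)$: $\norm{v(s)-\mu_\Phi(f)}_{L^2(\mu_\Phi)}\leq\e^{-R s}\norm{f-\mu_\Phi(f)}_{L^2(\mu_\Phi)}$. To upgrade this to $L^\infty$, we use a short-time ultracontractivity bound. Because the density of $\mu_\Phi$ is bounded above and below on the torus, the transition density $p_1(x,y)$ is bounded, so $\norm{P_1 g}_\infty\lesssim\norm{g}_{L^2(\mu_\Phi)}$. Hence, for $s\geq1$, $\norm{v(s)-\mu_\Phi(f)}_\infty\lesssim\e^{-R(s-1)}\norm f_\infty$, while for $s\leq1$ the bound is trivial. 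An alternative route is Doeblin/Harris's theorem, using a positive lower bound on $p_1$.

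\textbf{Decay for $1\leq m\leq M$, the main obstacle.} The naive flow bounds grow like $\e^{c_m t}$, so they must be combined with the $m=0$ decay through a semigroup splitting. For $s\geq1$, write $v(s)-\mu_\Phi(f)=P_1\bigl(v(s-1)-\mu_\Phi(f)\bigr)$. The plan is then to prove a regularisation estimate $\norm{P_1 g}_{m,\infty}\leq K_m\norm{g}_\infty$ for all $m\leq M$. Combined with the $m=0$ decay, this gives $\norm{v(s)-\mu_\Phi(f)}_{m,\infty}\leq K_m C_0\e^{-\lambda(s-1)}\norm f_\infty$. For $s\in[0,1]$, the flow bound gives $\norm{v(s)-\mu_\Phi(f)}_{m,\infty}\lesssim\norm f_{m,\infty}$ directly. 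The smoothing estimate $\norm{P_1g}_{m,\infty}\lesssim\norm g_\infty$ is the delicate step. We would first try Bismut--Elworthy--Li-type integration by parts formulas iterated $m$ times, which requires $\Phi\in\cont^{m+1}$ and, given the way the derivatives of the drift enter the Malliavin weights, explains the hypothesis $\Phi\in\cont^{\max\{4,M+1\}}$. Failing that, we would use interior parabolic Schauder estimates on $[1/2,1]\times\torus^d$, which bound $\cont^{m}$ norms by $\norm g_\infty$ since the drift is $\cont^{m}$, via bootstrapping. Either way, the constant is independent of $s$, and $\lambda_\Phi$ is taken to be the Poincar\'e constant $R[\mu_\Phi]$.
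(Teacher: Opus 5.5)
Your proposal is correct and follows the paper's proof essentially step for step: short-time derivative bounds from the differentiated flow, a smoothing estimate $\norm{P_1 g}_{m,\infty}\lesssim\norm{g}_\infty$ via Bismut--Elworthy--Li, and the splitting $v(s)-\mu_\Phi(f)=P_1\bigl(v(s-1)-\mu_\Phi(f)\bigr)$ combined with uniform $L^\infty$ ergodicity. The one variation is the $m=0$ step. You use the Poincar\'e inequality plus ultracontractivity of $P_1$, which gives the explicit rate $\lambda_\Phi=R[\mu_\Phi]$, but the boundedness of $p_1$ must come from a heat-kernel argument (Aronson bounds, or a Nash inequality transferred from the flat torus), not from the density bounds on $\mu_\Phi$ alone. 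The paper instead uses Harris's theorem with a Gaussian lower bound on the transition density, the alternative you also mention.
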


Before providing the proof of~\cref{thm:uniform_continuous_weak_convergence},
we also present the following auxiliary result,
which generalizes \cref{lemma:averaging} to time-dependent test functions $A(t,\theta)$,
and will be used twice in the proof of~\cref{thm:uniform_continuous_weak_convergence}.
\begin{lemma}
    [Averaging lemma with time-dependent test functions]
    \label{lemma:averaging_weak}
    Let $\chi \colon \real^d \to \real^n$ be a polynomial with mean zero under $\fastgaussianone$,
    and let $A \in \cont^{1,2}([0,\infty) \times \torus^d, \real^n)$.
    Under the same assumptions as in \cref{thm:uniform_continuous_weak_convergence},
    there exists a constant $C > 0$,
    depending only on~$d,n$ and fixed derivative bounds on~$G$ and~$\ell$ but independent of~$\varepsilon,\sigma,J,\chi,A$, 
    such that
    \[
    \begin{aligned}
        \left\lvert \int_0^t \expect{\left[ \frac{1}{J} \sum_{j=1}^J \chi(\xin{j}_s) \cdot A(s,\theta_s) \right]} \d s \right\rvert
        &\leq \frac{C\varepsilon}{\sqrt{J}} \, \lVert\chi\rVert_{L^2(\fastgaussianone)} \\
        &\qquad \times \left( \lVert A(0,\placeholder)\rVert_\infty
        + \lVert A(t,\placeholder)\rVert_\infty
        + \int_0^t \bigl( \lVert \partial_s A(s,\placeholder)\rVert_\infty
        + \lVert A(s,\placeholder)\rVert_{2,\infty} \bigr) \d s \right) \, .
    \end{aligned}
    \]
\end{lemma}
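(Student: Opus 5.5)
We follow the proof of the second estimate~\eqref{eq:averaging_lemma_second_estimate} in \cref{lemma:averaging}, now with a time-dependent test function. Reduce first to the scalar case~$n=1$, working componentwise. Since~$\chi$ is a mean-zero polynomial, the Poisson equation~\eqref{eq:poisson_averaging}, $-\genfastone\varphi=\chi$, has a unique mean-zero polynomial solution~$\varphi$ of the same degree. It satisfies~$\norm{\varphi}_{L^2(\fastgaussianone)}\leq\norm{\chi}_{L^2(\fastgaussianone)}$ and, by the computation at the end of the proof of \cref{lemma:averaging}, also~$\norm{\grad\varphi}_{L^2(\fastgaussianone)}\leq\norm{\chi}_{L^2(\fastgaussianone)}$. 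Set
\[
    h(s,\theta,\allxi)\coloneq\frac1J\sum_{j=1}^J\varphi(\xin{j})\,A(s,\theta) \, .
\]
This function is~$\cont^1$ in~$s$ and~$\cont^2$ in~$(\theta,\allxi)$, so the time-dependent It\^o formula applies on~$[0,t]$.

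Applying It\^o's formula to~$h(s,\theta_s,\allxi_s)$ and using~$\frac1\varepsilon\genfast h=-\frac1\varepsilon\cdot\frac1J\sum_j\chi(\xin{j})A$ gives
\[
    \int_0^t\frac1J\sum_{j}\chi(\xin{j}_s)A(s,\theta_s)\,\d s
    =\varepsilon\bigl(h(0,\cdot)-h(t,\cdot)\bigr)
    +\varepsilon\int_0^t\frac1J\sum_j\varphi(\xin{j}_s)\Bigl(\partial_sA+\driftall\cdot\grad A+\Delta A\Bigr)(s,\theta_s,\allxi_s)\,\d s
    +\varepsilon\,(\text{martingales}) \, .
\]
The first bracket is evaluated along the trajectory at times~$0$ and~$t$. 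The martingale terms consist of the~$\d\fastw{j}$ integral with prefactor~$\sqrt{2\varepsilon}/J$ and the~$\d\sloww$ integral. The~$\d\fastw{j}$ integrand is bounded by~$|A|\,|\grad\varphi|$, which is square integrable, so both stochastic integrals are true martingales and vanish in expectation.

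It then remains to bound the expectations of the remaining terms. For the boundary terms, the fast marginal is stationary~$\fastgaussianone^{\otimes J}$, so by independence and centering we get~$\expect|\frac1J\sum_j\varphi(\xin{j}_s)|\leq\norm{\varphi}_{L^2}/\sqrt J$. Together with~$|A(s,\theta)|\leq\norm{A(s,\placeholder)}_\infty$ for~$s\in\{0,t\}$, this yields the terms~$\norm{A(0,\placeholder)}_\infty+\norm{A(t,\placeholder)}_\infty$. The~$\partial_sA$ and~$\Delta A$ parts of the integral are handled in the same way and give~$\int_0^t(\norm{\partial_sA}_\infty+\norm{A}_{2,\infty})\,\d s$ with the factor~$\varepsilon/\sqrt J$.

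The only mildly delicate term is the one containing~$\driftall\cdot\grad A$, because~$\driftall(\theta_s,\allxi_s)$ is correlated with the~$\varphi(\xin{j}_s)$. We handle it by Cauchy--Schwarz in~$\Omega$, using the bounds~\eqref{eq:averaging_auxiliary_l2_bounds}: $\norm{\frac1J\sum_j\varphi(\xin{j}_s)}_{L^2(\Omega)}=\norm{\varphi}_{L^2}/\sqrt J$, and~$\sup_s\norm{\driftall(\theta_s,\allxi_s)}_{L^2(\Omega)}\lesssim1$ by \sref{lemma:upper_bound_drift_and_gradient} and stationarity. This bounds the term by~$\frac{\varepsilon}{\sqrt J}\norm{\varphi}_{L^2}\norm{\grad A(s,\placeholder)}_\infty$ inside the time integral. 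Collecting all the terms and using~$\norm{\varphi}_{L^2}\leq\norm{\chi}_{L^2}$ gives the claimed estimate. The constant depends only on the degree of~$\chi$ through norm equivalences, if those are needed, and on~$d$ and the derivative bounds on~$G$ and~$\ell$ through the drift bound.
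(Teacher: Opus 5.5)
Your proposal is correct and matches the paper's proof: the same mean-zero polynomial corrector $\varphi$ with $-\mathcal L_\xi\varphi=\chi$, It\^o's formula applied to $\frac1J\sum_{j}\varphi(\xi^j)\cdot A(s,\theta)$, boundary terms bounded by $\|\varphi\|_{L^2(\gamma_d)}/\sqrt J$, and the drift term handled by Cauchy--Schwarz together with the two $L^2(\Omega)$ bounds. Your justification that the stochastic integrals are true martingales is a useful detail the paper leaves implicit. No norm equivalence on polynomials is needed here, so the constant is indeed independent of the degree of $\chi$, as the statement asserts.
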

\begin{proof}[Proof of~\cref{lemma:averaging_weak}]
    Let $\varphi \in L^2_0(\fastgaussianone;\real^n)$ be the unique solution to the Poisson equation
    \[
        -\genfastone \varphi = \chi \, , \qquad \int_{\real^d} \varphi(\xi) \, \fastgaussianone(\d\xi) = 0 \, .
    \]
    Since $\chi$ is a polynomial, $\varphi$ is also a polynomial of the same degree,
    and it satisfies $\lVert\varphi\rVert_{L^2(\fastgaussianone)} \leq \lVert\chi\rVert_{L^2(\fastgaussianone)}$.
    Consider the function
    \[
        h{\left(s,\theta,\allxi\right)} \coloneq \frac{1}{J}\sum_{j=1}^J \varphi{\left(\xin{j}\right)} \cdot A{\left(s,\theta\right)} \, .
    \]
   Applying It\^o's formula to $h(s,\theta_s,\allxi_s)$ we obtain
    \begin{align*}
        h{\left(t,\theta_t,\allxi_t\right)} - h{\left(0,\theta_0,\allxi_0\right)}
        &= \int_0^t \frac{1}{J}\sum_{j=1}^J \varphi{\left(\xin{j}_s\right)} \cdot
        \Bigl( \partial_s + \driftall{\left(\theta_s,\allxi_s\right)} \cdot \nabla_\theta + \Delta_\theta \Bigr) A{\left(s,\theta_s\right)} \, \d s \\
        &\qquad - \frac{1}{\varepsilon} \int_0^t \frac{1}{J}\sum_{j=1}^J \chi{\left(\xin{j}_s\right)} \cdot A{\left(s,\theta_s\right)} \, \d s
        + \sqrt{2} \int_0^t \frac{1}{J}\sum_{j=1}^J \left(\varphi{\left(\xin{j}_s\right)} \cdot \nabla_\theta A{\left(s,\theta_s\right)}\right) \cdot \d \sloww_s \\
        &\qquad + \sqrt{\frac{2}{\varepsilon}} \frac{1}{J}\sum_{j=1}^J \int_0^t \Bigl(A{\left(s,\theta_s\right)} \cdot \nabla_\xi \varphi{\left(\xin{j}_s\right)}\Bigr) \cdot \d \fastw{j}_s \, .
    \end{align*}
    Taking expectations, rearranging, and multiplying by $\varepsilon$ yields
    \begin{align*}
        \int_0^t \expect{\left[ \frac{1}{J}\sum_{j=1}^J \chi{\left(\xin{j}_s\right)} \cdot A{\left(s,\theta_s\right)} \right]} \d s
        &= \varepsilon \, \expect\Bigl[ h{\left(0,\theta_0,\allxi_0\right)} - h{\left(t,\theta_t,\allxi_t\right)} \Bigr] \\
        &\qquad + \varepsilon \int_0^t \expect{\left[ \frac{1}{J}\sum_{j=1}^J \varphi{\left(\xin{j}_s\right)} \cdot
        \Bigl( \partial_s + \driftall{\left(\theta_s,\allxi_s\right)} \cdot \nabla_\theta + \Delta_\theta \Bigr) A{\left(s,\theta_s\right)} \right]} \d s \, .
    \end{align*}
    It remains to bound each term on the right-hand side.
    Since $\varphi$ has mean zero and the fast variables $\xin{j}$ are i.i.d.\ with law $\fastgaussianone$,
    we have
    \[
        \expect{\left[ \biggl\lvert \frac{1}{J}\sum_{j=1}^J \varphi{\left(\xin{j}_s\right)} \biggr\rvert \right]}
        \leq \frac{\lVert\varphi\rVert_{L^2(\fastgaussianone)}}{\sqrt{J}} \, ,
    \]
    uniformly in $s$ by stationarity of the fast process.
    Consequently,
    \[
        \left\lvert \expect\Bigl[ h{\left(s,\theta_s,\allxi_s\right)} \Bigr] \right\rvert
        \leq \frac{\lVert\varphi\rVert_{L^2(\fastgaussianone)}}{\sqrt{J}} \, \lVert A(s,\placeholder)\rVert_\infty \, .
    \]
    For the integral term,
    we use the two auxiliary bounds in~\eqref{eq:averaging_auxiliary_l2_bounds}.
    Using these bounds together with the Cauchy--Schwarz inequality, we obtain
    \[
        \biggl\lvert \expect\biggl[ \frac{1}{J}\sum_{j=1}^J \varphi{\left(\xin{j}_s\right)} \cdot
        \Bigl( \partial_s + \driftall{\left(\theta_s,\allxi_s\right)} \cdot \nabla_\theta + \Delta_\theta \Bigr) A{\left(s,\theta_s\right)} \biggr] \biggr\rvert
        \lesssim \frac{\lVert\varphi\rVert_{L^2(\fastgaussianone)}}{\sqrt{J}}
        \Bigl( \lVert\partial_s A(s,\placeholder)\rVert_\infty + \lVert A(s,\placeholder)\rVert_{2,\infty} \Bigr) \, .
    \]
    The desired estimate follows by collecting these bounds and using $\lVert\varphi\rVert_{L^2} \leq \lVert\chi\rVert_{L^2}$.
\end{proof}

We are now in a position to prove~\cref{thm:uniform_continuous_weak_convergence},
the main result of this subsection.
The main idea of proof is classical;
see, for example, the textbook~\cite[Part VI]{MR1214374},
as well as the recent references~\cite{MR4237949,MR4939957} for techniques to obtain uniform-in-time estimates.

\begin{proof}[Proof of~\cref{thm:uniform_continuous_weak_convergence}]
Let \(f\in \cont^3(\torus^d)\) and fix some time $t\geq 0$.
By~\cref{theorem:auxiliary_parabolic} with~\(M=3\),
there exists a unique classical solution~\(v\in \cont^{1,3}([0,\infty) \times \torus^d)\)
to the initial-value backward Kolmogorov equation
\begin{equation}
  \label{eq:forward}
  \left\{
  \begin{aligned}
      \partial_r v &= \genlimit v \quad && \text{in $\torus^d \times (0,\infty)$} \, , \\
    v &= f \quad && \text{in $\torus^d \times \{0\}$} \, .
  \end{aligned}
  \right.
\end{equation}
For this proof,
it will be more convenient to work with the terminal-value formulation of this parabolic equation.
Setting \(u(s,\theta) \coloneq v(t-s,\theta)\) for \(s\in (-\infty, t]\),
we obtain by direct computation that \(u\) satisfies
\begin{equation}
  \label{eq:kbe}
  \left\{
  \begin{aligned}
      \partial_s u + \genlimit u &= 0 \quad && \text{in $\torus^d \times (-\infty, t]$} \, , \\
    u &= f \quad && \text{in $\torus^d \times \{t\}$} \, .
  \end{aligned}
  \right.
\end{equation}
By the Feynman--Kac formula,
see~\sref{lemma:semigroup_parabolic},
the initial-value solution has the representation
\(v(r,\theta) = \expect^\theta[f(\vartheta_r)]\),
so that by the Markov property
\[
    u(s,\theta) = \expect\bigl[f(\vartheta_t) \bigm| \vartheta_s = \theta\bigr] \, ,
\]
where $(\vartheta_s)_{s\geq 0}$ is the solution to the overdamped Langevin dynamics~\eqref{eq:overdamped_langevin_weak}.
We can decompose the difference between the semigroup using the solution of \eqref{eq:kbe}:
\begin{align*}
    \expect \bigl[f(\theta_t)\bigr] - \expect\bigl[f(\vartheta_t)\bigr]
    & = \expect\bigl[u(t,\theta_t)\bigr] - \expect\bigl[u(0,\theta_0)\bigr] \, .
    % \\
    % & = \int_0^t\frac{\intd}{\intd s}\expect\bigl[u(s,\theta_s)\bigr] \, \d s \, .
\end{align*}
By applying Itô's formula to the function \((s, \theta) \mapsto u(s,\theta)\),
we obtain
\begin{align*}
  u(t, \theta_t) - u(0, \theta_0)
  =& \int_0^t \left(\frac{\partial u}{\partial s}(s,\theta_s) + \fullgenparams u{\left(s,\theta_s,\allxi_s\right)}\right) \, \d s
  + \sqrt{2} \int_0^t \nabla_\theta u(s,\theta_s) \cdot \d \sloww_s
  \\
  =& \int_0^t\left(\frac{\partial u}{\partial s}(s,\theta_s) + \genlimit u{\left(s,\theta_s\right)}\right) \, \d s
  \\
    &+ \int_0^t\left(\mathcal L^J[\varepsilon,\sigma] - \genlimit\right)u{\left(s,\theta_s,\allxi_s\right)}\, \d s
  + \sqrt{2} \int_0^t \nabla_\theta u(s,\theta_s) \cdot \d \sloww_s \, ,
\end{align*}
where, by a slight abuse of notation, we wrote $\fullgenparams u{\left(t, \theta,\allxi\right)} = \driftall{\left(\theta,\allxi\right)} \cdot \nabla_\theta u(t, \theta) + \Delta_{\theta} u(t, \theta)$.
Substituting the expressions of $\fullgenparams$ and $\genlimit$,
we obtain
\[
    u(t, \theta_t) - u(0, \theta_0)
     = \int_0^t\Bigl(\driftall{\left(\theta_s,\allxi_s\right)}
      + \nabla_\theta\Phi(\theta_s)\Bigr)\cdot \nabla_\theta u(s,\theta_s)\, \d s
     + \sqrt{2} \int_0^t \nabla_\theta u(s,\theta_s) \cdot \d \sloww_s \, .
\]
Taking the expectation on both sides,
we deduce that
\begin{align}
    \notag
  \expect\bigl[f(\theta_t)\bigr] - \expect\bigl[f(\vartheta_t)\bigr]
  &= \int_0^t\expect\Bigl[ \Bigl(\driftall{\left(\theta_s, \allxi_s\right)} + \nabla \Phi(\theta_s) \Bigr) \cdot \nabla_\theta u(s,\theta_s)\Bigr] \, \d s  \\
  \notag
  &= \int_0^t\expect\Bigl[ \Bigl(    \driftall{\left(\theta_s, \allxi_s\right)} - \driftlim{\left(\theta_s, \allxi_s\right)} \Bigr) \cdot \nabla_\theta u(s,\theta_s)\Bigr] \, \d s  \\
  \label{eq:diff_measure_semigroup}
  &\qquad + \int_0^t\expect\Bigl[ \Bigl(\driftlim{\left(\theta_s, \allxi_s\right)} + \nabla \Phi(\theta_s) \Bigr) \cdot \nabla_\theta u(s,\theta_s)\Bigr] \, \d s \, .
\end{align}
The first term on the right-hand side corresponds to the gradient approximation error,
while the second term is an averaging error.
We now bound these two terms separately.

\paragraph{Bound on the gradient approximation error}
By~\sref{lemma:drift_lim_small_sigma},
it holds that
\begin{equation}
    \label{eq:drift_approximation_error_expansion}
    \driftall{\left(\theta_s, \allxi_s\right)} - \driftlim{\left(\theta_s, \allxi_s\right)}
    = \sigma \driftderiv{\left(\theta_s, \allxi_s\right)} \,
    + \sigma^2 \mathcal R^J_{\sigma}{\left(\theta_s, \allxi_s\right)} \, ,
\end{equation}
where the remainder term~$\mathcal R^J_{\sigma}$ satisfies
\begin{equation}
    \label{eq:bound_remainder_term_grad_approx}
    \forall \bigl(\theta, \allxi\bigr) \in \torus^d \times \R^{dJ},
    \qquad \bigl\lvert \mathcal R^J_{\sigma}(\theta, \allxi) \bigr\rvert \leq \frac{1}{6} \bigl\lVert (\nabla \ell)\circ G \bigr\rVert_{\infty} \bigl\lVert \nabla^3 G \bigr\rVert_{\infty} \frac{1}{J}\sum_{j=1}^J \abs*{\xi^j}^4  \, .
\end{equation}
% Since the process \((\xin{j}_t)_{t\geq 0}\) is a stationary Ornstein-Uhlenbeck process for all \(j\in \{1,\dots,J\}\),
% we have for all \(t\geq 0\):
% \[
%     \expect\left[\abs*{\xi^1_t}^3\right] = 2^{3/2}\frac{\Gamma\left(\frac{d+3}{2} \right)}{\Gamma\left(\frac{d}{2}\right)} \, .
% \]
Thus, we have
\begin{align}
    \notag
    \int_0^t\expect\Bigl[ \Bigl(    \driftall{\left(\theta_s, \allxi_s\right)} - \driftlim{\left(\theta_s, \allxi_s\right)} \Bigr) \cdot \nabla_\theta u(s,\theta_s)\Bigr] \, \d s
    &=
    \sigma \int_0^t\expect\Bigl[ \driftderiv{\left(\theta_s, \allxi_s\right)} \cdot \nabla_\theta u(s,\theta_s)\Bigr] \, \d s  \\
    &\qquad
    + \sigma^2 \int_0^t \expect\Bigl[ \mathcal R^J_{\sigma}{\left(\theta_s, \allxi_s\right)} \cdot \nabla_\theta u(s,\theta_s)\Bigr] \, \d s \, .
    \label{eq:decomposition_grad_approx_error}
\end{align}
By~H\"older's inequality and~\eqref{eq:bound_remainder_term_grad_approx},
the second expectation on the right-hand side satisfies
\begin{align*}
    \Bigl\lvert \expect\Bigl[ \mathcal R^J_{\sigma}{\left(\theta_s, \allxi_s\right)} \cdot \nabla_\theta u(s,\theta_s)\Bigr] \Bigr\rvert
    &\leq
    \expect \Bigl\lvert \mathcal R^J_{\sigma}{\left(\theta_s, \allxi_s\right)} \Bigr\rvert
    \bigl\lVert \nabla_{\theta} u(s, \placeholder) \bigr\rVert_{\infty} \\
    &\lesssim \bigl\lVert \nabla_\theta u(s, \placeholder) \bigr\rVert_\infty \, ,
\end{align*}
where we omitted the dependence on~$\ell, G$, since these are considered fixed data of the problem.
By the estimate~\eqref{eq:decay_derivatives_semigroup_statement} in~\cref{theorem:auxiliary_parabolic}, applied with~$m=1$, there exists a constant \(C_1 > 0\) and \(\lambda_{\Phi} > 0\) such that
\begin{equation}
    \label{eq:semigroup}
    \forall s \in [0, t],
    \qquad
    \bigl\lVert \nabla u(s, \placeholder) \bigr\rVert_{\infty} \leq C_1 \e^{-\lambda_{\Phi} (t-s)} \norm*{f}_{1,\infty} \, .
\end{equation}
Hence, we deduce that
\begin{equation}
    \label{eq:bound_sigma2_remainder}
    \biggl\lvert \int_0^t \expect\Bigl[ \mathcal R^J_{\sigma}{\left(\theta_s, \allxi_s\right)} \cdot \nabla_\theta u(s,\theta_s)\Bigr] \, \d s \biggr\rvert
    \lesssim
    C_1  \norm*{f}_{1,\infty}
    \int_0^t \e^{-\lambda_{\Phi} (t-s)} \, \d s
    \leq \frac{C_1}{\lambda_{\Phi}} \norm*{f}_{1,\infty} \, .
\end{equation}
It remains to bound the first term on the right-hand side of~\eqref{eq:decomposition_grad_approx_error}.
For the multiscale dynamics~\eqref{eq:the_multiscale_dynamics},
it holds for an appropriate matrix-valued function $\Psi$, see~\sref{eq:drift_deriv_1}, that
\[
    \forall (\theta, \allxi) \in \torus^d \times \R^{dJ}, \qquad
    \driftderiv(\theta, \allxi) =
    \frac{1}{J}\sum_{j=1}^J (\xin{j} \otimes \xin{j} \otimes \xin{j}) : \Psi(\theta) \, ,
\]
where we recall that $:$ denotes contraction over the last two tensor indices.
Furthermore $\Psi \in \cont^2(\torus^d, \R^{d\times d})$ if~$G \in \cont^4(\torus^d, \R^{K})$.
By applying \cref{lemma:averaging_weak} with $\chi(\xi) = \xi \otimes \xi \otimes \xi$
and $A(s,\theta) = \nabla u(s,\theta) \otimes \Psi(\theta)$,
we obtain
\[
    \left\lvert \int_0^t\expect\Bigl[ \driftderiv{\left(\theta_s, \allxi_s\right)} \cdot \nabla_\theta u(s,\theta_s)\Bigr] \, \d s \right\rvert
    \lesssim \frac{\varepsilon}{\sqrt{J}}
    \left( \lVert A(0,\placeholder)\rVert_\infty + \lVert A(t,\placeholder)\rVert_\infty
    + \int_0^t \bigl( \lVert \partial_s A(s,\placeholder)\rVert_\infty + \lVert A(s,\placeholder)\rVert_{2,\infty} \bigr) \d s \right) \, .
\]
By the exponential decay~\eqref{eq:semigroup} and the regularity of~$\Psi$,
we have
\[
    \lVert A(s,\placeholder)\rVert_\infty \lesssim \lVert \Psi \rVert_\infty \lVert \nabla u(s,\placeholder) \rVert_\infty
    \lesssim \e^{-\lambda_{\Phi} (t-s)} \norm*{f}_{1,\infty} \, ,
\]
and, using the backward equation~$\partial_s u = -\genlimit u$ together with the semigroup decay,
\[
    \lVert A(s,\placeholder)\rVert_{2,\infty} + \lVert \partial_s A(s,\placeholder)\rVert_\infty
    \lesssim \e^{-\lambda_{\Phi} (t-s)} \norm*{f}_{3,\infty} \, .
\]
It follows that
\[
    \Bigl\lvert \int_0^t\expect\Bigl[ \driftderiv{\left(\theta_s, \allxi_s\right)} \cdot \nabla_\theta u(s,\theta_s)\Bigr] \, \d s \Bigr\rvert
    \lesssim \frac{\varepsilon}{\sqrt{J}} \, \norm*{f}_{3,\infty} \, .
\]
Combining this estimate with the bound on the~$\sigma^2$ remainder term~\eqref{eq:bound_sigma2_remainder},
we obtain the following bound on the gradient approximation error:
\begin{equation}
    \label{eq:grad_approx_error}
    \biggl\lvert \int_0^t\expect\Bigl[ \Bigl(\driftall{\left(\theta_s, \allxi_s\right)} - \driftlim{\left(\theta_s, \allxi_s\right)} \Bigr) \cdot \nabla_\theta u(s,\theta_s)\Bigr] \, \d s \biggr\rvert
    \lesssim \sigma\frac{\varepsilon}{\sqrt{J}} \, \norm*{f}_{3,\infty} + \sigma^2 \norm*{f}_{1,\infty} \, .
\end{equation}

\paragraph{Bound on the averaging error}%
Let us consider the second term of \eqref{eq:diff_measure_semigroup}.
By the expression of~\(\driftlim\),
we rewrite this term as
\[
    \int_0^t \expect{\Bigl[ \left(\driftlim{\left(\theta_s,\allxi_s\right)}+\nabla \Phi(\theta_s) \right) \cdot \nabla_\theta u(s,\theta_s) \Bigr]} \,\d s
    = - \int_0^t \expect{\left[  \frac{1}{J}\sum_{j=1}^J \Bigl( \xin{j}_s \otimes \xin{j}_s - \I_d \Bigr)  : \Bigl( \nabla_\theta u(s,\theta_s) \otimes \nabla \Phi(\theta_s) \Bigr) \right]} \,\d s \, .
\]
Applying \cref{lemma:averaging_weak} with $\chi(\xi) = \xi\otimes\xi - \I_d$
and $A(s,\theta) = \nabla_\theta u(s,\theta) \otimes \nabla \Phi(\theta)$,
we obtain
\begin{align*}
    \biggl\lvert \int_0^t\expect \Bigl[ \Bigl(\driftlim{\left(\theta_s,\allxi_s\right)}+\nabla \Phi(\theta_s) \Bigr) \cdot \nabla_\theta u(s,\theta_s) \Bigr] \,\d s \biggr\rvert
    &\lesssim \frac{\varepsilon}{\sqrt{J}}
    \Bigl( \lVert A(0,\placeholder)\rVert_\infty + \lVert A(t,\placeholder)\rVert_\infty \\
    &\qquad + \int_0^t \bigl( \lVert \partial_s A(s,\placeholder)\rVert_\infty + \lVert A(s,\placeholder)\rVert_{2,\infty} \bigr) \d s \Bigr) \, .
\end{align*}
By the exponential decay~\eqref{eq:semigroup} and the definition~$A(s,\theta) = \nabla_\theta u(s,\theta) \otimes \nabla \Phi(\theta)$,
we have
\[
    \lVert A(s,\placeholder)\rVert_\infty \lesssim \lVert \nabla \Phi \rVert_\infty \lVert \nabla u(s,\placeholder) \rVert_\infty
    \lesssim \e^{-\lambda_{\Phi} (t-s)} \norm*{f}_{1,\infty} \, ,
\]
and, by the backward equation together with the semigroup decay,
\[
    \lVert A(s,\placeholder)\rVert_{2,\infty} + \lVert \partial_s A(s,\placeholder)\rVert_\infty
    \lesssim \e^{-\lambda_{\Phi} (t-s)} \norm*{f}_{3,\infty} \, .
\]
and the time integrals are bounded uniformly in~$t$.
Therefore,
\begin{equation}
    \label{eq:weak_averaging_error}
    \biggl\lvert \int_0^t\expect \Bigl[ \Bigl(\driftlim{\left(\theta_s,\allxi_s\right)}+\nabla \Phi(\theta_s) \Bigr) \cdot \nabla_\theta u(s,\theta_s) \Bigr] \,\d s \biggr\rvert
    \lesssim \frac{\varepsilon}{\sqrt{J}} \, \norm*{f}_{3,\infty} \, .
\end{equation}
\paragraph{Conclusion of the proof}
By~\eqref{eq:diff_measure_semigroup}, the difference is decomposed into a gradient approximation error and an averaging error, bounded respectively in~\eqref{eq:grad_approx_error} and~\eqref{eq:weak_averaging_error}.
Combining these estimates and applying Young's inequality to the product term,
\[
    \sigma\frac{\varepsilon}{\sqrt{J}} \leq 
    \frac{1}{2}\left(\sigma^2 + \frac{\varepsilon^2}{J}\right)
    \leq \frac{1}{2}\left(\sigma^2 + \frac{\varepsilon}{\sqrt{J}}\right) \, ,
\]
we obtain
\[
    \bigl\lvert \expect\bigl[f(\theta_t)\bigr] - \expect\bigl[f(\vartheta_t)\bigr] \bigr\rvert
    \lesssim \sigma^2 \norm*{f}_{3,\infty} + \frac{\varepsilon}{\sqrt{J}} \norm*{f}_{3,\infty} \, .
\]
Since the bounds hold uniformly in~\(t\geq 0\), this completes the proof of~\cref{thm:uniform_continuous_weak_convergence}.
\end{proof}

\newpage
\section{Proofs of the main results (discrete-time)}
\label{sec:proofs_discrete}

In this section, we present the proofs of the main results for the discrete-time multiscale dynamics~\eqref{eq:the_multiscale_dynamics_discrete}.
We first prove existence and uniqueness of an invariant measure for the dynamics in~\cref{sub:proof_existence_uniqueness_invariant_discrete}.
Next, in~\cref{subsub:strong_convergence_discrete},
we provide the proof of the strong convergence results;
\Cref{sub:bound_invariant_discrete} then provides the proof of convergence of the invariant distribution and ergodic averages,
and~\cref{ssub:weak_discrete} provides the proof of weak convergence.

\subsection{Existence and uniqueness for the discrete-time dynamics}
\label{sub:proof_existence_uniqueness_invariant_discrete}

\begin{proof}[Proof of the discrete-time assertion in~\cref{lemma:existence_uniqueness_invariant}]
    For the discrete-time dynamics~\eqref{eq:the_multiscale_dynamics_discrete},
    we use Harris' theorem as stated in~\cite[Theorem~1.2]{MR2857021},
    which yields as a side-product geometric ergodicity in a weighted total variation distance.
    Letting~$a=\e^{-\frac{h}{\varepsilon}}\in(0,1)$ and denoting by~$P_h$ the one-step transition operator,
    we have that
    \[
        P_hV = a^2 V + (1-a^2)(d+1) \, ,
    \]
    where $V(\theta, \allxi) = 1 + \frac{1}{J} \sum_{j=1}^{J} \lvert \xin{j} \rvert^2$ as in the continuous-time proof.
    Thus the Lyapunov condition required to apply Harris' theorem clearly holds.
    The theorem also requires a minorization condition.
    For sufficiently large~$R>0$, let~$K_R = \{x\in\torus^d\times\real^{dJ}: V(x)\leq R\}$.
    This set is compact and has positive measure.
    The one-step transition density~$p_h(x,y)$, with respect to the product of Lebesgue measure on~$\torus^d$ and~$\real^{dJ}$,
    is continuous and strictly positive.
    Hence there exists~$c_R>0$ such that
    \[
        p_h(x,y) \geq c_R \qquad \text{for all } x,y\in K_R \, .
    \]
    This density lower bound implies the minorization condition required by Harris' theorem.
    Existence and uniqueness of the discrete invariant measure therefore follow from Harris' theorem.
\end{proof}

\subsection{Strong convergence}%
\label{subsub:strong_convergence_discrete}

\begin{proof}[Proof of~\cref{theorem:strong_convergence_fixed_sigma_discrete}]
    Fix \(p \geq 2\).
    Throughout the proof, \(C\) denotes a constant independent of \((\varepsilon,\sigma, h, J)\),
    whose value is irrelevant in the context and may change from occurrence to occurrence.
    By analogy with the proof in the continuous-time setting,
    we introduce
    \[
        n_\ell
        = \left\lfloor \frac{\ell}{\nu} \right\rfloor \, ,
        \qquad
        \nu = \left\lceil \frac{\varepsilon}{h} \right\rceil \, .
    \]
    Note that \(\ell - n_\ell \cdot \nu \leq \nu - 1\) and \(\ell = n_\ell \cdot \nu\) if \(\varepsilon \leq h\).
    It holds for any \(k \in \N\) that
    \begin{align}
        \notag
        \dtheta{k}-\dvvartheta{k}
        &= h \sum_{\ell=0}^{k-1} \Bigl( \driftinf(\dtheta{\ell}) - \driftinf(\dvvartheta{\ell}) \Bigr)
        + h \sum_{\ell=0}^{n_k \cdot \nu-1} \Bigl( \driftall\bigl(\dtheta{\ell},\dallxi{\ell}\bigr) - \driftinf\bigl(\dtheta{\ell}\bigr) \Bigr) \\
        \label{eq:main_equation_strong_conv}
        &\qquad+ h \sum_{\ell=n_k \cdot \nu}^{k-1} \Bigl( \driftall\bigl(\dtheta{\ell},\dallxi{\ell}\bigr) - \driftinf\bigl(\dtheta{\ell}\bigr) \Bigr) \, .
    \end{align}

    \paragraph{Discrete Poisson equation}
    By analogy with standard methods of proof for averaging results in continuous time,
    see for example~\cite[Chapter~17]{MR2382139},
    we consider the discrete Poisson equation
    \begin{equation}
        \label{eq:discrete_poisson}
        - \left(\frac{\e^{\frac{h}{\varepsilon} \genfastone} - \id}{h}\right) \solone(\theta, \xi) =
        \driftone(\theta, \xi) - \driftinf(\theta) \, .
    \end{equation}
    For convenience, we introduce the notation~$\Pi \driftone(\theta, \xi) = \driftone(\theta, \xi) - \driftinf(\theta)$ for the right-hand side of this equation.
    By~\cref{lemma:upper_bound_drift_and_gradient},
    the right-hand side of this equation belongs to the weighted space~\(L^\infty_{\weight{2},0}\) defined in~\eqref{eq:def_weighted_L_infty_space}.
    Therefore, by~\cref{lemma:discrete_poisson_eq} applied componentwise,
    for any fixed \(\theta \in \torus^d\) there exists a unique solution \(\solone(\theta,\placeholder)\colon\real^d\to\real^d\), with each component in~\(L^{\infty}_{\weight{2},0}\), to the Poisson equation~\eqref{eq:discrete_poisson},
    given by the Neumann series
    \[
        \solone(\theta, \xi)
        = h \sum_{\ell=0}^{\infty}  \e^{\ell \frac{h}{\varepsilon} \genfastone} (\Pi \driftone) \bigl(\theta, \xi\bigr) \, .
    \]
    This solution of course depends on the parameters~\((\varepsilon,\sigma,h)\),
    but for simplicity we omit these in the notation.
    For~\(j \in \range{1}{J}\) and any~\(k \in \N\),
    we now consider the decomposition
    \begin{align}
        \notag
        & \solone(\dtheta{k}, \dxin{j}{k}) -  \solone(\dtheta{0}, \dxin{j}{0})  \\
        \notag
        &\qquad = \sum_{\ell=0}^{k-1} \Bigl( \solone{\left(\dtheta{\ell+1}, \dxin{j}{\ell+1}\right)} - \solone{\left(\dtheta{\ell}, \dxin{j}{\ell}\right)} \Bigr) \\
        \label{eq:decomposition_a_b}
        &\qquad = \sum_{\ell=0}^{k-1} \Bigl( \solone{\left(\dtheta{\ell}, \dxin{j}{\ell+1}\right)} - \solone{\left(\dtheta{\ell}, \dxin{j}{\ell}\right)} \Bigr)
        + \sum_{\ell=0}^{k-1} \Bigl( \solone{\left(\dtheta{\ell+1}, \dxin{j}{\ell+1}\right)} - \solone{\left(\dtheta{\ell}, \dxin{j}{\ell+1}\right)} \Bigr)
        \eqcolon A_{k} + B_{k} \, .
    \end{align}
    % The discrete-time processes \((A_\ell)_{\ell \in \nat}\) and \((B_\ell)_{\ell \in \nat}\) are adapted to the filtration \((\mathcal F_n)_{n \in \nat}\),
    % generated by the random variables \(\{\dtheta{0}, \dallxi{0}, \dotsc, \dtheta{n}, \dallxi{n} \}\).
    % The Doob decomposition of the process \((A_n)_{n \in \nat}\) is given by
    We next decompose the process \((A_\ell)_{\ell \in \nat}\) as a sum of a predictable and a martingale process,
    and use~\eqref{eq:discrete_poisson} to rewrite the predictable part in terms of the drift functions:
    \begin{align*}
        A_k
        &= \sum_{\ell=0}^{k-1} \Bigl( \e^{\frac{h}{\varepsilon} \genfastone} \solone {\left(\dtheta{\ell}, \dxin{j}{\ell} \right)} - \solone {\left(\dtheta{\ell}, \dxin{j}{\ell} \right)} \Bigr)
        + \sum_{\ell=0}^{k-1} \Bigl( \solone {\left(\dtheta{\ell}, \dxin{j}{\ell+1}\right)} - \e^{\frac{h}{\varepsilon} \genfastone} \solone {\left(\dtheta{\ell}, \dxin{j}{\ell}\right)} \Bigr) \\
        &= - \sum_{\ell=0}^{k-1} h \Bigl( \driftone{\left(\dtheta{\ell}, \dxin{j}{\ell}\right)} - \driftinf(\dtheta{\ell}) \Bigr)
        + \sum_{\ell=0}^{k-1} \Bigl( \solone {\left(\dtheta{\ell}, \dxin{j}{\ell+1}\right)} - \e^{\frac{h}{\varepsilon} \genfastone} \solone {\left(\dtheta{\ell}, \dxin{j}{\ell}\right)} \Bigr) \, .
    \end{align*}
    Here the second term is a martingale with respect to \((\mathcal F_\ell)\),
    where \(\mathcal F_\ell\) is the \(\sigma\)-algebra generated by the random variables \(\{\dtheta{0}, \dallxi{0}, \dotsc, \dtheta{\ell}, \dallxi{\ell} \}\).
    Using this equation and~\eqref{eq:decomposition_a_b},
    we rewrite~\eqref{eq:main_equation_strong_conv} as
    \begin{align}
        \notag
        \dtheta{k}-\dvvartheta{k}
        &= h \sum_{\ell=0}^{k-1} \Bigl( \driftinf(\dtheta{\ell}) - \driftinf(\dvvartheta{\ell}) \Bigr)
        + h \sum_{\ell=n_k \cdot \nu}^{k-1} \Bigl( \driftall\bigl(\dtheta{\ell},\dallxi{\ell}\bigr) - \driftinf\bigl(\dtheta{\ell}\bigr) \Bigr)
        \\
        \notag
        & \qquad
        + \frac{1}{J} \sum_{j=1}^{J}
        \Bigl(
            \solone {\left(\dtheta{0}, \dxin{j}{0}\right)}
            - \solone {\left(\dtheta{n_k \cdot \nu}, \dxin{j}{n_k \cdot \nu}\right)}
        \Bigr) \\
        \notag
        &\qquad
        + \frac{1}{J} \sum_{j=1}^{J} \sum_{\ell=0}^{n_k \cdot \nu - 1}
        \Bigl(
            \solone{\left(\dtheta{\ell+1}, \dxin{j}{\ell+1}\right)}
        - \solone{\left(\dtheta{\ell}, \dxin{j}{\ell+1}\right)}
        \Bigr)
        \\
        \label{eq:main_equation_strong_convergence}
        &\qquad
        + \frac{1}{J} \sum_{j=1}^{J} \sum_{\ell=0}^{n_k \cdot \nu - 1} \Bigl( \solone {\left(\dtheta{\ell}, \dxin{j}{\ell+1}\right)} - \e^{\frac{h}{\varepsilon} \genfastone} \solone {\left(\dtheta{\ell}, \dxin{j}{\ell}\right)} \Bigr)
        \eqcolon \mathcal I^1_k + \mathcal I^2_k + \mathcal I^3_k + \mathcal I^4_k + \mathcal I^5_k \, .
    \end{align}
    Raising both sides to the power \(p \geq 2\),
    using the convexity of \(x \mapsto |x|^p\),
    taking the supremum over \(\range{0}{n}\) and taking the expectation,
    we obtain
    \[
        E_n := \expect {\left[ \sup_{\ell \in \range{0}{n}} \lvert \dtheta{\ell}-\dvvartheta{\ell} \rvert^p \right]}
        \leq 5^{p-1} \sum_{i=1}^{5} \expect {\left[ \sup_{\ell \in \range{0}{n}} \lvert \mathcal I^i_\ell \rvert^p \right]} \, .
    \]
    It remains to bound all the terms on the right-hand side.

    \paragraph{Bound on the first term in~\eqref{eq:main_equation_strong_convergence}}
    By Lipschitz continuity of~\(\driftinf\) given by~\cref{lemma:lipschitz_asymptotic_drift},
    the first term on the right-hand side of~\eqref{eq:main_equation_strong_convergence} is bounded as follows:
    \[
        \expect {\left[ \sup_{\ell \in \range{0}{n}} \left\lvert \mathcal I^1_\ell \right\rvert^p \right]}
        \leq n^{p-1} h^p \expect {\left[\sum_{\ell=0}^{n-1} \Bigl\lvert \driftinf(\dtheta{\ell})-\driftinf(\dvvartheta{\ell}) \Bigr\rvert^p \right]}
        \leq h L^p \sum_{\ell=0}^{n-1} \expect \Bigl[ \left\lvert \dtheta{\ell} - \dvvartheta{\ell} \right\rvert^p \Bigr]
        \lesssim h \sum_{\ell=0}^{n-1} E_\ell \, .
    \]

    \paragraph{Bound on the second term in~\eqref{eq:main_equation_strong_convergence}}%
    Since $\ell = n_\ell \cdot \nu$ for $\varepsilon \leq h$,
    this term is zero in this case,
    so we can assume without loss of generality that $\varepsilon > h$ in the rest of this paragraph.
    For any \(\ell \in \range{0}{k}\), the range \([n_\ell \nu, \ell-1]\) has length at most \(\nu - 1\),
    so by convexity of \(|\placeholder|^p\),
    \begin{equation}
        \label{eq:second_term_first_equation_discrete}
         \sup_{\ell \in \range{0}{k}} \left\lvert \mathcal I^2_\ell \right\rvert^p
        \leq
        (\nu-1)^{p-1} h^p
        \sum_{\ell=0}^{k-1} \biggl\lvert \frac{1}{J} \sum_{j=1}^{J} \driftone {\bigl(\dtheta{\ell},\dxin{j}{\ell}\bigr)} - \driftinf\bigl(\dtheta{\ell}\bigr) \biggr\rvert^p \, .
    \end{equation}
    For any given \(\theta \in \torus^d\),
    the function \(\xi \mapsto \driftone {\bigl(\theta,\xi\bigr)} - \driftinf\bigl(\theta\bigr)\) has average 0 with respect to the Gaussian measure~\(\normal(0, \I_d)\).
    Using the same technique as in the continuous-time setting,
    we write
    \[
        \expect {\left\lvert \frac{1}{J} \sum_{j=1}^{J} \driftone {\bigl(\dtheta{\ell},\dxin{j}{\ell}\bigr)} - \driftinf\bigl(\dtheta{\ell}\bigr) \right\rvert}^p
        \leq
        \expect \sup_{\theta \in \torus^d} \left\lvert \frac{1}{J} \sum_{j=1}^{J}  \driftone {\bigl(\theta,\dxin{j}{\ell}\bigr)} - \driftinf\bigl(\theta\bigr) \right\rvert^p \, .
    \]
    Denoting \(\Pi \driftone(\theta, \xi) \coloneq \driftone(\theta, \xi) - \driftinf(\theta)\)
    and letting \(q \coloneq \max\{d+1, p\}\),
    we apply~\cref{proposition:expect_supremum_torus} to obtain
    \[
        \expect \sup_{\theta \in \torus^d} \left\lvert \frac{1}{J} \sum_{j=1}^{J}  \driftone {\bigl(\theta,\dxin{j}{\ell}\bigr)} - \driftinf\bigl(\theta\bigr) \right\rvert^p
        \lesssim_{p,d}
        J^{-\frac{p}{2}}
        \biggl(
            \int_{\torus^d} \expect \Bigl\lvert \Pi \driftone {\bigl(\theta,\dxin{1}{0}\bigr)} \Bigr\rvert^{q}
            + \expect \Bigl\lvert  \nabla_{\theta} \Pi \driftone {\bigl(\theta,\dxin{1}{0}\bigr)} \Bigr\rvert^{q} \, \d \theta
        \biggr)^{\frac{p}{q}} \, ,
    \]
    where we used stationarity of the fast processes to replace \(\dxin{1}{\ell}\) by \(\dxin{1}{0}\).
    Since the right-hand side is bounded independently of the parameters~\((J, \sigma, \varepsilon)\),
    taking the expectation in~\eqref{eq:second_term_first_equation_discrete} we obtain
    \begin{equation*}
        \expect {\left[ \sup_{\ell \in \range{0}{n}} \bigl\lvert \mathcal I^2_\ell \bigr\rvert^p \right]}
        \lesssim_{p,d,T}
        \varepsilon^{p-1} J^{-\frac{p}{2}} \, .
    \end{equation*}
    where we used that \((\nu-1)^{p-1} h^p  k \lesssim (\varepsilon/h)^{p-1} h^p  (T/h) = T \varepsilon^{p-1}\).

    \paragraph{Bound on the third term in~\eqref{eq:main_equation_strong_convergence}}
    First note that
    \begin{align*}
        \max_{\ell \in \range{0}{k}} \left\lvert \mathcal I^3_\ell \right\rvert^p
        &\leq 2^{p-1} \biggl\lvert \frac{1}{J} \sum_{j=1}^{J} \solone(\dtheta{0}, \dxin{j}{0}) \biggr\rvert^p
        + 2^{p-1} \max_{\ell \in \range{0}{k}} \biggl\lvert \frac{1}{J} \sum_{j=1}^{J} \solone {\left( \dtheta{\nu \cdot n_\ell}, \dxin{j}{\nu \cdot n_\ell} \right)} \biggr\rvert^p \\
        &\leq 2^{p} \max_{\ell \in \range{0}{k}} \biggl\lvert \frac{1}{J} \sum_{j=1}^{J} \solone {\left( \dtheta{\nu \cdot n_\ell}, \dxin{j}{\nu \cdot n_\ell} \right)} \biggr\rvert^p \\
        &= 2^{p} \max_{\ell \in \range{0}{n_k}} \biggl\lvert \frac{1}{J} \sum_{j=1}^{J} \solone {\left( \dtheta{\ell \nu}, \dxin{j}{\ell \nu} \right)} \biggr\rvert^p \, .
    \end{align*}
    To proceed, note that for any \(q \geq 1\),
    \begin{align*}
        \expect {\left[ \max_{\ell \in \range{0}{n_k}} \Bigl\lvert \solall {\bigl(\dtheta{\ell \nu}, \dallxi{\ell \nu}\bigr)} \Bigr\rvert^p \right]}
        \notag
          &\leq \expect {\left[ \max_{\ell \in \range{0}{n_k}} \Bigl\lvert \solall {\bigl(\dtheta{\ell \nu}, \dallxi{\ell \nu}\bigr)} \Bigr\rvert^{pq} \right]}^{\frac{1}{q}} \\
          &\leq \left( \sum_{\ell = 0}^{n_k} \expect \Bigl\lvert \solall {\bigl(\dtheta{\ell \nu}, \dallxi{\ell \nu}\bigr)} \Bigr\rvert^{pq} \right)^{\frac{1}{q}}
          \leq \left( (n_k+1) \expect \sup_{\theta \in \torus^d} \Bigl\lvert \solall {\bigl(\theta, \dallxi{0}\bigr)} \Bigr\rvert^{pq} \right)^{\frac{1}{q}} \, .
    \end{align*}
    In order to recover the Monte Carlo convergence rate on the right-hand side,
    we use again~\cref{proposition:expect_supremum_torus},
    which gives, assuming that \(pq \geq d+1\) (this is not restrictive as we always can take \(q\) sufficiently large),
    that
    \[
        \expect \sup_{\theta \in \torus^d}\Bigl\lvert \solall\bigl(\theta, \dallxi{\ell \nu}\bigr) \Bigr\rvert^{pq}
        \lesssim_{p,q,d}
        J^{-\frac{pq}{2}} \biggl( \int_{\torus^d} \expect \Bigl\lvert \solone \bigl(\theta, \dxin{1}{0} \bigr) \Bigr\rvert^{pq}
            + \expect \Bigl\lvert  \nabla_{\theta} \solone \bigl(\theta, \dxin{1}{0} \bigr) \Bigr\rvert^{pq}
        \, \d \theta \biggr) \, ,
    \]
    Substituting this inequality in the previous display,
    and using the stationarity of the fast processes,
    we finally deduce that the following inequality holds for all \(k \in \left[0, \frac{T}{h}\right]\):
    \begin{align*}
        \expect {\left[ \max_{\ell \in \range{0}{k}} \bigl\lvert \mathcal I^3_\ell \bigr\rvert^p \right]}
          & \lesssim_{p,q,d}
          (n_k+1)^{\frac{1}{q}} J^{-\frac{p}{2}}
          \biggl( \int_{\torus^d} \expect \Bigl\lvert \solone \bigl(\theta, \dxin{1}{0} \bigr) \Bigr\rvert^{pq}
              + \expect \Bigl\lvert  \nabla_{\theta} \solone \bigl(\theta, \dxin{1}{0} \bigr) \Bigr\rvert^{pq}
          \, \d \theta \biggr)^{\frac{1}{q}} \, .
    \end{align*}
    By~\cref{lemma:ou_semigroup} and~\cref{lemma:discrete_poisson_eq},
    recalling that \(\weight{m}(\xi) = 1 + |\xi|^m\),
    we have
    \begin{equation}
        \label{eq:bound_discrete_poisson}
        \left\lVert \left( \frac{\e^{\frac{h}{\varepsilon} \genfastone} - \id}{h} \right)^{-1} \right\rVert_{L^{\infty}_{\weight{m},0}}
        \lesssim
        \frac{h}{1 - \e^{- \frac{h}{\varepsilon}}}
        \leq
        \varepsilon + h \, ,
    \end{equation}
    where we used in the last inequality the elementary bound~\eqref{eq:elem_1},
    which gives that \(\frac{x}{1-\e^{-x}} \leq 1 + x\) for all \(x > 0\).
    Since \(\solone(\theta, \placeholder)\) is the solution to the discrete Poisson equation~\eqref{eq:discrete_poisson},
    the bound~\eqref{eq:bound_discrete_poisson} together with~\cref{lemma:upper_bound_drift_and_gradient} gives that for all \(\theta \in \torus^d\),
    \[
        \Bigl\lVert \solone \bigl(\theta, \placeholder \bigr) \Bigr\rVert_{L^{\infty}_{\weight{2}}}
        \lesssim
        (\varepsilon + h)
        \Bigl\lVert \Pi \driftone \bigl(\theta, \placeholder \bigr) \Bigr\rVert_{L^{\infty}_{\weight{2}}} \, ,
    \]
    and the same bound holds for the \(\theta\)-derivatives of \(\solone\) up to order 5,
    with on the right-hand side the corresponding derivatives of \(\Pi \driftone\).
    This can be proved by following the same reasoning as in the proof of the corresponding derivative identity
    \ifsiamart in the main paper\else\fi
    in~\eqref{eq:theta_deriv_solone}.
    This, together with the inequality \(n_k+1\leq 1+T\min\left\{\frac1h,\frac1\varepsilon\right\}
    \leq(1+T)\min\left\{\frac1h,\frac1\varepsilon\right\}\), since~$h,\varepsilon\leq1$, leads to
    \[
        \expect {\left[ \max_{\ell \in \range{0}{k}} \bigl\lvert \mathcal I^3_\ell \bigr\rvert^p \right]}
        \lesssim_{p,d,T} \min \left\{ \frac{1}{h}, \frac{1}{\varepsilon} \right\}^{\frac{1}{q}} J^{-\frac{p}{2}} (\varepsilon + h)^p
        \lesssim_{p,d,T} J^{-\frac{p}{2}} \left(\varepsilon^{p - \frac{1}{q}} + h^{p - \frac{1}{q}}\right) \, .
    \]
    Since \(q\) was arbitrary,
    we can ensure that \(p - \frac{1}{q} \geq \frac{p}{2}\) by taking~\(q\) sufficiently large.

    \paragraph{Bound on the fourth term in~\eqref{eq:main_equation_strong_convergence}}
    This term is the one that requires the most effort to bound,
    compared to the proof in continuous time.
    First let
    \[
        \sum_{\ell=0}^{k-1} V_\ell \coloneq
        \sum_{\ell=0}^{k-1} \Bigl( \solall{\left(\dtheta{\ell+1}, \dallxi{\ell+1}\right)} - \solall{\left(\dtheta{\ell}, \dallxi{\ell+1}\right)} \Bigr)
    \]
    and set~$a_\ell \coloneq \driftall(\dtheta{\ell},\dallxi{\ell})$.
    Notice that
    \[
        \dtheta{\ell+1}-\dtheta{\ell}=h a_\ell+\sqrt{2h}\,x_\ell \, .
    \]
    In particular, the drift in this increment is evaluated at~$\dallxi{\ell}$,
    whereas the derivatives of the corrector in~$V_\ell$ are evaluated at~$\dallxi{\ell+1}$.
    Taylor expansion with respect to the slow variable therefore gives
    \begin{align}
        \notag
         V_\ell
         =\,& \sqrt{2h}\nabla_{\theta} \solall(\dtheta{\ell}, \dallxi{\ell+1})x_\ell
         + h\Bigl(\nabla_{\theta}\solall{\left(\dtheta{\ell}, \dallxi{\ell+1}\right)} \cdot a_\ell + \nabla_{\theta}^2 \solall{\left(\dtheta{\ell}, \dallxi{\ell+1}\right)}:{\left(x_\ell\otimes x_\ell\right)}\Bigr) \\
        \notag
            &+ \frac{h^{\frac{3}{2}}}{\sqrt 2} \nabla_{\theta}^2\solall{\left(\dtheta{\ell}, \dallxi{\ell+1}\right)}:\Bigl(a_\ell\otimes x_\ell+x_\ell\otimes a_\ell\Bigr)  \\
           \label{eq:difficult_terms_strong}
            &+ \frac{h^2}{2}\nabla_{\theta}^2\solall(\dtheta{\ell}, \dallxi{\ell+1}):\bigl(a_\ell\otimes a_\ell\bigr)+ R_\ell \, ,
    \end{align}
    where the remainder term satisfies, by Taylor's theorem,
    \[
         \lvert R_\ell \rvert
         \lesssim \max_{\theta \in \torus^d} \bigl\lVert \nabla_{\theta}^3 \solall{\left(\theta, \dallxi{\ell+1}\right)} \bigr\rVert
         \left\lvert \dtheta{\ell+1} - \dtheta{\ell} \right\rvert^3 \, .
    \]
    Consider the decomposition \(V_\ell = \mathcal V_\ell + \mathcal W_\ell + R_\ell\),
    where
    \begin{align*}
        \mathcal V_\ell &=  h \left(\nabla_{\theta}\solall(\dtheta{\ell}, \dallxi{\ell+1})\cdot a_\ell
        + \Delta_{\theta}\solall(\dtheta{\ell}, \dallxi{\ell+1})\right)
        + \frac{h^2}{2}\nabla_{\theta}^2\solall{\left(\dtheta{\ell}, \dallxi{\ell+1}\right)}:\bigl(a_\ell\otimes a_\ell\bigr) \, , \\
        \mathcal W_\ell &=  \sqrt{2h}\nabla_{\theta}\solall{\left(\dtheta{\ell}, \dallxi{\ell+1}\right)}x_\ell
        + h \nabla_{\theta}^2\solall{\left(\dtheta{\ell}, \dallxi{\ell+1}\right)}: {\left( x_\ell\otimes x_\ell - \I_d \right)} \\
                     &\qquad + \sqrt{2} h^{\frac{3}{2}}\nabla_{\theta}^2 \solall{\left(\dtheta{\ell}, \dallxi{\ell+1}\right)}:\bigl(a_\ell\otimes x_\ell\bigr)
                 \eqcolon \sqrt{h} \mathcal W_{\ell,1} + h \mathcal W_{\ell,2} + h^{\frac{3}{2}} \mathcal W_{\ell,3} \, .
    \end{align*}
    Before bounding these terms,
    we note that by~\cref{proposition:expect_supremum_torus},
    the discrete resolvent estimate~\eqref{eq:bound_discrete_poisson},
    and stationarity of the fast processes,
    it holds for every~$q\geq2$ and~$r\in\{1,2,3\}$,
    \begin{equation}
        \label{eq:corrector_moment_strong_discrete}
        \left(
        \expect\left[
        \sup_{\theta\in\torus^d}
        \left|\nabla_{\theta}^r\solall(\theta,\dallxi{\ell+1})\right|^q
        \right]
        \right)^{\frac1q}
        \lesssim_{q,d}\frac{\varepsilon+h}{\sqrt J} \, .
    \end{equation}
    Indeed, $\solone(\theta,\placeholder)$ and all its~$\theta$-derivatives are centered under~$\gamma_d$.
    On the other hand,~\cref{lemma:upper_bound_drift_and_gradient} gives
    \begin{equation}
        \label{eq:drift_moment_strong_discrete}
        \sup_{\ell\geq0}\expect|a_\ell|^q\lesssim_{q,d}1 \, .
    \end{equation}
    Although~$\dallxi{\ell}$ and~$\dallxi{\ell+1}$ are correlated,
    Cauchy--Schwarz applied with sufficiently high moments in~\cref{eq:corrector_moment_strong_discrete,eq:drift_moment_strong_discrete}
    therefore shows that, for the fixed powers~$m\geq0$ used below,
    \begin{equation}
        \label{eq:mixed_moment_strong_discrete}
        \expect\left[
        \sup_{\theta\in\torus^d}
        \left|\nabla_{\theta}^r\solall(\theta,\dallxi{\ell+1})\right|^q
        (1+|a_\ell|^{mq})
        \right]
        \lesssim_{q,d,m}\frac{(\varepsilon+h)^q}{J^{\frac q2}} \, .
    \end{equation}

    It remains to bound the contributions of the three terms in the decomposition of \(V_\ell\).
    \begin{itemize}
        \item
            For the first term,
            we use convexity of \(x \mapsto |x|^p\) and
            the mixed-moment estimate~\eqref{eq:mixed_moment_strong_discrete} to obtain
            \begin{align}
                \notag
                \expect\left[\max_{m \in \range{0}{k}}  \left\lvert \sum_{\ell=0}^{m-1} \mathcal V_{\ell} \right\rvert^p\right]
                &\leq k^{p-1} \sum_{\ell=0}^{k-1} \expect \Bigl[ \lvert \mathcal V_\ell \rvert^p \Bigr] \\
                &\lesssim_{p,d,T} \frac{(\varepsilon+h)^p}{J^{\frac{p}{2}}} \, .
                \label{eq:bound_predictable_strong_discrete}
            \end{align}

        \item
            To bound the second term,
            let~$\mathcal F_\ell$ be the sigma-algebra generated by the initial condition
            and all the slow and fast Gaussian innovations with indices strictly smaller than~$\ell$.
            Let~$\mathcal G_\ell$ be obtained by adjoining to~$\mathcal F_\ell$ the fast Gaussian innovations at step~$\ell$.
            Then~$a_\ell$ is~$\mathcal F_\ell$-measurable,
            $\dallxi{\ell+1}$ is~$\mathcal G_\ell$-measurable,
            and~$x_\ell$ is independent of~$\mathcal G_\ell$.
            It follows that
            \[
                \expect[\mathcal W_\ell\mid\mathcal G_\ell]=0 \, ,
                \qquad
                \expect[\mathcal W_\ell\mid\mathcal F_\ell]=0 \, .
            \]
            Consequently, the partial sums of~$\mathcal W_\ell$ form a martingale with respect to~$(\mathcal F_\ell)$.
            Therefore, by the discrete Burkholder--Davis--Gundy inequality
            and Jensen's inequality,
            it holds for all \(p \geq 2\) that
            \begin{align*}
                \expect {\left[\sup_{\ell \in \range{0}{k}} \left\lvert \sum_{r=0}^{\ell-1} \mathcal W_r  \right\rvert^p \right]}
                &\leq C_{\rm BDG}(p) \expect {\left[ \left(\sum_{\ell=0}^{k-1} |\mathcal W_\ell|^2 \right)^{\frac{p}{2}} \right]}
                \leq C_{\rm BDG} k^{\frac{p}{2}-1} \sum_{\ell=0}^{k-1} \expect \Bigl[ |\mathcal W_\ell|^{p} \Bigr]\\
                &\lesssim_{p} k^{\frac{p}{2}-1}  \sum_{\ell=0}^{k-1}
                \biggl(
                    h^{\frac{p}{2}} \expect \Bigl[ |\mathcal W_{\ell,1}|^{p} \Bigr]
                    + h^{p}  \expect \Bigl[ |\mathcal W_{\ell,2}|^{p} \Bigr]
                    + h^{\frac{3p}{2}} \expect \Bigl[ |\mathcal W_{\ell,3}|^{p} \Bigr]
                \biggr)
                \, .
            \end{align*}
            The three terms are treated in exactly the same manner,
            so focus only on the third one.
            By the tower property of conditional expectation and the independence of~$x_\ell$ from~$\mathcal G_\ell$,
            it holds that
            \begin{align*}
                \expect  \left\lvert \nabla_{\theta}^2 \solall{\left(\dtheta{\ell}, \dallxi{\ell+1}\right)}:\bigl(a_\ell\otimes x_\ell\bigr)  \right\rvert^p
                &= \expect {\left[ \expect {\left[ \left\lvert \nabla_{\theta}^2 \solall{\left(\dtheta{\ell}, \dallxi{\ell+1}\right)}:\bigl(a_\ell\otimes x_\ell\bigr)  \right\rvert^p \, \middle| \, \mathcal G_\ell \right]}  \right]} \\
                &\lesssim_d \expect \Bigl[ \left\lvert \nabla_{\theta}^2 \solall{\left(\dtheta{\ell}, \dallxi{\ell+1}\right)} \right\rvert^p |a_\ell|^p \Bigr] \\
                &\lesssim_d \left( \expect  \left\lvert \nabla_{\theta}^2 \solall{\left(\dtheta{\ell}, \dallxi{\ell+1}\right)} \right\rvert^{2p} \right)^{\frac{1}{2}}
                \left( \expect |a_\ell|^{2p} \right)^{\frac{1}{2}} \, .
            \end{align*}
            The second factor on the right-hand side is bounded from above uniformly for \((\sigma, \varepsilon, J) \in (0, 1] \times (0, 1] \times \nat_{>0}\),
            while~\eqref{eq:corrector_moment_strong_discrete}, with~$q=2p$ and~$r=2$, gives
            \[
                 \expect  \left\lvert \nabla_{\theta}^2 \solall{\left(\dtheta{\ell}, \dallxi{\ell+1}\right)} \right\rvert^{2p}
                 \lesssim_{p,d} \frac{\varepsilon^{2p} + h^{2p}}{J^{p}} \, .
            \]
            It follows that
            \[
                \expect {\left[\sup_{\ell \in \range{0}{k}} \left\lvert \sum_{r=0}^{\ell-1} \mathcal W_r  \right\rvert^p \right]}
                \lesssim_{p,d} k^{\frac{p}{2}} h^{\frac{p}{2}} \frac{\varepsilon^{p} + h^{p}}{J^{\frac{p}{2}}} \, .
            \]

        \item
            Finally,
            for the third term,
            we have
            \begin{align*}
                \expect\left[\max_{m \in \range{0}{k}} \left\lvert \sum_{r=0}^{m-1} R_r \right\rvert^p\right]
                &\leq k^{p-1} \sum_{r=0}^{k-1} \expect \biggl[\max_{\theta \in \torus^d} \bigl\lVert \nabla_{\theta}^3 \solall{\left(\theta, \dallxi{r+1}\right)} \bigr\rVert^p \left\lvert \dtheta{r+1} - \dtheta{r} \right\rvert^{3p}  \biggr]\\
                &\leq k^{p-1} \sum_{r=0}^{k-1}
                \left( \expect  \max_{\theta \in \torus^d} \bigl\lVert \nabla_{\theta}^3 \solall{\left(\theta, \dallxi{r+1}\right)} \bigr\rVert^{2p}  \right)^{\frac{1}{2}}
                \left( \expect  \left\lvert \dtheta{r+1} - \dtheta{r} \right\rvert^{6p} \right)^{\frac{1}{2}} \, .
            \end{align*}
            It is here that the assumption that $G \in \cont^5(\torus^d, \real^K)$ is required,
            so that~\cref{proposition:expect_supremum_torus} can be employed with~$\nabla^3_\theta \solall$ and the resulting expression can be bounded independently of~$\sigma$.
            Using the same arguments as above, 
            we obtain that
            \begin{align*}
                \left( \expect  \max_{\theta \in \torus^d} \bigl\lVert \nabla_{\theta}^3 \solall{\left(\theta, \dallxi{r+1}\right)} \bigr\rVert^{2p}  \right)^{\frac{1}{2}}
                &\lesssim_{p,d}
                 \frac{\varepsilon^{p} + h^{p}}{J^{\frac{p}{2}}} \,, \\
                \left( \expect  \left\lvert \dtheta{r+1} - \dtheta{r} \right\rvert^{6p} \right)^{\frac{1}{2}}
                &\lesssim
                h^{\frac{3p}{2}} \, .
            \end{align*}
    \end{itemize}
    It therefore follows that
    \[
        \expect {\left[ \max_{\ell \in \range{0}{k}} \bigl\lvert \mathcal I^4_\ell \bigr\rvert^p \right]}
        \lesssim_{p,d,T} \frac{\varepsilon^{p} + h^{p}}{J^{\frac{p}{2}}} \, .
    \]

    \paragraph{Bound on the fifth term in~\eqref{eq:main_equation_strong_convergence}}
    Define the full martingale with respect to $(\mathcal F_r)$ by
    \[
        M_r=\frac{1}{J}\sum_{j=1}^J\sum_{k=0}^{r-1}
        \Bigl(\solone\bigl(\dtheta{k},\dxin{j}{k+1}\bigr)
        -\e^{\frac{h}{\varepsilon}\genfastone}\solone\bigl(\dtheta{k},\dxin{j}{k}\bigr)\Bigr) \, .
    \]
    Since $\mathcal I^5_\ell=M_{\nu n_\ell}$ and $\nu n_\ell\leq\ell$,
    \[
        \max_{0\leq\ell\leq n}|\mathcal I^5_\ell|
        \leq\max_{0\leq r\leq n}|M_r| \, .
    \]
    Applying the discrete Burkholder--Davis--Gundy inequality to $M$
    and Jensen's inequality gives, for all $p\geq2$,
    \begin{align*}
        \expect {\left[ \sup_{\ell \in \range{0}{n}} \left\lvert \mathcal I^5_\ell \right\rvert^p \right]}
        &\leq C_{\rm BDG}(p) \expect {\left(\sum_{\ell=0}^{n-1}
        \left| \frac{1}{J}\sum_{j=1}^{J} \Bigl( \solone{\left(\dtheta{\ell}, \dxin{j}{\ell+1}\right)} - \e^{\frac{h}{\varepsilon} \genfastone} \solone{\left(\dtheta{\ell}, \dxin{j}{\ell}\right)} \Bigr) \right\rvert^2 \right)}^{\frac{p}{2}}  \\
        &\lesssim_{p} n^{\frac{p}{2}-1} \sum_{\ell=0}^{n-1} \expect
          \left| \frac{1}{J}\sum_{j=1}^{J} \Bigl( \solone{\left(\dtheta{\ell}, \dxin{j}{\ell+1}\right)} - \e^{\frac{h}{\varepsilon} \genfastone} \solone{\left(\dtheta{\ell}, \dxin{j}{\ell}\right)} \Bigr) \right|^p  \\
        &\lesssim_{p,T} h^{-\frac{p}{2}}  \expect {\left[ \sup_{\theta \in \torus^d} \left| \frac{1}{J}\sum_{j=1}^{J} \Bigl( \solone{\left(\theta, \dxin{j}{\ell+1}\right)} - \e^{\frac{h}{\varepsilon} \genfastone} \solone{\left(\theta, \dxin{j}{\ell}\right)} \Bigr) \right|^p \right]} \, ,
    \end{align*}
    where we used stationarity of the fast processes and \(n^{p/2} \leq (T/h)^{p/2}\) in the last inequality.
    Let us introduce
    \[
        A(\theta, \zeta, \xi) = \solone(\theta, \xi) - \e^{\frac{h}{\varepsilon} \genfastone} \solone(\theta, \zeta) \, .
    \]
    Let \(\mu\) denote the Gaussian distribution on \(\real^d \times \real^d\) defined by \((\dxin{1}{\ell}, \dxin{1}{\ell+1}) \sim \mu\).
    The random vectors \((\dxin{j}{\ell}, \dxin{j}{\ell+1})_{j \in \range{1}{J}}\) are independent and identically distributed with common distribution~\(\mu\).
    Furthermore \(\expect_{(\zeta,\xi) \sim \mu} A(\theta, \zeta, \xi) = 0\) for all~\(\theta \in \torus^d\),
    and so by~\cref{proposition:expect_supremum_torus} we deduce that
    with $q=\max\{p,d+1\}$,
    \begin{align*}
        \expect {\left[ \sup_{\ell \in \range{0}{n}} \left\lvert \mathcal I^5_\ell \right\rvert^p \right]}
        &\lesssim_{p,T}
        h^{-\frac{p}{2}}
        \expect {\left[ \sup_{\theta \in \torus^d} \left| \frac{1}{J} \sum_{j=1}^{J} A\bigl(\theta, \dxin{j}{\ell}, \dxin{j}{\ell+1}\bigr) \right|^p \right]} \\
        &\lesssim_{p,T} h^{-\frac{p}{2}} J^{-\frac{p}{2}} \biggl( \int_{\torus^d} \expect \Bigl\lvert A\bigl(\theta, \dxin{j}{\ell}, \dxin{j}{\ell+1}\bigr) \Bigr\rvert^{q}
            + \expect \Bigl\lvert \nabla_{\theta} A\bigl(\theta, \dxin{j}{\ell}, \dxin{j}{\ell+1}\bigr) \Bigr\rvert^{q}
        \, \d \theta \biggr)^{\frac{p}{q}} \, .
    \end{align*}
    This gives us the desired decay with respect to~\(J\).
    To conclude, we consider the terms in the integrand,
    focusing on the second as both terms are treated in exactly the same manner.
    Fix an arbitrary \(\theta \in \torus^d\) and \(j \in \range{1}{J}\),
    and notice that
    \[
        \expect \Bigl\lvert \nabla_{\theta} A\bigl(\theta, \dxin{j}{\ell}, \dxin{j}{\ell+1}\bigr) \Bigr\rvert^{q}
        = \expect \Bigl\lvert \nabla_{\theta} \solone(\theta, \dxin{j}{\ell+1}) - \e^{\frac{h}{\varepsilon} \genfastone} \nabla_{\theta} \solone(\theta, \dxin{j}{\ell}) \Bigr\rvert^q
    \]
    It is helpful to distinguish two cases:
    \begin{itemize}
        \item
            If \(h \geq \varepsilon\),
            then a sufficiently good bound is obtained by noting that
            \begin{equation}
                \label{eq:case_easy}
                \expect \Bigl\lvert \nabla_{\theta} \solone(\theta, \dxin{j}{\ell+1}) - \e^{\frac{h}{\varepsilon} \genfastone} \nabla_{\theta} \solone(\theta, \dxin{j}{\ell}) \Bigr\rvert^q
                \leq
                2^{q-1} \expect \Bigl\lvert \nabla_{\theta} \solone(\theta, \dxin{j}{\ell+1}) \Bigr\rvert^q +
                2^{q-1} \expect \Bigl\lvert \e^{\frac{h}{\varepsilon} \genfastone} \nabla_{\theta} \solone(\theta, \dxin{j}{\ell}) \Bigr\rvert^q \, .
            \end{equation}
            By definition of the semigroup \(t \mapsto \e^{t \genfastone}\) and
            by Jensen's inequality for conditional expectation,
            the second term on the right-hand side satisfies
            \begin{align*}
                \expect \Bigl\lvert \e^{\frac{h}{\varepsilon} \genfastone} \nabla_{\theta} \solone(\theta, \dxin{j}{\ell}) \Bigr\rvert^q
                &=
                \expect {\left[ \Bigl\lvert \expect \Bigl[ \nabla_{\theta} \solone(\theta, \dxin{j}{\ell+1}) \,\Bigm|\, \dxin{j}{\ell} \Bigr] \Bigr\rvert^q \right]} \\
                &\leq
                \expect {\left[ \expect {\left[ \Bigl\lvert \nabla_{\theta} \solone(\theta, \dxin{j}{\ell+1}) \Bigr\rvert^q\,\Bigm|\, \dxin{j}{\ell} \right]} \right]}
                =
                \expect  \bigl\lvert \nabla_{\theta} \solone(\theta, \dxin{j}{\ell+1}) \Bigr\rvert^q \, .
                % \expect \left[ \Bigl\lvert \nabla_{\theta} \solone(\theta, \dxin{j}{\ell+1}) \Bigl\lvert^q\,\Bigm|\, \dxin{j}{\ell} \Bigr] \right]
            \end{align*}
            The right-hand side may be bounded from above uniformly by \((\varepsilon + h)^q\),
            up to a constant factor independent of the parameters, by \eqref{eq:bound_discrete_poisson}.

        \item
            In the case \(h < \varepsilon\),
            we wish to show that the left-hand side of~\eqref{eq:case_easy} scales as \(\varepsilon^{\frac{q}{2}} h^{\frac{q}{2}}\).
            Since the fast operator does not depend on~$\theta$, differentiating~\eqref{eq:discrete_poisson} gives
            \[
                -\frac{\e^{\frac{h}{\varepsilon}\genfastone}-\id}{h}\nabla_\theta\solone
                =\nabla_\theta\Pi\driftone \, ,
            \]
            where we recall that~$\Pi\driftone(\theta,\xi)=\driftone(\theta,\xi)-\driftinf(\theta)$.
            Differentiating~\eqref{eq:drift_1_integral}, using~$G\in\cont^3(\torus^d,\real^K)$ and~$\sigma\leq1$, shows that
            both~$\Pi\driftone(\theta,\placeholder)$ and~$\nabla_\theta\Pi\driftone(\theta,\placeholder)$ have~$\sobolev^{1,\infty}_{\weight{2}}$ norms bounded uniformly over~$\theta\in\torus^d$ and~$\sigma\in(0,1]$.
            For each fixed~$\theta$, both have mean zero with respect to~$\fastgaussianone(\d\xi)$.
            The weighted norm is defined in~\eqref{eq:weighted_sobolev_norm} and applied componentwise in~$\xi$.
            Multiplying the Poisson equations by~$\varepsilon$ and applying~\cref{lemma:discrete_poisson_eq} componentwise with time step~$\frac{h}{\varepsilon}\in(0,1)$ therefore yields
            \[
                \sup_{\theta\in\torus^d}\Bigl(
                \|\nabla_\xi\solone(\theta,\placeholder)\|_{L^\infty_{\weight{2}}}
                +\|\nabla_\xi\nabla_\theta\solone(\theta,\placeholder)\|_{L^\infty_{\weight{2}}}
                \Bigr)\leq C\varepsilon \, .
            \]
            Here $C$ is independent of $(\sigma,\varepsilon,h,J)$.
            The mean value theorem gives a constant $L$ independent of these parameters such that
            \[
                \forall (\theta, \xi, \zeta) \in \torus^d \times \real^d \times \real^d, \qquad
                \bigl\lvert \nabla_{\theta} \solone(\theta, \xi) - \nabla_{\theta} \solone(\theta, \zeta) \bigr\rvert
                \leq L \varepsilon \Bigl(1 + |\xi|^2 + |\zeta|^2\Bigr) \bigl\lvert \xi - \zeta \bigr\rvert \, .
            \]
            The same bound holds without $\nabla_\theta$.
            % It follows that, for independent random variables \(Y, Z \sim \normal(0, I_d)\) and \(\alpha \in (0, 1]\),
            % \begin{align*}
            %     \forall  (\theta, \xi) \in \torus^d \times \real^d, \qquad
            %     \biggl\lvert \nabla_{\theta} \solone(\theta, \xi + \alpha Y)  - \expect \Bigl[ \nabla_{\theta} \solone(\theta, \xi + \alpha Z) \Bigr] \biggr\rvert^q
            %     &\lesssim
            %     \expect \Bigl[ L \left( 1 + \lvert \xi + \alpha Y \rvert^q + \lvert \xi + \alpha Z \rvert^q \right) \lvert \alpha Z + \xi - \xi \rvert^q  \Bigr] \\
            %     &\lesssim_{d,L} \delta^{2q} \left(  \alpha^q + \lvert \xi - \xi \rvert^q \right) \bigl( 1 + \lvert \xi \rvert^q + \lvert \xi \rvert^q \bigr).
            % \end{align*}
            Thus, since
            \(
                \dxin{1}{\ell+1} = \alpha \dxin{1}{\ell} + \sqrt{1 - \alpha^2} \xfast{1}{\ell}
            \)
            with
            \(
                \alpha \coloneq \e^{- \frac{h}{\varepsilon}}
            \)
            and \(\beta = \sqrt{1 - \alpha^2}\),
            using an independent standard Gaussian copy denoted by~$\xfast{2}{\ell}$,
            we deduce that
            \begin{align*}
                \expect \Bigl\lvert \nabla_{\theta} \solone(\theta, \dxin{1}{\ell+1}) - \e^{\frac{h}{\varepsilon} \genfastone} \nabla_{\theta} \solone(\theta, \dxin{1}{\ell}) \Bigr\rvert^q
                &=
                \expect \Bigl\lvert \nabla_{\theta} \solone(\theta, \dxin{1}{\ell+1}) - \expect {\left[ \nabla_{\theta} \solone(\theta, \dxin{1}{\ell+1}) \,\middle|\, \dxin{1}{\ell} \right]} \Bigr\rvert^q \\
                &=
                \expect \Bigl\lvert \expect \Bigl[ \nabla_{\theta} \solone\Bigl(\theta, \alpha \dxin{1}{\ell} + \beta \xfast{1}{\ell} \Bigr) - \nabla_{\theta} \solone\Bigl(\theta, \alpha \dxin{1}{\ell} + \beta \xfast{2}{\ell}\Bigr) \,\Bigm|\, \dxin{1}{\ell}, \xfast{1}{\ell} \Bigr] \Bigr\rvert^q \\
                &\leq
                L^q \beta^q  \varepsilon^{q}
                \expect \Bigl\lvert \expect \Bigl[ \Bigl(1 + \left\lvert \alpha \dxin{1}{\ell} + \beta \xfast{1}{\ell} \right\rvert^2 + \left\lvert \alpha \dxin{1}{\ell} + \beta \xfast{2}{\ell} \right\rvert^2 \Bigr) \bigl\lvert \xfast{1}{\ell} - \xfast{2}{\ell} \bigr\rvert \,\Bigm|\, \dxin{1}{\ell}, \xfast{1}{\ell} \Bigr] \Bigr\rvert^q \\
                &\lesssim_{d,L,q}
                \beta^q  \varepsilon^{q}
                \expect \Bigl[ \Bigl(1 + \lvert \dxin{1}{\ell} \rvert^{2q} + \lvert\xfast{1}{\ell}\rvert^{2q} + \lvert\xfast{2}{\ell}\rvert^{2q}\Bigr) \bigl\lvert \xfast{1}{\ell} - \xfast{2}{\ell} \bigr\rvert^q  \Bigr] \\
                &\lesssim_{d,L,q} \varepsilon^{q} \left( \frac{h}{\varepsilon} \right)^{\frac{q}{2}}
                \lesssim_{d,L,q} \varepsilon^{\frac{q}{2}} h^{\frac{q}{2}} \, .
            \end{align*}
            where we used in the last inequality the stationarity of the fast processes together with the elementary inequality \(1 - \e^{-x} \leq x\),
            which holds for all \(x \in (0, \infty)\).
    \end{itemize}
    Combining the estimates, we deduce that
    \[
        \expect {\left[ \sup_{\ell \in \range{0}{n}} \left\lvert \mathcal I^5_\ell \right\rvert^p \right]}
        \lesssim_{p,d,T}
        \frac{\varepsilon^{\frac{p}{2}} + h^{\frac{p}{2}}}{J^{\frac{p}{2}}}
    \]

    \paragraph{Conclusion of the proof}
    Gathering the bounds on all terms,
    we obtain from~\eqref{eq:main_equation_strong_convergence} that
    \[
        E_n \lesssim  h \sum_{\ell=0}^{n-1} E_\ell +  \left(\frac{\varepsilon^{\frac{p}{2}} + h^{\frac{p}{2}}}{J^{\frac{p}{2}}} \right) \, .
    \]
    Applying the discrete Gr\"onwall lemma, we obtain that
    \[
        E_N \lesssim \left(\frac{\varepsilon^{\frac{p}{2}} + h^{\frac{p}{2}}}{J^{\frac{p}{2}}} \right) \bigl(\exp \left( N h \right)-1\bigr)
        = \left(\e^T-1\right) \left(\frac{\varepsilon^{\frac{p}{2}} + h^{\frac{p}{2}}}{J^{\frac{p}{2}}} \right) \, .
    \]
    which concludes the proof of~\cref{theorem:strong_convergence_fixed_sigma_discrete} for \(p\geq 2\).
    By Cauchy--Schwarz inequality,
    the same inequality holds for \(p\in [1,2]\),
    possibly with a different constant prefactor.
\end{proof}

\subsection{Invariant distribution and ergodic averages}
\label{sub:bound_invariant_discrete}

Let us consider the discrete-time process \(\bigl(\dtheta{n}, \dallxi{n}\bigr)_{n\geq 0}\) obtained by discretization~\eqref{eq:the_multiscale_dynamics_discrete}
with step size \(h\):
\begin{equation}
    \label{eq:discretized_multiscale_dynamics}
    \left\{
    \begin{aligned}
        \dtheta{n+1} &= \dtheta{n} + h \driftall\bigl(\dtheta{n}, \dallxi{n}\bigr)
        + \sqrt{2h} \xslow{n} \, ,  \\
        \dxin{j}{n+1} &= \e^{-\frac{h}{\varepsilon}}\dxin{j}{n} +
        \sqrt{1 - \e^{-\frac{2h}{\varepsilon}}}\xfast{j}{n} \, , \qquad \dxin{j}{0} \stackrel{\rm i.i.d.}{\sim} \normal(0,\I_d) \, ,
        \qquad j \in \range{1}{J} \, ,
    \end{aligned}
    \right.
\end{equation}
The Markov transition operator associated with this process is denoted by $\markovtransition$.
This operator acts on any bounded measurable function \(\varphi \colon \torus^d\times \R^{dJ}\to \R\) in the following way:
\[
    \markovtransition \varphi{\left(\theta,\allxi\right)} =
    \expect \Bigl[ \varphi\bigl(\dtheta{1}, \dallxi{1}\bigr) \Bigm| \bigl(\dtheta{0}, \dallxi{0}\bigr) = \bigl(\theta, \allxi\bigr) \Bigr] \, .
\]

\begin{lemma}
    [Discrete averaging lemma]
    \label{lemma:averaging_discrete}
    Assume that~$\chi \colon \real^d \to \real^n$ is a vector-valued polynomial of degree at most~$r$, for some~$r\in\N$, with mean zero under~$\fastgaussianone$,
    and also that~$A \in \cont^0(\torus^d, \real^n)$.
    Then, under the same assumptions as in~\cref{theorem:bound_invariant_discrete},
    there exists a constant~$C > 0$, depending only on~$d,n,r$ and fixed derivative bounds on~$G$ and~$\ell$ but independent of~$A,\chi,\varepsilon,\sigma,h,J,N$, such that
    the following estimates hold.
    \begin{itemize}
        \item
            It holds that
            \begin{equation}
                \label{eq:averaging_lemma_discrete_coarse}
                \left\lvert
                    \frac{1}{N} \sum_{n=0}^{N-1} \expect \left[
                        \frac{1}{J} \sum_{j=1}^{J} \chi\bigl(\dxin{j}{n}\bigr) \cdot A\bigl(\dtheta{n}\bigr)
                    \right]
                \right\rvert
                \leq \frac{C}{\sqrt{J}} \norm{A}_{\infty} \norm{\chi}_{L^2(\fastgaussianone)} \, .
            \end{equation}

        \item
            If additionally~$A \in \cont^{2+\alpha}(\torus^d, \real^n)$,
            then we have the improved estimates
            \begin{subequations}
            \begin{equation}
                \label{eq:averaging_lemma_discrete}
                \left\lvert
                    \frac{1}{N} \sum_{n=0}^{N-1} \expect \left[
                        \frac{1}{J} \sum_{j=1}^{J} \chi\bigl(\dxin{j}{n}\bigr) \cdot A\bigl(\dtheta{n}\bigr)
                    \right]
                \right\rvert
                \leq
                 \frac{C(h+\varepsilon)}{\sqrt{J}} \left(\frac{\kappa}{Nh} \norm{A}_{\infty} + \norm{A}_{\cont^{2+\alpha}} \right)\norm{\chi}_{L^2(\fastgaussianone)} \, ,
            \end{equation}
            where~$\kappa = 0$ for stationary initial conditions and~$\kappa = 1$ otherwise.
            Moreover,
            \begin{align}
                \notag
                &\expect\left|\frac1N\sum_{n=0}^{N-1}
                \frac1J\sum_{j=1}^J\chi(\dxin{j}{n})\cdot A(\dtheta{n})\right|^2 \\
                &\qquad \qquad\leq C\norm{\chi}_{L^2(\fastgaussianone)}^2\Biggl(
                    \frac{(h+\varepsilon)^2}{J(Nh)^2}\norm A_\infty^2
                    +\frac{(h+\varepsilon)^2}{J}\norm A_{\cont^{2+\alpha}}^2
                +\frac{h+\varepsilon}{JNh}\norm A_\infty^2
            +\frac{(h+\varepsilon)^2}{JNh}\norm A_{1,\infty}^2\Biggr) \, .
            \label{eq:averaging_lemma_discrete_mean_square}
            \end{align}
            \end{subequations}
    \end{itemize}
\end{lemma}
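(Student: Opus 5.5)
The plan mirrors the proof of \cref{lemma:averaging}, replacing Itô's formula by a discrete Poisson equation for the fast chain. As there, we reduce to the scalar case $n=1$.

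\textbf{Coarse estimate.} The bound~\eqref{eq:averaging_lemma_discrete_coarse} follows from stationarity of the fast chain. For each $n$, the variables $(\dxin{j}{n})_{j}$ are i.i.d.\ with law $\fastgaussianone$, so
\[
\Bigl|\expect\Bigl[\tfrac1J\textstyle\sum_j\chi(\dxin{j}{n})A(\dtheta{n})\Bigr]\Bigr|\leq\norm A_\infty\,\expect\Bigl|\tfrac1J\textstyle\sum_j\chi(\dxin{j}{n})\Bigr|\leq \norm A_\infty\norm\chi_{L^2(\fastgaussianone)}/\sqrt J ,
\]
by Cauchy--Schwarz, independence and centering. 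We then average over $n$.

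\textbf{Corrector.} For the refined estimates, let $\varphi$ solve the discrete Poisson equation
\[
-\frac{\e^{\frac h\varepsilon\genfastone}-\id}{h}\,\varphi=\chi .
\]
Since $\e^{t\genfastone}$ acts diagonally on Hermite polynomials with factors $\e^{-kt}$, $k\geq1$, the solution $\varphi$ is a mean-zero polynomial of the same degree. It satisfies $\norm{\varphi}_{L^2(\fastgaussianone)}\leq \frac{h}{1-\e^{-h/\varepsilon}}\norm\chi_{L^2}\leq(\varepsilon+h)\norm\chi_{L^2}$ by the same elementary inequality as in~\eqref{eq:bound_discrete_poisson}. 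Set $g(\theta,\allxi)=\frac1J\sum_j\varphi(\xin{j})A(\theta)$. Since the fast update does not depend on $\theta$, we decompose
\[
g(\dtheta{n+1},\dallxi{n+1})-g(\dtheta{n},\dallxi{n})
=\Bigl[g(\dtheta{n},\dallxi{n+1})-g(\dtheta{n},\dallxi{n})\Bigr]+\frac1J\sum_j\varphi(\dxin{j}{n+1})\Bigl[A(\dtheta{n+1})-A(\dtheta{n})\Bigr].
\]
The first bracket equals $-h\frac1J\sum_j\chi(\dxin{j}{n})A(\dtheta{n})$ plus a martingale increment. Summing over $n$, dividing by $Nh$ and taking expectations, we obtain two contributions. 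The boundary term is $\frac{1}{Nh}\expect[g_0-g_N]$, which is bounded by $\kappa\frac{(\varepsilon+h)}{Nh\sqrt J}\norm A_\infty\norm\chi_{L^2}$ and vanishes for stationary initial conditions. The second is $\frac{1}{Nh}\sum_n\expect\bigl[\frac1J\sum_j\varphi(\dxin{j}{n+1})(A(\dtheta{n+1})-A(\dtheta{n}))\bigr]$.

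\textbf{Main obstacle: the slow-increment term.} This term must be shown to be $O(h)\cdot\frac{\varepsilon+h}{\sqrt J}\norm A_{\cont^{2+\alpha}}\norm\chi$ per step. We Taylor expand $A$ to second order around $\dtheta{n}$, with increment $\delta_n=h a_n+\sqrt{2h}x_n$ and $a_n=\driftall(\dtheta{n},\dallxi{n})$:
\[
A(\dtheta{n}+\delta_n)-A(\dtheta{n})=\nabla A\cdot\delta_n+\tfrac12\nabla^2A:\delta_n\otimes\delta_n+O\bigl(\norm{A}_{\cont^{2+\alpha}}|\delta_n|^{2+\alpha}\bigr).
\]
Since $x_n$ is independent of $\dallxi{n+1}$ and of $\mathcal F_n$, the linear term $\sqrt{2h}\,\nabla A\cdot x_n$ has zero expectation. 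The remaining terms are $h\nabla A\cdot a_n$, $h\Delta A$ (from $\expect[x_n\otimes x_n]=\I_d$), and corrections of order $h^{3/2}$ and $h^{1+\alpha/2}$. Each is multiplied by $\frac1J\sum_j\varphi(\dxin{j}{n+1})$. We control the products by Cauchy--Schwarz, using $\norm{\frac1J\sum_j\varphi(\dxin{j}{n+1})}_{L^2(\Omega)}=\norm\varphi_{L^2}/\sqrt J$ and uniform moment bounds on $a_n$ from \cref{lemma:upper_bound_drift_and_gradient}. The Hölder remainder is where $\cont^{2+\alpha}$ is needed: it gives $h^{1+\alpha/2}\leq h$ per step, so no loss arises. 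This yields~\eqref{eq:averaging_lemma_discrete}.

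\textbf{Mean-square estimate.} For~\eqref{eq:averaging_lemma_discrete_mean_square}, we keep the pathwise identity and square it, bounding three groups of terms.
\begin{itemize}
\item The boundary and Taylor terms are handled as above. The products are now quadratic, so we use the Marcinkiewicz--Zygmund inequality (\cref{lemma:mz_inequality}, $p=4$) together with equivalence of $L^2$ and $L^4$ norms on polynomials, as in the continuous case.
\item The fast martingale is treated via orthogonality of increments. Each increment has second moment at most $\frac1J\norm A_\infty^2\expect|\varphi(\dxin{1}{n+1})-\e^{\frac h\varepsilon\genfastone}\varphi(\dxin{1}{n})|^2$. This is $\lesssim\frac{\norm A_\infty^2}{J}(h/\varepsilon)\varepsilon^2\norm\chi^2$ when $h<\varepsilon$ (via the increment argument used for $\mathcal I^5$, or spectrally on Hermite modes), and $\lesssim\frac{\norm A_\infty^2}{J}(\varepsilon+h)^2\norm\chi^2$ otherwise. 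After dividing by $(Nh)^2$ and summing, this gives $\frac{h+\varepsilon}{JNh}\norm A_\infty^2$.
\item The slow martingale part, namely the term $\sqrt{2h}\,\nabla A\cdot x_n$, gives $\frac{(h+\varepsilon)^2}{JNh}\norm A_{1,\infty}^2$.
\end{itemize}
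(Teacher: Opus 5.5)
Your proposal is correct and follows the paper's proof essentially step for step: the same discrete corrector $\varphi$ with $\|\varphi\|_{L^2(\gamma_d)}\le(h+\varepsilon)\|\chi\|_{L^2(\gamma_d)}$, the same split of the increment of $\frac1J\sum_j\varphi(\xi^j)A(\theta)$ into a fast part and a slow second-order Taylor part with H\"older remainder (using independence of the slow Gaussian increment from $\mathcal F_n$ together with the new fast state, and $L^4$ Marcinkiewicz--Zygmund bounds plus norm equivalence on polynomials for the mean-square version), and the same boundary/slow-martingale/fast-martingale decomposition. The only cosmetic difference is that the paper bounds the fast-martingale variance $\|\varphi\|^2_{L^2(\gamma_d)}-\|\mathrm{e}^{\frac{h}{\varepsilon}\mathcal L_\xi}\varphi\|^2_{L^2(\gamma_d)}\lesssim h(h+\varepsilon)\|\chi\|^2_{L^2(\gamma_d)}$ directly on Hermite modes, which handles your two cases $h<\varepsilon$ and $h\ge\varepsilon$ in a single computation.
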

\begin{proof}[Proof of~\cref{lemma:averaging_discrete}]
    Similarly to the proof of the continuous-time averaging lemma~\cref{lemma:averaging},
    we present the proof only for the scalar case~$n = 1$ for simplicity.

    \paragraph{Proof of~\eqref{eq:averaging_lemma_discrete_coarse}}
    The first estimate~\eqref{eq:averaging_lemma_discrete_coarse} can be proved in the same way as the first estimate in~\cref{lemma:averaging}.
    We omit the details for brevity.

    \paragraph{Proof of~\eqref{eq:averaging_lemma_discrete}}
    Let~$\varphi \colon \real^d \to \real$ be the unique solution in~$L^2_0(\gamma_d)$ to the discrete Poisson equation
    \begin{equation}
        \label{eq:poisson_discrete_averaging}
        - \left(\frac{\e^{\frac{h}{\varepsilon}\genfastone} - \id}{h}\right) \varphi = \chi \, ,
        \qquad \int_{\real^d} \varphi \, \d\fastgaussianone = 0 \, .
    \end{equation}
    Since the eigenfunctions of the generator~$\genfastone$ are Hermite polynomials,
    the solution~$\varphi$ can be written explicitly in terms of the Hermite expansion of~$\chi$.
    In particular, the function~$\varphi$ is a polynomial of the same degree as~$\chi$.
    By the results in~\cref{sub:decay_ou},
    we deduce that
    \begin{equation}
        \label{eq:bound_varphi_l2_discrete}
        \lVert \varphi \rVert_{L^{2}(\fastgaussianone)}
        \lesssim \frac{h}{1 - \e^{-\frac{h}{\varepsilon}}} \, \norm{\chi}_{L^{2}(\fastgaussianone)}
        \lesssim (h+\varepsilon) \norm{\chi}_{L^{2}(\fastgaussianone)} \, ,
    \end{equation}
    where we used the elementary inequality~\eqref{eq:elem_1} in the last line.
    Strictly speaking, the results in this appendix are stated in weighted~$L^{\infty}$,
    but the same approach yields the~$L^2$ bound given by~\eqref{eq:bound_varphi_l2_discrete}.
    For convenience we introduce~$H^J{\left(\theta, \allxi\right)} =  \frac{1}{J} \sum_{j=1}^{J} \varphi(\xin{j}) \cdot A(\theta) $.
    We have the decomposition
    \begin{align*}
        &\expect \Bigl[ H^J\bigl(\dtheta{N}, \dallxi{N}\bigr) - H^J\bigl(\dtheta{0},\dallxi{0}\bigr) \Bigr] \\
        % &\qquad = \sum_{n=0}^{N-1} \expect \Bigl[  H^J{\left(\dtheta{n+1}, \dallxi{n+1}\right)} - H^J\bigl(\dtheta{n}, \dallxi{n}\bigr)  \, \Bigr] \\
        &\qquad = \sum_{n=0}^{N-1} \expect \Bigl[ H^J{\left(\dtheta{n+1}, \dallxi{n+1}\right)} - H^J\bigl(\dtheta{n}, \dallxi{n+1}\bigr)  \, \Bigr]
        + \sum_{n=0}^{N-1} \expect \Bigl[ \expect \left[H^J\bigl(\dtheta{n}, \dallxi{n+1}\bigr) \Bigm| \bigl( \dtheta{n}, \dallxi{n} \bigr) \right] - H^J{\left(\dtheta{n}, \dallxi{n}\right)}  \, \Bigr] \, \\
        &\qquad = \sum_{n=0}^{N-1} \expect \Bigl[ H^J\bigl(\dtheta{n+1}, \dallxi{n+1}\bigr) - H^J\bigl(\dtheta{n}, \dallxi{n+1}\bigr)  \, \Bigr] \,
        - h \sum_{n=0}^{N-1} \expect \Biggl[ \frac{1}{J} \sum_{j=1}^{J} \chi\bigl(\dxin{j}{n}\bigr) \cdot A\bigl(\dtheta{n}\bigr)  \, \Biggr] \, ,
    \end{align*}
    It follows that
    \begin{align}
        \notag
            \frac{1}{N} \sum_{n=0}^{N-1} \expect\biggl[ \frac{1}{J} \sum_{j=1}^{J} \chi\bigl(\dxin{j}{n}\bigr) \cdot A\bigl(\dtheta{n}\bigr) \biggr]
        &= -\frac{1}{Nh} \expect \Bigl[ H^J\bigl(\dtheta{N}, \dallxi{N}\bigr) - H^J\bigl(\dtheta{0},\dallxi{0}\bigr) \Bigr] \\
        &\qquad + \frac{1}{Nh} \sum_{n=0}^{N-1} \expect \Bigl[ H^J\bigl(\dtheta{n+1}, \dallxi{n+1}\bigr) - H^J\bigl(\dtheta{n}, \dallxi{n+1}\bigr)  \, \Bigr] \, .
        \label{eq:two_terms_averaging_discrete}
    \end{align}
    The first term on the right-hand side is zero for stationary initial conditions,
    and can be bounded by using~\eqref{eq:bound_varphi_l2_discrete} in the general case as follows:
    \begin{align}
        \notag
        \left\lvert \frac{1}{Nh} \expect \Bigl[ H^J\bigl(\dtheta{N}, \dallxi{N}\bigr) - H^J\bigl(\dtheta{0},\dallxi{0}\bigr) \Bigr] \right\rvert
        &\leq \frac{2}{Nh} \norm{A}_\infty \expect \left\lvert \frac{1}{J} \sum_{j=1}^{J} \varphi(\dxin{j}{0}) \right\rvert
        \\
        &\lesssim \frac{h+\varepsilon}{Nh\sqrt{J}} \norm{A}_\infty \norm{\chi}_{L^{2}(\fastgaussianone)} \, ,
        \label{eq:bound_term_1_lemma_averaging_discrete}
    \end{align}
    where we used the bound~\eqref{eq:bound_varphi_l2_discrete} in the last line.
    The second term on the right-hand side of~\eqref{eq:two_terms_averaging_discrete} can be controlled by using a Taylor expansion:
    for any~$(\theta, \allxi, \widetildeallxi, x) \in \torus^d \times \R^{dJ} \times \R^{dJ} \times \R^d$,
    where~$\widetildeallxi=(\widetilde\xi^1,\dotsc,\widetilde\xi^J)$,
    and any~$h > 0$, it holds that
    \begin{align*}
        &H^J\Bigl(\theta + h\driftall\bigl(\theta,\allxi\bigr)+\sqrt{2h}\,x, \widetildeallxi \Bigr) - H^J\bigl(\theta, \widetildeallxi\bigr)  \\
        &\qquad = \nabla_\theta H^J\bigl(\theta,\widetildeallxi\bigr)\cdot \left(h \driftall(\theta,\allxi) + \sqrt{2h}\,x\right)
        + \frac{1}{2} \nabla_{\theta}^2 H^J{\left(\theta,\widetildeallxi\right)} : \left[h \driftall{\left(\theta,\allxi\right)} + \sqrt{2h}\,x\right]^{\otimes 2}
        + \mathcal R\bigl(\theta,\allxi,\widetildeallxi,x,h\bigr) \, ,
    \end{align*}
    where~$\mathcal R\bigl(\theta,\allxi,\widetildeallxi,x,h\bigr)$ is the remainder term in the Taylor expansion.
    From this expansion, it follows that
    \begin{align}
        \notag
        &H^J\Bigl(\theta + h\driftall\bigl(\theta,\allxi\bigr)+\sqrt{2h}\,x, \widetildeallxi \Bigr) - H^J\bigl(\theta, \widetildeallxi\bigr)  \\
        \notag
        &\qquad = h \Bigl( \nabla_\theta H^J\bigl(\theta,\widetildeallxi\bigr)\cdot  \driftall\bigl(\theta,\allxi\bigr) + \nabla_{\theta}^2 H^J\bigl(\theta,\widetildeallxi\bigr) : (x \otimes x) \Bigr)
        + \sqrt{h} \nabla_{\theta} H^J\bigl(\theta,\widetildeallxi\bigr) \cdot \sqrt{2}\,x
        \\
        &\qquad\qquad
        + \frac{\sqrt{2}h^{\frac{3}{2}}}{2} \nabla_{\theta}^2 H^J\bigl(\theta,\widetildeallxi\bigr) : \Bigl(\driftall\bigl(\theta,\allxi\bigr) \otimes x + x \otimes \driftall\bigl(\theta,\allxi\bigr)\Bigr)
        \notag \\
        &\qquad\qquad
        + \frac{h^2}{2}\nabla_\theta^2 H^J\bigl(\theta,\widetildeallxi\bigr):\driftall\bigl(\theta,\allxi\bigr)^{\otimes2}
        + \mathcal R\bigl(\theta,\allxi,\widetildeallxi,x,h\bigr) \, .
        \label{eq:taylor_expansion_H_discrete}
    \end{align}
    The remainder term \(\mathcal R\) is given by
    \[
        \mathcal R\bigl(\theta,\allxi,\widetildeallxi,x,h\bigr)
        = \int_0^1 (1-t) \left( \nabla_{\theta}^2 H^J\Bigl(\theta + t\Bigl(h\driftall(\theta,\allxi)+\sqrt{2h}\,x\Bigr),\widetildeallxi\Bigr) - \nabla_{\theta}^2 H^J(\theta,\widetildeallxi) \right)
         : \Bigl[h\driftall(\theta,\allxi)+\sqrt{2h}\,x\Bigr]^{\otimes 2} \, \d t \, .
    \]
    This form of the remainder is more tedious than the mean-value or integral forms,
    but using it is necessary to avoid requiring boundedness of third-order derivatives of~$A$,
    just as in the proof of Itô's formula for~$\cont^2$ functions.
    It follows from this expression that the remainder satisfies the bound
    \begin{equation}
        \label{eq:bound_remainder_varphi_discrete}
        \bigl\lvert \mathcal R(\theta,\allxi,\widetildeallxi,x,h) \bigr\rvert
        \lesssim
        \omega \Bigl(\bigl\lvert h\driftall(\theta,\allxi)+\sqrt{2h}\,x \bigr\rvert \Bigr)
        \Bigl\lvert h\driftall(\theta,\allxi)+\sqrt{2h}\,x \Bigr\rvert^2 \cdot \biggl\lvert \frac{1}{J} \sum_{j=1}^{J} \varphi\Bigl(\widetilde\xi^j\Bigr) \biggr\rvert\, ,
    \end{equation}
    where $\omega$ is the modulus of continuity of~$\nabla^2 A$.
    Since $A \in \cont^{2+\alpha}(\torus^d)$ by assumption,
    the modulus of continuity~$\omega$ satisfies
    \[
        \omega(s)  \lesssim  s^{\alpha} \lVert A \rVert_{\cont^{2+\alpha}} \qquad \text{for all } s \geq 0 \, .
    \]
    Let~$\mathcal F_n$ be the natural filtration of the chain and set~$\mathcal G_n=\sigma\bigl(\mathcal F_n,\dallxi{n+1}\bigr)$.
    The slow Gaussian increment~$x=\xslow{n}$ is independent of~$\mathcal G_n$.
    Applying~\eqref{eq:taylor_expansion_H_discrete} with~$\theta=\dtheta{n}$, $\allxi=\dallxi{n}$, and~$\widetildeallxi=\dallxi{n+1}$,
    and then taking the conditional expectation given~$\mathcal G_n$,
    we have
    \begin{align}
        \notag
        &\expect\Bigl[H^J\bigl(\dtheta{n+1},\dallxi{n+1}\bigr)-H^J\bigl(\dtheta{n},\dallxi{n+1}\bigr)\Bigm|\mathcal G_n\Bigr] \\
        \notag
        &\qquad = h\Bigl(\driftall\bigl(\dtheta{n},\dallxi{n}\bigr)\cdot\nabla A(\dtheta{n})+\laplacian A(\dtheta{n})\Bigr)
        \cdot\left(\frac1J\sum_{j=1}^J\varphi\bigl(\dxin{j}{n+1}\bigr)\right) \\
        &\qquad\quad +\frac{h^2}{2}\nabla_\theta^2H^J\bigl(\dtheta{n},\dallxi{n+1}\bigr):\driftall\bigl(\dtheta{n},\dallxi{n}\bigr)^{\otimes2}
        +\expect\Bigl[\mathcal R\bigl(\dtheta{n},\dallxi{n},\dallxi{n+1},x,h\bigr)\Bigm|\mathcal G_n\Bigr] \, .
        \label{eq:conditional_increment_H_discrete}
    \end{align}
    By the bound on~$\driftall$ in~\eqref{eq:bound_drift_bip_a}, the Gaussian moments of~$x$,
    and the stationarity of the fast variables,
    \[
        \left\lVert h\driftall(\dtheta{n},\dallxi{n})+\sqrt{2h}\,x\right\rVert_{L^{2(2+\alpha)}(\Omega)}^{2+\alpha}
        \lesssim h^{1+\frac{\alpha}{2}} \, ,
        \qquad
        \left\lVert\frac1J\sum_{j=1}^J\varphi(\dxin{j}{n+1})\right\rVert_{L^2(\Omega)}
        =\frac{\norm{\varphi}_{L^2(\fastgaussianone)}}{\sqrt J} \, .
    \]
    Taking expectations in~\eqref{eq:conditional_increment_H_discrete} and using these estimates together with~\eqref{eq:bound_remainder_varphi_discrete}
    and the Cauchy--Schwarz inequality, with~$h\leq1$, we deduce that
    \begin{align}
        \notag
        \left\lvert \expect {\Bigl[ H^J\bigl(\dtheta{n+1}, \dallxi{n+1}\bigr) - H^J\bigl(\dtheta{n}, \dallxi{n+1} \bigr) \Bigr]} \right\rvert
        &\lesssim h
        \left(1+\left( \expect {\left\lvert \driftall{\left(\dtheta{n}, \dallxi{n}\right)} \right\rvert^4} \right)^{\frac{1}{2}}\right) \norm{A}_{2,\infty}
        \left( \expect {\biggl\lvert \frac{1}{J} \sum_{j=1}^{J} \varphi{\left(\dxin{j}{n+1}\right)} \biggr\rvert^2} \right)^{\frac{1}{2}}
        \\
        \notag
        &\qquad
        + \expect \left[ \Bigl\lvert h \driftall{\left(\dtheta{n}, \dallxi{n}\right)} + \sqrt{2h}\,x \Bigr\rvert^{2+\alpha} \cdot \biggl\lvert \frac{1}{J} \sum_{j=1}^{J} \varphi{\left(\dxin{j}{n+1}\right)} \biggr\rvert \right]
        \norm{A}_{\cont^{2+\alpha}}
        \\
        &\lesssim
        \frac{h}{\sqrt{J}} \norm{A}_{2,\infty} \norm{\varphi}_{L^2(\fastgaussianone)}
        + \frac{h^{1+\frac{\alpha}{2}}}{\sqrt{J}} \norm{A}_{\cont^{2+\alpha}} \norm{\varphi}_{L^2(\fastgaussianone)}
        \, .
        \label{eq:bound_term_2_lemma_averaging_discrete}
    \end{align}
    Since~$\norm{\varphi}_{L^2(\fastgaussianone)}$ is bounded by~\eqref{eq:bound_varphi_l2_discrete},
    and the second term on the right-hand side of~\eqref{eq:two_terms_averaging_discrete} involves
    the summation~$\frac{1}{Nh}\sum_{n=0}^{N-1}$,
    which introduces a factor~$1/h$ that cancels the factor~$h$ in the one-step bound~\eqref{eq:bound_term_2_lemma_averaging_discrete},
    the desired estimate follows by combining~\eqref{eq:bound_term_1_lemma_averaging_discrete} and~\eqref{eq:bound_term_2_lemma_averaging_discrete}.

    \paragraph{Proof of~\eqref{eq:averaging_lemma_discrete_mean_square}}
    We follow the proof of~\eqref{eq:averaging_lemma_second_estimate_mean_square} in continuous time.
    Set
    \[
        H_n=H^J\bigl(\dtheta{n},\dallxi{n}\bigr) \, ,
        \qquad
        \widetilde H_{n+1}=H^J\bigl(\dtheta{n},\dallxi{n+1}\bigr) \, .
    \]
    The Poisson equation~\eqref{eq:poisson_discrete_averaging} gives the pathwise decomposition
    \begin{equation}
        \label{eq:averaging_decomposition_discrete_mse}
        \frac1N\sum_{n=0}^{N-1}\frac1J\sum_{j=1}^J
        \chi(\dxin{j}{n})\cdot A(\dtheta{n})
        =\frac{H_0-H_N}{Nh}
        +\frac1{Nh}\sum_{n=0}^{N-1}(H_{n+1}-\widetilde H_{n+1})
        +\frac1{Nh}\sum_{n=0}^{N-1}\eta^{\xi}_{n+1} \, ,
    \end{equation}
    where
    \[
        \eta^{\xi}_{n+1}
        =\widetilde H_{n+1}
        -\expect\bigl[\widetilde H_{n+1}\mid\mathcal F_n\bigr] \,.
    \]
    As in the continuous proof,
    using the Marcinkiewicz--Zygmund inequality~(\cref{lemma:mz_inequality} with~$p=4$)
    and equivalence of the~$L^2(\fastgaussianone)$ and~$L^4(\fastgaussianone)$ norms on the finite-dimensional space of polynomials of degree at most~$r$,
    we have
    \[
        \left\lVert\frac1J\sum_{j=1}^J\varphi(\dxin{j}{n+1})\right\rVert_{L^4(\Omega)}
        \lesssim\frac{\norm{\varphi}_{L^4(\fastgaussianone)}}{\sqrt J}
        \lesssim_r\frac{\norm{\varphi}_{L^2(\fastgaussianone)}}{\sqrt J} \, .
    \]
    To bound~$H_{n+1}-\widetilde H_{n+1}$, we apply~\eqref{eq:taylor_expansion_H_discrete}
    with~$\theta=\dtheta{n}$, $\allxi=\dallxi{n}$, $\widetildeallxi=\dallxi{n+1}$, and~$x=\xslow{n}$:
    \begin{equation}
        \label{eq:slow_increment_decomposition_discrete}
        H_{n+1}-\widetilde H_{n+1}
        =\sqrt{2h}\,\nabla_\theta H^J\bigl(\dtheta{n},\dallxi{n+1}\bigr)\cdot\xslow{n}+R_{n+1} \, .
    \end{equation}
    Here~$R_{n+1}$ collects all the other terms of that expansion.
    Using the preceding~$L^4(\Omega)$ bound, the uniform moments of the drift and Gaussian increments,
    and~\eqref{eq:bound_remainder_varphi_discrete},
    we obtain
    \[
        \norm{R_{n+1}}_{L^2(\Omega)}
        \leq C\Bigl(h+h^{\frac{3}{2}}+h^2+h^{1+\alpha/2}\Bigr)
        J^{-\frac{1}{2}}
        \norm A_{\cont^{2+\alpha}}
        \norm\varphi_{L^2(\fastgaussianone)} \, .
    \]
    Since~$h\leq1$, applying~\eqref{eq:bound_varphi_l2_discrete} gives
    \[
        \norm{R_{n+1}}_{L^2(\Omega)}
        \lesssim\frac{h(h+\varepsilon)}{\sqrt J}\norm A_{\cont^{2+\alpha}}\norm\chi_{L^2(\fastgaussianone)} \, .
    \]
    We now bound the contribution of the first term on the right-hand side of~\eqref{eq:slow_increment_decomposition_discrete} to the time average.
    This term has conditional mean zero given~$\mathcal G_n$,
    since~$\xslow{n}$ is independent of~$\mathcal G_n$.
    By orthogonality of the increments,
    \begin{align*}
        \expect\left|\frac{\sqrt{2h}}{Nh}\sum_{n=0}^{N-1}\nabla_\theta H^J\bigl(\dtheta{n},\dallxi{n+1}\bigr)\cdot\xslow{n}\right|^2
        &=\frac{2}{N^2h}\sum_{n=0}^{N-1}\expect\left|\nabla_\theta H^J\bigl(\dtheta{n},\dallxi{n+1}\bigr)\right|^2 \\
        &\leq\frac{2}{JNh}\norm{\nabla A}_\infty^2\norm{\varphi}_{L^2(\fastgaussianone)}^2
        \lesssim \frac{(h+\varepsilon)^2}{JNh}
        \norm A_{1,\infty}^2\norm\chi_{L^2(\fastgaussianone)}^2 \, .
    \end{align*}
    Here we used independence of the fast particles, the zero mean of~$\varphi$,
    and~\eqref{eq:bound_varphi_l2_discrete}.
    For the fast increment~$\eta^\xi_{n+1}$, let~$Q=\e^{\frac{h}{\varepsilon}\genfastone}$ denote the one-step transition operator of a fast particle.
    Then~$\expect[\varphi(\dxin{j}{n+1})\mid\mathcal F_n]=(Q\varphi)(\dxin{j}{n})$.
    Since the fast updates are independent conditional on~$\mathcal F_n$,
    bounding~$A$ by~$\norm A_\infty$ and using stationarity gives
    \[
        \expect|\eta^\xi_{n+1}|^2
        \leq\frac{\norm A_\infty^2}{J^2}\sum_{j=1}^J\expect\left|\varphi\bigl(\dxin{j}{n+1}\bigr)-(Q\varphi)\bigl(\dxin{j}{n}\bigr)\right|^2
        =\frac{\norm A_\infty^2}{J}\left(\norm\varphi_{L^2(\fastgaussianone)}^2-\norm{Q\varphi}_{L^2(\fastgaussianone)}^2\right) \, .
    \]
    Write~$\chi=\sum_{m=1}^r\chi_m$, where~$\chi_m$ is the sum of all multivariate Hermite terms of total degree~$m$.
    These components are orthogonal in~$L^2(\fastgaussianone)$ and satisfy~$-\genfastone\chi_m=m\chi_m$.
    By~\eqref{eq:poisson_discrete_averaging},
    \[
        \varphi=\sum_{m=1}^r\frac{h}{1-a_m}\chi_m \, ,
        \qquad a_m=\e^{-\frac{mh}{\varepsilon}} \, .
    \]
    Since
    \(
        Q\varphi=\sum_{m=1}^r\frac{ha_m}{1-a_m}\chi_m,
    \)
    it follows that
    \begin{align*}
        \norm\varphi_{L^2(\fastgaussianone)}^2-\norm{Q\varphi}_{L^2(\fastgaussianone)}^2
        &=\sum_{m=1}^r\frac{h^2(1-a_m^2)}{(1-a_m)^2}\norm{\chi_m}_{L^2(\fastgaussianone)}^2 \\
        &\leq\frac{2h^2}{1-\e^{-\frac{h}{\varepsilon}}}\norm\chi_{L^2(\fastgaussianone)}^2
        \lesssim h(h+\varepsilon)\norm\chi_{L^2(\fastgaussianone)}^2 \, ,
    \end{align*}
    where we used the upper bound in~\eqref{eq:elem_1} with~$x=\frac{h}{\varepsilon}$ in the last inequality.
    Combining these estimates yields
    \[
        \expect|\eta^\xi_{n+1}|^2
        \lesssim \frac{h(h+\varepsilon)}J\norm A_\infty^2\norm\chi_{L^2(\fastgaussianone)}^2 \, .
    \]
    Since~$\expect[\eta^\xi_{n+1}\mid\mathcal F_n]=0$, it follows that
    \[
        \expect\left|\frac1{Nh}\sum_{n=0}^{N-1}\eta^\xi_{n+1}\right|^2
        =\frac1{N^2h^2}\sum_{n=0}^{N-1}\expect|\eta^\xi_{n+1}|^2
        \lesssim\frac{h+\varepsilon}{JNh}\norm A_\infty^2\norm\chi_{L^2(\fastgaussianone)}^2 \, .
    \]
    Applying the triangle inequality in~$L^2(\Omega)$ to the three terms on the right-hand side of~\eqref{eq:averaging_decomposition_discrete_mse},
    using~\eqref{eq:bound_varphi_l2_discrete} for the boundary term and the estimates above for the other two terms, we obtain
    \begin{align*}
        &\expect\left|\frac1N\sum_{n=0}^{N-1}\frac1J\sum_{j=1}^J
        \chi(\dxin{j}{n})\cdot A(\dtheta{n})\right|^2 \\
        &\qquad\qquad\lesssim\norm\chi_{L^2(\fastgaussianone)}^2\Biggl(
        \frac{(h+\varepsilon)^2}{J(Nh)^2}\norm A_\infty^2
        +\frac{(h+\varepsilon)^2}{J}\norm A_{\cont^{2+\alpha}}^2
        +\frac{h+\varepsilon}{JNh}\norm A_\infty^2
        +\frac{(h+\varepsilon)^2}{JNh}\norm A_{1,\infty}^2\Biggr) \, .
    \end{align*}
    This proves~\eqref{eq:averaging_lemma_discrete_mean_square}.
\end{proof}

\begin{proof}
    [Proof of~\cref{theorem:bound_invariant_discrete}]
    As before,
    we use the symbol $\lesssim$ to denote inequalities up to a multiplicative constant independent of the relevant parameters~\(\varepsilon,\sigma,J,h\) and of the test function~\(f\),
    and we use $\bigo(\cdot)$ to denote the corresponding Landau big-O notation.
    Just as in the continuous-time case,
    the bound on the invariant distribution~\eqref{eq:bound_distributions_discrete}
    can be deduced as a corollary of the bound on the bias of the ergodic estimator~\eqref{eq:ergodic_estimator_discrete}
    by considering stationary initial conditions.
    Therefore, in the rest of the proof we focus on the bias and mean square error of the ergodic estimator~\eqref{eq:ergodic_estimator_discrete}.
    In this proof, we write~$\kappa=0$ when~$\pi_0=\fullinvariantdiscrete$
    and~$\kappa=1$ otherwise.
    By \cref{lem:poisson}, there exists a unique solution \(\psi\in \cont^{4+\alpha}(\torus^d,\real)\) to the Poisson equation
    \begin{equation}
        \label{eq:poisson_discrete}
         -\genlimit \psi = f-\mu_\Phi(f) \, ,
        \qquad \text{with }\mu_\Phi(\psi) = 0 \, .
    \end{equation}
    The idea of the proof is similar to that of the continuous-time case.
    We first write
    \begin{align*}
        \psi(\dtheta{N}) - \psi(\dtheta{0})
        &= \sum_{n=0}^{N-1} \bigl( \psi(\dtheta{n+1}) - \psi(\dtheta{n}) \bigr) \, \\
        % &= \sum_{n=0}^{N-1} \Bigl( \markovtransition \psi{\left(\dtheta{n}, \dallxi{n}\right)} - \psi(\dtheta{n}) \Bigr) + \sum_{n=0}^{N-1} \Bigl( \psi(\dtheta{n+1}) - \markovtransition \psi{\left(\dtheta{n}, \dallxi{n}\right)} \Bigr) \, , \\
          &= \sum_{n=0}^{N-1} h \genlimit \psi(\dtheta{n}) + \sum_{n=0}^{N-1} \Bigl( \markovtransition \psi{\left(\dtheta{n}, \dallxi{n}\right)} - \psi(\dtheta{n}) - h \genlimit \psi(\dtheta{n}) \Bigr)  \\
          &\qquad \qquad + \sum_{n=0}^{N-1} \Bigl( \psi(\dtheta{n+1}) - \markovtransition \psi{\left(\dtheta{n}, \dallxi{n}\right)} \Bigr) \, .
    \end{align*}
    Rearranging terms and dividing by \(Nh\),
    we obtain
    \begin{align}
        \notag
        \frac{1}{N} \sum_{n=0}^{N-1} f(\dtheta{n}) - \mu_\Phi(f)
        &= \frac{\psi(\dtheta{0}) - \psi(\dtheta{N})}{Nh}  \\
        \notag
        &\qquad + \frac{1}{N} \sum_{n=0}^{N-1} \biggl[\biggl( \frac{\markovtransition - \id}{h} \biggr) \psi{\left(\dtheta{n}, \dallxi{n}\right)} - \genlimit \psi(\dtheta{n})\biggr]  \\
        \label{eq:variance_term_discrete}
        &\qquad + \frac{1}{Nh} \sum_{n=0}^{N-1} \biggl( \psi(\dtheta{n+1}) - \markovtransition \psi{\left(\dtheta{n}, \dallxi{n}\right)} \biggr) \, .
    \end{align}
    We first focus on the bias of the ergodic estimator~\eqref{eq:ergodic_estimator_discrete}.
    Taking the expectation of both sides,
    we obtain
    \begin{align*}
        \expect \biggl[ \frac{1}{N} \sum_{n=0}^{N-1} f(\dtheta{n}) \biggr] - \mu_\Phi(f)
        &= \frac{1}{Nh} \expect \bigl[ \psi(\dtheta{0})  - \psi(\dtheta{N}) \bigr]
        + \frac{1}{N} \sum_{n=0}^{N-1} \expect \left[ \biggl( \frac{\markovtransition - \id}{h} \biggr) \psi{\left(\dtheta{n}, \dallxi{n}\right)} - \genlimit \psi(\dtheta{n}) \right] \, .
    \end{align*}
    Further decomposing the second term on the right-hand side,
    we obtain
    \begin{subequations}
        \begin{align}
            \notag
            \expect \biggl[ \frac{1}{N} \sum_{n=0}^{N-1} f(\dtheta{n}) \biggr] - \mu_\Phi(f)
        &=
        \frac{1}{Nh} \expect \bigl[ \psi(\dtheta{0})  - \psi(\dtheta{N}) \bigr] \\
        \label{eq:discretization_error}
        &\qquad + \frac{1}{N} \sum_{n=0}^{N-1} \expect \biggl[ \biggl( \frac{\markovtransition - \id}{h} - \fullgenparams \biggr) \psi\bigl(\dtheta{n}, \dallxi{n}\bigr) \biggr]  \\
        \label{eq:small_sigma_error}
         &\qquad + \frac{1}{N} \sum_{n=0}^{N-1} \expect\Bigl[\Bigl( \fullgenparams \psi - \genslow[\sigma=0] \psi \Bigr)\bigl(\dtheta{n}, \dallxi{n}\bigr)\Bigr] \\
        \label{eq:averaging_error}
         &\qquad + \frac{1}{N} \sum_{n=0}^{N-1} \expect\Bigl[\genslow[\sigma=0] \psi{\left(\dtheta{n}, \dallxi{n}\right)} - \genlimit \psi(\dtheta{n})\Bigr] \, .
        \end{align}
    \end{subequations}
    The second term~\eqref{eq:discretization_error} on the right-hand side corresponds to the discretization error,
    the third term~\eqref{eq:small_sigma_error} corresponds to the gradient approximation error,
    and the last term~\eqref{eq:averaging_error} corresponds to the averaging error.
    In the rest of the proof, we first bound these three terms separately.
    Our strategy for the first two is to bound the functions appearing in the expectations in the weighted~$L^{\infty}_{\weightall{p}}$ norm,
    with some~$p \geq 0$ and~$\weightall{p}$ defined by
    \begin{equation}
        \label{eq:definition_Kp}
         \weightall{p}\bigl(\theta, \allxi\bigr) = \frac{1}{J} \sum_{j=1}^{J} \weight{p}(\xin{j}) \, .
    \end{equation}
    Bounds in expectation then follow from the fact~$\dallxi{n}$ is distributed according to~$\fastgaussianall$ for all~$n \geq 0$.
    For the last term~\eqref{eq:averaging_error},
    this approach does not work directly since the argument of the expectation is not necessarily small in a weighted~$L^{\infty}$ norm,
    so we rely on the averaging argument from~\cref{lemma:averaging_discrete} instead.

     \paragraph{Bound on the discretization error~\eqref{eq:discretization_error}}
     We have by definition of the Markov transition operator that,
     for any fixed~\((\theta, \allxi) \in \torus^d \times \R^{dJ}\) and for~\(x \sim \normal(0,\I_d)\),
     \[
         \left(\markovtransition - \mathrm{id}\right) \psi{\left(\theta, \allxi\right)}
         = \expect\Bigl[\psi\bigl(\theta + h\driftall{\left(\theta,\allxi\right)}+\sqrt{2h}\,x \bigr) - \psi(\theta)\Bigr] \, .
     \]
     By a Taylor expansion up to order \(3\), we obtain
     \begin{align*}
         \psi\Bigl(\theta + h\driftall{\left(\theta,\allxi\right)}+\sqrt{2h}\,x\Bigr)
     &= \psi(\theta) + \nabla \psi(\theta)\cdot \left(h\driftall{\left(\theta,\allxi\right)}+\sqrt{2h}\,x\right)
     \\
     &\qquad + \frac{1}{2}
     \nabla^2 \psi(\theta) \cdot \Bigl[h\driftall{\left(\theta,\allxi\right)}+\sqrt{2h}\,x\Bigr]^{\otimes 2}
     + \frac{1}{6}\nabla^3 \psi(\theta) \cdot \Bigl[h\driftall{\left(\theta,\allxi\right)}+\sqrt{2h}\,x\Bigr]^{\otimes 3}
     \\
     &\qquad
     +\frac{1}{6}\int_0^1 (1-t)^3 \nabla^4 \psi\left(\theta + t\bigl(h\driftall{\left(\theta,\allxi\right)}+\sqrt{2h}\,x\bigr)\right) \cdot \Bigl[h\driftall{\left(\theta,\allxi\right)}+\sqrt{2h}\,x\Bigr]^{\otimes 4} \, \d t \, .
     \end{align*}
     Taking the expectation (with respect to~$x$, since~$\theta$ and~$\allxi$ are fixed), we have
     \begin{equation}
         \label{eq:bound_intermediate_discrete_error_ergodic_bias}
         \bigl( P^J[\varepsilon,\sigma,h] - \id \bigr) \psi \bigl(\theta, \allxi\bigr)  =
         h \mathcal L^J[\varepsilon,\sigma] \psi\bigl(\theta, \allxi\bigr) + h^{2}\expect\left[ \mathcal R{\left(\theta,\allxi,x,\sigma,h,J\right)}\right] \, ,
     \end{equation}
     where $\mathcal R{\left(\theta,\allxi,x,\sigma,h,J\right)}$ is a remainder term satisfying
     for all $(\theta, \allxi) \in \torus^d \times \R^{dJ}$,
     \[
         \expect \Bigl\lvert \mathcal R{\left(\theta,\allxi,x,\sigma,h,J\right)} \Bigr\rvert
         \lesssim \norm{\psi}_{4,\infty} \left( 1 + \frac{1}{J} \sum_{j=1}^{J} \bigl\lvert \xin{j} \bigr\rvert^2  \right)^4
         \lesssim \norm{f}_{\cont^{2+\alpha}} \left( 1 + \frac{1}{J} \sum_{j=1}^{J} \bigl\lvert \xin{j} \bigr\rvert^8  \right) \, ,
     \]
     where we used \cref{lemma:upper_bound_drift_and_gradient} and \cref{lem:poisson}.
     It follows that, for all~\(n \geq 0\),
     \begin{equation}
         \label{eq:bound_term_1}
         \expect
         \biggl[
             \biggl( \frac{\markovtransition - \id}{h} - \fullgenparams \biggr) \psi\bigl(\dtheta{n}, \dallxi{n}\bigr)
         \biggr] \,
         \lesssim
         h \norm{f}_{\cont^{2+\alpha}} \,.
     \end{equation}

     \paragraph{Bound on the gradient approximation error~\eqref{eq:small_sigma_error}}
    Similarly to the continuous-time case,
    since~$G \in \cont^{3+\alpha}(\torus^d)$ by assumption,
    we have by~\cref{lemma:drift_lim_small_sigma} the expansion
    \[
        \driftall\bigl(\theta,\allxi\bigr) = \driftlim\bigl(\theta,\allxi\bigr) + \sigma \driftderiv\bigl(\theta,\allxi\bigr) + \sigma^2 \mathcal R_{\sigma}^J{\left(\theta,\allxi\right)} \, ,
    \]
    with~$\driftderiv$ defined in~\eqref{eq:drift_deriv_1}.
    By~\cref{lemma:drift_lim_small_sigma}, the remainder term satisfies
    \[
        \lVert \mathcal R_{\sigma}^J \rVert_{L^{\infty}_{\weightall{4}}}
        \lesssim \norm{\nabla^3 G}_{\infty} \, .
    \]
    Substituting this decomposition into the expression of the generator yields
    \begin{align*}
        \fullgenparams \psi{\left(\theta, \allxi\right)} - \fullgenlimit \psi{\left(\theta, \allxi\right)}
        &= \Bigl( \driftall\bigl(\theta,\allxi\bigr) - \driftlim\bigl(\theta,\allxi\bigr) \Bigr) \cdot \nabla \psi(\theta)  \\
        &= \sigma \driftderiv\bigl(\theta,\allxi\bigr) \cdot \nabla \psi(\theta) + \sigma^2 \mathcal R_{\sigma}^J{\left(\theta,\allxi\right)} \cdot \nabla \psi(\theta) \, ,
    \end{align*}
    Therefore,
    it holds that
    \begin{align}
        \frac{1}{N} \sum_{n=0}^{N-1} \expect\Bigl[\Bigl( \fullgenparams \psi - \genslow[\sigma=0] \psi \Bigr)\bigl(\dtheta{n}, \dallxi{n}\bigr)\Bigr]
        &=
        \frac{\sigma}{N} \sum_{n=0}^{N-1} \expect\Bigl[\driftderiv\bigl(\dtheta{n},\dallxi{n}\bigr) \cdot \nabla \psi(\dtheta{n})\Bigr]
        + \bigo\bigl( \sigma^2 \norm{f}_{\infty} \bigr) \, .
        \label{eq:small_sigma_expansion}
    \end{align}
    The first term on the right-hand side is controlled by an averaging argument,
    exactly as in the continuous-time case.
    From the explicit expression~\eqref{eq:drift_deriv_1},
    for each fixed~$\theta$ the function~$\xi \mapsto \left.\partial_\sigma \driftone(\theta,\xi)\right|_{\sigma=0}$
    is a centered cubic polynomial in~$\xi$ with~$\theta$-dependent coefficients depending on~$G$ and its derivatives up to order~$2$.
    By applying~\cref{lemma:averaging_discrete} with the cubic polynomial~$\chi(\xi)=\xi\otimes\xi\otimes\xi$,
    together with~\cref{lem:poisson} which provides the required bounds on~$\nabla\psi$,
    we obtain
    \begin{equation}
        \label{eq:bound_small_sigma_discrete}
        \Biggl\lvert
        \frac{\sigma}{N} \sum_{n=0}^{N-1} \expect\Bigl[\driftderiv\bigl(\dtheta{n},\dallxi{n}\bigr) \cdot \nabla \psi(\dtheta{n})\Bigr]
        \Biggr\rvert
        \lesssim \frac{\sigma}{\sqrt{J}} \, \norm{f}_{\infty} \, .
    \end{equation}
    If~$G \in \cont^{4+\alpha}(\torus^d)$ so that the second estimate of~\cref{lemma:averaging_discrete}
    also applies, then the improved bound
    \begin{align}
        \notag
        \Biggl\lvert
        \frac{\sigma}{N} \sum_{n=0}^{N-1} \expect\Bigl[\driftderiv\bigl(\dtheta{n},\dallxi{n}\bigr) \cdot \nabla \psi(\dtheta{n})\Bigr]
        \Biggr\rvert
         &\lesssim \frac{\sigma (h+\varepsilon)}{\sqrt{J}} \left( \frac{\kappa}{Nh} \norm{\nabla \psi}_{\infty} + \norm{\nabla \psi}_{\cont^{2+\alpha}} \right)
        \\
         &\lesssim \frac{\sigma (h+\varepsilon)}{\sqrt{J}} \left( \frac{\kappa}{Nh} \norm{f}_{\infty} + \norm{f}_{2,\infty} \right)
        \label{eq:bound_small_sigma_discrete_sharp}
    \end{align}
     holds, where~$\kappa = 0$ for stationary initial conditions and~$\kappa = 1$ otherwise.

     \paragraph{Bound on the averaging error~\eqref{eq:averaging_error}}
    The following derivation is valid provided that $G \in \cont^{3+\alpha}(\torus^d)$.
    We introduce, just as in the continuous-time case,
    the function \(\chi^J\) given by
    \[
        \chi^J\bigl(\theta,\allxi\bigr) = \bigl(\cov(\allxi) - \I_d\bigr) : \Bigl( \nabla \Phi(\theta) \otimes \nabla \psi(\theta) \Bigr) \, ,
    \]
    which appears in the expectation in~\eqref{eq:averaging_error}, up to the sign.
    Using the identity~$\cov(\allxi) = \frac{1}{J}\sum_{j=1}^{J} \xin{j} \otimes \xin{j}$,
    we rewrite~$\chi^J$ as
    \[
        \chi^J(\theta,\allxi) = \frac{1}{J} \sum_{j=1}^{J} \bigl( \xin{j} \otimes \xin{j} - \I_d \bigr) : \bigl( \nabla \Phi(\theta) \otimes \nabla \psi(\theta) \bigr) \, .
    \]
    The quadratic polynomial~$\xi \mapsto \xi \otimes \xi - \I_d$ has mean zero under~$\fastgaussianone$,
    so by~\cref{lemma:averaging_discrete} applied with~$\chi(\xi) = \xi \otimes \xi - \I_d$
    and~$A(\theta) = \nabla \Phi(\theta) \otimes \nabla \psi(\theta)$,
    together with~\cref{lem:poisson} which provides the required bounds on~$\nabla\psi$,
    we obtain
    \begin{align}
        \biggl\lvert \frac{1}{N} \sum_{n=0}^{N-1} \expect \Bigl[ \chi^J\bigl(\dtheta{n}, \dallxi{n}\bigr) \Bigr] \biggr\rvert
        &\lesssim \frac{h + \varepsilon}{Nh\sqrt{J}} \norm{f}_{\infty}
        + \frac{h+\varepsilon}{\sqrt{J}} \norm{f}_{2,\infty} \, ,
        \label{eq:final_bound_averaging_discrete}
    \end{align}
    where the last line uses~$\norm{A}_{\infty} \lesssim \norm{f}_{\infty}$
    and~$\norm{A}_{\cont^{2+\alpha}} \lesssim \norm{f}_{2,\infty}$.
    This gives the desired bound on the averaging error~\eqref{eq:averaging_error}.

     \paragraph{Conclusion of the bias estimate}
    Combining the bound~\(\lesssim (Nh)^{-1}\norm{f}_{\infty}\) on the first term in the bias decomposition,
    the discretization error~\eqref{eq:bound_term_1},
    the small-\(\sigma\) expansion~\eqref{eq:small_sigma_expansion} together with~\eqref{eq:bound_small_sigma_discrete},
    and the averaging error~\eqref{eq:final_bound_averaging_discrete},
    we obtain for~\(\beta = 0\)
    \[
        \Bigl\lvert \expect[\widehat f_N] - \mu_\Phi(f) \Bigr\rvert
        \lesssim \frac{1}{Nh}\norm{f}_{\infty}
        + h \norm{f}_{\cont^{2+\alpha}}
        + \frac{\sigma}{\sqrt{J}}\norm{f}_{\infty}
        + \sigma^2 \norm{f}_{\infty}
        + \frac{h+\varepsilon}{Nh\sqrt{J}}\norm{f}_{\infty}
        + \frac{h+\varepsilon}{\sqrt{J}}\norm{f}_{2,\infty} \, .
    \]
    This yields the bound~\eqref{eq:bound_bias_discrete} for~\(\beta = 0\).
    For~\(\beta = 1\), the refined small-\(\sigma\) estimate~\eqref{eq:bound_small_sigma_discrete_sharp}
    gives the bound
    \begin{align*}
        \bigl\lvert \expect[\widehat f_N] - \mu_\Phi(f) \bigr\rvert
        &\lesssim \frac{1}{Nh}\norm{f}_{\infty}
        + h \norm{f}_{\cont^{2+\alpha}}
        + \sigma^2 \norm{f}_{\infty}
        + \frac{\sigma(h+\varepsilon)}{\sqrt{J}} \Bigl( \frac{\kappa}{Nh}\norm{f}_{\infty} + \norm{f}_{2,\infty} \Bigr)
        \\
        &\qquad
        + \frac{h+\varepsilon}{Nh\sqrt{J}}\norm{f}_{\infty}
        + \frac{h+\varepsilon}{\sqrt{J}}\norm{f}_{2,\infty} \, ,
    \end{align*}
    This concludes the proof of~\eqref{eq:bound_bias_discrete}.

     \paragraph{Bound on the mean square error~\eqref{eq:bound_mse_discrete}}
    Let~$\mathcal F_n$ be the natural filtration of the chain and set
    \[
        \Delta M_{n+1}
        = \psi(\dtheta{n+1})
        - \markovtransition\psi(\dtheta{n},\dallxi{n}) \, .
    \]
    The last term in~\eqref{eq:variance_term_discrete} is then~$(Nh)^{-1}\sum_{n=0}^{N-1}\Delta M_{n+1}$,
    and $(\Delta M_n)_{n\geq1}$ is a martingale-difference sequence.
    Since~$\psi$ is Lipschitz continuous and the drift has moments bounded uniformly in the parameters,
    \[
        \expect|\Delta M_{n+1}|^2
        \leq \expect|\psi(\dtheta{n+1})-\psi(\dtheta{n})|^2
        \lesssim h\norm{\psi}_{1,\infty}^2 \, .
    \]
    Orthogonality of the increments therefore gives
    \begin{equation}
        \label{eq:bound_martingale_mse_discrete}
        \expect\left|\frac1{Nh}\sum_{n=0}^{N-1}\Delta M_{n+1}\right|^2
        =\frac1{N^2h^2}\sum_{n=0}^{N-1}\expect|\Delta M_{n+1}|^2
        \lesssim \frac1{Nh}\norm{f}_{\infty}^2 \, .
    \end{equation}
    The additional discretization contribution is bounded in mean square by
    $Ch^2\norm{f}_{\cont^{2+\alpha}}^2$,
    using~\eqref{eq:bound_intermediate_discrete_error_ergodic_bias} and the Gaussian moment bounds.
    The remaining terms are treated as in the continuous-time proof,
    using~\eqref{eq:averaging_lemma_discrete_mean_square} in place of~\eqref{eq:averaging_lemma_second_estimate_mean_square},
    which leads to the desired bound~\eqref{eq:bound_mse_discrete} on the mean square error.
    % The boundary contribution is bounded by~$C(Nh)^{-2}\norm{f}_\infty^2$.
    % The additional discretization contribution is bounded in mean square
    % by~$Ch^2\norm{f}_{\cont^{2+\alpha}}^2$,
    % by~\eqref{eq:bound_intermediate_discrete_error_ergodic_bias}
    % and the Gaussian moment bounds.
    % Combining these estimates in~\eqref{eq:variance_term_discrete}
    % gives~\eqref{eq:bound_mse_discrete}.
\end{proof}

\subsection{Weak convergence}%
\label{ssub:weak_discrete}

\begin{lemma}[Discrete averaging lemma]
    \label{lemma:averaging_discrete_weak}
    Let $B\colon \real^d \to \real^n$ be a polynomial of degree at most~$r$, for some~$r\in\N$, with mean zero under~$\fastgaussianone$,
    and assume that $g\in\cont^{1,4}([0,Nh]\times\torus^d,\real^n)$.
    Under the same assumptions as in \cref{thm:uniform_discrete_weak_convergence},
    there exists a constant $C > 0$,
    depending only on~$d,n,r$ and fixed derivative bounds on~$G$ and~$\ell$ 
    but independent of~$\varepsilon,\sigma,J,h,N,B,g$, such that
    \begin{align*}
        & \biggl\lvert h \sum_{n=0}^{N-1} \expect \biggl[ \frac{1}{J} \sum_{j=1}^J B(\dxin{j}{n}) \cdot g(nh+h, \dtheta{n}) \biggr] \biggr\rvert \\
        &\qquad \qquad 
        \leq \frac{C(h+\varepsilon)}{\sqrt{J}} \, \lVert B\rVert_{L^2(\fastgaussianone)}
        \left( \sup_{s\in[0,Nh]} \lVert g(s,\placeholder)\rVert_\infty + h \sum_{n=0}^{N-1} \biggl( \sup_{r\in[nh,nh+h]}  \lVert \partial_t g(r,\placeholder)\rVert_\infty + \lVert g(nh+h,\placeholder)\rVert_{4,\infty} \biggr)  \right) \, .
    \end{align*}
\end{lemma}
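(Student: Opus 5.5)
We combine the discrete Poisson-equation argument from the proof of~\cref{lemma:averaging_discrete} with the time-dependent bookkeeping from~\cref{lemma:averaging_weak}. Let $\varphi\in L^2_0(\fastgaussianone;\real^n)$ solve the discrete Poisson equation~\eqref{eq:poisson_discrete_averaging} with right-hand side~$B$. As before, $\varphi$ is a polynomial of degree at most~$r$, and by~\eqref{eq:bound_varphi_l2_discrete} it satisfies $\norm{\varphi}_{L^2(\fastgaussianone)}\lesssim(h+\varepsilon)\norm{B}_{L^2(\fastgaussianone)}$. We introduce the time-dependent corrector
\[
    H_n \coloneq \frac1J\sum_{j=1}^J\varphi\bigl(\dxin{j}{n}\bigr)\cdot g\bigl(nh,\dtheta{n}\bigr) \, .
\]
We then telescope $\expect[H_N-H_0]=\sum_{n=0}^{N-1}\expect[H_{n+1}-H_n]$ and split each increment into three parts.
\begin{itemize}
    \item[(i)] The fast increment $\frac1J\sum_j\bigl(\varphi(\dxin{j}{n+1})-\varphi(\dxin{j}{n})\bigr)\cdot g(nh+h,\dtheta{n})$. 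Conditioning on $\mathcal F_n$ and using~\eqref{eq:poisson_discrete_averaging}, its expectation equals $-h\,\expect\bigl[\frac1J\sum_jB(\dxin{j}{n})\cdot g(nh+h,\dtheta{n})\bigr]$. This is exactly the target summand, and it explains why the statement evaluates $g$ at $nh+h$.
    \item[(ii)] The time increment $\frac1J\sum_j\varphi(\dxin{j}{n})\cdot\bigl(g(nh+h,\dtheta{n})-g(nh,\dtheta{n})\bigr)$.
    \item[(iii)] The slow increment $\frac1J\sum_j\varphi(\dxin{j}{n+1})\cdot\bigl(g(nh+h,\dtheta{n+1})-g(nh+h,\dtheta{n})\bigr)$.
\end{itemize}
Rearranging expresses the target sum as $\expect[H_0-H_N]$ plus the sums of (ii) and (iii).

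Each piece is then bounded as follows.
\begin{itemize}
    \item[(a)] Boundary term: by stationarity, independence and the centering of~$\varphi$, $|\expect H_n|\le \norm{g(nh,\placeholder)}_\infty\norm{\varphi}_{L^2}/\sqrt J$, which gives the $\sup_s\norm{g(s,\placeholder)}_\infty$ contribution.
    \item[(b)] Time term (ii): Cauchy--Schwarz together with the same $L^2(\Omega)$ bound $\norm{\frac1J\sum_j\varphi(\dxin{j}{n})}_{L^2(\Omega)}=\norm{\varphi}_{L^2}/\sqrt J$ gives at most $h\sup_{r\in[nh,nh+h]}\norm{\partial_tg(r,\placeholder)}_\infty\,\norm\varphi_{L^2}/\sqrt J$.
    \item[(c)] Slow term (iii): Taylor-expand in~$\theta$ around~$\dtheta{n}$ as in~\eqref{eq:taylor_expansion_H_discrete} and take the conditional expectation given $\mathcal G_n=\sigma(\mathcal F_n,\dallxi{n+1})$. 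The odd Gaussian terms in $\xslow{n}$ vanish, leaving $h(\driftall\cdot\nabla+\Delta)g$ multiplied by $\frac1J\sum_j\varphi(\dxin{j}{n+1})$, plus higher-order terms.
\end{itemize}

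The slow term (iii) is the main obstacle. One must check that every term of the expansion is of order~$h$ times $\norm{g(nh+h,\placeholder)}_{4,\infty}/\sqrt J$. Because $g$ has four bounded $\theta$-derivatives, we can expand to fourth order with an integral remainder; there is then no need for the H\"older modulus used in~\cref{lemma:averaging_discrete}. Concretely:
\begin{itemize}
    \item the third-order Gaussian term is odd in $\xslow{n}$ apart from cross terms carrying a factor $h^{2}$ or more;
    \item the remaining terms, and the remainder, are $O(h^2)$ times polynomial moments of the drift, which are uniformly bounded by~\cref{lemma:upper_bound_drift_and_gradient}.
\end{itemize}
The delicate point is the product of $\frac1J\sum_j\varphi(\dxin{j}{n+1})$ with powers of $\driftall(\dtheta{n},\dallxi{n})$, since these two factors are correlated. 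We handle it, as in the proof of~\cref{lemma:averaging_discrete}, by Cauchy--Schwarz combined with the $L^4(\Omega)$ Marcinkiewicz--Zygmund bound (\cref{lemma:mz_inequality}) and norm equivalence for polynomials. This preserves the factor $\norm{\varphi}_{L^2}/\sqrt J$.

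Finally, we sum over $n$, use $h\le1$, and substitute $\norm{\varphi}_{L^2}\lesssim(h+\varepsilon)\norm B_{L^2}$, which yields the claimed estimate.
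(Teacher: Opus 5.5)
Your proposal is correct and follows essentially the same argument as the paper. Both use the discrete Poisson corrector $\varphi$ (the paper's $\mathscr B$) with $\norm{\varphi}_{L^2}\lesssim(h+\varepsilon)\norm{B}_{L^2}$, telescope, and bound the boundary, time-increment and slow-increment terms by Cauchy--Schwarz against the centered empirical mean of the corrector; your pairing of the slow increment with $\varphi(\dxin{j}{n+1})$ is just a repackaging, by conditional independence of $\dtheta{n+1}$ and $\dallxi{n+1}$ given $(\dtheta{n},\dallxi{n})$, of the paper's pairing with $\mathscr B^J(\dallxi{n})$ plus its correction term $-h\sum_n\expect[(g(nh+h,\dtheta{n+1})-g(nh+h,\dtheta{n}))\cdot B^J(\dallxi{n})]$, and the $L^4$ Marcinkiewicz--Zygmund step you invoke is harmless but unnecessary, since a plain $L^2$ Cauchy--Schwarz bound suffices for an expectation (as opposed to a mean square).
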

\begin{proof}[Proof of~\cref{lemma:averaging_discrete_weak}]
    Let $\mathscr B\colon\real^d\to\real^n$ denote the mean-zero solution in~$L^2(\fastgaussianone)$ to the discrete Poisson equation
    \[
        - \left( \frac{\e^{\frac{h}{\varepsilon} \genfastone{}} - \id}{h} \right) \mathscr B(\xi) = B(\xi) \, ,
    \]
    so that $\expect \bigl[ \mathscr B\bigl(\dxin{j}{n+1}\bigr) \bigm| \dxin{j}{n} \bigr]
    = \mathscr B\bigl(\dxin{j}{n}\bigr) - h B\bigl(\dxin{j}{n}\bigr)$.
    By~\cref{lemma:discrete_poisson_eq},
    appropriately adapted to the~$L^2(\fastgaussianone)$ setting,
    and the elementary inequality~\eqref{eq:elem_1},
    we have
    \begin{equation}
        \label{eq:bound_B_discrete}
        \lVert \mathscr B \rVert_{L^2(\fastgaussianone)}
        \lesssim \frac{h}{1 - \e^{-\frac{h}{\varepsilon}}}\lVert B\rVert_{L^2(\fastgaussianone)}
        \lesssim (h + \varepsilon)\lVert B\rVert_{L^2(\fastgaussianone)} \, .
    \end{equation}
    Let $B^J(\allxi) = \frac{1}{J} \sum_{j=1}^J B(\xin{j})$ and $\mathscr B^J(\allxi) = \frac{1}{J} \sum_{j=1}^J \mathscr B(\xin{j})$.
    It holds that
    \begin{align*}
        &\expect \Bigl[ g(Nh, \dtheta{N}) \cdot \mathscr B^J\bigl(\dallxi{N}\bigr) \Bigr] - \expect \Bigl[ g(0, \dtheta{0}) \cdot \mathscr B^J\bigl(\dallxi{0}\bigr) \Bigr] \\
        &\qquad = \sum_{n=0}^{N-1} \Bigl( \expect \Bigl[ g(nh+h, \dtheta{n+1}) \cdot \mathscr B^J\bigl(\dallxi{n+1}\bigr) \Bigr] - \expect \Bigl[ g(nh, \dtheta{n}) \cdot \mathscr B^J(\dallxi{n}) \Bigr] \Bigr) \\
        &\qquad = \sum_{n=0}^{N-1} \expect \Bigl[ \expect
        \Bigl[  g(nh+h, \dtheta{n+1}) \cdot \mathscr B^J\bigl(\dallxi{n+1}\bigr) - g(nh, \dtheta{n}) \cdot \mathscr B^J\bigl(\dallxi{n}\bigr) \Bigm| \bigl( \dtheta{n}, \dallxi{n} \bigr) \Bigr] \Bigr] \\
        &\qquad = \sum_{n=0}^{N-1} \expect \Bigl[ g(nh+h, \dtheta{n+1}) \cdot \Bigl( \mathscr B^J\bigl(\dallxi{n}\bigr) - h B^J\bigl(\dallxi{n}\bigr) \Bigr) - g(nh, \dtheta{n}) \cdot \mathscr B^J(\dallxi{n}) \Bigr] \, ,
    \end{align*}
    where we used the conditional independence of $g(nh+h, \dtheta{n+1})$ and $\mathscr B^J(\dallxi{n+1})$ given $(\dtheta{n}, \dallxi{n})$ together with the Poisson property in the third equality.
    Rearranging, we obtain the following equation:
    \begin{align*}
        \notag
        h \sum_{n=0}^{N-1} \expect \Bigl[  g(nh+h, \dtheta{n+1}) \cdot B^J\bigl(\dallxi{n}\bigr) \Bigr]
        &= \expect \Bigl[ g(0, \dtheta{0}) \cdot \mathscr B^J\bigl(\dallxi{0}\bigr) \Bigr] - \expect \Bigl[ g(Nh, \dtheta{N}) \cdot \mathscr B^J\bigl(\dallxi{N}\bigr) \Bigr] \\
        &\qquad + \sum_{n=0}^{N-1} \expect \Bigl[ \Bigl( g(nh+h, \dtheta{n+1}) - g(nh, \dtheta{n}) \Bigr) \cdot \mathscr B^J\bigl(\dallxi{n}\bigr) \Bigr] \, .
    \end{align*}
    The left-hand side of the above equation is almost equal to the quantity we want to bound.
    Adding and subtracting the difference on both sides,
    we obtain
    \begin{align}
        \notag
        h \sum_{n=0}^{N-1} \expect \Bigl[  g(nh+h, \dtheta{n}) \cdot B^J\bigl(\dallxi{n}\bigr) \Bigr]
        &= \expect \Bigl[ g(0, \dtheta{0}) \cdot \mathscr B^J\bigl(\dallxi{0}\bigr) \Bigr] - \expect \Bigl[ g(Nh, \dtheta{N}) \cdot \mathscr B^J\bigl(\dallxi{N}\bigr) \Bigr] \\
        \notag
        &\qquad + \sum_{n=0}^{N-1} \expect \Bigl[ \Bigl( g(nh+h, \dtheta{n+1}) - g(nh, \dtheta{n}) \Bigr) \cdot  \mathscr B^J\bigl(\dallxi{n}\bigr)  \Bigr] \\
        &\qquad - h \sum_{n=0}^{N-1} \expect \Bigl[ \Bigl( g(nh+h, \dtheta{n+1}) - g(nh + h, \dtheta{n}) \Bigr) \cdot   B^J\bigl(\dallxi{n} \bigr)  \Bigr] \, .
        \label{eq:averaging_decomposition_discrete_weak_initial}
    \end{align}

    By a Cauchy--Schwarz inequality,
    the fact that $\mathscr B^J$ is a sum of i.i.d.\ centered random variables,
    and the bound~\eqref{eq:bound_B_discrete} on $\mathscr B$,
    the first term satisfies the bound
    \[
        \left\lvert \expect \Bigl[ g(0, \dtheta{0}) \cdot \mathscr B^J\bigl(\dallxi{0}\bigr) \Bigr] - \expect \Bigl[ g(Nh, \dtheta{N}) \cdot \mathscr B^J\bigl(\dallxi{N}\bigr) \Bigr] \right\rvert
        \lesssim \frac{h+\varepsilon}{\sqrt{J}} \, \lVert B\rVert_{L^2(\fastgaussianone)}
        \lVert g \rVert_{L^{\infty}\left([0, Nh]\times \torus^d\right)} \, .
    \]

    To obtain a bound for the second term on the right-hand side of~\eqref{eq:averaging_decomposition_discrete_weak_initial}
    we first decompose the increment of~$g$~as
    \begin{equation}
        \label{eq:g_difference_decomposition}
        g(nh+h, \dtheta{n+1}) - g(nh, \dtheta{n}) =
        \bigl( g(nh+h, \dtheta{n}) - g(nh, \dtheta{n}) \bigr) + \bigl( g(nh+h, \dtheta{n+1}) - g(nh+h, \dtheta{n}) \bigr) \, .
    \end{equation}
    The first term on the right-hand side satisfies
    \begin{equation}
        \label{eq:bound_g_time_increment_1}
        \bigl\lvert g(nh+h, \dtheta{n}) - g(nh, \dtheta{n}) \bigr\rvert
        \leq \int_{nh}^{nh+h} \bigl\lvert \partial_t g(s, \dtheta{n}) \bigr\rvert \, \mathrm{d}s
        \leq h \bigl\lVert \partial_t g \bigr\rVert_{L^{\infty}([nh, nh+h]\times \torus^d)} \, ,
    \end{equation}
    and for the second term on the right-hand side of~\eqref{eq:g_difference_decomposition},
    we have by a Taylor expansion of the function~$g(nh+h, \placeholder)$ around~$\dtheta{n}$ up to order~$4$,
    that
    \begin{equation}
        \label{eq:bound_g_time_increment_2}
        \expect \Bigl[  g(nh+h, \dtheta{n+1}) - g(nh+h, \dtheta{n})  \, \Bigm| \, \bigl(\dtheta{n}, \dallxi{n}\bigr) \Bigr]
        = h \fullgenparams g{\left(nh+h, \dtheta{n}, \dallxi{n}\right)} + \mathfrak R^J_{\sigma,h}\bigl(nh+h, \dtheta{n}, \dallxi{n}\bigr) \, ,
    \end{equation}
    where the remainder term satisfies, by~\cref{lemma:upper_bound_drift_and_gradient},
    \[
        \Bigl\lvert \mathfrak R^J_{\sigma, h} \bigl(nh+h, \dtheta{n}, \dallxi{n}\bigr) \Bigr\rvert
        \lesssim
        h^2 \lVert g(nh+h, \placeholder) \rVert_{4,\infty} \biggl( 1 + \frac{1}{J} \sum_{j=1}^{J} \lvert \dxin{j}{n} \rvert^8 \biggr)  \, .
    \]
    Therefore, combining~\eqref{eq:bound_g_time_increment_1} and~\eqref{eq:bound_g_time_increment_2},
    we obtain
    \begin{align*}
        &\left\lvert \expect \Bigl[  g(nh+h, \dtheta{n+1}) - g(nh, \dtheta{n})  \, \Bigm| \, \bigl(\dtheta{n}, \dallxi{n}\bigr) \Bigr] \right\rvert         \\
        &\qquad \qquad \leq
        h\sup_{r \in [nh, nh+h]} \left\lVert \partial_t g(r, \placeholder) \right\rVert_\infty
        + h \left\lvert \fullgenparams g{\left(nh+h, \dtheta{n}, \dallxi{n}\right)} \right\rvert
        + \left\lvert \mathfrak R^J_{\sigma, h} \bigl(nh+h, \dtheta{n}, \dallxi{n}\bigr) \right\rvert \, \\
        &\qquad \qquad \leq 
        h\sup_{r \in [nh, nh+h]} \left\lVert \partial_t g(r, \placeholder) \right\rVert_\infty
        + h\biggl( 1 + \frac{1}{J} \sum_{j=1}^{J} \lvert \dxin{j}{n} \rvert^2 \biggr) \left\lVert g(nh+h, \placeholder) \right\rVert_{2,\infty} \,  \\
        &\qquad \qquad \qquad \qquad +
        h^2 \lVert g(nh+h, \placeholder) \rVert_{4,\infty} \biggl( 1 + \frac{1}{J} \sum_{j=1}^{J} \lvert \dxin{j}{n} \rvert^8 \biggr)  \, .
    \end{align*}
    Together with the law of large numbers in~$L^2(\Omega)$ for $\mathscr B^J$ and the bound on $\mathscr B$ given in~\eqref{eq:bound_B_discrete},
    this leads to the following bound for the second term on the right-hand side of \eqref{eq:averaging_decomposition_discrete_weak_initial},
    \begin{align*}
        &\left\lvert \sum_{n=0}^{N-1} \expect \Bigl[ \Bigl( g(nh+h, \dtheta{n+1}) - g(nh, \dtheta{n}) \Bigr) \cdot \mathscr B^J(\dallxi{n}) \Bigr] \right\rvert \\
        &\qquad \qquad = \left\lvert \sum_{n=0}^{N-1} \expect \biggl[ \expect \Bigl[ \Bigl( g(nh+h, \dtheta{n+1}) - g(nh, \dtheta{n}) \Bigr) \,\Bigm|\, \bigl(\dtheta{n}, \dallxi{n} \bigr) \Bigr] \cdot \mathscr B^J(\dallxi{n}) \biggr] \right\rvert \\
        &\qquad \qquad \leq  \sum_{n=0}^{N-1} \left( \expect  \biggl\lvert \expect \Bigl[ \Bigl( g(nh+h, \dtheta{n+1}) - g(nh, \dtheta{n}) \Bigr) \,\Bigm|\, \bigl(\dtheta{n}, \dallxi{n} \bigr) \Bigr] \biggr\rvert^2 \right)^{\frac{1}{2}} 
        \left( \expect \Bigl\lvert \mathscr B^J(\dallxi{n}) \Bigr\rvert^2 \right)^{\frac{1}{2}} \\
        &\qquad 
        \lesssim
        \frac{h(h+\varepsilon)}{\sqrt{J}}\lVert B\rVert_{L^2(\fastgaussianone)} \sum_{n=0}^{N-1}  \left( \sup_{r\in[nh,nh+h]}  \lVert \partial_t g(r,\placeholder)\rVert_\infty + \lVert g(nh+h,\placeholder)\rVert_{4,\infty}  \right) \, .
    \end{align*}

    The third term on the right-hand side of~\eqref{eq:averaging_decomposition_discrete_weak_initial} can be bounded in exactly the same manner:
    using the tower property of conditional expectation,
    we have that
    \begin{align*}
        &h \sum_{n=0}^{N-1} \expect \Bigl[ \Bigl( g(nh+h, \dtheta{n+1}) - g(nh + h, \dtheta{n}) \Bigr) \cdot   B^J\bigl(\dallxi{n} \bigr)  \Bigr] \\
        &\qquad
        = h \sum_{n=0}^{N-1} \expect \biggl[ \expect \Bigl[ g(nh+h, \dtheta{n+1}) - g(nh+h, \dtheta{n}) \, \Bigm| \, \bigl(\dtheta{n}, \dallxi{n}\bigr) \Bigr] \cdot B^J\bigl(\dallxi{n}\bigr)\biggr] \\
        &\qquad
        = h^2 \sum_{n=0}^{N-1} \expect \biggl[ \fullgenparams g{\left(nh+h, \dtheta{n}, \dallxi{n}\right)} \cdot B^J\bigl(\dallxi{n}\bigr) \biggr] 
        + h \sum_{n=0}^{N-1} \expect \biggl[ \mathfrak R^J_{\sigma,h}\bigl(nh+h, \dtheta{n}, \dallxi{n}\bigr) \cdot B^J\bigl(\dallxi{n}\bigr) \biggr] \, .
    \end{align*}
    By the Cauchy--Schwarz inequality,
    we deduce that 
    \begin{align*}
        &\left\lvert h \sum_{n=0}^{N-1} \expect \Bigl[ \Bigl( g(nh+h, \dtheta{n+1}) - g(nh + h, \dtheta{n}) \Bigr) \cdot B^J\bigl(\dallxi{n} \bigr) \Bigr] \right\rvert \\
        &\qquad \lesssim \frac{h^2}{\sqrt{J}}\lVert B\rVert_{L^2(\fastgaussianone)} \sum_{n=0}^{N-1} \lVert g(nh+h, \placeholder) \rVert_{2,\infty} \\
        &\qquad\qquad + \frac{h^3}{\sqrt{J}}\lVert B\rVert_{L^2(\fastgaussianone)} \sum_{n=0}^{N-1} \lVert g(nh+h, \placeholder) \rVert_{4,\infty} \, .
    \end{align*}
    Together with the bound on the first two terms on the right-hand side of~\eqref{eq:averaging_decomposition_discrete_weak_initial},
    this estimate leads to the desired bound,
    which concludes the proof.
\end{proof}

\begin{proof}
    [Proof of~\cref{thm:uniform_discrete_weak_convergence}]
Let \(f\in \cont^5(\torus^d)\) and fix $N \in \N$.
Without loss of generality,
we assume that~$\target(f) = 0$.
Throughout this proof,
the constants implicit in the~$\lesssim$ and~$\bigo$ notations may depend on the dimension \(d\) and on~$\lVert \ell \rVert_{\cont^6}$ and $\lVert G \rVert_{\cont^6}$,
but not on the relevant parameters~$\varepsilon, \sigma, J,h $,
nor on the distribution $m_0$, the function~$f$ or the time horizon~$N$.
By~\cref{theorem:auxiliary_parabolic} with~\(M=5\) and time reversal,
there exists a unique classical solution~$u\in\cont^{1,5}([0,Nh]\times\torus^d)$
to the terminal-value backward Kolmogorov equation:
\begin{equation}
    \label{eq:kbe_discrete}
    \left\{
        \begin{aligned}
            \frac{\partial u}{\partial t} (t,\theta) 
            + \genlimit u{\left(t,\theta\right)}  &= 0 \, ,\\
            u(Nh,\theta) &= f(\theta) \, .
        \end{aligned}
    \right.
\end{equation}
Equation~\eqref{eq:kbe_discrete} gives~$\partial_tu=-\genlimit u$ in~$\cont\bigl([0,Nh];\cont^3(\torus^d)\bigr)$.
Since~$\genlimit\colon\cont^3(\torus^d)\to\cont^1(\torus^d)$ is bounded,
we may differentiate~$\genlimit u$ in time to obtain~$\partial_{tt}u=-\genlimit\partial_tu=\genlimit^2u$ in~$\cont\bigl([0,Nh];\cont^1(\torus^d)\bigr)$.
It follows that
\begin{equation}
    \label{eq:bound_second_time_derivative_u}
    \bigl\lVert \partial_{tt} u(t,\placeholder) \bigr\rVert_{\infty}
    = \bigl\lVert \genlimit^2 u(t,\placeholder) \bigr\rVert_{\infty}
    \lesssim \bigl\lVert u(t,\placeholder) \bigr\rVert_{4,\infty} \, .
\end{equation}
Furthermore, since~$\target(f)=0$, the estimate~\eqref{eq:decay_derivatives_semigroup_statement} in~\cref{theorem:auxiliary_parabolic}, with~$M=5$, gives
\begin{equation}
    \label{eq:semigroup_discrete_ell}
    \forall s \in [0, Nh],
    \qquad
    \forall k \in \{0, \ldots, 5\},
    \qquad
    \bigl\lVert u(s, \placeholder) \bigr\rVert_{k,\infty} \lesssim \e^{-\lambda_{\Phi} (Nh-s)} \norm*{f}_{k,\infty} \, .
\end{equation}
By the Feynman--Kac formula~\eqref{eq:feynman_kac}, 
we have the following representation formula for the solution
\[
    u(t,\theta) = \expect\bigl[f(\vartheta_{Nh}) \bigm| \vartheta_t = \theta\bigr] \, , 
\]
where $(\vartheta_t)_{t\geq 0}$ is the solution to the overdamped Langevin dynamics~\eqref{eq:overdamped_langevin_weak}.
Following the classical approach in numerical analysis,
we use~\eqref{eq:kbe_discrete} to rewrite the difference between expectations in~\eqref{eq:uniform_discrete_weak_convergence} as a telescoping sum:
\begin{align*}
    \expect \bigl[f(\dtheta{N})\bigr] - \expect\bigl[f(\vartheta_{Nh})\bigr]
    & = \expect\bigl[u(Nh,\dtheta{N})\bigr] - \expect\bigl[u(0,\dtheta{0})\bigr] \, \\
    & = \sum_{n=0}^{N-1} \expect \Bigl[ u(nh+h, \dtheta{n+1}) - u(nh, \dtheta{n}) \Bigr] \\
    & = \sum_{n=0}^{N-1} \expect \Bigl[ \markovtransition u{\left(nh + h, \dtheta{n}, \dallxi{n}\right)} - u(nh, \dtheta{n}) \Bigr] \, .
\end{align*}
Drawing inspiration from the proof in continuous time,
we further decompose this term as
\begin{align*}
    \expect \bigl[f(\dtheta{N})\bigr] - \expect\bigl[f(\vartheta_{Nh})\bigr]
&= \sum_{n=0}^{N-1} \expect \Bigl[ \markovtransition u{\left(nh +  h, \dtheta{n}, \dallxi{n}\right)} - u(nh, \dtheta{n}) \\ 
&\qquad \qquad - h \fullgenparams u{\left(nh+h, \dtheta{n}, \dallxi{n}\right)} - h\frac{\partial u}{\partial t}(nh+h, \dtheta{n}) \Bigr]
\\
    &\qquad + h \sum_{n=0}^{N-1} \expect \Bigl[ \fullgenparams u{\left(nh + h, \dtheta{n}, \dallxi{n}\right)} - \fullgenlimit u{\left(nh + h, \dtheta{n}, \dallxi{n}\right)} \Bigr] 
\\
    &\qquad + h \sum_{n=0}^{N-1} \expect \Bigl[ \fullgenlimit u{\left(nh + h, \dtheta{n}, \dallxi{n}\right)} - \genlimit u{\left(nh + h, \dtheta{n}\right)} \Bigr]
\eqcolon \mathcal A_1^N + \mathcal A_2^N + \mathcal A_3^N \, .
\end{align*}
The first term on the right-hand side corresponds to the local truncation error of the Euler--Maruyama scheme,
while the other two terms correspond to the gradient approximation error and the averaging error, respectively.

\paragraph{Bound on the discretization error $\mathcal A_1^N$}
The approach for bounding this term is based on the standard weak convergence proof for the Euler--Maruyama scheme.
We refer to~\cite[Section 7.5.2.]{MR3097957},~\cite[Chapter 14]{MR1214374} and~\cite[Chapter 3]{MR2069903} for textbook references on this topic.
By a Taylor expansion of the function~\(u\),
following exactly the same steps as in the reasoning leading up to~\eqref{eq:bound_intermediate_discrete_error_ergodic_bias},
and by using the definition of the generator~\(\fullgenparams\),
we have for all fixed $(\theta, \allxi) \in \torus^d \times \R^{dJ}$,
\begin{align}
    \notag
    \markovtransition u{\left(nh+h, \theta, \allxi\right)} 
    &= \expect \Bigl[ u\bigl(nh+h, \theta + h \driftall{\left(\theta, \allxi\right)} + \sqrt{2h} Z\bigr) \Bigr] \\
    &= u\bigl(nh+h, \theta \bigr) + h \fullgenparams u{\left(nh+h, \theta, \allxi\right)}
    + \mathcal R^J_{\sigma,h}(n, \theta, \allxi) \, ,
    \label{eq:bound_intermediate_disc_error_weak_discrete}
\end{align}
where $Z\sim \normal(0, \I_d)$ and the remainder term satisfies,
by a Taylor expansion of the function~$u(nh+h, \cdot)$ around~$\theta$ up to order~$4$,
for all $(\theta, \allxi) \in \torus^d \times \R^{dJ}$,
\begin{equation}
\label{eq:bound_remainder_term_discretization_error_weak_discrete}
\Bigl\lvert \mathcal R^J_{\sigma,h}\bigl(n, \theta,\allxi\bigr) \Bigr\rvert
\lesssim h^2 \bigl\lVert u(nh+h, \placeholder) \bigr\rVert_{4,\infty} \, \left( 1 + \frac{1}{J} \sum_{j=1}^{J} \lvert \xin{j} \rvert^8 \right) \, .
\end{equation}
Thus, substituting~\eqref{eq:bound_intermediate_disc_error_weak_discrete} in the discretization error~$\mathcal A_1^N$, 
we rewrite this error as
\[
    \mathcal A_1^N = \sum_{n=0}^{N-1} \expect \biggl[ u(nh+h, \dtheta{n}) - u(nh, \dtheta{n}) - h\frac{\partial u}{\partial t}(nh+h, \dtheta{n}) \biggr]
    + \sum_{n=0}^{N-1} \expect \Bigl[ \mathcal R^{J}_{\sigma,h}\bigl(n, \dtheta{n}, \dallxi{n}\bigr) \Bigr] \, .
\]
Since $u$ is twice continuously differentiable in time, 
we have for all $n\in \{0, \ldots, N-1\}$,
\[
\Bigl\lvert u(nh+h, \dtheta{n}) - u(nh, \dtheta{n}) - h\frac{\partial u}{\partial t}(nh+h, \dtheta{n}) \Bigr\rvert
\leq \frac{h^2}{2} \bigl\lVert \partial_{tt} u \bigr\rVert_{L^{\infty}([nh, nh+h]\times \torus^d)} \, .
\]
It follows from this equation, together with~\eqref{eq:bound_remainder_term_discretization_error_weak_discrete} and the bound on the second time derivative of $u$ given by~\eqref{eq:bound_second_time_derivative_u}, 
that the discretization error can be bounded as follows:
\begin{align*}
    \lvert\mathcal A_1^N\rvert
     &\lesssim h^2 \sum_{n=0}^{N-1} \sup_{s\in [nh, nh+h]} \bigl\lVert u(s, \placeholder) \bigr\rVert_{4,\infty} \,
     \lesssim h^2 \sum_{n=0}^{N-1} \sup_{s\in [nh, nh+h]} \e^{-\lambda_{\Phi}(Nh-s)} \norm*{f}_{4,\infty}
    \\
    &\lesssim h^2 \sum_{n=0}^{N-1} \e^{-\lambda_{\Phi}(Nh-nh-h)} \norm*{f}_{4,\infty}
    = \frac{\e^{\lambda_\Phi Nh} - 1}{\e^{\lambda_\Phi h} - 1} 
    \e^{-\lambda_{\Phi} (Nh-h)}
    \, h^2 \norm*{f}_{4,\infty}
    \lesssim h \norm*{f}_{4,\infty} \, .
\end{align*}

\paragraph{Bound on the gradient approximation error $\mathcal A_2^N$}
Given the decomposition~\eqref{eq:drift_approximation_error_expansion} of the drift~$\driftall$ used previously
\ifsiamart in the main paper\fi,
it holds that
\[
\mathcal A_2^N = 
h \sigma \sum_{n=0}^{N-1} \expect \Bigl[ \driftderiv\bigl(\dtheta{n}, \dallxi{n}\bigr)  \cdot \nabla u(nh+h, \dtheta{n}) \Bigr]
+ h \sigma^2 \sum_{n=0}^{N-1} \expect \Bigl[ \mathcal R^J_{\sigma}\bigl(\dtheta{n}, \dallxi{n}\bigr) \cdot \nabla u(nh+h, \dtheta{n}) \Bigr] \, .
\]
where the remainder term satisfies the estimate~\eqref{eq:bound_remainder_term_grad_approx}.
Using the estimate~\eqref{eq:semigroup_discrete_ell} on the~$L^{\infty}$ decay of the derivatives of the semigroup,
we deduce that
\begin{equation}
    \label{eq:refer_alternative_proof}
    \mathcal A_2^N = 
    h \sigma \sum_{n=0}^{N-1} \expect \Bigl[ \driftderiv\bigl(\dtheta{n}, \dallxi{n}\bigr)  \cdot \nabla u(nh+h, \dtheta{n}) \Bigr]
    + \bigo\bigl(\sigma^2\norm f_{1,\infty}\bigr) \, .
\end{equation}
As in the previous sections,
the first term on the right-hand side,
which will be denoted by~$\widetilde {\mathcal A}_2^N$, can be bounded using an averaging argument,
just like the averaging error~$\mathcal A_3^N$.
We bound both terms in a unified manner in the next paragraph.

\paragraph{Bound on the averaging error $\widetilde {\mathcal A}_2^N + \mathcal A_3^N$}
We recall from~\eqref{eq:drift_deriv_1} that
the $\driftderiv$ term can generically be written as follows:
\[
\driftderiv(\theta, \allxi) = \frac{1}{J} \sum_{j=1}^{J}
(\xin{j} \otimes \xin{j} \otimes \xin{j}) : \Psi(\theta) \, ,
\]
for an appropriate matrix-valued function~$\Psi$ depending on the form of~$\ell$,
and involving up to the second derivatives of~$G$.
Therefore, 
we can rewrite the sum of the two error terms as
\begin{align*}
\widetilde {\mathcal A}_2^N + \mathcal A_3^N 
&= h\sigma \sum_{n=0}^{N-1} \expect \biggl[ \frac{1}{J} \sum_{j=1}^J \Bigl( \dxin{j}{n} \otimes \dxin{j}{n} \otimes \dxin{j}{n} \Bigr) \cdot \Bigl( \Psi(\dtheta{n}) \otimes \nabla u(nh+h, \dtheta{n}) \Bigr) \biggr] \\
&\qquad  
- h \sum_{n=0}^{N-1} \expect \biggl[  \frac{1}{J} \sum_{j=1}^J \Bigl( \dxin{j}{n} \otimes \dxin{j}{n} - \I_d \Bigr)
    : \Bigl( \nabla \Phi(\dtheta{n}) \otimes \nabla u(nh+h, \dtheta{n}) \Bigr)
\biggr] \, .
\end{align*}
Both terms can be rewritten in the form
\[
    \mathcal B^N = h \sum_{n=0}^{N-1} \expect \biggl[ \frac{1}{J} \sum_{j=1}^J B\bigl(\dxin{j}{n}\bigr) \cdot g(nh+h, \dtheta{n}) \biggr] \, ,
\]
where~$B$ is a tensor-valued function (of order two or three, but this does not matter for the reasoning)
of the fast variable~$\xi$ that is centered with respect to the Gaussian measure~$\fastgaussianone$.
The definition of~$g$ is different for the two error terms~$\widetilde {\mathcal A}_2^N$ and~$\mathcal A_3^N$.
However,
given that~$\Psi$ and~$\nabla \Phi$ both belong to $\cont^4(\torus^d)$ by assumption and that~$\sigma\leq1$,
and since~$u$ is a solution to the backward Kolmogorov equation~\eqref{eq:kbe_discrete},
it holds in both cases that
\begin{equation}
    \label{eq:bound_ell_discrete}
    \bigl\lVert g(s, \placeholder) \bigr\rVert_{4,\infty} \lesssim \bigl\lVert u(s, \placeholder) \bigr\rVert_{5,\infty} \, , 
    \qquad
    \bigl\lVert \partial_{t} g(s, \placeholder) \bigr\rVert_{\infty}
    \lesssim 
    \bigl\lVert u(s, \placeholder) \bigr\rVert_{5,\infty} \, .
\end{equation}
The spatial regularity of~$u$ gives~$\nabla u\in\cont([0,Nh];\cont^4)$,
while~\eqref{eq:kbe_discrete} gives~$\partial_t\nabla u=-\nabla\genlimit u\in\cont([0,Nh];\cont^2)$.
By~\cref{lemma:averaging_discrete_weak},
whose regularity assumption is therefore satisfied by~$g$, we obtain
\begin{align*}
    \bigl\lvert \mathcal B^N \bigr\rvert
    \lesssim 
    \frac{h + \varepsilon}{\sqrt{J}} 
        \left( \sup_{s\in[0,Nh]} \lVert g(s,\placeholder)\rVert_\infty + h \sum_{n=0}^{N-1} \biggl( \sup_{r\in[nh,nh+h]}  \lVert \partial_t g(r,\placeholder)\rVert_\infty + \lVert g(nh+h,\placeholder)\rVert_{4,\infty} \biggr)  \right)
        \lesssim \frac{h + \varepsilon}{\sqrt{J}} \lVert f \rVert_{5,\infty}
    \, ,
\end{align*}
and so it follows that
\[
    \bigl\lvert \widetilde {\mathcal A}_2^N + \mathcal A_3^N \bigr\rvert
    \lesssim \frac{h+\varepsilon}{\sqrt{J}} \, \norm*{f}_{5,\infty} \, .
\]
The stated estimate~\eqref{eq:uniform_discrete_weak_convergence} then follows by combining the bounds on the three error terms~$\mathcal A_1^N$, $\mathcal A_2^N$ and~$\mathcal A_3^N$,
which concludes the proof.
\end{proof}

\section{Conclusions and perspectives}%
\label{sec:conclusion}

In this paper,
we considered the problem of sampling from a generic probability measure~$\pi \propto \exp(-\ell\circ G) \, \d x$ supported on the torus~$\torus^d$,
and we mathematically analyzed a multiscale numerical method for this purpose that does not require the derivatives of~$G$.
Using techniques from averaging theory,
we obtained careful error estimates showing convergence of the method to the overdamped Langevin dynamics in different metrics of practical interest.
Unlike derivative-free methods based on the ensemble Kalman filter,
the method analyzed in the present paper can be refined to obtain an approximate sample from the target measure~$\pi$,
with an error that can be made arbitrarily small.
As potential avenues for future research,
we mention the following perspectives:

\begin{itemize}
    \item
        We focused for technical simplicity on the sampling problem for a probability measure~$\pi$ supported on the torus~$\torus^d$,
        and many of our proofs rely crucially on the compactness of the domain.
        An interesting perspective is to extend our main results to the unbounded setting~$\real^d$.
        We expect this extension to be possible,
        under suitable assumptions on the potential~$\Phi$,
        using some of the techniques developed in~\cite{pardoux2001poisson}.

    \item
        We focused on a multiscale dynamics without any preconditioner.
        As demonstrated in~\cite{pavliotis21derivative},
        a fixed preconditioner can be incorporated into the multiscale dynamics~\eqref{eq:the_multiscale_dynamics} and
        its Euler-slow/fast-exact discretization to improve the convergence of the method.
        It should be immediately possible to extend our main results to this setting,
        without any conceptual obstruction.
        In addition, it would be interesting to consider a preconditioner that is position-dependent.
        For instance, a natural idea that was suggested to us by Sebastian Reich would be to consider a preconditioner that approximates the inverse of the Hessian of the potential~$\Phi = \ell \circ G$,
        using a formula extending~\eqref{eq:stein_identity} to second derivatives.
\end{itemize}

\appendix
\section{Technical auxiliary results}

Before presenting the main results of this appendix in~\cref{sub:sup_ou,sub:lln_fun,sub:decay_ou},
we first recall a few elementary results that are used in the proofs.
We first recall the following elementary inequality: 
\begin{equation}
    \label{eq:elem_1}
    \forall x > 0, \qquad
    1 + \frac{x}{2}  \leq \frac{x}{1 - \e^{-x}} \leq 1 + x \, .
\end{equation}

Next, we recall the following result.
\begin{lemma}
    [Marcinkiewicz--Zygmund coupled with Jensen's inequality]
    \label{lemma:mz_inequality}
    % Does not work in all normed vector spaces!
    % Let \(\left(\mathcal X,\abs{\cdot}\right)\) be a normed vector space.
    Let $X_1, X_2, \dotsc$ be~$\real^d$-valued i.i.d.\ random variables with mean zero and finite $p$-th moment for some $p \geq 2$.
    There exists a function \(C_{\rm MZ}(p) > 0\) such that
    the following holds:
    \[
        \forall N \in \nat_{>0}, \qquad
        \expect{\left[\frac{1}{N^p}\Biggl\lvert \sum_{j = 1}^N X_j \Biggr\rvert^p\right]} \leq
        \frac{C_{\rm MZ}(p)}{N^{\frac{p}{2}}}\expect\Bigl[\abs*{X_1}^p \Bigr] \, .
    \]
\end{lemma}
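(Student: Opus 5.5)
The plan is to reduce to the classical scalar Marcinkiewicz--Zygmund inequality, and then remove the $\real^d$-valuedness and the normalization by elementary convexity arguments.

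\emph{Step 1 (componentwise reduction).} Write $X_j = (X_j^{(1)},\dots,X_j^{(d)})$ and $S_N = \sum_{j=1}^N X_j$. By equivalence of norms on $\real^d$, more precisely by convexity of $x\mapsto x^{p/2}$ applied to $\abs{S_N}^2=\sum_{k=1}^d \abs{S_N^{(k)}}^2$,
\[
    \abs{S_N}^p \leq d^{\frac{p}{2}-1}\sum_{k=1}^d \abs{S_N^{(k)}}^p \, .
\]
It therefore suffices to treat each scalar sequence $(X_j^{(k)})_{j\geq1}$. These are i.i.d., centered, and have a finite $p$-th moment, with $\expect\abs{X_1^{(k)}}^p\leq \expect\abs{X_1}^p$.

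\emph{Step 2 (scalar MZ plus Jensen).} The classical Marcinkiewicz--Zygmund inequality for independent centered real random variables gives a constant $B_p$ such that
\[
    \expect\abs{S_N^{(k)}}^p \leq B_p\, \expect\Bigl(\sum_{j=1}^N \abs{X_j^{(k)}}^2\Bigr)^{\frac{p}{2}} \, .
\]
Since $p/2\geq1$, Jensen's (or Hölder's) inequality yields
\[
    \Bigl(\sum_{j=1}^N a_j\Bigr)^{\frac{p}{2}} \leq N^{\frac{p}{2}-1}\sum_{j=1}^N a_j^{\frac{p}{2}} \, .
\]
Combining this with the identical distribution of the $X_j$, we obtain
\[
    \expect\abs{S_N^{(k)}}^p\leq B_p N^{\frac{p}{2}}\,\expect\abs{X_1^{(k)}}^p \, .
\]

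\emph{Step 3 (conclusion).} Summing over $k$ gives
\[
    \expect\abs{S_N}^p \leq d^{\frac{p}{2}} B_p N^{\frac{p}{2}}\expect\abs{X_1}^p \, .
\]
Dividing by $N^p$ yields the claim with $C_{\rm MZ}(p)=d^{p/2}B_p$. If one insists on a constant depending on $p$ only, the MZ/Burkholder inequality can instead be applied directly to the $\real^d$-valued martingale $(S_n)_{n\geq 0}$: the BDG inequality in Hilbert space is dimension-free and gives $\expect\abs{S_N}^p\lesssim_p \expect(\sum_j\abs{X_j}^2)^{p/2}$. The Jensen step is then identical.

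The only genuine input is the scalar (or Hilbert-space) MZ/BDG inequality, which is assumed as classical. The mild obstacle is the dimension dependence of the constant. It is harmless here, since the paper's constants are allowed to depend on $d$, and it can be avoided by the martingale route.
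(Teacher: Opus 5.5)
Your proof is correct and follows the same argument as the paper: the Marcinkiewicz--Zygmund inequality bounds $\expect\lvert\sum_j X_j\rvert^p$ by $\expect\bigl(\sum_j \lvert X_j\rvert^2\bigr)^{p/2}$, and then Jensen's inequality for $x\mapsto x^{p/2}$, together with identical distribution, produces the factor $N^{p/2}\expect\lvert X_1\rvert^p$. The only difference is your preliminary componentwise reduction to the scalar inequality, which costs a factor $d^{p/2}$, whereas the paper applies the inequality directly to the $\real^d$-valued sum as in your closing remark, so its constant depends on $p$ only.
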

\begin{proof}
    Fix $p \geq 2$ and~$N \in \nat_{>0}$.
    By the Marcinkiewicz--Zygmund inequality, there exists \(C_{\rm MZ}(p) > 0 \) such that
    \[
        \expect{\left[\Biggl\lvert \sum_{j=1}^N X_j \Biggr\rvert^p\right]} 
        \leq C_{\rm MZ}(p)\expect{\left[{\left(\sum_{j = 1}^N\abs{X_j}^2\right)}^{\frac{p}{2}}\right]} \, .
    \]
    By Jensen's inequality applied to the convex function \(x \mapsto x^{\frac{p}{2}}\) (or by H\"older's inequality),
    we then have
    \[
        \expect{\left[\left(\sum_{j = 1}^N\abs{X_j}^2\right)^{\frac{p}{2}}\right]} 
        = N^{\frac{p}{2}} \expect{\left[{\left(\frac{1}{N}\sum_{j=1}^N\abs{X_j}^2\right)}^{\frac{p}{2}}\right]}
        \leq N^{\frac{p}{2}} \expect{\left[\frac{1}{N}\sum_{j=1}^N {\lvert X_j \rvert}^p\right]} 
        = N^{\frac{p}{2}}\expect{\bigl[\lvert X_1 \rvert^p\bigr]} \, ,
    \]
    which leads to the desired inequality.
\end{proof}

Finally, we prove a simple result on the expectation of the maximum of a collection of identically distributed random variables.
\begin{lemma}
    \label{lemma:auxiliary_lemma_iid}
    Let \(N\in \nat_{>0}\) and let \(\left(Z_k\right)_{1\leq k\leq N}\) be a collection of
    identically distributed real-valued random variables.
    Then for all \(q \geq 1\)
    it holds that
    \begin{equation}
        \label{eq:coarse_bound}
       \expect\Bigl[\max_{1\leq k\leq N}|Z_k|\Bigr]\leq N^{\frac{1}{q}}
       \Bigl(\expect\bigl[|Z_1|^q\bigr]\Bigr)^{\frac{1}{q}} \, ,
     \end{equation}
     where the right-hand side may be \(+ \infty\).
\end{lemma}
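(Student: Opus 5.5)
The plan is to bound the maximum pointwise by a sum and then apply Jensen's inequality. We use the elementary fact that, for nonnegative numbers $a_1,\dots,a_N$ and any $q\geq 1$,
\[
    \max_{1\leq k\leq N} a_k = \Bigl(\max_{1\leq k\leq N} a_k^q\Bigr)^{\frac{1}{q}} \leq \Bigl(\sum_{k=1}^N a_k^q\Bigr)^{\frac{1}{q}} ,
\]
applied with $a_k = |Z_k|$, since the largest of the $q$-th powers is bounded by their sum. This holds pointwise on $\Omega$, so after taking expectations
\[
    \expect\Bigl[\max_{1\leq k\leq N}|Z_k|\Bigr] \leq \expect\biggl[\Bigl(\sum_{k=1}^N |Z_k|^q\Bigr)^{\frac{1}{q}}\biggr].
\]

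Next, we move the power $\frac1q$ outside the expectation. Because $x\mapsto x^{1/q}$ is concave on $[0,\infty)$ for $q\geq1$, Jensen's inequality gives $\expect[Y^{1/q}]\leq (\expect Y)^{1/q}$ for the nonnegative random variable $Y=\sum_k |Z_k|^q$. This step remains valid when $\expect Y=+\infty$, in which case the bound is trivial. We then use linearity of expectation together with the fact that the $Z_k$ are identically distributed, so that $\expect|Z_k|^q=\expect|Z_1|^q$ for every $k$. Combining these,
\[
    \expect\Bigl[\max_{1\leq k\leq N}|Z_k|\Bigr] \leq \Bigl(\sum_{k=1}^N \expect|Z_k|^q\Bigr)^{\frac{1}{q}} = \Bigl(N\,\expect|Z_1|^q\Bigr)^{\frac{1}{q}} ,
\]
which is exactly~\eqref{eq:coarse_bound}.

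No independence of the $Z_k$ is required: only their identical distribution is used, through the last equality. This is precisely how the lemma is applied in~\eqref{eq:trick_max_sum}, where the variables are correlated. The argument also contains no genuine obstacle. The only point requiring care is that all quantities are nonnegative, so every inequality above holds in $[0,+\infty]$ and the case of an infinite right-hand side needs no separate treatment.
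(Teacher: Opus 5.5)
Your proof is correct and is essentially the paper's argument. The paper first applies Jensen to the convex map $x\mapsto x^q$, giving $\bigl(\expect[\max_k|Z_k|]\bigr)^q\le\expect[\max_k|Z_k|^q]\le\sum_k\expect|Z_k|^q=N\,\expect|Z_1|^q$; you instead bound the maximum by the $\ell^q$-sum pointwise and then apply Jensen to the concave map $x\mapsto x^{1/q}$, which uses the same two ingredients in the opposite order, and only identical distribution (not independence) is needed, as you say.
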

\begin{proof}
    Applying Jensen's inequality to the
    convex function \(x\mapsto |x|^q\),
    we have
    \[
        \left(\expect\biggl[\max_{1\leq k\leq N}|Z_k|\biggr]\right)^q
        \leq \expect\biggl[\max_{1\leq k\leq N}|Z_k|^q\biggr] 
        \leq \expect{\left[\sum_{k=1}^N|Z_k|^q\right]}
        = N\expect\bigl[|Z_1|^q\bigr] \, .
    \]
    Taking the \(q\)-th root of each side,
    we obtain the statement.
\end{proof}

\subsection{Supremum of the Ornstein--Uhlenbeck process}%
\label{sub:sup_ou}
Using \cref{lemma:mz_inequality,lemma:auxiliary_lemma_iid}, 
we can prove the following result.
\begin{proposition}
  \label{corollary:stationary_empirical_mean}
  Fix~$\varepsilon > 0$ and consider \(J\) independent copies \((\xin{j}_t)_{t\geq 0}\) of the stationary Ornstein--Uhlenbeck process with time scale parameter \(\varepsilon\),
  each of which solves the SDE
  \begin{equation*}
      \d \xin{j}_t = -\frac{1}{\varepsilon}\xin{j}_t \, \d t
      +\sqrt{\frac{2}{\varepsilon}} \, \d \wn{j}_t \, ,
      \qquad \xin{j}_0\sim\normal(0,\I_d) \, , \qquad j \in \range{1}{J} \, ,
  \end{equation*}
  where \((\wn{j}_t)_{t\geq 0}\) are independent \(d\)-dimensional Brownian motions.
  Suppose that \(F\in \cont^2(\R^d, \real^k)\) satisfies the centering condition \(\expect\bigl[F(\xin{1}_0)\bigr] = 0\).
  Then, for all \(h,T\geq 0\) and for all \(p,q\geq 1\) such that \(pq \geq 2\),
  it holds that
  \begin{equation}
      \label{eq:statement_corollary}
      \expect\left[\max_{0\le n \le T/h} \left|\frac{1}{J}\sum_{j=1}^JF(\xin{j}_{nh})\right|^p\right]
      \leq  J^{- \frac{p}{2}} \left(1 + \min \left\{\frac{T}{h}, \frac{T}{\varepsilon} \right\} \right)^{\frac{1}{q}} \bigl\lvert C(pq) \mathcal M_{pq} \bigr\rvert^{\frac{1}{q}} \, ,
  \end{equation}
  where the maximum on the left-hand side is taken over integers \(n\),
  and~$C(pq)$ is a constant depending on~\(pq\) and arising from the use of the Marcinkiewicz--Zygmund and Burkholder--Davis--Gundy inequalities in the proof
  (the precise value of this constant is not important for our purposes in this paper).
  Finally \(\mathcal M_{pq} \in [0, \infty]\) is given by
  \[
      \mathcal M_{pq} =
      \expect\Bigl[
          \bigl\lvert F(X)\bigr\rvert^{pq}
          + \bigl\lvert - X \cdot \nabla F(X) + \laplacian F(X) \bigr\rvert^{pq}
          + 2^{\frac{pq}{2}} \bigl\lvert \nabla F(X) \bigr\rvert^{pq}
      \Bigr] \, ,
      \qquad
      X\sim \normal(0,\I_d) \, .
  \]
\end{proposition}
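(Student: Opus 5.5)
The plan is to treat the two regimes $h\geq\varepsilon$ and $h<\varepsilon$ separately and in each to reduce the maximum to a maximum of identically distributed variables. These are then handled by \cref{lemma:auxiliary_lemma_iid} with exponent $q$, while \cref{lemma:mz_inequality} (and, in the second regime, the BDG inequality) with exponent $pq\geq 2$ supplies the factor $J^{-\frac{p}{2}}$. The final bound follows by taking the better of the two estimates, which gives the factor $\bigl(1+\min\{T/h,T/\varepsilon\}\bigr)^{1/q}$. Throughout I write $\bar F_t\coloneq\frac1J\sum_{j=1}^J F(\xin{j}_t)$, and I use that each $\xin{j}_t\sim\fastgaussianone$ for all $t$ by stationarity.

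\emph{Coarse regime, giving the factor $(1+T/h)^{1/q}$.} Set $Z_n\coloneq\lvert\bar F_{nh}\rvert^p$ for $0\le n\le T/h$. By stationarity these are identically distributed, and there are at most $1+T/h$ of them. \cref{lemma:auxiliary_lemma_iid} then gives
\[
\expect\Bigl[\max_n Z_n\Bigr]\leq (1+T/h)^{\frac1q}\Bigl(\expect\bigl\lvert\bar F_0\bigr\rvert^{pq}\Bigr)^{\frac1q}.
\]
Since the $\xin{j}_0$ are i.i.d.\ and $F$ is centered, \cref{lemma:mz_inequality} with exponent $pq\geq 2$ yields $\expect\lvert\bar F_0\rvert^{pq}\leq C_{\rm MZ}(pq)\,J^{-\frac{pq}{2}}\,\expect\lvert F(X)\rvert^{pq}$. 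Taking the $q$-th root gives the claim with the first summand of $\mathcal M_{pq}$.

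\emph{Fine regime, giving the factor $(1+T/\varepsilon)^{1/q}$.} Here I partition $[0,T]$ into at most $1+T/\varepsilon$ blocks $I_k=[k\varepsilon,(k+1)\varepsilon]$. I bound the discrete maximum by $\max_k Y_k$, where $Y_k\coloneq\sup_{t\in I_k}\lvert\bar F_t\rvert^p$; the $Y_k$ are identically distributed by stationarity of the Markov process. On $I_k$, Itô's formula gives
\[
\bar F_t=\bar F_{k\varepsilon}+\frac1\varepsilon\int_{k\varepsilon}^t\frac1J\sum_j\genfastone F(\xin{j}_s)\,\d s+\sqrt{\tfrac2\varepsilon}\,\frac1J\sum_j\int_{k\varepsilon}^t\nabla F(\xin{j}_s)\cdot\d\wn{j}_s .
\]
After applying \cref{lemma:auxiliary_lemma_iid}, I need $\expect Y_0^q$, which I control term by term in $L^{pq}$:
\begin{itemize}
\item The first term is handled by \cref{lemma:mz_inequality}, exactly as in the coarse regime.
\item For the drift term I use Jensen over an interval of length $\varepsilon$, which cancels the prefactor $1/\varepsilon$. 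This leaves $\frac1\varepsilon\int_{I_0}\expect\lvert\frac1J\sum_j\genfastone F(\xin{j}_s)\rvert^{pq}\,\d s$. Here $\genfastone F$ is centered under $\fastgaussianone$ by invariance, so \cref{lemma:mz_inequality} gives $J^{-\frac{pq}{2}}\expect\lvert -X\cdot\nabla F(X)+\Delta F(X)\rvert^{pq}$.
\item For the martingale term I apply BDG to the $\real^k$-valued martingale driven by the $dJ$-dimensional Brownian motion. Its quadratic variation is $\frac{2}{\varepsilon J^2}\int\sum_j\lvert\nabla F(\xin{j}_s)\rvert^2\,\d s$. Jensen over the $J$ summands and the time interval gives
\[
\Bigl(\tfrac{2}{\varepsilon J^2}\Bigr)^{\frac{pq}{2}}(\varepsilon J)^{\frac{pq}{2}-1}\int_{I_0}\sum_j\expect\lvert\nabla F(\xin{j}_s)\rvert^{pq}\,\d s = 2^{\frac{pq}{2}}J^{-\frac{pq}{2}}\,\expect\lvert\nabla F(X)\rvert^{pq},
\]
which is precisely the last summand of $\mathcal M_{pq}$.
\end{itemize}
Combining the three bounds with $(a+b+c)^{pq}\lesssim a^{pq}+b^{pq}+c^{pq}$ and taking the $q$-th root gives the claim, with $C(pq)$ collecting the Marcinkiewicz--Zygmund, BDG and convexity constants.

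The main obstacle is the fine regime. The crude count $1+T/h$ blows up as $h\to0$, so one must trade the discrete maximum for a continuous supremum on blocks of the natural length $\varepsilon$. The key point is that the choice of block length exactly cancels the factors $1/\varepsilon$ and $\sqrt{2/\varepsilon}$ in the Itô expansion, while the $J$-scaling survives both in the drift term (centering under $\fastgaussianone$) and in the martingale term (independence of the $\wn{j}$).
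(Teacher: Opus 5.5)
Your proposal is correct and follows essentially the same route as the paper. The paper also proves a bound with factor $(1+T/h)^{1/q}$, by applying the identically-distributed maximum lemma at the grid points followed by Marcinkiewicz--Zygmund, and a bound with factor $(1+T/\varepsilon)^{1/q}$, by working on blocks of length $\varepsilon$ with It\^o's formula, Jensen, Marcinkiewicz--Zygmund for the initial and drift terms, and BDG for the martingale term; both bounds hold for every $h$, so taking the smaller one gives the stated minimum.
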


\begin{remark}
    \Cref{corollary:stationary_empirical_mean} should be understood as stating that
    the quantity
    \[
        \mathcal E \coloneq \expect{\left[\max_{0\le n \le T/h} \left|\frac{1}{J}\sum_{j=1}^JF(\xin{j}_{nh})\right|^p\right]}
    \]
    decreases at the usual Monte Carlo rate as~$J \to \infty$,
    with a prefactor that is controlled in terms of \(T / \min\{\varepsilon,h\}\).
    For \(J = 1\) and for fixed~$(\varepsilon,h)$,
    it is known from~\cite[Lemma B.1]{pavliotis21derivative} that~\(\mathcal E\) grows in fact logarithmically with \(T\),
    but such a sharp estimate is not necessary for our purposes.
\end{remark}

\begin{proof}
    We assume that~$k = 1$, 
    so that \(F\) is scalar-valued, but the proof generalizes easily to vector-valued functions.
    We prove two bounds independently,
    each corresponding to a different argument of the minimum in~\eqref{eq:statement_corollary},
    which together imply~\eqref{eq:statement_corollary}.

    \paragraph{Proof of the first bound}
    We can apply \cref{lemma:auxiliary_lemma_iid} to
    the collection \(\left(Z_n\right)_{0\leq n\leq T/h}\)
    with
    \[
        Z_n = \left|\frac{1}{J}\sum_{j=1}^JF(\xin{j}_{nh})\right|^p
    \]
    to obtain the inequality
    \[
        \expect{\left[\max_{0\le n \le T/h} \left|\frac{1}{J}\sum_{j=1}^JF(\xin{j}_{nh})\right|^p\right]}
        \leq \left( \frac{T}{h} + 1\right)^{\frac{1}{q}}
        \expect{\left[\left|\frac{1}{J}\sum_{j=1}^JF(\xin{j}_{0})\right|^{pq}\right]}^{\frac{1}{q}} \, .
    \]
    Since \(\expect\bigl[F(\xin{1}_0)\bigr] = 0\) by assumption,
    there exists by the Marcinkiewicz--Zygmund inequality
    \(C_{\rm MZ} = C_{\rm MZ}(pq) > 0\) not depending on \(J\) such that
    \begin{align*}
        \expect{\left[\left|\frac{1}{J}\sum_{j=1}^JF(\xin{j}_{0})\right|^{pq}\right]}
        &\leq \frac{C_{\rm MZ}}{J^{pq}} \expect{\left[\left(\sum_{j=1}^J\left|F(\xin{j}_0)\right|^2\right)^{\frac{pq}{2}}\right]} \\
        &\leq \frac{C_{\rm MZ}}{J^{\frac{pq}{2} + 1}}\expect{\left[\sum_{j=1}^J|F(\xin{j}_0)|^{pq}\right]}
        = \frac{C_{\rm MZ}}{J^{\frac{pq}{2}}}\expect \Bigl[\bigl\lvert F(\xin{1}_0) \bigr\rvert^{pq}\Bigr] \, ,
    \end{align*}
    where we applied Jensen's inequality in the second line,
    noting that \(pq\geq 2\).
    Hence, we obtain that
    \begin{equation}
        \label{eq:first_bound}
        \expect{\left[\max_{0\le n \le
        T/h}\left|\frac{1}{J}\sum_{j=1}^JF(\xin{j}_{nh})\right|^p\right]}
        \leq J^{-\frac{p}{2}} \bigl(C_{\rm MZ}(pq) \mathcal M_{pq}\bigr)^{\frac{1}{q}} \left( \frac{T}{h} + 1\right)^{\frac{1}{q}} \, .
    \end{equation}

    \paragraph{Proof of the second bound}
    We use that
    \[
        \sup_{0\leq t\leq T}\left|\frac{1}{J}\sum_{j=1}^JF( \xin{j}_t)\right|^p
        \leq \max_{0\leq n \leq T/\varepsilon} Y_n \, ,
        \qquad Y_n \coloneq \sup_{t \in [n \varepsilon,  (n+1) \varepsilon]}\left|\frac{1}{J}\sum_{j=1}^JF(\xin{j}_t)\right|^p \, .
    \]
    Since the processes \((\xin{j}_t)_{t\geq 0}\) are stationary,
    the random variables \((Y_n)\) are identically distributed,
    and so we can apply \cref{lemma:auxiliary_lemma_iid} to obtain
    \[
        \expect\left[\sup_{0\leq t\leq T}\left|\frac{1}{J}\sum_{j=1}^JF(\xin{j}_t)\right|^p\right]
        \leq \expect\biggl[\max_{0\leq n \leq T/\varepsilon} Y_n\biggr]
        \leq
        \left( \frac{T}{\varepsilon} + 1\right)^{\frac{1}{q}} \expect \bigl[ \lvert Y_0 \rvert^q \bigr]^{\frac{1}{q}} \, .
    \]
    It remains to bound the expectation on the right-hand side.
    By It\^o's lemma,
    it holds that
    \begin{equation}
        \frac{1}{J}\sum_{j=1}^JF(\xin{j}_t)
        = \frac{1}{J}\sum_{j=1}^J F(\xin{j}_0) +\frac{1}{J \varepsilon}\sum_{j=1}^J\int_0^t
        \mathcal L_{\xi}F(\xin{j}_s) \, \d s
        + \frac{\sqrt 2}{J \sqrt{\varepsilon}}\sum_{j=1}^J\int_0^t \nabla F(\xin{j}_s) \cdot \d \wn{j}_s \, ,
        \label{eq:ito_F}
    \end{equation}
    where \(\mathcal L_{\xi}\) is the infinitesimal generator of the
    standard Ornstein--Uhlenbeck process:
    \(
        \mathcal L_{\xi} = - \xi \cdot \nabla + \laplacian.
    \)
    Raising both sides of~\eqref{eq:ito_F} to the power \(pq\),
    taking the supremum, taking the expectation,
    and using Jensen's inequality,
    we obtain
    \begin{equation}
        \begin{aligned}
            \frac{1}{3^{pq - 1}} \expect {\left[ \sup_{t \in [0, \varepsilon]}\left|\frac{1}{J}\sum_{j=1}^JF(\xin{j}_t)\right|^{pq} \right]}
    &\leq \expect{\left[\left|\frac{1}{J} \sum_{j=1}^J F(
    \xin{j}_0)\right|^{pq}\right]} +
    \frac{1}{\varepsilon} \int_0^{\varepsilon} 
    \expect {\left[\left|\frac{1}{J}\sum_{j=1}^J \mathcal L_{ \xi}F( \xin{j}_s)\right|^{pq}\right]} \, \d s\\
    &\qquad
    + \frac{2^{\frac{pq}{2}}}{\varepsilon^{\frac{pq}{2}}}
    \expect{\left[\sup_{t \in [0, \varepsilon]} \left|\frac{1}{J}\sum_{j=1}^J\int_0^t \, \nabla F(\xin{j}_s) \cdot \d \wn{j}_s \right|^{pq}\right]}
        \end{aligned}
        \label{eq:ub_ito_F}
    \end{equation}
    Since the \(J\) processes \((\xin{j}_t)_{t\geq 0}\) are independent and
    identically distributed and
    \(
    \expect\Bigl[F(\xin{1}_t)\Bigr]
    = \expect\Bigl[\mathcal L_{\xi}F(\xin{1}_t)\Bigr]
    = 0
    \)
    for all~\(t\geq 0\) by stationarity,
    we have by Marcinkiewicz--Zygmund inequality
    \[
        \expect{\left[\left|\frac{1}{J} \sum_{j=1}^J F( \xin{j}_0)\right|^{pq}\right]} 
        \leq \frac{C_{\rm MZ}(pq)}{J^{\frac{pq}{2}}}\expect\left[\left|F(\xin{1}_0)\right|^{pq}\right] \, ,
        \qquad \expect{\left[\left|\frac{1}{J} \sum_{j=1}^J \mathcal L_{ \xi}F( \xin{j}_0)\right|^{pq}\right]}
        \leq \frac{C_{\rm MZ}(pq)}{J^{\frac{pq}{2}}}\expect{\left[\left|\mathcal L_{\xi}F(\xi_0^1)\right|^{pq}\right]} \, .
    \]
    In order to bound the third term on the right-hand side of \eqref{eq:ub_ito_F},
    we note that the argument of the absolute value is a martingale with quadratic variation process
    \[
        \left\langle \frac{1}{J}\sum_{j=1}^J\int_0^t \, \nabla F(\xin{j}_s) \cdot \d \wn{j}_s \right\rangle_t
        = \frac{1}{J^2} \sum_{j=1}^{J} \int_{0}^{t} \lvert \nabla F(\xin{j}_s) \rvert^2 \d s \, .
    \]
    Therefore, by the Burkholder--Davis--Gundy inequality,
    there is \(C_{\rm BDG} = C_{\rm BDG}(pq)\) such that
    \begin{align*}
        \frac{1}{\varepsilon^{\frac{pq}{2}}} \expect{\left[\sup_{t \in [0, \varepsilon]} \left|\frac{1}{J}\sum_{j=1}^J\int_0^t \, \nabla F(\xin{j}_s) \cdot \d \wn{j}_s \right|^{pq}\right]}
          &\leq \frac{C_{\rm BDG}}{\varepsilon^{\frac{pq}{2}}}
          \expect{\left[\left|\frac{1}{J^2}\sum_{j=1}^{J} \int_{0}^{\varepsilon} \lvert \nabla F(\xin{j}_s) \rvert^2 \, \d s \right|^{\frac{pq}{2}}\right]} \\
          &\leq \frac{C_{\rm BDG}}{\varepsilon J^{\frac{pq}{2} + 1}}
          \expect{\left[ \sum_{j=1}^{J} \int_{0}^{\varepsilon} \bigl\lvert \nabla F(\xin{j}_s) \bigr\rvert^{pq} \, \d s \right]}
          = \frac{C_{\rm BDG}}{J^{\frac{pq}{2}}}
          \expect{\left[ \bigl\lvert \nabla F(\xin{1}_0) \bigr\rvert^{pq}  \right]} \, .
    \end{align*}
    We have thus shown,
    denoting by~$C = 3^{pq-1}\max\{C_{\rm MZ}(pq), C_{\rm BDG}(pq)\}$,
    that
    \begin{equation}
        \label{eq:second_bound}
        \expect\left[\sup_{0\leq t\leq
        T}\left|\frac{1}{J}\sum_{j=1}^JF(\xin{j}_t)\right|^p\right] \leq J^{-\frac{p}{2}} \bigl(C \mathcal M_{pq}\bigr)^{\frac{1}{q}} \left(1 + \frac{T}{\varepsilon}\right)^{\frac{1}{q}} \, .
    \end{equation}

    \paragraph{Conclusion of the proof}
    Combining~\eqref{eq:first_bound} and~\eqref{eq:second_bound},
    we obtain the claimed inequality~\eqref{eq:statement_corollary}.
\end{proof}

\subsection{A law of large numbers for functions}
\label{sub:lln_fun}

Suppose that \(X \sim \mu\) for some probability distribution~\(\mu \in \mathcal P(\real^d)\)
and that \(f \colon \torus^d \times \real^d \to \real\) satisfies
\[
    \forall \theta \in \torus^d, \qquad
    \expect \left[ f(\theta, X) \right] = 0 \, .
\]
Let \(X_{1}, X_{2}, \dotsc\) denote i.i.d.\ samples with common law~\(\mu\).
By the Marcinkiewicz--Zygmund inequality,
for all \(\theta \in \torus^d\) and all \(p \geq 2\),
it holds that
\begin{equation}
    \label{eq:mz_pointwise}
    \forall J \in \nat_{>0}, \qquad
    \expect \left[ \biggl\lvert \frac{1}{J} \sum_{j=1}^{J} f\bigl(\theta, X_j\bigr) \biggr\rvert^p \right]
    \leq \frac{C_{\rm MZ}(p)}{J^{\frac{p}{2}}} \expect \Bigl\lvert f\bigl(\theta, X_1 \bigr) \Bigr\rvert^p \, .
\end{equation}
Thus, provided that the right-hand side is finite,
convergence at the classical Monte Carlo rate holds pointwise in~\(\theta\).
The proposition following addresses the question of whether the random function
\[
    \theta \mapsto \frac{1}{J} \sum_{j=1}^{J} f\bigl(\theta, X_j \bigr)
\]
can be shown to converge to zero in the functional supremum norm.
In other words,
can we prove a functional counterpart of~\eqref{eq:mz_pointwise}?
This is the content of the following result.

\begin{proposition}
    [Expectation of the supremum over the torus of an empirical mean]
    \label{proposition:expect_supremum_torus}
    Consider a probability distribution \(\mu\in \mathcal P\left(\R^d\right)\) and 
    let \((X_j)_{j \in \nat_{>0}}\) be a sequence of i.i.d.\ samples with common distribution~\(\mu\).
    Assume that~\(f \colon \torus^d \times \real^d \to \real^K\) is a function satisfying the following conditions
    \begin{itemize}
        \item For all \(\theta \in \torus^d\), it holds that \(\expect \bigl[ f(\theta, X_1) \bigr] = 0\).
        \item For all~$\xi \in \real^d$, the function \(\theta \mapsto f(\theta, \xi)\) belongs to \(\cont^1(\torus^d, \real^K)\).
        \item It holds that \(\expect\bigl[\sup_{\theta \in \torus^d} \lvert \nabla_{\theta} f(\theta, X_1) \rvert\bigr] < \infty\).
    \end{itemize}
    Then for all \(p\geq 2\),
    there exists \(C = C(d, K, p) > 0\) such that for all \(J\in \nat_{>0}\), it holds that
    \begin{equation*}
        \expect\left[\sup_{\theta\in \torus^d} \biggl\lvert \frac{1}{J}\sum_{j = 1}^J f(\theta,X_j) \biggr\rvert^p\right]
        \leq C J^{-\frac{p}{2}} \biggl( \int_{\torus^d} \expect \Bigl\lvert f\bigl(\theta, X_{1}\bigr) \Bigr\rvert^{q}
            + \expect \Bigl\lvert  \nabla_{\theta} f\bigl(\theta, X_{1}\bigr) \Bigr\rvert^{q}
        \, \d \theta \biggr)^{\frac{p}{q}} \, ,
    \end{equation*}
    where \(q := \max\{p, d+1\}\).
    (Note that the right-hand side of this inequality may be infinite,
    in which case this result does not provide any useful information.)
\end{proposition}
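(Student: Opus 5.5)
The natural route is a Sobolev embedding on the torus, applied pathwise to the random function
\[
    S_J(\theta) \coloneq \frac{1}{J}\sum_{j=1}^J f(\theta, X_j) \, ,
\]
followed by the pointwise Marcinkiewicz--Zygmund bound~\eqref{eq:mz_pointwise} applied to $S_J$ and to its gradient. Since $q = \max\{p, d+1\} > d$, Morrey's inequality on the compact manifold $\torus^d$ gives a constant $C_{\rm S}=C_{\rm S}(d,q)$ such that, for every $g \in \cont^1(\torus^d,\real^K)$,
\[
    \sup_{\theta\in\torus^d}\lvert g(\theta)\rvert \leq C_{\rm S} \left( \int_{\torus^d} \lvert g(\theta)\rvert^q + \lvert \nabla g(\theta)\rvert^q \, \d\theta \right)^{\frac1q} \, .
\]
The second assumption ensures that $S_J(\dummy)$ is $\cont^1$ for every realization, so this inequality applies pathwise with $g = S_J$. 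Here $\nabla_\theta S_J = \frac1J\sum_j \nabla_\theta f(\theta,X_j)$; the third assumption, together with dominated convergence, allows differentiation under the expectation, so that $\expect[\nabla_\theta f(\theta,X_1)] = \nabla_\theta \expect[f(\theta,X_1)] = 0$. Thus the gradient summands are centred as well.

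Raising the embedding inequality to the power $p$ and taking expectations, I would use Jensen's inequality (since $p/q\leq 1$, the map $x\mapsto x^{p/q}$ is concave):
\[
    \expect \sup_\theta \lvert S_J\rvert^p \leq C_{\rm S}^p \left( \int_{\torus^d} \expect\lvert S_J(\theta)\rvert^q + \expect\lvert\nabla_\theta S_J(\theta)\rvert^q \, \d\theta\right)^{\frac pq} \, ,
\]
where Fubini's theorem has been used to exchange the expectation and the $\theta$-integral. Then \cref{lemma:mz_inequality}, applied with exponent $q\geq 2$ for each fixed $\theta$ to the centred i.i.d.\ vectors $f(\theta,X_j)$ and $\nabla_\theta f(\theta,X_j)$, gives
\[
    \expect\lvert S_J(\theta)\rvert^q \leq C_{\rm MZ}(q) J^{-\frac q2}\,\expect\lvert f(\theta,X_1)\rvert^q \, ,
\]
and the analogous bound for the gradient. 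Raising $J^{-q/2}$ to the power $p/q$ produces exactly $J^{-p/2}$, and collecting constants into $C(d,K,p)$ gives the claimed inequality. If the right-hand side is infinite there is nothing to prove.

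The main obstacle is mild: it is the justification of the pathwise Sobolev step and of the exchange of derivative and expectation. The gradient of $S_J$ must be the classical one, which holds by the $\cont^1$ assumption, and the centring of $\nabla_\theta f$ requires the integrable dominating bound $\sup_\theta\lvert\nabla_\theta f\rvert$ from the third assumption. One also has to check measurability of $\sup_\theta\lvert S_J\rvert$; this follows from continuity in $\theta$, since the supremum can be taken over a countable dense set. The Morrey embedding on the torus itself is standard: one can obtain it by covering $\torus^d$ with finitely many charts and applying the Euclidean $W^{1,q}\hookrightarrow \cont^0$ estimate on a cube with a bounded extension, valid since $q>d$.
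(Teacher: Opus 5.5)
Your proof is correct and follows the paper's argument. You apply a pathwise Sobolev embedding on $\torus^d$, use Jensen to bring the expectation inside the power $p/q$, exchange with Fubini, and apply Marcinkiewicz--Zygmund at exponent $q$ to $f$ and to its gradient (centred by dominated convergence). The only cosmetic difference is that the paper embeds with exponent $d+1$ and then upgrades to exponent $q$ by Jensen on the unit-volume torus, whereas you apply Morrey's inequality directly at exponent $q>d$.
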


\begin{proof}
    We assume for simplicity that \(f\) is scalar-valued,
    but the proof generalizes easily to vector-valued functions.
    It will be convenient to introduce the random function
    \[
        f^J(\theta; \omega) =
        \frac{1}{J} \sum_{j=1}^{J} f\bigl(\theta, X_j(\omega)\bigr) \, .
    \]
    By assumption, for every fixed \(\xi \in \real^d\),
    the function \(\theta \mapsto f(\theta, \xi)\) belongs to \(\cont^1(\torus^d)\),
    thus also to~\(W^{1,p}(\torus^d)\) for all~\(p \in [1, \infty]\).
    By the Sobolev embedding theorem,
    there exists a constant \(C_{\rm Sob}(d)\) such that,
    almost surely,
    \[
        \sup_{\theta \in \torus^d} \bigl\lvert f^J(\theta) \bigr\rvert
        \leq
        C_{\rm Sob}
        \left( \int_{\torus^d}
            \bigl\lvert f^J(\theta) \bigr\rvert^{d+1}
            + \bigl\lvert \nabla_{\theta} f^J(\theta) \bigr\rvert^{d+1}
        \, \d \theta \right)^{\frac{1}{d+1}} \, .
    \]
    By Jensen's inequality, it holds that
    \begin{align*}
        \expect \sup_{\theta \in \torus^d} \bigl\lvert f^J(\theta) \bigr\rvert^p
        \leq 
        \left( \expect {\left[  \sup_{\theta \in \torus^d} \bigl\lvert f^J(\theta) \bigr\rvert^{q} \right] } \right)^{\frac{p}{q}}
        &\leq
        C_{\rm Sob}^{p} 
        \left( \expect {\left[ \left( \int_{\torus^d} \bigl\lvert f^J(\theta) \bigr\rvert^{d+1} + \bigl\lvert \nabla_{\theta} f^J(\theta) \bigr\rvert^{d+1} \, \d \theta \right)^{\frac{q}{d+1}} \right]} \right)^{\frac{p}{q}} \\
        &\leq
        C_{\rm Sob}^{p} 2^{\frac{p}{q}\bigl(\frac{q}{d+1} - 1\bigr)}
        \left(  \int_{\torus^d} \expect\bigl\lvert f^J(\theta) \bigr\rvert^{q} + \expect\bigl\lvert \nabla_{\theta} f^J(\theta) \bigr\rvert^{q} \, \d \theta \right)^{\frac{p}{q}} \, .
    \end{align*}
    Notice that, since \(\expect {\left[ f\bigl(\theta, X_{1}\bigr) \right]} = 0\) for all \(\theta \in \torus^d\),
    it holds also that \(\expect {\left[ \nabla_{\theta} f\bigl(\theta, X_{1}\bigr) \right]} = 0\) for all \(\theta \in \torus^d\).
    Indeed, differentiating under the expectation is justified by the dominated convergence theorem:
    by the mean value theorem, the difference quotients of~\(f\) in~\(\theta\) are bounded pointwise by
    \(\sup_{\theta \in \torus^d} \lvert \nabla_{\theta} f(\theta, X_1) \rvert\),
    which has finite expectation by assumption.
    Therefore, using the Marcinkiewicz--Zygmund inequality (\cref{lemma:mz_inequality}),
    we obtain
    \[
        \expect \sup_{\theta \in \torus^d} \biggl\lvert \frac{1}{J} \sum_{j=1}^{J} f\bigl(\theta, X_j\bigr) \biggr\rvert^p
        \leq 2^{\frac{p}{q}\bigl(\frac{q}{d+1} - 1\bigr)}
        C_{\rm Sob}^{p} C_{\rm MZ}(q)^{\frac{p}{q}} J^{-\frac{p}{2}}
        \biggl( \int_{\torus^d} \expect \biggl\lvert f\bigl(\theta, X_{1}\bigr) \biggr\rvert^{q}
            + \expect \biggl\lvert  \nabla_{\theta} f\bigl(\theta, X_{1}\bigr) \biggr\rvert^{q}
        \, \d \theta \biggr)^{\frac{p}{q}} \, ,
    \]
    which concludes the proof.
\end{proof}

\subsection{Decay estimates for the Ornstein--Uhlenbeck semigroup}%
\label{sub:decay_ou}

In this section,
we recall useful properties of the Ornstein--Uhlenbeck semigroup.
Most of the content of this section is either standard or follows from standard results,
but we include it for the reader's convenience.
Consider the Ornstein--Uhlenbeck process in \(\real^d\) started from~\(x\),
which solves the SDE
\begin{equation}
    \label{eq:ou}
    \d \xi^x_t = - \xi^x_t \, \d t + \sqrt{2} \, \d W_t \, , \qquad \xi_0^x = x \, .
\end{equation}
The operator semigroup associated with this equation is denoted by \(t \mapsto \e^{t \mathcal L_{\xi}}\).
For \(t \geq 0\), the operator \(\e^{t \mathcal L_{\xi}}\) acts on functions \(\varphi\) as
\[
    \e^{t \mathcal L_{\xi}} \varphi(x) \coloneq \expect \bigl[ \varphi(\xi^{x}_t) \bigr] \, .
\]
It is well-known that this defines a bounded linear operator in appropriate function spaces~\cite{MR2313847},
in particular in the weighted \(L^\infty\) spaces considered below in \cref{lemma:ou_semigroup}.
The generator of this semigroup is denoted by~\(\mathcal L_{\xi}\)
and given on \(\cont^{\infty}_{\rm c}(\real^d)\) by
\begin{equation}
    \label{eq:generator_ou}
    \mathcal L_{\xi} = -\xi\cdot\nabla_\xi +\Delta_\xi \, .
\end{equation}

\begin{lemma}
    [Weighted \(L^{\infty}\) decay for the Ornstein-Uhlenbeck semi-group]
  \label{lemma:ou_semigroup}
    For~$m \geq 1$, using the spaces~$L^\infty_{\weight{m}}$
    and~$L^\infty_{\weight{m},0}$
    \ifsiamart defined in the main paper in~\eqref{eq:def_weighted_L_infty_space}\else defined in~\eqref{eq:def_weighted_L_infty_space}\fi,
    there exists \(C_{\mathcal L_\xi} > 0\) such that
    \begin{equation}
        \label{eq:convergence_to_equilibrium}
        \forall \varphi \in L^\infty_{\weight{m},0}, \qquad
        \norm*{\e^{t\mathcal L_\xi}\varphi}_{L^\infty_{\weight{m}}}
        \leq
        C_{\mathcal L_\xi} \e^{-t}
        \norm{\varphi}_{L^\infty_{\weight{m}}} \, .
    \end{equation}
\end{lemma}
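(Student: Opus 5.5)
We prove the bound~\eqref{eq:convergence_to_equilibrium} by exploiting the explicit Mehler representation of the Ornstein--Uhlenbeck semigroup,
\[
    \e^{t\mathcal L_\xi}\varphi(x) = \expect\Bigl[\varphi\bigl(\e^{-t}x + \sqrt{1-\e^{-2t}}\,Y\bigr)\Bigr] \, , \qquad Y\sim\normal(0,\I_d) \, ,
\]
and we split into the regimes of short and long times. For $t \in [0,1]$, say, no decay is needed beyond a constant: since $\weight{m}(\e^{-t}x+\sqrt{1-\e^{-2t}}Y) \lesssim_m \weight{m}(x) + |Y|^m$, we get $\lvert\e^{t\mathcal L_\xi}\varphi(x)\rvert \lesssim_m \norm{\varphi}_{L^\infty_{\weight{m}}}\bigl(\weight{m}(x) + \expect|Y|^m\bigr) \lesssim \norm{\varphi}_{L^\infty_{\weight{m}}}\weight{m}(x)$, and $\e^{-t}\geq \e^{-1}$ absorbs the missing factor. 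The same computation shows that $\e^{t\mathcal L_\xi}$ is uniformly bounded on $L^\infty_{\weight{m}}$ for all $t\geq0$.

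For large times the natural route is Harris' theorem~\cite{MR2857021} applied to the time-$1$ skeleton chain with Lyapunov function $\weight{m}$: we have $\e^{\mathcal L_\xi}\weight{m} \leq \gamma\,\weight{m} + K$ with $\gamma = \e^{-m}\cdot(1+\delta)<1$ (up to lower-order terms), and a minorization on sublevel sets follows from the Gaussian transition density being bounded below on compacts. Harris then gives $\norm{\e^{n\mathcal L_\xi}\varphi}_{L^\infty_{\weight{m}}}\leq C\rho^n\norm{\varphi}_{L^\infty_{\weight{m}}}$ for mean-zero $\varphi$, for some $\rho<1$. The difficulty, and the main obstacle, is that Harris yields an unspecified rate $\rho$, whereas the statement asks for the sharp rate $\e^{-t}$, matching the spectral gap.

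To get the sharp rate, we instead use a coupling argument directly from Mehler's formula, at least for the part of the argument that produces decay. Writing $a=\e^{-t}$, $b=\sqrt{1-a^2}$, and using $\gamma_d(\varphi)=0$ in the form $0=\expect[\varphi(aZ+bY)]$ for an independent $Z\sim\gamma_d$ (stationarity), we have
\[
    \e^{t\mathcal L_\xi}\varphi(x) = \expect\bigl[\varphi(ax+bY)-\varphi(aZ+bY)\bigr] \, ,
\]
and the two arguments differ by $a(x-Z)$, which is $\bigo(\e^{-t})$. For merely weighted-bounded $\varphi$ there is no Lipschitz bound, so we instead compare Gaussian densities: the law of $ax+bY$ and of $aZ+bY$ have densities $\normal(ax,b^2)$ and $\normal(0,1)$, and for $t\geq1$ one bounds $\int \weight{m}(y)\,\lvert p_{ax,b^2}(y)-p_{0,1}(y)\rvert\,\d y \lesssim a\,\weight{m}(x)$. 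This follows by differentiating the Gaussian density in the mean (giving a factor $a|x|$) and in the variance (giving $1-b^2=a^2\le a$), with Gaussian-weighted polynomial moments that are controlled by $\weight{m}(x)$. Combined with the short-time bound, this yields~\eqref{eq:convergence_to_equilibrium}. The density-difference estimate is the step requiring the most care, and it is where we would expect to spend effort checking that the weight bookkeeping closes.
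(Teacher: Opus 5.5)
Your overall route is the paper's: use the explicit law $\gamma^x_t\coloneq\normal(\e^{-t}x,(1-\e^{-2t})\I_d)$, subtract $0=\gamma_d(\varphi)$, and reduce to a weighted total-variation bound between $\gamma^x_t$ and $\gamma_d$. The gap is in the step you flagged yourself. With a split at the fixed time $t=1$, the density estimate for $t\geq1$ does not follow from the linearization you describe.

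Interpolate along $\mu(\tau)=\tau a x$, $s^2(\tau)=1-\tau a^2$. The mean derivative then contributes the factor $a|x|$ times weighted Gaussian moments of size $1+|\mu(\tau)|^m\lesssim 1+a^m|x|^m$. What you actually obtain is therefore of order
\[
    a|x| + a^{m+1}|x|^{m+1} + a^2\bigl(1+a^m|x|^m\bigr) \, .
\]
The last term is harmless. But for fixed $t\geq1$ and $|x|\to\infty$, the middle term grows like $|x|^{m+1}$, which is not $\lesssim a\,\weight{m}(x)$. Bounding the moments by $\weight{m}(x)$, as you propose, leaves an uncompensated factor $|x|$. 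The linearization only closes when $a|x|\lesssim 1$.

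The missing ingredient is an $x$-dependent split; the paper uses the threshold $t=\log(2+|x|)$.

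\emph{Case $\e^t\leq 2+|x|$.} Do not linearize. The crude bound gives $\int\weight{m}\,\d(\gamma^x_t+\gamma_d)\lesssim 1+\e^{-mt}|x|^m$. Combined with $1\leq \e^{-mt}(2+|x|)^m$, this is $\lesssim \e^{-mt}\weight{m}(x)\leq \e^{-t}\weight{m}(x)$.

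\emph{Case $\e^t\geq 2+|x|$.} Here $|\e^{-t}x|\leq 1$ and $1-\e^{-2t}\geq\frac34$, so all Gaussian moments along your interpolation are $\bigo(1)$. Your differentiation argument then yields $\lesssim \e^{-t}(1+|x|)\leq 2\e^{-t}\weight{m}(x)$, using $m\geq1$.

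These two regimes cover all $t\geq0$, so your separate $t\leq 1$ case becomes unnecessary. The paper uses the pointwise bound $\lvert\e^{-u^2}-\e^{-v^2}\rvert\leq 2|v-u|(\e^{-u^2/2}+\e^{-v^2/2})$ instead of differentiating along a path, but that difference is cosmetic. With this split, the inequality you claim for $t\geq1$ is true and the repair is short. As written, however, your argument does not prove it.
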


\begin{proof}
    Fix \(x \in \real^d\) and let \(\gamma_{\infty} = \normal(0, \I_d)\) denote the standard centered normal distribution in~\(\real^d\).
    Since the law at time~\(t\) of the Ornstein--Uhlenbeck process initialized at a deterministic position~\(x\) is given by the normal distribution~\(\gamma^{x}_t \coloneq \normal\bigl(\e^{-t} x, (1 - \e^{-2t}) \I_d \bigr)\),
    it holds that
    \begin{equation*}
        \e^{t\mathcal L_\xi} \varphi(x) = \int_{\real^d} \varphi(y) \, \gamma^x_t (\d y) \, .
    \end{equation*}
    The function \(\varphi\) has mean zero with respect to~\(\gamma_{\infty}\) by assumption,
    and so we deduce that
    \[
        \bigl\lvert \e^{t\mathcal L_\xi} \varphi(x) \bigr\rvert
        =
        \Bigl\lvert
            \int_{\real^d} \varphi(y) \, (\gamma^x_t - \gamma_{\infty}) (\d y)
        \Bigr\rvert
        \leq
        d_m(\gamma^x_t, \gamma_{\infty})  \norm{\varphi}_{L^\infty_{\weight{m}}} \, ,
    \]
    where we introduced the weighted total variation distance for probability measures \(\mu, \nu \in \mathcal P(\real^d)\):
    \[
        d_{m} (\mu, \nu)
        = \int_{\real^d} \bigl(1 + |y|^m\bigr) \, |\mu - \nu| (\d y) \, .
    \]
    A standard argument based on Harris' theorem~\cite{MR2857021} can be employed to prove the convergence of \(\gamma^x_t\) to \(\gamma_{\infty}\) in the~\(d_m\) distance at an exponential rate~\(\e^{- \lambda t}\),
    for an appropriate constant~\(\lambda > 0\) that is usually difficult to control quantitatively.
    For the reader's convenience,
    here we prove convergence in~$d_m$ in a self-contained manner, with the exponential rate~\(\e^{-t}\).
    Such a rate of convergence (without the multiplicative constant~$\lambda$)
    is expected, but we were not able to find a reference in the literature where it is explicitly stated and proved.
    We refer to~\cite[Chapter 2]{MR2459454} and~\cite{MR2313847}
    for proofs of convergence with this rate in the Wasserstein and \(L^p\) metrics,
    respectively.
    We also mention the works~\cite{2018arXiv181008693D,carrillo2022ensemble},
    where the total variation distance between Gaussian distributions is studied.
    Writing
    \[
        \gamma^x_t = \frac{1}{Z_t} \exp \left( -\frac{|y - \mu_t|^2}{2 \sigma_t^2}\right) \, ,
        \qquad
        \gamma_{\infty} = \frac{1}{Z_{\infty}} \exp \left( -\frac{|y|^2}{2}\right) \, ,
    \]
    where \(\mu_t = \e^{-t} x\) and \(\sigma_t^2 = 1 - \e^{-2t}\),
    and using explicit formulas for the moments of a Gaussian probability distribution,
    we have that
    \begin{align*}
        \forall t  \leq  \log \left( 2 + |x| \right), \qquad
        d_m(\gamma^x_t, \gamma_{\infty})
        &\leq
        \int_{\real^d} \bigl(1 + |y|^m\bigr) \, (\gamma^x_t + \gamma_{\infty}) (\d y) \\
        &\lesssim_{m} 1 + |\e^{-t} x|^m
        = \e^{-mt} (\e^{mt} + |x|^m)
        \lesssim_{m} \e^{-mt} \bigl(1  + |x|^m \bigr) \, ,
    \end{align*}
    where, in the last inequality, we used that \(\e^{t} \leq 2 + |x|\) for the values of \(t\) considered.
    To obtain a bound for values of~\(t\) such that \(t \geq \log \left( 2 + |x| \right)\),
    we write
    \begin{align}
        \notag
        d_m(\gamma^x_t, \gamma_{\infty})
        &\leq \frac{1}{Z_t} \int_{\real^d} \bigl(1 + |y|^m\bigr)
        \left\lvert
        \exp \left( -\frac{|y - \mu_t|^2}{2 \sigma_t^2}\right)
        - \exp \left( -\frac{|y|^2}{2}\right)
        \right\rvert
         \, \d y \\
        &\qquad \qquad +
        \left\lvert \frac{1}{Z_t} - \frac{1}{Z_{\infty}} \right\rvert
        \int_{\real^d} \bigl(1 + |y|^m\bigr) \exp \left( -\frac{|y|^2}{2}\right)  \, \d y \,
        \eqcolon \mathcal A_t + \mathcal B_t \, .
        \label{eq:two_terms_total_var}
    \end{align}
    For the first term in this equation,
    we use the elementary inequality~\cite[Equation (A.7)]{MR5046225},
    \[
        \forall  (a, b) \in \real \times \real, \qquad
        \left\lvert \e^{-a^2} - \e^{-b^2} \right\rvert
        \leq 2 \abs{b - a} \left( \e^{-\frac{a^2}{2}} + \e^{-\frac{b^2}{2}} \right) \, ,
    \]
    to deduce that
    \begin{align*}
        \forall t \geq \log(2 + |x|), \qquad
        \mathcal A_t
        &\leq \sqrt{2}
        \int_{\real^d} \bigl(1 + |y|^m\bigr)
             \left\lvert
                 \frac{|y - \mu_t|}{\sigma_t} - |y|
            \right\rvert
            \left(  \exp \left( -\frac{|y - \mu_t|^2}{4 \sigma_t^2}\right) + \exp \left( -\frac{|y|^2}{4}\right)   \right)  \, \d y \\
        &\lesssim
        \e^{-t}
            \int_{\real^d} \bigl(1 + |y|^m\bigr)
             \bigl( |y| + |x|  \bigr)
            \left(  \exp \left( -\frac{|y - \mu_t|^2}{4 \sigma_t^2}\right) + \exp \left( -\frac{|y|^2}{4}\right)   \right)  \, \d y \\
        &\lesssim_m \e^{-t} \bigl( 1 + |x| \bigr) \, .
    \end{align*}
    Here we used that, 
    by the reverse triangle inequality,
    \[
        \forall t \geq \log(2 + |x|), \qquad
        \left\lvert
            \frac{|y - \mu_t|}{\sigma_t} - |y|
        \right\rvert
        \leq 
        \frac{|\mu_t|}{\sigma_t} + \frac{|\sigma_t - 1|}{\sigma_t} |y|
        \lesssim \e^{-t} \bigl(|x| + |y|\bigr) \, ,
    \]
    and that \(\lvert \mu_t \rvert \leq 1\) for \(t \geq \log(2 + |x|)\) to bound the integral including the first exponential term.
    For the second term in~\eqref{eq:two_terms_total_var},
    we use that the normalization constants \(Z_t, Z_{\infty}\) are bounded away from 0 by a strictly positive constant for \(t \geq \log(2)\) to deduce that
    \[
        \forall t \geq \log(2 + |x|), \qquad
        \mathcal B_t
        \lesssim_{m} \left\lvert \frac{1}{Z_t} - \frac{1}{Z_{\infty}} \right\rvert
        \leq \frac{\lvert Z_t - Z_{\infty} \rvert}{Z_t Z_{\infty}}
        \lesssim \e^{-2t} \, .
    \]
    Combining these bounds,
    we obtain that \(d_m(\gamma^x_t, \gamma_{\infty}) \lesssim_m \e^{-t} (1 + |x|^{m})\) for all \(t \geq 0\),
    from which~\eqref{eq:convergence_to_equilibrium} follows.
\end{proof}

In the following lemma,
we use the weighted Sobolev spaces~$\sobolev^{k,\infty}_{\weight{m}}$
\ifsiamart defined in the main paper in~\eqref{eq:weighted_sobolev_norm}\else defined in~\eqref{eq:weighted_sobolev_norm}\fi.
\begin{lemma}
    \label{lemma:ou_semigroup_der}
    With the same notations as in the previous lemma,
    for any multi-index~\(\alpha\),
    there is \(C_{\mathcal L_{\xi}, \alpha}\) such that,
    for any function~\(\varphi \in \sobolev^{|\alpha|,\infty}_{\weight{m}}\),
    it holds
    \begin{equation}
        \label{eq:decay_derivatives}
        \left\lVert D^{\alpha} \e^{t \mathcal L_{\xi}} \varphi \right\rVert_{L^{\infty}_{\weight{m}}}
        \leq C_{\mathcal L_{\xi}, \alpha} \e^{- |\alpha| t}
        \left\lVert D^{\alpha} \varphi \right\rVert_{L^{\infty}_{\weight{m}}} \, .
    \end{equation}
    % Furthermore, 
    % if~$\varphi \in L^{\infty}_{K_{m},0}$,
    % then there exists a unique solution $\Phi \in L^{\infty}_{K_{m},0}$ to the Poisson equation 
    % \begin{equation}
    %     \label{eq:poisson_ou}
    %     -\mathcal L_{\xi}\Phi=\varphi \, ,
    % \end{equation}
    % which furthermore satisfies $\lVert D^{\alpha}\Phi\rVert_{L^{\infty}_{\weight{m}}} \lesssim_{\alpha,m} \lVert D^{\alpha}\varphi\rVert_{L^{\infty}_{\weight{m}}}$.
\end{lemma}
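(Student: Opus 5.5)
The plan rests on the commutation relation between derivatives and the Ornstein--Uhlenbeck semigroup. The Mehler representation
\[
    \e^{t\mathcal L_\xi}\varphi(x) = \expect\bigl[\varphi\bigl(\e^{-t}x + \sqrt{1-\e^{-2t}}\,Z\bigr)\bigr] \, ,
    \qquad Z\sim\normal(0,\I_d) \, ,
\]
follows from the explicit law~$\gamma^x_t$ used in the proof of~\cref{lemma:ou_semigroup}. Since the map $x\mapsto \e^{-t}x+\sqrt{1-\e^{-2t}}\,Z$ is affine in $x$ with linear part $\e^{-t}\I_d$, differentiating under the expectation gives, for any multi-index~$\alpha$,
\[
    D^\alpha \e^{t\mathcal L_\xi}\varphi = \e^{-|\alpha| t}\, \e^{t\mathcal L_\xi}\bigl(D^\alpha\varphi\bigr) \, .
\]
Differentiation under the integral will be justified by dominated convergence. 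For~$\varphi\in\sobolev^{|\alpha|,\infty}_{\weight{m}}$, the derivatives up to order~$|\alpha|$ grow at most like~$1+|y|^m$, and Gaussian moments of all orders are finite, locally uniformly in~$x$. If needed, I would first work with smooth~$\varphi$ and conclude by mollification, using weak derivatives.

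The problem is then reduced to the boundedness of~$\e^{t\mathcal L_\xi}$ on~$L^\infty_{\weight{m}}$, uniformly in~$t\geq 0$. Note that this does not require the mean-zero condition, which matters because $D^\alpha\varphi$ need not be centered. For~$\psi\in L^\infty_{\weight{m}}$,
\[
    \bigl\lvert \e^{t\mathcal L_\xi}\psi(x)\bigr\rvert
    \leq \norm{\psi}_{L^\infty_{\weight{m}}}\,
    \expect\Bigl[1+\bigl\lvert \e^{-t}x+\sqrt{1-\e^{-2t}}\,Z\bigr\rvert^m\Bigr]
    \lesssim_m \norm{\psi}_{L^\infty_{\weight{m}}}\bigl(1+|x|^m\bigr) \, .
\]
This uses $\lvert a+b\rvert^m\leq 2^{m-1}(\lvert a\rvert^m+\lvert b\rvert^m)$ together with $\e^{-t}\leq 1$ and $1-\e^{-2t}\leq 1$. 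Hence $\norm{\e^{t\mathcal L_\xi}}_{L^\infty_{\weight{m}}\to L^\infty_{\weight{m}}}\leq C_m$.

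Combining the commutation identity with this operator bound yields
\[
    \norm{D^\alpha \e^{t\mathcal L_\xi}\varphi}_{L^\infty_{\weight{m}}}
    \leq C_m\,\e^{-|\alpha| t}\,\norm{D^\alpha\varphi}_{L^\infty_{\weight{m}}} \, ,
\]
which is~\eqref{eq:decay_derivatives} with $C_{\mathcal L_\xi,\alpha}=C_m$. The main, though modest, obstacle is the rigorous justification of the commutation identity for non-smooth~$\varphi$ with polynomial growth. I would handle it by mollification: the weighted norms of a mollified function are controlled by those of~$\varphi$, and one passes to the limit pointwise via dominated convergence in the Mehler formula.
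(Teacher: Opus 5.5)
Your proof is correct and is essentially the paper's argument. The paper writes $\e^{t\mathcal L_\xi}\varphi(x) = (\varphi\star g_t)(\e^{-t}x)$ with $g_t = \normal\bigl(0,(1-\e^{-2t})\I_d\bigr)$, which is your Mehler formula; it differentiates to get $D^\alpha \e^{t\mathcal L_\xi}\varphi(x) = \e^{-|\alpha|t}(D^\alpha\varphi\star g_t)(\e^{-t}x)$, which is your commutation identity; and it concludes from the uniform-in-$t$ boundedness of Gaussian convolution on $L^\infty_{\weight{m}}$ applied to the not necessarily centered $D^\alpha\varphi$, which is your weighted operator bound. Your direct bound via Gaussian moments and your remarks on justifying the differentiation for weakly differentiable $\varphi$ are slightly more careful than the paper, which does not address that point.
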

\begin{proof}
    We assume that the right-hand side of~\eqref{eq:decay_derivatives} is finite,
    otherwise the statement is obvious.
    By~\cite{MR4176390}, see also~\cite[Lemma~9.3.6]{MR2313847},
    it holds for all \(\varphi \in L^p(\gamma_{\infty})\) for some \(p \in [1, + \infty)\) that,
    \[
        \e^{t \mathcal L_{\xi}} \varphi(x) = (\varphi \star g_t) (\e^{-t} x) \, ,
        \qquad g_t \coloneq \normal\Bigl(0, (1 - \e^{-2t}) \I_d \Bigr) \, .
    \]
    Differentiating,
    we obtain for any~\(t > 0\)
    \[
        D^{\alpha} \e^{t \mathcal L_{\xi}} \varphi(x) = \e^{- \lvert \alpha \rvert t} (D^{\alpha} \varphi \star g_t) (\e^{-t} x) \, ,
    \]
    and so
    \[
        \frac{\lvert D^{\alpha} \e^{t \mathcal L_{\xi}} \varphi(x) \rvert}{1 + |x|^m} \leq  \e^{-\lvert \alpha \rvert t}
        \norm{D^{\alpha} \varphi \star g_t}_{L^{\infty}_{\weight{m}}} \frac{1 + \e^{-m t} |x|^m}{1 + |x|^m} \, .
    \]
    The estimate~\eqref{eq:decay_derivatives} then follows because convolution with \(g_t\) is a bounded linear operator on \(L^{\infty}_{\weight{m}}\),
    uniformly in~\(t\).
    Indeed, it holds that
    \[
        \frac{\lvert (D^{\alpha} \varphi \star g_t) (x) \rvert}{1 + |x|^m}
        \leq \int_{\real^d} \norm{D^{\alpha} \varphi}_{L^{\infty}_{\weight{m}}} \frac{1 + |x - y|^m}{1 + |x|^m} \, g_t(\d y) \, 
        \lesssim_{m} \norm{D^{\alpha} \varphi}_{L^{\infty}_{\weight{m}}} \, ,
    \]
    which concludes the proof.
    % The existence of a unique solution in~\(L^{\infty}_{\weight{m},0}\) to the Poisson equation~\eqref{eq:poisson_ou} 
    % follows from~\cref{lemma:ou_semigroup} and the following representation formula for the solution operator corresponding to~\eqref{eq:poisson_ou}:
    % \[
    %     \mathcal L_{\xi}^{-1} = - \int_{0}^{+\infty} \e^{t \mathcal L_{\xi}} \, \d t \, .
    % \]
    % See, for instance, \cite[Proposition 2.1]{MR3509213} for more details.
    % The estimate on the derivatives of the solution~$\Phi$ then follows from the decay of the derivatives of the semigroup.
\end{proof}

 \begin{remark}
     It is possible to show decay of the derivatives of \(\e^{t \mathcal L_{\xi}} \varphi\) for $t > 0$
     even when~\(\varphi\) is not differentiable,
     by rewriting the convolution as
     \begin{align*}
         \bigl\lvert D^{\alpha} \e^{t \mathcal L_{\xi}} \varphi(x) \bigr\rvert
         &= \Bigl\lvert \e^{- \lvert \alpha \rvert t} (\varphi \star D^{\alpha} g_t) (\e^{-t} x) \Bigr\rvert \\
         &\leq  \e^{- \lvert \alpha \rvert t} (\weight{m} \star |D^{\alpha} g_t|) (\e^{-t} x)  \norm{\varphi}_{L^{\infty}_{\weight{m}}} \\
         &\leq  \e^{- \lvert \alpha \rvert t} \norm{\weight{m} \star |D^{\alpha} g_t|}_{L^{\infty}_{\weight{m}}}   \norm{\varphi}_{L^{\infty}_{\weight{m}}} \weight{m}(\e^{-t} x) \, .
     \end{align*}
     Since $g_t(x) = \sigma_t^{-d} g_{\infty}(x/\sigma_t)$, with $\sigma_t \coloneq \sqrt{1 - \e^{-2t}}$,
     we have that
     \[
         D^{\alpha} g_t(x) = \frac{1}{\sigma_t^{d + |\alpha|}} D^{\alpha} g_{\infty} \Bigl( \frac{x}{\sigma_t} \Bigr) \, ,
     \]
     and so
     \begin{align*}
         \Bigl( \weight{m} \star |D^{\alpha} g_t| \Bigr) (x) 
         &= \frac{1}{\sigma_t^{d + |\alpha|}} \int_{\real^d} \bigl(1 + |x - y|^m\bigr) \Bigl\lvert D^{\alpha} g_{\infty} \Bigl( \frac{y}{\sigma_t} \Bigr) \Bigr\rvert \, \d y \\
         &= \frac{1}{\sigma_t^{|\alpha|}} \int_{\real^d} \bigl(1 + |x - \sigma_t z|^m\bigr) \Bigl\lvert D^{\alpha} g_{\infty} (z) \Bigr\rvert \, \d z 
         \lesssim_{m} \frac{1}{\sigma_t^{|\alpha|}} \weight{m}(x) \, .
     \end{align*}
     We conclude that
     \[
         \bigl\lVert D^{\alpha} \e^{t \mathcal L_{\xi}} \varphi \bigr\rVert_{L^{\infty}_{\weight{m}}}
         \lesssim_{m} \frac{\e^{-|\alpha|t}}{(1 - \e^{-2t})^{\frac{|\alpha|}{2}}} \norm{\varphi}_{L^{\infty}_{\weight{m}}} \, .
     \]
     This estimate will not,
     however,
     be useful for our purposes in this paper.
 \end{remark}

The following results follow from~\cref{lemma:ou_semigroup,lemma:ou_semigroup_der}.
\begin{corollary}
    \label{corollary:poisson_ou}
    If~\(\varphi \in L^{\infty}_{\weight{m},0}\) for some multi-index~\(\alpha\),
    then there exists a unique solution~\(\Phi \in L^{\infty}_{\weight{m},0}\) to the Poisson equation
    \begin{equation}
        \label{eq:poisson_ou}
        - \mathcal L_{\xi} \Phi = \varphi \, ,
    \end{equation}
    which furthermore satisfies
    \(
    \lVert  D^{\alpha} \Phi \rVert_{L^{\infty}_{\weight{m}}}  \lesssim_{m, \alpha} \lVert D^{\alpha} \varphi \rVert_{L^{\infty}_{\weight{m}}}
    \)
    if \(\varphi \in \sobolev^{|\alpha|, \infty}_{\weight{m}}\).
\end{corollary}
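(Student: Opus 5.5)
The plan is to represent the solution by the semigroup resolvent formula
\[
    \Phi = -\mathcal L_\xi^{-1}\varphi = \int_0^{\infty} \e^{t\mathcal L_\xi}\varphi \, \d t \, ,
\]
and to obtain both existence and the derivative bounds from the decay estimates in \cref{lemma:ou_semigroup,lemma:ou_semigroup_der}.

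\textbf{Existence.} For $\varphi\in L^\infty_{\weight{m},0}$, \cref{lemma:ou_semigroup} gives $\norm{\e^{t\mathcal L_\xi}\varphi}_{L^\infty_{\weight{m}}}\leq C_{\mathcal L_\xi}\e^{-t}\norm{\varphi}_{L^\infty_{\weight{m}}}$. The integral therefore converges absolutely in the Banach space $L^\infty_{\weight{m}}$, and $\norm{\Phi}_{L^\infty_{\weight{m}}}\leq C_{\mathcal L_\xi}\norm{\varphi}_{L^\infty_{\weight{m}}}$. The mean-zero property is inherited: $\gamma_d$ is invariant, so $\gamma_d(\e^{t\mathcal L_\xi}\varphi)=\gamma_d(\varphi)=0$ for every $t$, and Fubini (justified since $\weight{m}\in L^1(\gamma_d)$) passes this to $\Phi$. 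To check that $\Phi$ solves the equation, we use the semigroup property. For $s>0$,
\[
    \e^{s\mathcal L_\xi}\Phi-\Phi=-\int_0^s \e^{t\mathcal L_\xi}\varphi\,\d t \, .
\]
Dividing by $s$ and letting $s\to0$ gives $\mathcal L_\xi\Phi=-\varphi$, understood in the sense of the generator on the weighted space (pointwise at Lebesgue points, or in the distributional sense tested against $\smoothcompact$ functions). I would state this as the notion of solution, following the standard argument of, e.g., the references used for the OU semigroup.

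\textbf{Uniqueness.} Suppose $\Phi_1,\Phi_2\in L^\infty_{\weight{m},0}$ are two solutions, and let $w=\Phi_1-\Phi_2$. Then $\mathcal L_\xi w=0$, so $\e^{t\mathcal L_\xi}w=w$ for all $t$. Applying \cref{lemma:ou_semigroup} to $w$ gives $\norm{w}_{L^\infty_{\weight{m}}}\leq C\e^{-t}\norm{w}_{L^\infty_{\weight{m}}}$, and letting $t\to\infty$ yields $w=0$. The one subtle point is showing that $\mathcal L_\xi w=0$ in the chosen weak sense implies invariance under the semigroup. I would handle this by working with the generator of the semigroup on $L^\infty_{\weight{m},0}$ directly. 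Alternatively, since $L^\infty_{\weight{m}}\subset L^2(\gamma_d)$, one can use $L^2(\gamma_d)$ uniqueness: the spectral gap of $-\mathcal L_\xi$ on $L^2_0(\gamma_d)$ rules out nonzero mean-zero harmonic functions. This step is the main potential obstacle, purely as a matter of functional-analytic bookkeeping.

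\textbf{Derivative bound.} Suppose in addition that $\varphi\in\sobolev^{|\alpha|,\infty}_{\weight{m}}$. Differentiating under the integral, which is justified by dominated convergence using the bound below, gives $D^\alpha\Phi=\int_0^\infty D^\alpha\e^{t\mathcal L_\xi}\varphi\,\d t$. The two lemmas then give the pointwise-in-$t$ estimate
\[
    \norm{D^\alpha\e^{t\mathcal L_\xi}\varphi}_{L^\infty_{\weight{m}}}\leq C_{\mathcal L_\xi,\alpha}\e^{-|\alpha|t}\norm{D^\alpha\varphi}_{L^\infty_{\weight{m}}} \, .
\]
For $|\alpha|\geq1$ this comes from \cref{lemma:ou_semigroup_der}, and the exponential factor makes it integrable, so that $\norm{D^\alpha\Phi}_{L^\infty_{\weight{m}}}\lesssim_{m,\alpha}\norm{D^\alpha\varphi}_{L^\infty_{\weight{m}}}$. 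For $\alpha=0$ that lemma gives no decay, and the bound is instead the existence estimate obtained above from \cref{lemma:ou_semigroup}, which uses the zero mean. To justify the differentiation, I would first establish it for the derivatives $D^\beta$ of order one, using the explicit Mehler representation
\[
    D^\beta\e^{t\mathcal L_\xi}\varphi(x)=\e^{-|\beta|t}(D^\beta\varphi\star g_t)(\e^{-t}x) \, ,
\]
and then iterate. The integrable dominating function $\e^{-t}$ makes dominated convergence applicable at each step.
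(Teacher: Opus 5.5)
Your proposal is correct and follows the paper's proof. Existence and the bounds come from the resolvent formula $\mathcal L_\xi^{-1} = -\int_0^\infty \e^{t\mathcal L_\xi}\,\d t$ combined with \cref{lemma:ou_semigroup,lemma:ou_semigroup_der}, and uniqueness comes from $\e^{t\mathcal L_\xi}w = w$ for a mean-zero harmonic $w$, which contradicts the exponential decay. You are more careful than the paper, which simply asserts this invariance ``by definition of the generator''. The only adjustment is that $\int_0^\infty \e^{t\mathcal L_\xi}\varphi\,\d t$ should be read pointwise in $\xi$, not as a Bochner integral in $L^\infty_{\weight{m}}$, where the semigroup is not strongly continuous; your pointwise bounds already accommodate this.
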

\begin{proof}
    Existence of a solution satisfying the bound follows from the representation formula for the solution operator corresponding to~\eqref{eq:poisson_ou}:
    \[
        \mathcal L_{\xi}^{-1} = - \int_{0}^{+\infty} \e^{t \mathcal L_{\xi}} \, \d t \, .
    \]
    The solution is necessarily unique because,
    if there were two solutions \(\Phi\) and \(\Psi\) in \(L^\infty_{\weight{m},0}(\real^d)\),
    then it would hold by definition of the generator that \(\e^{t \mathcal L_{\xi}} (\Phi - \Psi) = (\Phi - \Psi)\)
    contradicting~\cref{lemma:ou_semigroup}.
    We refer to~\cite[Section 7.4]{MR2597943} and~\cite[Proposition 2.1]{MR3509213} for more details.
\end{proof}

 \begin{corollary}
     [Discrete Poisson equation]
     \label{lemma:discrete_poisson_eq}
     Let \(\mathcal L_\xi\) be the Ornstein-Uhlenbeck generator defined in~\eqref{eq:generator_ou}.
     Fix a time step~\(h > 0\) and a multi-index~\(\alpha\) and
     suppose that \(\varphi\in  L^{\infty}_{\weight{m},0}\).
     Then the discrete Poisson equation
     \begin{equation}
         \label{eq:discrete_poisson2}
         - \frac{\left( \e^{h \mathcal L_{\xi}} - \id \right)}{h} \Phi = \varphi
     \end{equation}
     admits a unique solution in \(L^{\infty}_{\weight{m},0}\).
     Furthermore,
     if \(\varphi \in \sobolev^{|\alpha|, \infty}_{\weight{m}}\),
     then the solution satisfies
     \[
        \forall h \in (0, 1), \qquad
         \lVert  D^{\alpha} \Phi \rVert_{L^{\infty}_{\weight{m}}}  \lesssim_{m, \alpha} \lVert D^{\alpha} \varphi \rVert_{L^{\infty}_{\weight{m}}} \, ,
     \]
     where the right-hand side is allowed to be \(+ \infty\).
\end{corollary}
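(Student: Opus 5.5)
The natural approach is to write the solution of~\eqref{eq:discrete_poisson2} as a Neumann series,
\[
    \Phi \coloneq h \sum_{k=0}^{\infty} \e^{kh\mathcal L_\xi} \varphi \, ,
\]
and to control every term by the decay estimates of~\cref{lemma:ou_semigroup,lemma:ou_semigroup_der}. This mirrors the continuous-time proof of~\cref{corollary:poisson_ou}: the integral $\int_0^\infty \e^{t\mathcal L_\xi}\,\d t$ is replaced by a Riemann-type sum with step~$h$. Throughout, I use the semigroup property $\e^{kh\mathcal L_\xi} = (\e^{h\mathcal L_\xi})^k$ and the fact that $\e^{t\mathcal L_\xi}$ preserves the mean under~$\fastgaussianone$. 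The latter holds by invariance of~$\fastgaussianone$, so $L^\infty_{\weight{m},0}$ is stable under the semigroup.

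\emph{Existence.} Since $\varphi \in L^\infty_{\weight{m},0}$, \cref{lemma:ou_semigroup} gives $\norm{\e^{kh\mathcal L_\xi}\varphi}_{L^\infty_{\weight{m}}} \leq C_{\mathcal L_\xi}\e^{-kh}\norm{\varphi}_{L^\infty_{\weight{m}}}$. Hence the series converges absolutely in the Banach space $L^\infty_{\weight{m},0}$, and
\[
    \norm{\Phi}_{L^\infty_{\weight{m}}} \leq C_{\mathcal L_\xi}\frac{h}{1-\e^{-h}}\norm{\varphi}_{L^\infty_{\weight{m}}} \leq C_{\mathcal L_\xi}(1+h)\norm{\varphi}_{L^\infty_{\weight{m}}} \, ,
\]
by~\eqref{eq:elem_1}. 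The limit has mean zero because $L^\infty_{\weight{m},0}$ is closed. To check that $\Phi$ solves the equation, I apply the bounded operator $\e^{h\mathcal L_\xi}$ termwise, which is legitimate on an absolutely convergent series. The series then telescopes to
\[
    (\id - \e^{h\mathcal L_\xi})\Phi = h\varphi \, ,
\]
which is exactly~\eqref{eq:discrete_poisson2}.

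\emph{Uniqueness.} Let $\Psi \in L^\infty_{\weight{m},0}$ be the difference of two solutions. Then $\e^{h\mathcal L_\xi}\Psi = \Psi$, and iterating gives $\Psi = \e^{kh\mathcal L_\xi}\Psi$ for all $k\in\N$. By \cref{lemma:ou_semigroup}, the right-hand side tends to zero in $L^\infty_{\weight{m}}$ as $k \to \infty$, so $\Psi = 0$.

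\emph{Derivative bound.} Let $\alpha$ be a multi-index with $\varphi \in \sobolev^{|\alpha|,\infty}_{\weight{m}}$. The case $\alpha = 0$ is the bound already obtained in the existence step, since $1+h\leq 2$ for $h\in(0,1)$. For $|\alpha|\geq 1$, \cref{lemma:ou_semigroup_der} gives
\[
    \norm{D^\alpha \e^{kh\mathcal L_\xi}\varphi}_{L^\infty_{\weight{m}}} \leq C_{\mathcal L_\xi,\alpha}\e^{-|\alpha|kh}\norm{D^\alpha\varphi}_{L^\infty_{\weight{m}}} \, .
\]
This estimate does not use the mean-zero property; that is why the case $\alpha=0$ has to rely on \cref{lemma:ou_semigroup} instead. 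Summing over $k$ and using~\eqref{eq:elem_1} with $x = |\alpha| h$ yields
\[
    \norm{D^\alpha\Phi}_{L^\infty_{\weight{m}}} \lesssim_{m,\alpha} \frac{h}{1-\e^{-|\alpha|h}}\norm{D^\alpha\varphi}_{L^\infty_{\weight{m}}} \leq \frac{1+|\alpha|h}{|\alpha|}\norm{D^\alpha\varphi}_{L^\infty_{\weight{m}}} \, ,
\]
which is bounded uniformly for $h\in(0,1)$.

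The step I expect to be the main obstacle is justifying $D^\alpha\Phi = h\sum_k D^\alpha \e^{kh\mathcal L_\xi}\varphi$, that is, termwise differentiation of the series. I would handle it as follows.
\begin{itemize}
    \item The partial sums converge in $L^\infty_{\weight{m}}$ to $\Phi$, hence locally uniformly and in particular in the sense of distributions.
    \item Their $\alpha$-th derivatives form a Cauchy sequence in $L^\infty_{\weight{m}}$ by the estimate above. This requires applying \cref{lemma:ou_semigroup_der} to all derivatives of order at most~$|\alpha|$, with the weighted Sobolev space interpreted so that these are finite.
    \item Distributional derivatives are continuous under distributional convergence, so the limit of the $\alpha$-th derivatives coincides with $D^\alpha\Phi$, and the bound passes to the limit.
\end{itemize}
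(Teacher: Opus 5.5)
Your proof is correct and follows essentially the same route as the paper. The solution is the Neumann series $h\sum_{k\geq 0}\e^{kh\mathcal L_\xi}\varphi$, whose convergence and bound $h/(1-\e^{-h})$ come from \cref{lemma:ou_semigroup}, and the derivative bounds come from \cref{lemma:ou_semigroup_der}. Your explicit uniqueness argument, the separate treatment of $\alpha=0$ (where the mean-zero decay is needed), and the justification of termwise differentiation spell out details that the paper leaves implicit.
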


\begin{proof}
    By~\eqref{eq:convergence_to_equilibrium},
    the Neumann series
    \[
        \sum_{n=0}^{+\infty} \e^{nh \mathcal L_{\xi}}
    \]
    converges in the operator norm on~\(L^{\infty}_{\weight{m},0}\),
    so the operator \(\id - \e^{h \mathcal L_{\xi}}\) is invertible on this space with inverse given by the series.
    In particular,
    it holds that
    \[
        \left\lVert \left( \frac{\e^{h \mathcal L_{\xi}} - \id }{h} \right)^{-1} \right\rVert_{L^{\infty}_{\weight{m},0}}
        \leq \sum_{n=0}^{+\infty} h \lVert \e^{nh \mathcal L_{\xi}} \rVert_{L^{\infty}_{\weight{m},0}}
        \leq C_{\mathcal L_{\xi}} \sum_{n=0}^{+\infty} h \e^{-n h} = C_{\mathcal L_{\xi}} \left( \frac{h}{1 - \e^{-h}} \right) \, .
    \]
    Here both norms are operator norms on~\(L^{\infty}_{\weight{m},0}\).
    See~\cite[Corollary 2.26]{MR3509213} for additional details.
    Showing bounds on the derivatives can then be achieved in a similar manner as in~\cref{corollary:poisson_ou}.
\end{proof}

% We can therefore define the function mapping the source term \(f\) to the solution
% of the associated discrete Poisson equation (\ref{eq:discrete_poisson2}) as an
% endomorphism of \(\mathcal
% C^0\left(\torus^d,L^\infty_{K,0}\left(\R^d,\R^k\right)\right)\).

\section{Auxiliary results from PDE theory}

This section is devoted to the proof of the auxiliary results on elliptic and parabolic PDEs.
These proofs are presented in~\cref{sec:proof_poisson} and~\cref{appendix:proof_auxiliary_parabolic}, respectively. 

\subsection{Proof of the result on the elliptic Poisson equation\texorpdfstring{ (\cref{lem:poisson})}{}}
\label{sec:proof_poisson}

\begin{proof}
    [Proof of \cref{lem:poisson}]
    We decompose the proof into several steps for clarity.
    We assume that $k=0$ until the last step,
    where the idea of the proof for the general case is explained.
    Furthermore, in the first two steps, we assume only that~$\Phi \in W^{1,\infty}(\torus^d)$ and~$f \in L^p(\torus^d)$ for some~$2 \leq p < \infty$.
    The assumptions that $f \in L^\infty(\torus^d)$ or~$f \in \cont^{\alpha}(\torus^d)$ are used only in later steps.

    \paragraph{Step 1. Existence and uniqueness of a weak solution in $H^1(\mu_\Phi) \cap L^2_0(\mu_{\Phi})$}
    The function space
    \[
        \mathcal H \coloneq \Bigl\{\varphi \in H^1(\torus^d;\mu_\Phi) \mid \mu_\Phi(\varphi) = 0\Bigr\}
    \]
    is a Hilbert space when endowed with the inner product~$\mathcal A_{\Phi}$ defined by
    \[
        % \forall \varphi_1, \varphi_2 \in H_0^1(\torus^d;\mu_\Phi), \quad
        % This is implicit in the notation. I think it is better to reserve \forall for actual statements, and not for definitions.
        \mathcal A_\Phi(\varphi_1,\varphi_2) = \int_{\torus^d} \nabla \varphi_1(x) \cdot \nabla \varphi_2(x)\,\mu_\Phi(\d x) \, .
    \]
    Indeed, by the Poincaré inequality \eqref{eq:poincare2},
    the associated norm is equivalent to the standard norm of \( H^1(\torus^d;\mu_\Phi) \),
    so completeness follows.
    In addition, the linear functional \(\mathcal L_f \colon \mathcal H \to \R\) defined by
    \[
        % \forall \varphi \in H_0^1(\torus^d;\mu_\Phi), \quad
        \mathcal L_f(\varphi) = \int_{\torus^d} \Bigl( f(x) - \mu_{\Phi}(f) \Bigr) \varphi(x)\,\mu_\Phi(\d x)
    \]
    is continuous on \(\left(\mathcal H, \mathcal A_\Phi\right)\)
    by the Cauchy--Schwarz inequality and the Poincaré inequality~\eqref{eq:poincare2}.
    % as we have:
    % \[
    % \forall \psi \in H_0^1(\mu_\Phi), \qquad \abs*{\mathcal L_f(\psi)} \leq
    % \frac{1}{\sqrt{R[\mu_\Phi]}}\left( \int_{\torus^d} \abs*{f(x)}^2\,\mu_\Phi(\d x)
    % \right)^{\frac{1}{2}} \mathcal A_\Phi(\psi,\psi)^{\frac{1}{2}}.
    % \]
    % Therefore, the operator \(\mathcal L_f\) is continuous on \(\left(H_0^1(\torus^d;\mu_\Phi), \mathcal
    % A_\Phi(\placeholder,\placeholder)^{1/2}\right)\),
    % so
    Therefore, by the Riesz representation theorem,
    there exists a unique \(\psi \in \mathcal H \) such that
    \begin{equation}
        \label{eq:riesz}
        \forall \varphi \in \mathcal H, \qquad \int_{\torus^d} \nabla \psi(x) \cdot
        \nabla \varphi(x)\,\mu_\Phi(\d x) = \int_{\torus^d} \Bigl( f(x) - \mu_{\Phi}(f) \Bigr) \varphi(x)\,\mu_\Phi(\d x) \, .
    \end{equation}
    Since any $\varphi \in H^1(\torus^d;\mu_\Phi)$ can be written as the sum of a function in $\mathcal H$ and a constant,
    this equation also holds for all~\(\varphi \in H^1(\torus^d;\mu_\Phi)\) with a mean that is not necessarily equal to~0.
    % U: I think this is obvious since \mu_\Phi has a density bounded from below and above
    % We also have that \(\psi\) is in \(H^1(\torus^d)\) since, for \(\alpha = 0\) or \(1\),
    % as \(\Phi\) is continuous over the torus,
    % \[
    %     \int_{\torus^d} \abs*{D^\alpha \psi(x)}^2\,\d x = Z_\Phi\int_{\torus^d}
    %     e^{\Phi(x)}\abs*{D^\alpha \psi(x)}^2\,\mu_\Phi(\d x)\leq Z_\Phi
    %     e^{\norm{\Phi}_\infty}\int_{\torus^d}\abs*{D^\alpha \psi(x)}^2\,
    %     \mu_\Phi(\d x)< \infty.
    % \]

\paragraph{Step 2. Proof that the solution belongs to~$W^{2,p}(\torus^d)$}%
Since $\e^{-\Phi}$ is Lipschitz and bounded from above and below by positive constants,
it follows from~\eqref{eq:riesz}, using $\e^\Phi\varphi$ as a test function, that \(\psi \in H^1(\torus^d)\) and
\begin{equation}
    \label{eq:weak_form}
    \forall \varphi \in H^1(\torus^d), \qquad
    \int_{\torus^d} \nabla \psi(x) \cdot \nabla \varphi(x) \, \d x
    + \int_{\torus^d} \bigl(\nabla \Phi(x) \cdot \nabla \psi(x) \bigr) \, \varphi(x) \, \d x
    = \int_{\torus^d} \Bigl( f(x) - \mu_{\Phi}(f) \Bigr) \varphi(x)\, \d x \, .
\end{equation}
From now on, we identify $\Phi$, $\psi$, and $f$ with their periodic extensions to \(\R^d\).
For any test function \(\varphi \in \smoothcompact(\real^d) \), it holds by periodicity of~$\psi$ that
\begin{align*}
    \int_{\R^d} \nabla \psi(x) \cdot \nabla \varphi(x) \, \d x
    &= \sum_{\alpha \in \Z^d} \int_{[0, 1]^d} \nabla \psi(x) \cdot \nabla \varphi(x + \alpha) \, \d x \\
    &=  \int_{[0, 1]^d} \nabla \psi(x) \cdot \nabla \left(\sum_{\alpha \in \Z^d} \varphi(x + \alpha) \right) \, \d x \, , \,
\end{align*}
and similarly for the other terms in~\eqref{eq:weak_form}.
Furthermore, the sum on the second line (which is finite for any given~$x$, since~$\varphi$ is compactly supported)
defines a smooth function that is 1-periodic in each coordinate.
Therefore, it follows from~\eqref{eq:weak_form} that
\begin{equation}
    \label{eq:weak_form_rd}
    \forall \varphi \in \smoothcompact(\real^d), \qquad
    \int_{\real^d} \nabla \psi(x) \cdot \nabla \varphi(x) \, \d x
    + \int_{\real^d} \bigl(\nabla \Phi(x) \cdot \nabla \psi(x) \bigr) \, \varphi(x) \, \d x
    = \int_{\real^d} \Bigl( f(x) - \mu_{\Phi}(f) \Bigr) \varphi(x)\, \d x \, ,
\end{equation}
and so \(\psi\) satisfies, in the sense of distributions over~$\real^d$,
the equation
\[
    \nabla \Phi\cdot \nabla \psi - \Delta \psi = f - \mu_\Phi(f) \, .
\]
We note that the first term on the left-hand side belongs to~$L^2_{\mathrm{loc}}(\real^d)$,
since~$\nabla \psi \in L^2_{\mathrm{loc}}(\real^d)$ and~$\nabla \Phi$ is bounded and measurable by assumption,
and hence defines a distribution of order~0.
To bootstrap the regularity of \(\psi\),
we relate the Poisson equation with periodic boundary conditions
to a Dirichlet problem.
Consider a smooth bump function \(\chi \in \smoothcompact(\R^d)\) such that \(\chi\equiv 1\) on
\([0,1]^d\) and~$\chi \equiv 0$ outside \([-1,2]^d\).
\ifdetails
We can, for instance, define \(\chi\) as follows:
\begin{align}
    \label{eq:bump_function}
    \chi(x) = \prod_{i=1}^d g(x_i +1) \,g(2-x_i) \, ,
\end{align}
where
\[
  g(x_i) = \frac{h(x_i)}{h(x_i) + h(1-x_i)} \, ,
  \qquad
  h(x_i) =
  \begin{cases}
      \displaystyle \e^{-\frac{1}{x_i}}
      &\text{ if } x_i > 0 \, ,\\
      0 &\text{ if } x_i \leq 0 \, .
  \end{cases}
\]
\Cref{fig:bump} illustrates the function \(\chi\) in dimension \(d = 1\).
\begin{figure}[htbp]
    \begin{center}
        \includegraphics[width = 0.5\textwidth]{fig/bump.pdf}
        \caption{The bump function \(\chi\) given in~\eqref{eq:bump_function} in dimension \(d = 1\).}
        \label{fig:bump}
    \end{center}
\end{figure}
\fi
In the sense of distributions, we have
\begin{equation*}
  \nabla \Phi \cdot \nabla (\psi \chi) - \Delta (\psi \chi) =
  \chi \left(f - \mu_\Phi(f)\right) + \psi \nabla \Phi \cdot \nabla
  \chi - \psi \Delta \chi - 2 \nabla \chi \cdot \nabla \psi \, .
\end{equation*}
Let~$\openball{R}$ denote the open ball of radius~$R$ centered at~0 in~$\real^d$,
and fix~$R = 3\sqrt{d}$ so that the support of~$\chi$ is contained in~$\openball{R}$.
It follows that \(u\coloneq\psi \chi\) is the unique weak solution to the Dirichlet
problem:
\begin{subequations}
\begin{align}
  \label{eq:dirichlet_problem_a}
      \nabla \Phi\cdot \nabla u - \Delta u &= \chi \left(f - \mu_\Phi(f)\right) + \psi \nabla \Phi \cdot \nabla \chi - \psi \Delta \chi - 2 \nabla \chi \cdot \nabla \psi
      \eqcolon F
                                           &\text{ on } \openball{R} \, ,
      \\ u &= 0 &\text{ on } \partial \openball{R} \, .
\end{align}
\end{subequations}
Since \(\psi\in H^1(\openball{R})\) and $\Phi \in W^{1,\infty}(\torus^d)$ by assumption,
the right-hand side~$F$ belongs to~$L^2(\openball{R})$.
We now prove that~$F$ belongs to~$L^{p}(\openball{R})$,
i.e.\ that it has the same integrability as~$f$,
by repeatedly employing the classical elliptic regularity result given in~\cite[Theorem~9.15]{MR1814364} with the Sobolev embedding theorem~\cite[Chapter~5]{MR2597943}.

By elliptic regularity, it holds that~\(\psi\chi\) belongs to \(H^2(\openball{R})\),
so~\(\psi\in H^2(\torus^d)\) and \(\nabla \psi\in H^1(\torus^d)\).
If~$d \leq 2$,
it then follows immediately from a Sobolev embedding that~$F$ indeed belongs to~$L^{p}(\openball{R})$.
For $d > 2$, the Sobolev embedding theorem implies only that $F \in L^{\min\{p,q_1\}}(\openball{R})$ for \(\frac{1}{q_1} = \frac{1}{2} - \frac{1}{d}\),
but elliptic regularity can be used again to deduce that~$\psi\chi \in W^{2,\min\{p,q_1\}}(\openball{R})$,
and, unless the exponent~$p$ has already been reached, another application of the Sobolev embedding theorem then further improves the regularity of~$F$ to~$L^{\min\{q_2,p\}}(\openball{R})$ for \(\frac{1}{q_2} = \frac{1}{2} - \frac{2}{d}\),
provided that $\frac{1}{2} - \frac{2}{d} > 0$.
When the Sobolev exponent reaches~$d$, the embedding~$W^{1,d} \hookrightarrow L^q$ for every finite~$q$ gives~$F\in L^p$; above~$d$, the same conclusion follows from the embedding into~$L^\infty$.
This procedure can be repeated to eventually deduce that $F \in L^{p}(\openball{R})$ and~$\psi \in W^{2,p}(\torus^d)$.

\paragraph{Step 3. Proof of the bound~\eqref{eq:bound_poisson} with $k=0$}
So far, we have used mainly the elliptic regularity estimate~\cite[Theorem 9.15]{MR1814364},
which provides existence of a solution in \(W^{2,p}(\torus^d)\),
but no estimate on the~$W^{2,p}(\torus^d)$-norm of the solution.
To obtain such an estimate, let
\[
    -\genlimit \colon W^{2,p}(\torus^d) \cap L^p_0(\torus^d) \to L^p_0(\torus^d) \, , \qquad
    -\genlimit \psi = \nabla \Phi \cdot \nabla \psi - \Delta \psi \, ,
\]
where \(L^p_0(\torus^d) \coloneq \bigl\{ f \in L^p(\torus^d) \mid \mu_\Phi(f) = 0 \bigr\}\).
By the existence and uniqueness established in \textbf{Steps~1--2},
this operator is bijective.
% for each \(f \in L^p_0(\torus^d)\) there exists a unique \(\psi \in W^{2,p}(\torus^d)\) with mean zero solving \(-\genlimit \psi = f\).
Moreover, the operator \(-\genlimit\) is linear and bounded:
\[
\|-\genlimit \psi\|_{L^p(\torus^d)} = \| \nabla \Phi \cdot \nabla \psi - \Delta \psi \|_{L^p(\torus^d)}
\le \|\nabla \Phi\|_\infty \|\nabla \psi\|_{L^p(\torus^d)} + \|\Delta \psi\|_{L^p(\torus^d)} \le C \|\psi\|_{W^{2,p}(\torus^d)} \, .
\]
Hence, the linear operator \(-\genlimit\) is bijective between the Banach spaces \(W^{2,p} \cap L^p_0 \) and \(L^p_0\).
Thus, by Banach's bounded inverse theorem (or open mapping theorem),
its inverse is also bounded.
Therefore, there exists a constant~\(C_p > 0\), depending only on \(\Phi\) and \(p\), such that the solution satisfies
\begin{equation}
    \label{eq:bound_W2p}
    \|\psi\|_{W^{2,p}(\torus^d)} \le C_p \, \| f \|_{L^p(\torus^d)} \, .
\end{equation}
Then for any $0 \leq \lambda < 1$,
the Sobolev embedding theorem, employed with a sufficiently large value of $p > d$,
leads to the bound~\eqref{eq:bound_poisson} with~$k=0$.

\paragraph{Step 4. Proof of the bound~\eqref{eq:bound_poisson_holder} with $k=0$}
To obtain the improved bound~\eqref{eq:bound_poisson_holder},
assume additionally that~$\Phi \in \cont^{1+\alpha}(\torus^d)$;
it suffices to apply the same reasoning as in the previous step to the operator
\[
    -\genlimit \colon \cont^{2+\alpha}(\torus^d) \cap L^2_0(\torus^d) \to \cont^{\alpha}(\torus^d) \cap L^2_0(\torus^d) \, .
\]
This operator is bounded and linear,
so it remains to show that it is surjective to conclude that its inverse is bounded by the bounded inverse theorem.
To this end, take any~$f \in \cont^{0+\alpha}(\torus^d) \cap L^2_0(\mu_{\Phi})$.
By the second step, the corresponding solution satisfies $\psi \in W^{2,q}(\torus^d)$ for all $q \in [2, \infty)$,
and application of the Morrey--Sobolev embedding theorem then implies that the right-hand side~$F$ of~\eqref{eq:dirichlet_problem_a} belongs to $\cont^{0+\alpha}(\openball{R})$.
But then \(\chi \psi\in \cont^{2+\alpha}(\openball{R})\), by the Schauder regularity estimate~\cite[Theorem 6.14]{MR1814364},
and so~$\psi \in \cont^{2+\alpha}(\torus^d) \cap L^2_0(\mu_{\Phi})$.
Note that, in this case, the Poisson equation \eqref{eq:poisson} is satisfied in the classical sense.

\paragraph{Step 5. Generalization to higher derivatives}
It remains to prove the bounds~\eqref{eq:bound_poisson} and~\eqref{eq:bound_poisson_holder} for the case~$k > 0$.
We focus on the case~\(k = 1\) for simplicity,
but the general case can be treated by induction.
Assume that \(f\in W^{1,\infty}(\torus^d)\) and \(\Phi\in W^{2,\infty}(\torus^d)\).
For any~$\varphi \in \smoothcompact(\real^d)$ and any~$i \in \{1, \ldots, d\}$,
it holds by testing~\eqref{eq:weak_form_rd} with~$\partial_i \varphi$,
and by using integration by parts,
that
\[
    \int_{\real^d} \nabla (\partial_i \psi)(x) \cdot \nabla \varphi(x) \, \d x
    + \int_{\real^d} \bigl(\nabla \Phi(x) \cdot \nabla (\partial_i \psi)(x) \bigr) \, \varphi(x) \, \d x
    = \int_{\real^d} \bigl(\partial_{i}f(x) - \nabla \partial_{i}\Phi(x) \cdot \nabla \psi(x) \bigr) \varphi(x)\, \d x \, ,
\]
% By density, this equation also holds for all~\(\varphi \in H^1(\mu_\Phi)\),
and so~$u_i\coloneq\partial_i\psi$ is a weak solution to the Poisson equation
\[
    \nabla \Phi \cdot \nabla u_i -\Delta u_i =
    \partial_{i}f - \nabla \partial_{i}\Phi\cdot \nabla \psi \, .
\]
Since~$\nabla\psi$ is bounded by~\eqref{eq:bound_poisson} with~$k=0$,
the right-hand side belongs to~$L^\infty(\torus^d)$, with norm bounded by~$C\|f\|_{W^{1,\infty}}$.
Applying the same estimate to~$u_i-\mu_\Phi(u_i)$,
and using~$|\mu_\Phi(u_i)|\leq\|\nabla\psi\|_\infty\leq C\|f\|_\infty$,
gives~\eqref{eq:bound_poisson} for~$k=1$.
Repeating this argument proves the higher-order bounds by induction;
for~\eqref{eq:bound_poisson_holder}, we use \textbf{Step~4} and H\"older norms instead.
We omit the details for brevity.
\end{proof}

\subsection{Proof of the result on the backward Kolmogorov equation\texorpdfstring{ (\cref{theorem:auxiliary_parabolic})}{}}%
\label{appendix:proof_auxiliary_parabolic}

The proof is based on several auxiliary results,
which we present and prove in this section.
We first recall without proof the following classical result:
we refer to~\cite[Theorem 1.6.2 and Remark 1.6.3]{MR1840644} for the details.
In that reference, the setting where the equation is posed on~$\real^d$ with a dissipative drift term is considered,
but the proof applies mutatis mutandis to the case of the torus~$\torus^d$.
In fact, it is likely that this result can be proved under weaker regularity assumptions on~$\Phi$ because the torus is compact,
but for simplicity we do not pursue this direction in this paper.

\begin{lemma}
    [Stochastic representation of the solution to the backward Kolmogorov equation]
    \label{lemma:semigroup_parabolic}
    If \(\Phi\in \cont^4\left(\torus^d\right)\) and \(f\in \cont^2(\torus^d)\),
    then there exists a unique classical solution \(u\in \cont^{1,2}([0, +\infty) \times \torus^d)\) to the parabolic problem
    \begin{equation}
        \left\{
            \begin{aligned}
                \label{eq:parabolic_equation}
                \partial_t u(t,\theta) &= -\nabla \Phi(\theta)\cdot \nabla u(t,\theta) + \Delta u(t,\theta) \, , && t>0, \theta\in \torus^d \, ,\\
                u(0,\theta) &= f(\theta) \, , && \theta\in \torus^d \, .
            \end{aligned}
        \right.
    \end{equation}
    and this classical solution is given by the formula
    \begin{equation}
        \label{eq:representation}
        \forall \theta \in \torus^d,
        \qquad u(t,\theta ) 
        = \expect_{\theta }\bigl[ f(\vartheta_t)\bigr] \, ,
    \end{equation}
    where \((\vartheta_t)_{t\geq 0}\) is the solution of the stochastic differential equation~\eqref{eq:overdamped_langevin_weak} and
    \(\expect_{\theta }\) denotes the expectation with respect to the law of the process \((\vartheta_t)_{t\geq 0}\) with initial condition \(\vartheta_0 = \theta \).
\end{lemma}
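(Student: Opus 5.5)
The plan is to take the representation~\eqref{eq:representation} as the \emph{definition} of a candidate $u(t,\theta)\coloneq\expect_\theta[f(\vartheta_t)]$, show directly that it is a classical solution, and then prove uniqueness by a verification argument along the diffusion. We work with the periodic lift of~\eqref{eq:overdamped_langevin_weak} to~$\real^d$. Its drift~$-\nabla\Phi$ is bounded and, since $\Phi\in\cont^4(\torus^d)$, belongs to~$\cont^3$ with bounded derivatives.

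\textbf{Step 1: spatial regularity.} By classical results on differentiability of stochastic flows with respect to the initial condition, the map $\theta\mapsto\vartheta^\theta_t$ is almost surely of class~$\cont^2$, since the coefficients are $\cont^3_b$. Its first and second derivatives $J_t=\nabla_\theta\vartheta^\theta_t$ and $\nabla^2_\theta\vartheta^\theta_t$ solve the linear variational equations
\[
    \d J_t=-\nabla^2\Phi(\vartheta_t)J_t\,\d t,\qquad J_0=\I_d,
\]
together with the analogous second-order equation, which has source $-\nabla^3\Phi(\vartheta_t)[J_t,J_t]$. Gr\"onwall's lemma gives moment bounds on these derivatives that are uniform for $(t,\theta)\in[0,T]\times\torus^d$; here the noise is additive, so the variational equations are random ODEs. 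These bounds, together with continuity in~$(t,\theta)$, let us differentiate under the expectation:
\[
    \nabla u(t,\theta)=\expect_\theta\bigl[J_t^{\t}\nabla f(\vartheta_t)\bigr],
\]
and similarly for~$\nabla^2u$, using $f\in\cont^2$. Dominated convergence then shows that $u$, $\nabla u$ and $\nabla^2 u$ are continuous on $[0,\infty)\times\torus^d$, including at $t=0$.

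\textbf{Step 2: time derivative and the PDE.} Fix $t\geq0$ and $s>0$. The Markov property gives $u(t+s,\theta)=\expect_\theta[u(t,\vartheta_s)]$. Since $u(t,\cdot)\in\cont^2(\torus^d)$ by Step~1, It\^o's formula yields
\[
    u(t+s,\theta)-u(t,\theta)=\int_0^s\expect_\theta\bigl[\genlimit u(t,\vartheta_r)\bigr]\,\d r .
\]
Continuity of $(t,\theta)\mapsto\genlimit u(t,\theta)$ from Step~1 then shows that the right derivative in time equals $\genlimit u(t,\theta)$, and that this right derivative is continuous in $(t,\theta)$. A continuous function on $[0,\infty)$ whose right derivative exists and is continuous is~$\cont^1$. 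Hence $u\in\cont^{1,2}([0,\infty)\times\torus^d)$, it solves~\eqref{eq:parabolic_equation}, and $u(0,\cdot)=f$ trivially.

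\textbf{Step 3: uniqueness.} Let $w\in\cont^{1,2}$ be any classical solution and fix $t>0$. Apply It\^o's formula to $s\mapsto w(t-s,\vartheta_s)$ on $[0,t]$, with $\vartheta_0=\theta$. The drift term vanishes because $-\partial_t w+\genlimit w=0$. The stochastic integral is a true martingale because $\nabla w$ is bounded on the compact set $[0,t]\times\torus^d$. Taking expectations gives $w(t,\theta)=\expect_\theta[f(\vartheta_t)]=u(t,\theta)$. A maximum principle on the compact torus would give uniqueness just as well.

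The main obstacle is Step~1, namely justifying second-order differentiability of the flow and the interchange of derivatives and expectation. This is where $\Phi\in\cont^4$ is consumed. I would first try to quote the flow-differentiability theorem from Kunita, or from the cited reference~\cite{MR1840644}. Compactness of the torus makes all the required moment bounds elementary, because every derivative of~$\Phi$ is globally bounded.
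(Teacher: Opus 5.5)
Your proposal is correct and is essentially the standard probabilistic argument that the paper does not reproduce but delegates to Cerrai's Theorem~1.6.2: regularity of $P_tf$ by differentiating the flow, the time derivative via the Markov property and It\^o's formula, and uniqueness by an It\^o verification argument, carried over to the torus by periodic lifting. The only variation is that you use the additive noise to differentiate the flow pathwise, as a random ODE whose Gr\"onwall bounds are in fact deterministic, instead of in mean square; this makes the interchange of derivatives and expectation immediate, but it also only needs $\Phi\in\cont^3$, so your remark that $\Phi\in\cont^4$ is consumed in Step~1 slightly overstates what your argument uses.
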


In order to state the following results,
we introduce the following notation
for any bounded measurable function~$f \in \mathcal B(\torus^d)$:
\[
    P_t f(\theta) = \expect_{\theta }\bigl[ f(\vartheta_t)\bigr] \, .
\]
Since the stochastic differential equation~\eqref{eq:overdamped_langevin_weak} is well-posed for~$\Phi \in \cont^2(\torus^d)$,
see~\cite[Theorem 3.4]{khasminskii2012stochastic},
the function~$P_t f$ is well-defined under conditions that are less restrictive than those of \cref{lemma:semigroup_parabolic}.
We proceed by presenting a result that gives a bound on the derivatives of~$P_t f$
in terms of the derivatives of~$f$.
\begin{lemma}
  \label{lemma:gradient_semigroup}
  Assume that \(\Phi\in \cont^{k+1}(\torus^d)\) and \(f\in \cont^k(\torus^d)\),
  for some~$k \in \{1, 2, \dots\}$.
  Then for any~$t \geq 0$, it holds that~$P_t f \in \cont^k(\torus^d)$, 
  and for any multi-index \(\alpha\) such that \(\abs{\alpha}\leq k\),
  there exist constants \(C_{\abs{\alpha}}, \lambda_{\lvert \alpha \rvert} > 0\) 
  depending on~$\Phi$ and~$|\alpha|$ but not \(f\) such that
  \begin{equation}
      \label{eq:gradient_semigroup}
      \forall t \geq 0, \qquad
      \norm[\big]{\partial^\alpha_\theta P_t f}_\infty \leq C_{\abs{\alpha}}
      \exp\left(\lambda_{|\alpha|} t\right)\norm*{f}_{\abs{\alpha},\infty} \, .
\end{equation}
\end{lemma}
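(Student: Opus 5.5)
We prove the lemma through the derivative flows of the overdamped Langevin dynamics~\eqref{eq:overdamped_langevin_weak}. Since $\Phi \in \cont^{k+1}(\torus^d)$, the drift $-\nabla\Phi$ is $\cont^k$ with bounded derivatives on the compact torus. By classical results on differentiability of stochastic flows with respect to the initial condition (e.g.\ Kunita's theorem), the map $\theta \mapsto \vartheta^\theta_t$ is then almost surely $\cont^k$. The derivatives $\partial^\beta_\theta \vartheta^\theta_t$ for $\abs{\beta}\leq k$ solve the variational equations obtained by formally differentiating the SDE. Because the noise is additive, these equations are \emph{random ODEs}, with no stochastic integral.

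For $\abs{\beta} = 1$, the first variation $D_t \coloneq \nabla_\theta \vartheta^\theta_t$ satisfies
\[
\frac{\d}{\d t} D_t = -\nabla^2\Phi(\vartheta^\theta_t) D_t, \qquad D_0 = \I_d .
\]
Gr\"onwall's lemma gives the pathwise bound $\abs{D_t} \leq \e^{\norm{\nabla^2\Phi}_\infty t}$. For higher orders we proceed by induction on $\abs{\beta}$. The equation for $\partial^\beta \vartheta_t$ has the form
\[
\frac{\d}{\d t}\partial^\beta\vartheta_t = -\nabla^2\Phi(\vartheta_t)\,\partial^\beta\vartheta_t + R_\beta(t).
\]
The source $R_\beta$ is a polynomial in lower-order derivatives $\partial^\gamma\vartheta_t$ ($\abs{\gamma}<\abs{\beta}$), with coefficients $\nabla^{m}\Phi(\vartheta_t)$ for $m\leq \abs{\beta}+1\leq k+1$. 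By the induction hypothesis $R_\beta$ is bounded pathwise by $C\e^{c t}$, so Duhamel's formula yields $\abs{\partial^\beta\vartheta_t}\leq C_{\abs\beta}\e^{\lambda_{\abs\beta} t}$ almost surely. These bounds are deterministic, so there are no integrability issues.

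Next, we differentiate $P_tf(\theta)=\expect[f(\vartheta^\theta_t)]$ under the expectation. By the Fa\`a di Bruno formula, $\partial^\alpha_\theta f(\vartheta^\theta_t)$ is a finite sum of terms $\nabla^m f(\vartheta^\theta_t)$ contracted with products of $\partial^{\gamma_i}\vartheta^\theta_t$, where $m\leq\abs\alpha$ and $\sum\abs{\gamma_i}=\abs\alpha$. Each term is continuous in $\theta$ and dominated by $C\e^{\lambda t}\norm{f}_{\abs\alpha,\infty}$ uniformly in $\theta$. Dominated convergence therefore justifies interchanging derivative and expectation and proves $P_tf\in\cont^k(\torus^d)$, with continuity of the derivatives following by the same domination. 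Taking suprema gives~\eqref{eq:gradient_semigroup}.

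The main obstacle is the first step: rigorously establishing $\cont^k$ regularity of the flow with only $\cont^{k+1}$ regularity of $\Phi$, and identifying the derivatives with solutions of the variational equations. With additive noise this reduces to a pathwise argument. We would write $\vartheta_t=\theta+\sqrt2 W_t+Y_t$, where $Y$ solves the random ODE $\dot Y_t=-\nabla\Phi(\theta+\sqrt2W_t+Y_t)$. Classical ODE theory on differentiable dependence on initial data, applied for each fixed continuous Brownian path, then gives the $\cont^k$ dependence directly, since the vector field is $\cont^k$ in the state variable and continuous in time. This avoids any appeal to the general theory of stochastic flows.
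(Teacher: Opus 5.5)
Your proof is correct, but it takes a different route from the paper's sketched proof.

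\textbf{The paper's route.} It works with mean-square differentiability of $z\mapsto\vartheta^z_t$ in $L^2(\Omega)$, imported from the stochastic-flow literature. It controls the first variation only in expectation, $\expect\lvert\nabla_z\vartheta^z_t\rvert\leq\e^{t\norm{\nabla^2\Phi}_\infty}$. It then proves by hand a mean-square chain rule for $g(\vartheta^z_t)$, using the modulus of continuity of $\nabla g$ and a Vitali-type argument, in order to exchange $\partial_z$ and $\expect$. The higher-order case is only indicated.

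\textbf{Your route.} You exploit the additive noise all the way. After subtracting $\sqrt2\,W_t$, the dynamics is a nonautonomous ODE on every Brownian path. Hence the flow $\theta\mapsto\vartheta^\theta_t$ is defined simultaneously for all $\theta$, and it is $\cont^k$ by classical differentiable dependence on initial data. Your variational bounds are deterministic. Both the interchange of $\partial^\alpha$ with $\expect$ and the continuity of $\partial^\alpha P_tf$ then follow from dominated convergence with a constant majorant. The Fa\`a di Bruno/Duhamel induction makes the higher orders explicit.

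One small precision: when you invoke the ODE theorem, the hypothesis actually needed is that the $z$-derivatives of order at most $k$ of $(t,z)\mapsto-\nabla\Phi(z+\sqrt2\,W_t)$ be jointly continuous in $(t,z)$. Continuity in time alone is not the right condition. It holds here because the path is continuous and $\Phi\in\cont^{k+1}$.

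\textbf{Comparison.} Your argument is more elementary and fully self-contained, and it gives classical $\cont^k$ regularity of $P_tf$ directly. The paper's argument does not use the additive structure. It therefore extends to multiplicative noise, where the variational equations contain stochastic integrals and only moment bounds on the derivative flow are available. That is the setting of the references the paper relies on.
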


\begin{proof}
    Since the strategy of proof is standard,
    we only give a sketch of the proof.
    We refer to~\cite[Lemma 11]{darshan2024sticky} or~\cite[Lemma A.2]{kopec2015weak} for detailed proofs of these statements in more general settings.

    Under the present assumptions, the semigroup~$P_t$ is well defined through the stochastic representation.
    Moreover, when~$\Phi$ and~$f$ are smooth enough for~\cref{lemma:semigroup_parabolic} to apply,
    this representation coincides with the unique classical solution to~\eqref{eq:parabolic_equation}.
    Let~$(\vartheta^z_t)_{t\geq 0}$ be the solution to the overdamped Langevin SDE~\eqref{eq:overdamped_langevin_weak} with initial condition \(\vartheta^z_0 = z\in \torus^d\). 
    For differences and derivatives, we view these processes in~$\real^d$, with periodic coefficients and a common Brownian motion.
    It is possible to prove,
    see~\cite[Section~1.3]{MR1840644},
    that if~$\Phi \in \cont^{k+1}(\torus^d)$,
    then for any fixed~$t$ the solution~$z \mapsto \vartheta^z_t \in L^2(\Omega, \torus^d)$ is~$k$ times mean-square differentiable with respect to~$z$.
    Furthermore, the derivatives solve the SDEs obtained by formally differentiating the original SDE with respect to the initial condition.
    For example, the gradient~$\nabla_{z} \vartheta^z_t$ is the solution of
    \[
        \d \nabla_z \vartheta^z_t = -\nabla^2 \Phi(\vartheta^z_t) \nabla_z \vartheta^z_t \,\d t \, , 
        \qquad \nabla_z \vartheta^z_0 = \I_d \, ,
    \]
    and so by Gr\"onwall's lemma, we have that
    \begin{equation}
        \label{eq:gronwall_derivative}
        \expect\Bigl[\eucnorm*{\nabla_z \vartheta^z_t}\Bigr] \leq \exp\Bigl(t\norm*{\nabla^2 \Phi}_{\infty}\Bigr) \, .
    \end{equation}
    Since $\vartheta^z_t$ is mean-square differentiable with respect to~$z$,
    for any function~$g \in \cont^1(\torus^d)$,
    the function~$z \mapsto g(\vartheta^z_t)$ is also mean-square differentiable and 
    it holds that
    \begin{equation}
        \label{eq:chain_rule_derivative_mean_square}
        \partial_{z_i} g(\vartheta^z_t) = \nabla g(\vartheta^z_t) \cdot \partial_{z_i} \vartheta^z_t \, 
    \end{equation}
    We present a self-contained proof of this result at the end of this proof for completeness.
    It follows from the representation formula~\eqref{eq:representation},
    together with equations~\eqref{eq:gronwall_derivative} and~\eqref{eq:chain_rule_derivative_mean_square},
    that
    \begin{align}
        \label{eq:example_bound}
        \lvert \partial_{z_i} P_t f(z) \rvert 
        = \Bigl\lvert \frac{\partial}{\partial z_i} \expect \bigl[f(\vartheta^z_t)\bigr]  \Bigr\rvert
        = \Bigl\lvert \expect \bigl[\nabla f(\vartheta^z_t) \cdot \partial_{z_i} \vartheta^z_t\bigr] \Bigr\rvert
        \leq \expect \Bigl[  \eucnorm*{\partial_{z_i} \vartheta^z_t} \Bigr] \norm*{\nabla f}_\infty
        \leq \exp\Bigl(t\norm*{\nabla^2 \Phi}_\infty\Bigr) \norm*{\nabla f}_\infty \, .
    \end{align}
    Exchanging the derivative and the expectation is justified by the mean-square differentiability of~$z \mapsto f(\vartheta^z_t)$.
    Notice that the proof of~\eqref{eq:example_bound} relied
    on the regularity conditions~$\Phi \in \cont^2(\torus^d)$,
    to ensure that~$\vartheta^z_t$ is mean-square differentiable with respect to~$z$,
    and~$f \in \cont^1(\torus^d)$, to derive~\eqref{eq:chain_rule_derivative_mean_square}.
    The procedure to prove~\eqref{eq:gradient_semigroup} with greater values of~$|\alpha|$ is similar:
    if~$\Phi \in \cont^{|\alpha|+1}(\torus^d)$, 
    then~$\vartheta^z_t$ is~$|\alpha|$ times mean-square differentiable with respect to~$z$,
    and a bound similar to~\eqref{eq:gronwall_derivative} can be derived for the higher-order derivatives of~$\vartheta^z_t$ with respect to~$z$ 
    by using the SDEs satisfied by these derivatives.
    Next, if~$f \in \cont^{|\alpha|}(\torus^d)$, 
    then a formula similar to~\eqref{eq:chain_rule_derivative_mean_square} can be proved for higher-order derivatives,
    which can be combined with the bounds on the derivatives of~$\vartheta^z_t$ to obtain the desired bound.

    \paragraph{Proof of~\eqref{eq:chain_rule_derivative_mean_square}}
    From Taylor's formula, for all \(h\in \R^d\), 
    we have for any~$g \in \cont^1(\torus^d)$ that
    \[
        g(\vartheta^{z+h}_t) - g(\vartheta^z_t) = \nabla g(\vartheta^{z}_t) \cdot (\vartheta^{z+h}_t - \vartheta^z_t) \,
        + \int_0^1 \Bigl( \nabla g\bigl(\vartheta^z_t + s(\vartheta^{z+h}_t - \vartheta^z_t)\bigr) - \nabla g(\vartheta^z_t) \Bigr) \cdot (\vartheta^{z+h}_t - \vartheta^z_t) \,\d s \, ,
    \]
    and so, introducing the modulus of continuity~$w_{\nabla g} \colon \R_{\geq0} \to \R_{\geq0}$ of \(\nabla g\),
    we deduce that
    \[
        \Bigl\lvert g(\vartheta^{z+h}_t) - g(\vartheta^z_t) - \nabla g(\vartheta^{z}_t) \cdot (\vartheta^{z+h}_t - \vartheta^z_t) \Bigr\rvert \,
        \leq 
        \int_0^1  w_{\nabla g}\bigl(s\lvert \vartheta^{z+h}_t - \vartheta^z_t \rvert\bigr) \, \d s \, \bigl\lvert \vartheta^{z+h}_t - \vartheta^z_t \bigr\rvert \, .
    \]
    This implies the following inequality:
    \begin{align*}
        \frac{1}{|h|^2}
        \expect \Bigl[ \bigl\lvert g(\vartheta^{z+h}_t) - g(\vartheta^z_t) - \nabla g(\vartheta^{z}_t) \cdot (\nabla \vartheta^z_t h) \bigr\rvert^2 \Bigr] \,
        &\leq 
        \frac{2}{|h|^2}
        \expect \Bigl[ \bigl\lvert \nabla g(\vartheta^z_t) \cdot (\vartheta^{z+h}_t - \vartheta^z_t) - \nabla g(\vartheta^{z}_t) \cdot (\nabla \vartheta^z_t h)  \bigr\rvert^2  \Bigr] \\
        &\qquad 
        + \frac{2}{|h|^2} \int_0^1  \expect \Bigl[  w_{\nabla g}\bigl(s\lvert \vartheta^{z+h}_t - \vartheta^z_t \rvert\bigr)^2  \bigl\lvert \vartheta^{z+h}_t - \vartheta^z_t \bigr\rvert^2 \Bigr] \, \d s \,     \, .
    \end{align*}
    The first term on the right-hand side tends to zero as~$h \to 0$ since~$\vartheta^z_t$ is mean-square differentiable with respect to~$z$ and~$\nabla g$ is bounded,
    and it remains to prove that
    \[
        \forall s \in [0,1], \qquad
        \expect \Biggl[  w_{\nabla g}\bigl(s\lvert \vartheta^{z+h}_t - \vartheta^z_t \rvert\bigr)^2  \frac{\bigl\lvert \vartheta^{z+h}_t - \vartheta^z_t \bigr\rvert^2}{|h|^2} \Biggr] 
        \xrightarrow[h \to 0]{} 0 \, .
    \]
    By mean-square differentiability of~$\vartheta^z_t$ with respect to~$z$, 
    we have that $\vartheta^{z+h}_t - \vartheta^z_t = \nabla \vartheta^z_t h + o\bigl(|h|\bigr)$ in $L^2(\Omega)$ as~$h \to 0$, 
    and so, since~$w_{\nabla g}$ is uniformly bounded,
    it is sufficient to prove that
    \[
        \forall s \in [0,1], \qquad
        \expect \Biggl[  w_{\nabla g}\bigl(s\lvert \vartheta^{z+h}_t - \vartheta^z_t \rvert\bigr)^2 \bigl\lvert \nabla \vartheta^z_t \bigr\rvert^2 \Biggr]
        \xrightarrow[h \to 0]{} 0 \, ,
    \]
    but this is clear (using, for example, the Vitali convergence theorem) 
    since~$w_{\nabla g}\bigl(s\lvert \vartheta^{z+h}_t - \vartheta^z_t \rvert\bigr)$ converges to zero in probability.
\end{proof}

It is well known that the exponential growth with respect to the time~$t$ in~\cref{lemma:gradient_semigroup} is suboptimal.
In order to obtain a better bound, 
we first state the following regularization result,
which can be proved by following exactly the same approach as in~\cite[Appendix A, Lemma 12]{darshan2024sticky}.
See also~\cite[Theorem 1.5.2]{MR1840644} and~\cite[Lemma A.2]{kopec2015weak}.
The main technical tool of the proof is the Bismut--Elworthy--Li (BEL) formula.
Unlike the formula~\eqref{eq:example_bound},
the BEL formula provides a stochastic representation formula for the derivatives of the solution to the parabolic problem in terms of just~$f$, and not its derivatives.

\begin{lemma}
  \label{lemma:bel_formula}
  Assume that \(\Phi\in \cont^{k+1}(\torus^d,\R)\) for some~$k \in \{1, 2, \dots\}$.
  Then, for any multi-index \(\alpha\) such that~\(\abs{\alpha} \leq k\), 
  there exists a constant~\(\widetilde C_{\abs{\alpha}}>0\) such that for any~$f \in \mathcal B(\torus^d)$,
  it holds that
  \begin{equation}
    \label{eq:bel_formula}
    \forall t \in (0, 1], \qquad
    \norm[\big]{\partial^\alpha_\theta P_t f}_\infty \leq
    \widetilde C_{\abs{\alpha}} \norm*{f}_{\infty} t^{-\frac{|\alpha|}{2}} \, .
  \end{equation}
\end{lemma}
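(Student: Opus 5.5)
The plan is to prove \cref{lemma:bel_formula} by induction on $|\alpha|$. The base case $|\alpha|=1$ comes from the Bismut--Elworthy--Li formula, and higher orders come from the semigroup property $P_t = P_{t/2}P_{t/2}$ combined with \cref{lemma:gradient_semigroup}.

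Since $\|P_t f\|_\infty\leq\|f\|_\infty$ and $P_t$ is Markov, it suffices by density and monotone class arguments to treat $f\in\cont^{k}(\torus^d)$. The constants must depend only on $\|f\|_\infty$, so the bound passes to bounded measurable $f$. This is justified because, for $t>0$, $P_tf$ has a smooth density representation, and pointwise convergence of $P_t f_n$ together with uniform derivative bounds yields the claim.

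\textbf{Step 1: first derivatives.} Let $J_t^z=\nabla_z\vartheta^z_t$ be the Jacobian flow from the proof of \cref{lemma:gradient_semigroup}. It solves $\d J_t=-\nabla^2\Phi(\vartheta_t)J_t\,\d t$, so $|J_t|\leq \e^{t\|\nabla^2\Phi\|_\infty}$ almost surely. Because the diffusion coefficient is $\sqrt2\,\I_d$, the BEL formula reads
\[
    \nabla_z P_tf(z)\cdot v=\frac{1}{\sqrt2\, t}\,\expect\Bigl[f(\vartheta^z_t)\int_0^t (J^z_s v)\cdot \d \wslow_s\Bigr] \, .
\]
I would derive it by applying It\^o's formula to $s\mapsto P_{t-s}f(\vartheta^z_s)$, which gives the martingale representation $f(\vartheta^z_t)=P_tf(z)+\sqrt2\int_0^t\nabla P_{t-s}f(\vartheta^z_s)\cdot\d\wslow_s$. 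I would then multiply by $\int_0^t J_sv\cdot \d\wslow_s$, take expectations, use the It\^o isometry, and use the identity $\nabla P_{t-s}f(\vartheta_s)J_s=\nabla_z(P_tf)$ in expectation, which follows from the Markov property. Cauchy--Schwarz and the It\^o isometry then give $\|\nabla P_tf\|_\infty\leq C\|f\|_\infty t^{-1/2}$ for $t\le1$.

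\textbf{Step 2: higher derivatives.} Assume the bound holds for orders at most $m-1<k$. Write $P_tf=P_{t/2}g$ with $g=P_{t/2}f$. Differentiating the BEL formula for $P_{t/2}g$ $m-1$ further times in $z$ puts derivatives on $J$, on the higher variations $\nabla^r_z\vartheta^z$ (which exist and have bounded moments uniformly on $[0,1]$ when $\Phi\in\cont^{m+1}$), and on $g$ of order at most $m-1$. By the induction hypothesis applied to $g$, these derivatives are bounded by $C\|f\|_\infty t^{-(m-1)/2}$. The extra factor $t^{-1/2}$ from the BEL weight then gives $t^{-m/2}$.

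An equivalent and cleaner bookkeeping is to iterate BEL $m$ times on $m$ consecutive subintervals of length $t/m$. On each subinterval one derivative is converted into a factor $(t/m)^{-1/2}$, and the remaining derivatives fall on the flow derivatives, which are bounded in $L^p$ on $[0,1]$ by Gr\"onwall estimates.

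The main obstacle is this higher-order step, specifically the justification of differentiating the BEL weight $\int_0^t J^z_sv\cdot\d\wslow_s$ with respect to $z$. This requires mean-square differentiability of $z\mapsto J^z$ up to order $m-1$ with moment bounds uniform for $t\le1$. I would take this from \cite[Section~1.3]{MR1840644} under $\Phi\in\cont^{k+1}$, as in \cite[Lemma 12]{darshan2024sticky}, and control the stochastic integrals with BDG.
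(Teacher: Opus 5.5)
Your proposal is correct and follows essentially the same route as the paper. The paper writes out no proof of its own: it defers to \cite[Appendix A, Lemma 12]{darshan2024sticky} (see also \cite[Theorem 1.5.2]{MR1840644}) and names the Bismut--Elworthy--Li formula as the key tool, and your first-order BEL identity plus induction through $P_t = P_{t/2}P_{t/2}$ is the standard argument behind that citation.
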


The last remaining auxiliary result, 
before we can prove the decay estimates presented in the statement of \cref{theorem:auxiliary_parabolic},
is a classical ergodicity result for the overdamped Langevin dynamics.
\begin{lemma}
  \label{lemma:ergodicity_langevin}
  Let \(\Phi\in \cont^2(\torus^d)\). 
  Then, there exist constants \(C_\Phi > 0\) and \(\lambda_\Phi > 0\) such that,
  for all \(f\in \mathcal B(\torus^d)\),
  \begin{equation}
      \forall t \geq 0, \qquad
      \norm*{P_t f- \mu_{\Phi}(f)}_\infty 
      \leq C_\Phi \e^{- \lambda_\Phi t} \norm*{f- \mu_{\Phi}(f)}_\infty \, 
       ,\qquad \mu_{\Phi}(f) \coloneq
      \frac
      {\displaystyle\int_{\torus^d} f(\theta) \e^{-\Phi(\theta)} \, \d \theta}
      {\displaystyle \int_{\torus^d} \e^{-\Phi(\theta)} \, \d \theta} \, .
  \end{equation}
\end{lemma}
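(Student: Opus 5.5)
We prove the uniform exponential convergence of \(P_t f\) to \(\mu_\Phi(f)\) in sup norm using Doeblin's argument on the compact torus. The plan has three steps: a uniform minorization of the transition kernel at time \(t=1\), a contraction for oscillations, and extension to all times via the semigroup property.

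\textbf{Step 1 (minorization).} Since \(\Phi\in\cont^2(\torus^d)\), the generator \(\genlimit=-\nabla\Phi\cdot\nabla+\Delta\) is uniformly elliptic with smooth enough coefficients. Hence the law of \(\vartheta_1\) started at \(\theta\) has a density \(p_1(\theta,\cdot)\). We claim \(p_1\) is bounded below by a constant \(c>0\), uniformly in \(\theta\). The simplest route is Girsanov's theorem. Write the law of \(\vartheta_1\) under \(\proba_\theta\) with respect to the law of the Brownian motion \(\theta+\sqrt2 W_1\) projected on the torus. The Radon--Nikodym density of path measures is
\[
\exp\Bigl(-\tfrac{1}{\sqrt2}\int_0^1\nabla\Phi(X_s)\cdot\d W_s-\tfrac14\int_0^1|\nabla\Phi(X_s)|^2\d s\Bigr).
\]
Since \(\nabla\Phi\) is bounded, Novikov's condition holds on \([0,1]\). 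For any Borel set \(A\subset\torus^d\), apply Cauchy--Schwarz in reverse, in the form \(\proba(A)^2\le \expect[Z]\,\expect[Z^{-1}\mathbf 1_A]\) with the roles chosen appropriately. This gives \(\proba_\theta(\vartheta_1\in A)\ge c\,|A|^2\), uniformly in \(\theta\), because the periodized heat kernel at time \(1\) is bounded below on \(\torus^d\).

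This quadratic bound is not yet a linear minorization. The cleaner alternative is to invoke the Harnack inequality or Gaussian lower bounds (Aronson) for the periodized density. Either yields \(p_1(\theta,\theta')\ge c>0\) for all \(\theta,\theta'\in\torus^d\). We will take this classical fact as the minorization \(P_1(\theta,\cdot)\ge c\,\mathrm{Leb}\).

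\textbf{Step 2 (contraction of oscillations).} From the minorization, for any bounded \(g\) the oscillation contracts:
\[
\operatorname{osc}(P_1 g)\le(1-c)\operatorname{osc}(g).
\]
Indeed, \(P_1(\theta,\cdot)-P_1(\theta',\cdot)\) has total variation at most \(2(1-c)\). Iterating gives \(\operatorname{osc}(P_n g)\le(1-c)^n\operatorname{osc}(g)\). For \(t=n+s\) with \(s\in[0,1)\), the operator \(P_s\) does not increase the oscillation, so \(\operatorname{osc}(P_t g)\le(1-c)^{\lfloor t\rfloor}\operatorname{osc}(g)\).

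\textbf{Step 3 (conclusion).} The measure \(\mu_\Phi\) is invariant for \(P_t\), since \(\genlimit^*\e^{-\Phi}=0\) on the torus. Hence \(\mu_\Phi(P_tf)=\mu_\Phi(f)\), so \(P_tf-\mu_\Phi(f)\) is a function with \(\mu_\Phi\)-mean zero. Such a function takes both nonnegative and nonpositive values, so its sup norm is at most its oscillation. Therefore
\[
\norm{P_tf-\mu_\Phi(f)}_\infty\le \operatorname{osc}(P_tf)\le (1-c)^{\lfloor t\rfloor}\operatorname{osc}(f)\le 2(1-c)^{\lfloor t\rfloor}\norm{f-\mu_\Phi(f)}_\infty.
\]
This is the claim with \(\lambda_\Phi=-\log(1-c)\) and \(C_\Phi=2/(1-c)\).

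Alternatively, one could apply Harris' theorem~\cite{MR2857021} with the Lyapunov function \(V\equiv1\); compactness makes the Lyapunov condition trivial. The only genuine obstacle is the uniform positive lower bound on \(p_1\) in Step 1, which needs a heat-kernel or Harnack argument valid under the mere \(\cont^2\) regularity of \(\Phi\).
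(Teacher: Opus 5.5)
Your argument is correct and follows the paper's route. Both proofs obtain a uniform positive lower bound on the periodized time-$1$ transition density from Gaussian heat-kernel bounds (the paper cites Qian--Russo--Zheng where you cite Aronson/Harnack), and both then apply the Harris--Doeblin mechanism, which you carry out explicitly via the oscillation contraction instead of citing Harris' theorem. As a side remark, your quadratic Girsanov bound $P_1(\theta,A)\geq c\lvert A\rvert^2$ would already suffice for Step~2: taking $A$ from the Hahn decomposition of $P_1(\theta,\cdot)-P_1(\theta',\cdot)$ gives an overlap $P_1(\theta',A)+P_1(\theta,A^c)\geq c\bigl(\lvert A\rvert^2+\lvert A^c\rvert^2\bigr)\geq c/2$.
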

\begin{proof}
    The result follows from~\cite[Theorem 1.2]{MR2857021},
    see also \cite[Equation (2.78)]{MR3509213}.
    Since the state space is compact,
    it suffices to check the minorization condition,
    i.e.\ that there exist~$t_0 > 0$ and a probability measure~$\nu$ on~$\torus^d$ such that, 
    for some~$\varepsilon > 0$,
    \[
        \forall \theta \in \torus^d, \qquad
        P_{t_0}(\theta, \placeholder) \geq \varepsilon \nu(\placeholder) \, .
    \]
    There are various approaches to check this condition.
    For example,
    we could use the Girsanov transformation to compare the diffusion process with a Brownian motion on the torus.
    For the Langevin process in~\eqref{eq:overdamped_langevin_weak},
    using It\^o's formula we can rewrite the Brownian integral in terms of endpoint values and time integrals:
    \[
        \sqrt{2}\int_{0}^t \nabla \Phi(\vartheta_s) \cdot \d W_s
        = \Phi(\vartheta_t) - \Phi(\vartheta_0)
        + \int_0^t \Bigl(\lvert\nabla\Phi(\vartheta_s)\rvert^2-\Delta\Phi(\vartheta_s)\Bigr) \, \d s \, ,
    \]
    showing that the Girsanov weight is bounded from above and below by positive constants over a fixed time interval, 
    which allows to deduce the minorization condition from the minorization condition for Brownian motion.
    Here, 
    to avoid the technicalities related to the Girsanov transformation on the torus,
    we present a short argument based on existing bounds on the transition density.

    Consider the overdamped Langevin dynamics on~$\real^d$,
    with $\Phi$ extended to a periodic function in~$\cont^2(\real^d)$.
    The bounds in~\cite[Theorem~1]{QianRussoZheng2003} give a Gaussian lower bound on the transition probability:
    there exist~$\alpha_1,K>0$ such that
    for any~$(x,y)\in\real^d\times\real^d$ and~$0<t\leq1$,
    \begin{equation}
        \label{eq:transition_density_lower_bound}
        K^{-1}t^{-\frac d2}\exp\Bigl(-\frac{\eucnorm*{y-x}^2}{\alpha_1t}\Bigr)\leq p_t(x,y) \, .
    \end{equation}
    For the overdamped Langevin dynamics on the torus~$\torus^d$,
    the transition density can be expressed as the sum of the transition densities of the overdamped Langevin dynamics on~$\real^d$ over all the periodic translations of the initial condition~$x$.
    In particular, the lower bound in~\eqref{eq:transition_density_lower_bound} is still valid for representatives~$x,y\in[0,1)^d$.
    At~$t=1$, this gives a uniform positive lower bound, so the minorization condition holds with~$\nu$ the uniform measure.
\end{proof}

We are now in a position to prove the exponential decay of the derivatives of the solution to the backward Kolmogorov equation,
by proceeding exactly as in the proof of~\cite[Proposition 7]{darshan2024sticky}.
\begin{lemma}
    \label{lemma:exponential_decay_derivatives}
    Assume that \(\Phi\in \cont^{k+1}(\torus^d)\),
    for some~$k \in \{1, 2, \dotsc \}$.
    Then, for any multi-index \(\alpha\) such that~\(1 \leq \abs{\alpha} \leq k\),
    there exists a constant~$c > 0$ depending on~$\Phi$ and~$\alpha$ such that for any~$f \in \cont^{|\alpha|}(\torus^d)$,
    \begin{equation}
        \label{eq:decay}
        \forall t \geq 0, \qquad
        \norm*{\partial^{\alpha} P_t f}_\infty 
        \leq c \e^{-\lambda_{\Phi} t} \norm*{f}_{|\alpha|,\infty} \, , 
    \end{equation}
    where~$\lambda_\Phi$ is the constant defined in \cref{lemma:ergodicity_langevin}.
\end{lemma}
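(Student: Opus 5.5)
The plan is to split time into the short-time regime $t \in [0,2]$ and the long-time regime $t \geq 2$. In the short-time regime we use \cref{lemma:gradient_semigroup}. In the long-time regime we combine the semigroup property with \cref{lemma:bel_formula} and \cref{lemma:ergodicity_langevin}.

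For $t \in [0,2]$, \cref{lemma:gradient_semigroup} gives directly
\[
    \norm{\partial^\alpha P_t f}_\infty \leq C_{|\alpha|} \e^{2\lambda_{|\alpha|}} \norm{f}_{|\alpha|,\infty}
    \leq C_{|\alpha|} \e^{2\lambda_{|\alpha|}+2\lambda_\Phi} \e^{-\lambda_\Phi t} \norm{f}_{|\alpha|,\infty} \, .
\]
This is of the form~\eqref{eq:decay} with a constant depending only on $\Phi$ and $|\alpha|$.

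For $t \geq 2$, we write $P_t f = P_1 P_{t-1} f$. Since $|\alpha| \geq 1$, derivatives annihilate constants, and so
\[
    \partial^\alpha P_t f = \partial^\alpha P_1 \bigl(P_{t-1} f - \mu_\Phi(f)\bigr) \, .
\]
Applying \cref{lemma:bel_formula} with time $1$ to the bounded function $g = P_{t-1}f - \mu_\Phi(f)$ gives $\norm{\partial^\alpha P_t f}_\infty \leq \widetilde C_{|\alpha|} \norm{g}_\infty$. Then \cref{lemma:ergodicity_langevin} yields
\[
    \norm{g}_\infty \leq C_\Phi \e^{-\lambda_\Phi (t-1)} \norm{f-\mu_\Phi(f)}_\infty \leq 2 C_\Phi \e^{\lambda_\Phi} \e^{-\lambda_\Phi t} \norm{f}_\infty \, .
\]
Combining the two regimes gives~\eqref{eq:decay} with $c = \max\{C_{|\alpha|}\e^{2\lambda_{|\alpha|}+2\lambda_\Phi},\, 2\widetilde C_{|\alpha|} C_\Phi \e^{\lambda_\Phi}\}$.

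The only delicate point is justifying the identity $\partial^\alpha P_1 P_{t-1} f = \partial^\alpha P_1 g$. This needs the semigroup (Markov) property $P_t = P_1 P_{t-1}$ on bounded measurable functions, which follows from the strong Markov property of~\eqref{eq:overdamped_langevin_weak}. It also needs $P_1 \mathbf 1 = \mathbf 1$, which makes constants invariant so that subtracting $\mu_\Phi(f)$ does not change the derivative. \Cref{lemma:bel_formula} only requires $g \in \mathcal B(\torus^d)$, which holds here, so no regularity of $P_{t-1} f$ is needed. I do not expect a real obstacle; the argument is essentially bookkeeping of constants once the three lemmas are granted.
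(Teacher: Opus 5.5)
Your proposal is correct and follows the paper's proof essentially verbatim. Short times are handled by \cref{lemma:gradient_semigroup}, and long times by the decomposition $P_t f = P_1\bigl(P_{t-1}f-\mu_\Phi(f)\bigr)+\mu_\Phi(f)$ combined with \cref{lemma:bel_formula} at time $1$ and \cref{lemma:ergodicity_langevin}. The only difference is that you split at $t=2$ where the paper splits at $t=1$; this is harmless but unnecessary, since the ergodicity estimate only requires $t-1\geq 0$.
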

\begin{proof}
    Fix a multi-index~$\alpha$ such that~$1 \leq |\alpha| \leq k$ and take $f \in \cont^{|\alpha|}(\torus^d)$.
    In view of~\cref{lemma:gradient_semigroup},
    the inequality~\eqref{eq:decay} holds for all~$t \in [0, 1]$ with constant~$C_{|\alpha|}\e^{\lambda_{|\alpha|}+\lambda_\Phi}$.
    Assuming now that $t \geq 1$,
    we write
    \[
        P_t f = P_1 P_{t-1} f = P_1 \bigl(P_{t-1} f - \mu_\Phi(f)\bigr) + \mu_\Phi(f) \, .
    \]
    It then follows from~\cref{lemma:ergodicity_langevin,lemma:bel_formula} that
    \begin{align*}
        \forall t \geq 1, \qquad
        \norm*{\partial^{\alpha} P_t f}_\infty 
        &= \norm*{\partial^{\alpha} P_1 \bigl(P_{t-1} f - \mu_\Phi(f)\bigr)}_\infty \\
        &\leq \widetilde C_{|\alpha|} \norm*{P_{t-1} f - \mu_\Phi(f)}_\infty \\
        & \leq \widetilde C_{|\alpha|} C_\Phi \e^{-\lambda_\Phi (t-1)} \norm*{f - \mu_\Phi(f)}_\infty \\
        &\leq 2 \widetilde C_{|\alpha|} C_\Phi \e^{\lambda_\Phi} \e^{-\lambda_\Phi t} \norm*{f}_{|\alpha|,\infty} \, .
    \end{align*}
    We conclude that~\eqref{eq:decay} holds for all~$t \geq 0$ with~$c = \max\{C_{|\alpha|} \e^{\lambda_{|\alpha|}+\lambda_\Phi}, 2 \widetilde C_{|\alpha|} C_\Phi \e^{\lambda_\Phi}\}$,
    which concludes the proof.
\end{proof}

Combining these lemmas proves~\cref{theorem:auxiliary_parabolic}, with time regularity following from~\eqref{eq:parabolic_equation}.

% \begin{remark}
%     As a consequence,
%     it holds that
%     \begin{equation}
%         \label{eq:exponential_decay_derivatives_time}
%         \norm*{\partial_t u(t,\placeholder)}_\infty \lesssim \e^{-\lambda_2 t} \norm*{f}_{2,\infty},
%         \qquad
%         \norm*{\partial_t \nabla_{\theta} u(t,\placeholder)}_\infty \lesssim C_1 \e^{-\lambda_1 t} \norm*{f}_{2,\infty} \, .
%     \end{equation}
% \end{remark}

\section{Further auxiliary results concerning the drift coefficient}%
\label{sec:results_drift}

In this section,
we collect a few elementary results concerning the drift coefficients~$\driftall$ that appear in the multiscale dynamics~\eqref{eq:the_multiscale_dynamics}.
The proofs are straightforward and rely mainly on Taylor expansions;
but we provide most of them for completeness.
We first recall the expressions of the drift coefficient:
\begin{equation}
    \label{eq:drift_1_recall}
    \driftall(\theta, \allxi) 
    \coloneq \frac{1}{J}\sum_{j = 1}^J \driftone\bigl(\theta, \xin{j}\bigr) \, ,
    \qquad
    \driftone(\theta, \xi) 
    \coloneq 
    - \frac{1}{\sigma} \Bigl\langle \nabla \ell\bigl(G(\theta)\bigr), G\bigl(\theta + \sigma \xi\bigr) - G(\theta) \Bigr\rangle \, \xi \, .
\end{equation}
We recall that~$\ell \colon \real^K \to \real$ is assumed to be smooth throughout the paper.
Thus, if~$G\in\cont^1(\torus^d, \real^K)$,
then the functions~$\driftall$ and~$\driftone$ converge pointwise as~$\sigma \to 0$ to
\[
    \driftlim(\theta, \allxi) \coloneq \frac{1}{J} \sum_{j=1}^J \driftonelim(\theta, \xin{j}) \, , \qquad
    \driftonelim(\theta, \xi) 
    \coloneq - (\xi \otimes \xi) \nabla \Phi(\theta) \, ,
\]
where $\Phi = \ell \circ G$.
In this case, the drift functions may be continuously extended to $\sigma = 0$.
For simplicity, we denote these continuous extensions again by $\driftone$ and $\driftall$.
Using the fundamental theorem of calculus,
we can rewrite the drift function~$\driftone$ as
\begin{equation}
    \label{eq:drift_1_integral}
    \driftone(\theta, \xi) = - \int_0^1 \Bigl\langle \nabla \ell\bigl(G(\theta)\bigr), \nabla G\bigl(\theta + t\sigma \xi\bigr) \, \xi \Bigr\rangle \, \xi \, \d t \, .
\end{equation}
The total drift $\driftall$ can be rewritten in a similar manner.
If~$G\in\cont^2(\torus^d, \real^K)$, 
then the function~$\driftone$ in~\eqref{eq:drift_1_integral} is continuously differentiable with respect to~$\sigma$ for all fixed $(\theta, \xi)$.
We shall denote the derivative of the total drift at~$\sigma=0$ by
\begin{equation}
    \label{eq:drift_deriv_1}
    \driftderiv(\theta, \allxi) 
    \coloneq \Bigl. \frac{\d}{\d\sigma} \driftall\bigl(\theta, \allxi\bigr) \Bigr|_{\sigma = 0} 
    =  - \frac{1}{2J} \sum_{j=1}^J
    \Bigl\langle \nabla \ell\bigl(G(\theta)\bigr), \nabla^2 G(\theta) : \bigl(\xin{j} \otimes \xin{j}\bigr) \Bigr\rangle \, \xin{j} \, .
\end{equation}
In this section, 
we first state a result that provides upper bounds on the drift coefficient and its gradient.
For a multi-linear operator~$A \colon \real^{m_1} \times \dots \times \real^{m_k} \to \real^n$,
we denote by~$|A|$ its operator norm for the Euclidean norm,
defined as
\[
    |A| \coloneq \sup_{x_1 \in \real^{m_1}\setminus\{0\}, \dots, x_k \in \real^{m_k}\setminus\{0\}} \frac{|A(x_1, \dots, x_k)|}{|x_1| \cdots |x_k|} \, .
\]
\begin{lemma}
    [Upper bound on the drift and its gradient]
    \label{lemma:upper_bound_drift_and_gradient}
    Let $J\in \nat_{>0}$ and $\sigma \geq 0$.
    If $G\in\cont^1(\torus^d, \real^K)$, then it holds that
    \begin{subequations}
        \begin{equation}
            \label{eq:bound_drift_bip_a}
            \forall (\theta,\allxi)\in\torus^d\times \real^{dJ}, \qquad
            \Bigl\lvert \driftall\bigl(\theta, \allxi\bigr) \Bigr\rvert
            \leq 
            \bigl\lVert (\nabla \ell) \circ G \bigr\rVert_{\infty}
            \lVert \nabla G \rVert_{\infty}
            \frac{1}{J}\sum_{j=1}^{J} \bigl\lvert \xin{j} \bigr\rvert^2 \, .
        \end{equation}
        Furthermore, if $G\in\cont^2(\torus^d, \real^K)$, 
        then it also holds for all $(\theta, \allxi) \in \torus^d \times \real^{dJ}$ that
        \begin{align}
            \Bigl\lvert \nabla_{\theta}\driftall\bigl(\theta, \allxi\bigr)\Bigr\rvert
            \leq 
            \Bigl(
                \bigl\lVert(\nabla^2 \ell) \circ G \bigr\rVert_{\infty}
                \|\nabla G\|_{\infty}^2
                +
                \bigl\lVert (\nabla \ell) \circ G \bigr\rVert_{\infty} \|\nabla^2G\|_{\infty}
            \Bigr)
            \frac{1}{J}\sum_{j=1}^{J}\abs*{\xin{j}}^2 \, ,
            \label{eq:bound_drift_bip_b}
        \end{align}
    \end{subequations}
    where $\|\nabla^2G\|_{\infty}$ is the $L^{\infty}$-norm of~$\theta \mapsto |\nabla^2G(\theta)|$.
    Here the second derivative $\nabla^2 G(\theta)$ is viewed as a bilinear map from~$\real^d \times \real^d$ to $\real^K$,
    and we recall that~$| \cdot |$ is the corresponding operator norm.
\end{lemma}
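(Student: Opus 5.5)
The plan is to work directly from the integral representation~\eqref{eq:drift_1_integral} of~$\driftone$, valid whenever $G\in\cont^1(\torus^d,\real^K)$ and for all $\sigma\geq0$, since it treats the case $\sigma=0$ (the continuous extension) and $\sigma>0$ simultaneously. Because $\driftall$ is the empirical average of $\driftone(\theta,\xin{j})$ over $j$, and both claimed right-hand sides are averages of $\lvert\xin{j}\rvert^2$ with a $\theta$-independent prefactor, both bounds reduce by the triangle inequality to the single-particle estimates
\[
    \bigl\lvert \driftone(\theta,\xi)\bigr\rvert \leq \bigl\lVert(\nabla\ell)\circ G\bigr\rVert_\infty \lVert\nabla G\rVert_\infty \lvert\xi\rvert^2,
    \qquad
    \bigl\lvert \nabla_\theta\driftone(\theta,\xi)\bigr\rvert \leq \Bigl(\bigl\lVert(\nabla^2\ell)\circ G\bigr\rVert_\infty\lVert\nabla G\rVert_\infty^2 + \bigl\lVert(\nabla\ell)\circ G\bigr\rVert_\infty\lVert\nabla^2G\rVert_\infty\Bigr)\lvert\xi\rvert^2 .
\]

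For the first estimate, I bound the integrand in~\eqref{eq:drift_1_integral} pointwise in $t\in[0,1]$ by Cauchy--Schwarz in $\real^K$: $\lvert\langle\nabla\ell(G(\theta)),\nabla G(\theta+t\sigma\xi)\xi\rangle\rvert\,\lvert\xi\rvert\leq\lvert\nabla\ell(G(\theta))\rvert\,\lvert\nabla G(\theta+t\sigma\xi)\rvert\,\lvert\xi\rvert^2$, and then integrate over $t$. Periodicity of $G$ ensures that the sup norm of $\nabla G$ controls its value at the shifted point $\theta+t\sigma\xi$ for arbitrary $\xi\in\real^d$.

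For the gradient bound, I differentiate~\eqref{eq:drift_1_integral} under the integral sign in $\theta$. This is legitimate for fixed $\xi$ since, when $G\in\cont^2$, the integrand is $\cont^1$ in $\theta$ with derivative continuous and bounded on the compact set $\torus^d\times[0,1]$. The product rule gives two terms:
\[
    \nabla_\theta\driftone(\theta,\xi)\,v = -\int_0^1\Bigl(\bigl\langle\nabla^2\ell(G(\theta))\nabla G(\theta)v,\nabla G(\theta+t\sigma\xi)\xi\bigr\rangle + \bigl\langle\nabla\ell(G(\theta)),\nabla^2G(\theta+t\sigma\xi)(v,\xi)\bigr\rangle\Bigr)\xi\,\d t .
\]
Each term is bounded using operator norms, producing respectively $\lvert\nabla^2\ell\circ G\rvert\,\lvert\nabla G\rvert^2\lvert v\rvert\lvert\xi\rvert^2$ and $\lvert\nabla\ell\circ G\rvert\,\lvert\nabla^2G\rvert\,\lvert v\rvert\lvert\xi\rvert^2$. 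Taking the supremum over unit $v$ then yields the second estimate.

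No step is a genuine obstacle; the proof is essentially bookkeeping. The only points needing care are checking that the chain rule produces exactly these two terms with the correct operator-norm conventions for $\nabla^2G$ viewed as a bilinear map, and justifying the interchange of derivative and integral, which uses only the continuity of the integrand and compactness of $\torus^d\times[0,1]$ at fixed $\xi$.
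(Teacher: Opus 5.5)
Your proposal is correct and follows essentially the same route as the paper: start from the integral representation~\eqref{eq:drift_1_integral}, apply Cauchy--Schwarz for~\eqref{eq:bound_drift_bip_a}, and differentiate under the integral sign to obtain the same two product-rule terms, each bounded in operator norm, for~\eqref{eq:bound_drift_bip_b}. Your justification of the interchange of derivative and integral (continuity of the integrand on the compact set $\torus^d\times[0,1]$ at fixed $\xi$) and your remark that the case $\sigma=0$ is covered by the same formula are accurate refinements of details the paper leaves implicit.
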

\begin{proof}
    Recall from~\eqref{eq:drift_1_integral} that
    \[
        \driftall\bigl(\theta,\allxi\bigr)
        =
        -\frac{1}{J}
        \sum_{j=1}^J
        \int_0^1
        \Bigl\langle
            \nabla \ell\bigl(G(\theta)\bigr), \nabla G\bigl(\theta+t\sigma\xin{j}\bigr) \, \xin{j}
        \Bigr\rangle
        \, \xin{j} \, \d t \, .
    \]
    The inequality~\eqref{eq:bound_drift_bip_a} then follows from the Cauchy--Schwarz inequality and elementary bounds.
    To obtain the second bound,
    we assume that $G\in\cont^2(\torus^d, \real^K)$ and differentiate the above expression with respect to~\(\theta\) under the integral sign
    to find the following expression for the Jacobian of $\driftall$:
    for any $a \in \real^d$,
    \begin{align*}
        \nabla_\theta \driftall\bigl(\theta,\allxi\bigr) a
        &=
        -\frac{1}{J} \sum_{j=1}^J \int_0^1
        \Bigl\langle
            \nabla^2 \ell\bigl(G(\theta)\bigr) \nabla G(\theta) a, \nabla G\bigl(\theta+t\sigma\xin{j}\bigr) \, \xin{j}
        \Bigr\rangle 
        \, \xin{j} \, \d t 
        \\
        &\qquad
        - \frac{1}{J} \sum_{j=1}^J \int_0^1
        \Bigl\langle
            \nabla \ell\bigl(G(\theta)\bigr), \nabla^2 G\bigl(\theta+t\sigma\xin{j}\bigr) : (\xin{j} \otimes a)
        \Bigr\rangle 
        \, \xin{j} \, \d t \, .
    \end{align*}
    This leads to the desired bound after taking norms and using elementary inequalities.
    % and using that $|AB| \leq |A| \cdot |B|$ for all matrices $A,B$.
\end{proof}
\begin{remark}
    \label{remark:higher_order_derivatives_drift}
    Using a similar approach as in the proof of~\cref{lemma:upper_bound_drift_and_gradient}, 
    it is possible to prove that if~$G \in \cont^{k}(\torus^d, \real^K)$ for some natural number~$k > 2$,
    then~$\driftall \in \cont^{k}(\torus^d \times \real^{dJ}, \real^d)$ with respect to the~$\theta$-variable.
    Furthermore,
    there exists a constant~$C_{k} > 0$ depending only on $k$ and derivative bounds on $G$ and $\ell$ up to order~$k$ such that,
    for all $m \in \{1, \dots, k-1\}$ and all $(\theta, \allxi) \in \torus^d \times \real^{dJ}$
    \begin{equation}
        \label{eq:bound_drift_high_order}
        \left\lvert \nabla_{\theta}^{m} \driftall\bigl(\theta, \allxi\bigr) \right\rvert
        \leq
        C_k \left( \frac{1}{J} \sum_{j=1}^J |\xin{j}|^2 \right) \, .
    \end{equation}
    We emphasize that the constant~$C_k$ does not depend on $\sigma$ or $J$,
    because we restricted~$m$ to be at most~$k-1$.
    We omit the details of the proof for brevity.
\end{remark}

We now state and prove a result concerning the limit of the drift coefficients as $\sigma \to 0$.
\begin{lemma}
    [Limit $\sigma \to 0$ of the drift]
    \label{lemma:drift_lim_small_sigma}
    Fix $J\in\nat_{>0}$.
    If $G\in\cont^2(\torus^d, \real^K)$,
    then it holds for all $(\theta, \allxi) \in \torus^d \times \real^{dJ}$ that
    \begin{subequations}
    \begin{equation}
        \label{eq:second_result_drift}
        \Bigl\lvert \driftall\bigl(\theta, \allxi\bigr) - \driftlim\bigl(\theta, \allxi\bigr) \Bigr\rvert
        \leq
        \frac{\sigma}{2} \, 
        \bigl\lVert ( \nabla \ell ) \circ G \bigr\rVert_{\infty} 
        \, 
        \lVert \nabla^2 G \rVert_{\infty} 
        \left( \frac{1}{J}\sum_{j=1}^J \bigl\lvert \xin{j} \bigr\rvert^3 \right) \, .
    \end{equation}
    Furthermore, if $G\in\cont^3(\torus^d, \real^K)$, then the following estimate holds for all $(\theta, \allxi) \in \torus^d \times \real^{dJ}$:
    \begin{equation}
        \label{eq:second_result_drift_b}
        \Bigl\lvert \driftall(\theta, \allxi) - \driftlim(\theta, \allxi) - \sigma \driftderiv(\theta, \allxi) \Bigr\rvert
        \leq
        \frac{\sigma^2}{6}\,
        \bigl\lVert (\nabla \ell) \circ G \bigr\rVert_{\infty}
        \| \nabla^3 G \|_{L^\infty(\torus^d)} \,
        \left( \frac{1}{J} \sum_{j=1}^J |\xin{j}|^4 \right) \, .
    \end{equation}
    \end{subequations}
\end{lemma}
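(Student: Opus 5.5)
The plan is to work with the integral representation \eqref{eq:drift_1_integral} and Taylor-expand the Jacobian $\nabla G(\theta + t\sigma\xi)$ in the increment $t\sigma\xi$. Since $\driftall$ and $\driftlim$ are both averages over $j$ of $\driftone(\theta,\xin{j})$ and $\driftonelim(\theta,\xin{j})$, by the triangle inequality it suffices to prove the single-particle bounds
\[
\bigl\lvert \driftone(\theta,\xi) - \driftonelim(\theta,\xi)\bigr\rvert \leq \tfrac{\sigma}{2}\bigl\lVert(\nabla\ell)\circ G\bigr\rVert_\infty \lVert\nabla^2 G\rVert_\infty \lvert\xi\rvert^3 ,
\]
together with the analogous $\sigma^2$ estimate. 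Averaging over $j$ then gives \eqref{eq:second_result_drift} and \eqref{eq:second_result_drift_b}.

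For the first estimate, I first observe that $\driftonelim(\theta,\xi) = -\langle \nabla\ell(G(\theta)), \nabla G(\theta)\xi\rangle\,\xi$, because $\nabla\Phi(\theta) = \nabla G(\theta)^{\t}\nabla\ell(G(\theta))$. Hence
\[
\driftone - \driftonelim = -\int_0^1 \Bigl\langle \nabla\ell\bigl(G(\theta)\bigr), \bigl(\nabla G(\theta+t\sigma\xi) - \nabla G(\theta)\bigr)\xi\Bigr\rangle\,\xi\,\d t .
\]
Using the mean value inequality, $\lvert(\nabla G(\theta+t\sigma\xi)-\nabla G(\theta))\xi\rvert \leq t\sigma\lVert\nabla^2G\rVert_\infty\lvert\xi\rvert^2$. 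Cauchy--Schwarz then bounds the integrand by $t\sigma\lVert(\nabla\ell)\circ G\rVert_\infty\lVert\nabla^2G\rVert_\infty\lvert\xi\rvert^3$, and $\int_0^1 t\,\d t = \tfrac12$ yields the claimed constant. The mean value inequality is legitimate on the torus, since $G$ lifts to a periodic function on $\real^d$ and the segment can be taken in $\real^d$.

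For the second estimate, assume $G\in\cont^3$ and use the second-order Taylor formula with integral remainder:
\[
\nabla G(\theta+t\sigma\xi)\xi = \nabla G(\theta)\xi + t\sigma\,\nabla^2G(\theta):(\xi\otimes\xi) + (t\sigma)^2\int_0^1(1-s)\,\nabla^3G(\theta+st\sigma\xi)[\xi,\xi,\xi]\,\d s .
\]
Inserting this into \eqref{eq:drift_1_integral}, the zeroth-order term reproduces $\driftonelim$. The first-order term integrates ($\int_0^1 t\,\d t=\tfrac12$) to $\sigma$ times the single-particle summand of \eqref{eq:drift_deriv_1}. This also confirms the formula for $\driftderiv$, via differentiation under the integral sign justified by continuity of $\nabla^2 G$. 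The remainder is bounded by $\sigma^2\lVert(\nabla\ell)\circ G\rVert_\infty\lVert\nabla^3G\rVert_\infty\lvert\xi\rvert^4\int_0^1 t^2\,\d t\int_0^1(1-s)\,\d s$, which equals the stated bound with constant $\tfrac13\cdot\tfrac12=\tfrac16$.

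There is no real obstacle here. The only points needing care are matching the constant in \eqref{eq:drift_deriv_1}, in particular the factor $\tfrac12$ coming from the $t$-integral, and making sure the operator-norm conventions for $\nabla^2G$ and $\nabla^3G$ give exactly $\lvert\xi\rvert^3$ and $\lvert\xi\rvert^4$.
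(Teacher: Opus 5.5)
Your proof is correct and is essentially the paper's argument: the paper Taylor-expands $G(\theta+\sigma\xi)$ directly to second (respectively third) order with integral remainder, whereas you expand $\nabla G(\theta+t\sigma\xi)$ inside the integral representation~\eqref{eq:drift_1_integral}. This is the same computation reorganized, and it produces the same constants $\tfrac12$ and $\tfrac16$; your check that the first-order term reproduces~\eqref{eq:drift_deriv_1} is a consistency verification the paper leaves implicit.
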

\begin{proof}
    A similar statement was proved in~\cite[Appendix A]{pavliotis21derivative},
    but we include a sketch of the proof here for completeness.
    Fix $(\theta,\allxi)\in\torus^d\times\real^{dJ}$.
    For each $j\in\{1,\dots,J\}$, we apply the second-order Taylor expansion of $G$
    around $\theta$ with integral remainder:
    \[
        G(\theta+\sigma\xin{j})
        =
        G(\theta)
        +
        \sigma \nabla G(\theta) \xin{j}
        +
        \sigma^2
        \int_0^1 (1-s)\, \nabla^2 G(\theta+s\sigma\xin{j}) : (\xin{j} \otimes \xin{j}) \, \d s \, .
    \]
    It follows from this equation that
    \[
        \driftall\bigl(\theta,\allxi\bigr)
        -\driftlim\bigl(\theta,\allxi\bigr)
        =
        -\frac{\sigma}{J}
        \sum_{j=1}^J \xin{j} 
        \int_0^1 (1-s)\,  \Bigl\langle \nabla \ell\bigl(G(\theta)\bigr), \nabla^2 G\bigl(\theta+s\sigma\xin{j}\bigr) : \bigl(\xin{j} \otimes \xin{j}\bigr) \Bigr\rangle \, \d s \, ,
    \]
    This yields~\eqref{eq:second_result_drift} after applying the Cauchy--Schwarz inequality and noting that
    \[
        \Bigl\lvert \nabla^2 G\bigl(\theta+s\sigma\xin{j}\bigr) : \bigl(\xin{j} \otimes \xin{j}\bigr) \Bigr\rvert 
        \leq \lVert \nabla^2 G \rVert_{\infty} \bigl\lvert \xin{j} \bigr\rvert^2 \, .
    \]
    To obtain~\eqref{eq:second_result_drift_b},
    it suffices to push the Taylor expansion one order further:
    \[
        G(\theta+\sigma\xin{j})
        = G(\theta)
        + \sigma \nabla G(\theta) \xin{j}
        + \frac{\sigma^2}{2} \nabla^2G(\theta) : (\xin{j} \otimes \xin{j})
        + \frac{\sigma^3}{2} \int_0^1 (1-s)^2 \nabla^3 G\bigl(\theta+s\sigma\xin{j}\bigr) \cdot \bigl(\xin{j} \otimes \xin{j} \otimes \xin{j}\bigr) \, \d s \, .
    \]
    Substituting this expression into the definition of $\driftall$ and using the definition of $\driftderiv$ yields
    \begin{align}
        \notag
        &\driftall\bigl(\theta,\allxi\bigr)-\driftlim\bigl(\theta,\allxi\bigr)-\sigma \driftderiv\bigl(\theta,\allxi\bigr) \\
        &\qquad =
        -\frac{\sigma^2}{2J}
        \sum_{j=1}^J \xin{j} 
        \int_0^1 (1-s)^2\,  \Bigl\langle \nabla \ell\bigl(G(\theta)\bigr), \nabla^3 G\bigl(\theta+s\sigma\xin{j}\bigr) \cdot \bigl(\xin{j} \otimes \xin{j} \otimes \xin{j}\bigr) \Bigr\rangle \, \d s \, .
        \label{eq:aux_eq_drift}
    \end{align}
    This yields the desired bound.
\end{proof}

We now provide an estimate on the drift obtained in the averaging limit for fixed $\sigma > 0$.
For any fixed~$\theta \in \torus^d$, 
we rewrite the average drift from~\eqref{eq:asymptotic_drift} as an expectation:
\begin{align}
    \label{eq:asymptotic_drift_appendix}
    \driftinf(\theta) \coloneq \expect_{\xi \sim \fastgaussianone} \Bigl[ \driftone\bigl(\theta, \xi \bigr) \Bigr] \, , 
\end{align}
\begin{lemma}
    [Error estimate for the average drift]
    \label{lemma:bound_sigma}
    Fix $\sigma > 0$ and assume that $G\in\cont^2(\torus^d, \real^K)$.
    Then
    \[
        \bigl\lVert \driftinf + \nabla \Phi \bigr\rVert_{\infty} 
        \leq
        \frac{\sigma}{2} \, 
        \bigl\lVert (\nabla \ell) \circ G \bigr\rVert_{\infty}
        \lVert \nabla^2 G \rVert_{\infty} \, \expect_{\xi \sim \fastgaussianone} \bigl[ |\xi|^3 \bigr] \, .
    \]
    Furthermore, if $G\in\cont^3(\torus^d, \real^K)$, then the following estimate holds:
    \[
        \Bigl\lVert \driftinf + \nabla \Phi \Bigr\rVert_{\infty} 
        \leq
        \frac{\sigma^2}{6} \,
        \bigl\lVert (\nabla \ell) \circ G \bigr\rVert_{\infty}
        \| \nabla^3 G \|_{\infty} \, \expect_{\xi \sim \fastgaussianone} \bigl[ |\xi|^4 \bigr] \, .
    \]
\end{lemma}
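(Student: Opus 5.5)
The plan is to reduce both estimates to the pointwise Taylor bounds of \cref{lemma:drift_lim_small_sigma} by taking the Gaussian expectation in~$\xi$. Fix $\theta\in\torus^d$. By definition~\eqref{eq:asymptotic_drift_appendix}, $\driftinf(\theta)=\expect_{\xi\sim\fastgaussianone}[\driftone(\theta,\xi)]$. Moreover, $\expect[\driftonelim(\theta,\xi)]=-\expect[\xi\otimes\xi]\nabla\Phi(\theta)=-\nabla\Phi(\theta)$, since the covariance of $\fastgaussianone$ is $\I_d$. Hence
\[
\driftinf(\theta)+\nabla\Phi(\theta)=\expect_{\xi\sim\fastgaussianone}\bigl[\driftone(\theta,\xi)-\driftonelim(\theta,\xi)\bigr] \, .
\]
Integrability of each term is guaranteed by \cref{lemma:upper_bound_drift_and_gradient}, which gives quadratic growth in~$\xi$.

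For the first estimate, I will apply~\eqref{eq:second_result_drift} with $J=1$ inside the expectation and use Jensen's inequality $\lvert\expect[\,\cdot\,]\rvert\le\expect\lvert\,\cdot\,\rvert$. This gives the bound $\frac{\sigma}{2}\lVert(\nabla\ell)\circ G\rVert_\infty\lVert\nabla^2G\rVert_\infty\expect\lvert\xi\rvert^3$ uniformly in~$\theta$. Taking the supremum over~$\theta$ then concludes.

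For the second estimate, the key extra observation is that the first-order term vanishes in expectation. By~\eqref{eq:drift_deriv_1} with $J=1$,
\[
\left.\partial_\sigma\driftone(\theta,\xi)\right|_{\sigma=0}=-\tfrac12\bigl\langle\nabla\ell(G(\theta)),\nabla^2G(\theta):(\xi\otimes\xi)\bigr\rangle\,\xi \, ,
\]
which is a homogeneous cubic polynomial in~$\xi$, i.e.\ an odd function. It therefore has zero mean under the symmetric measure $\fastgaussianone$. So I will insert this term for free, writing
\[
\driftinf+\nabla\Phi=\expect\bigl[\driftone-\driftonelim-\sigma\,\partial_\sigma\driftone|_{\sigma=0}\bigr] \, ,
\]
and then apply~\eqref{eq:second_result_drift_b} pointwise. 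This yields $\frac{\sigma^2}{6}\lVert(\nabla\ell)\circ G\rVert_\infty\lVert\nabla^3G\rVert_\infty\expect\lvert\xi\rvert^4$.

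There is no real obstacle in this argument. The only point to state explicitly is the oddness and centering of the cubic term, which is what upgrades the rate from~$\sigma$ to~$\sigma^2$. A minor check is that all Gaussian moments involved are finite, which is immediate.
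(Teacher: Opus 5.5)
Your proposal is correct and follows essentially the same route as the paper. The paper also takes the Gaussian expectation of the $J=1$ Taylor expansion from the proof of \cref{lemma:drift_lim_small_sigma}, using $\expect[\xi\otimes\xi]=\I_d$ and the vanishing mean of the odd cubic first-order term to get the $\sigma^2$ rate; your choice to bound pointwise and then apply Jensen, rather than first taking expectations of the identity, makes no real difference.
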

\begin{proof}
    These statements can be proved by applying exactly the same arguments as in the proof of \cref{lemma:drift_lim_small_sigma}.
    Consider the equation~\eqref{eq:aux_eq_drift} that we already proved in the particular case~$J=1$.
    Taking the expectation with respect to $\xin{1} \sim \fastgaussianone$
    and noting that the third term on the left-hand side then vanishes
    yields the desired result.
\end{proof}

To conclude this section, 
we state a result concerning the derivative of the average drift $\driftinf$.
\begin{lemma}
    [Bound on the derivative of the average drift]
    \label{lemma:lipschitz_asymptotic_drift}
    Assume that $G\in\cont^2(\torus^d, \real^K)$.
    Then~$\driftinf \in \cont^1(\torus^d, \real^d)$ and it holds for all $\theta \in \torus^d$ that
    \[
        \lvert \nabla \driftinf(\theta) \rvert
        \leq 
        d
        \Bigl(
            \bigl\lVert(\nabla^2 \ell) \circ G \bigr\rVert_{\infty}
            \|\nabla G\|_{\infty}^2
            +
            \bigl\lVert (\nabla \ell) \circ G \bigr\rVert_{\infty} \|\nabla^2G\|_{\infty}
        \Bigr) \, .
    \]
\end{lemma}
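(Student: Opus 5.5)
The plan is to differentiate the expectation defining~$\driftinf$ under the integral sign, using the integral representation~\eqref{eq:drift_1_integral} of~$\driftone$, and then to bound the resulting integrand pointwise in~$\xi$ by a constant times~$|\xi|^2$. The pointwise bound is exactly the one already established in~\cref{lemma:upper_bound_drift_and_gradient} with~$J=1$.

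\textbf{Step 1: differentiability of~$\driftinf$.} For fixed~$\xi$, the map~$\theta \mapsto \driftone(\theta,\xi)$ is~$\cont^1$ when~$G \in \cont^2(\torus^d,\real^K)$ and~$\ell$ is smooth. This follows from~\eqref{eq:drift_1_integral}, since the integrand involves~$\nabla\ell\circ G$ and~$\nabla G$ evaluated at~$\theta+t\sigma\xi$, both of which are~$\cont^1$ in~$\theta$. By~\eqref{eq:bound_drift_bip_b} with~$J=1$, we have
\[
    \sup_{\theta} \bigl\lvert \nabla_\theta \driftone(\theta,\xi)\bigr\rvert \leq M\,|\xi|^2 \, ,
    \qquad
    M \coloneq \bigl\lVert(\nabla^2 \ell) \circ G \bigr\rVert_{\infty}\|\nabla G\|_{\infty}^2 + \bigl\lVert (\nabla \ell) \circ G \bigr\rVert_{\infty} \|\nabla^2G\|_{\infty} \, .
\]
The dominating function~$M|\xi|^2$ is integrable under~$\fastgaussianone$. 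By the mean value theorem and dominated convergence, we may therefore differentiate under the expectation, which gives~$\nabla\driftinf(\theta) = \expect_{\xi\sim\fastgaussianone}[\nabla_\theta\driftone(\theta,\xi)]$. Continuity of this derivative follows from dominated convergence again, using the continuity of~$\nabla_\theta\driftone$ in~$\theta$ and the same dominating function. Hence~$\driftinf\in\cont^1(\torus^d,\real^d)$.

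\textbf{Step 2: the bound.} Taking operator norms inside the expectation (Jensen) gives
\[
    |\nabla\driftinf(\theta)| \leq M\,\expect_{\xi \sim \fastgaussianone}\bigl[|\xi|^2\bigr] = d\,M \, ,
\]
which is the claimed estimate.

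The only point requiring some care is the justification of differentiating under the integral. Because~$\xi$ ranges over the unbounded space~$\real^d$, this needs a uniform-in-$\theta$ integrable dominating function, and the quadratic bound~\eqref{eq:bound_drift_bip_b} supplies exactly that. Notably, this bound does not depend on~$\sigma$, because~$G$ and its derivatives are bounded on the torus regardless of the shift~$t\sigma\xi$. No real obstacle is expected beyond this.
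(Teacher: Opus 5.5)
Your proof is correct and follows the paper's own argument: differentiate under the Gaussian expectation by dominated convergence, using the $\sigma$-independent quadratic bound~\eqref{eq:bound_drift_bip_b} with $J=1$ as the dominating function, then conclude from $\expect_{\xi\sim\fastgaussianone}\bigl[|\xi|^2\bigr]=d$. Your justification of the continuity of $\nabla\driftinf$ is slightly more explicit than the paper's, which is welcome but not needed for correctness.
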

\begin{proof}
    The result follows from~\cref{lemma:upper_bound_drift_and_gradient} for~$J=1$.
    Indeed, by dominated convergence and~\eqref{eq:bound_drift_bip_b},
    we can differentiate under the expectation in~\eqref{eq:asymptotic_drift_appendix} to find that
    \[
        \nabla \driftinf(\theta) 
        = \expect_{\xi\sim\fastgaussianone} \Bigl[ \nabla_\theta \driftone\bigl(\theta, \xi\bigr) \Bigr] \, ,
    \]
    and the desired bound then follows from~\eqref{eq:bound_drift_bip_b}.
\end{proof}

% \begin{remark}
%     \label{remark:higher_order_derivatives_drift_inf}
%     Just as in~\cref{remark:higher_order_derivatives_drift},
%     it is possible to prove that if~$G \in \cont^{k}(\torus^d, \real^K)$ for some~$k \in \{3, 4, \dotsc\}$,
%     then~$\driftinf \in \cont^{k}(\torus^d, \real^d)$ and 
%     there exists a constant~$C_k > 0$ depending only on $k$ and derivative bounds on $G$ and $\ell$ up to order~$k$ such that,
%     for all $m \in \{1, \dots, k-1\}$
%     \begin{equation}
%         \label{eq:bound_drift_inf_high_order}
%         \left\lvert \nabla_{\theta}^{m} \driftinf(\theta) \right\rvert_{\infty}
%         \leq
%         C_k \, .
%     \end{equation}
% \end{remark}

\printbibliography
\end{document}